\documentclass[10pt,reqno,twoside]{amsart}
\usepackage{hyperref}
\usepackage[ngerman,english]{babel}
\usepackage[nameinlink,capitalise,noabbrev]{cleveref}
\hypersetup{
	colorlinks=true,
    linkcolor=blue,
    citecolor=blue,
    filecolor=black,
    urlcolor=black,
}
\usepackage{verbatim}
\usepackage{xy}
\usepackage{amsfonts}
\input xy
\xyoption{all}
\usepackage[shortlabels]{enumitem}
\usepackage{tikz-cd}
\usepackage[top=1.1in,bottom=1.1in, left=1in, right=1in]{geometry}
\usepackage{amssymb}
\usepackage{url}
\usepackage[toc,page]{appendix} 

\newtheorem{Theorem}[equation]{Theorem}
\newtheorem{Cor}[equation]{Corollary}
\newtheorem{Lemma}[equation]{Lemma}
\newtheorem{Prop}[equation]{Proposition}

\newtheorem{thmx}{Theorem}
\theoremstyle{definition}
\newtheorem{Def}[equation]{Definition}

\newtheorem{Remark}[equation]{Remark}
\newtheorem{Example}[equation]{Example}
\newtheorem{DefNot}[equation]{Definition/Notation}
\newtheorem*{Def*}{Definition}
\newtheorem*{Example*}{Example}
\newtheorem*{Theorem*}{Theorem}
\newtheorem*{Prop*}{Proposition}
\newtheorem*{Remark*}{Remark}

\newcommand{\R}{\mathbb{R}}
\newcommand{\Q}{\mathbb{Q}}
\newcommand{\Z}{\mathbb{Z}}
\newcommand{\N}{\mathbb{N}}
\newcommand{\C}{\mathbb{C}}
\newcommand{\T}{\mathbb{T}}
\newcommand{\U}{\mathcal{U}}
\newcommand{\bL}{\textbf{L}}
\newcommand{\F}{\mathbb{F}}
\newcommand{\cC}{\mathcal{C}}

\newcommand{\MUP}{\mathbf{MUP}}
\newcommand{\MOP}{\mathbf{MOP}}
\newcommand{\MU}{\mathbf{MU}}
\newcommand{\MO}{\mathbf{MO}}
\newcommand{\Ze}{\mathbb{Z}_e}
\newcommand{\Fe}{(\mathbb{F}_2)_e}
\newcommand{\CPU}{\mathbb{C}P(\U^{\C}_A)}
\newcommand{\RPU}{\mathbb{R}P(\U^{\R}_A)}
\newcommand{\Ga}{\mathbb{G}_a}
\newcommand{\Gm}{\mathbb{G}_m}

\newcommand{\upi}{\underline{\pi}}

\newcommand{\xr}{\xrightarrow}

\newcommand{\wL}{\widetilde{\bL}}

\newcommand{\hotimes}{\hat{\otimes}}
\newcommand{\el}{\text{el}_2}
\newcommand{\cD}{\mathcal{D}}
\newcommand{\wLt}{\widetilde{\bL}^{\raisebox{-3pt}{$\scriptstyle 2-\tor$}}}

\DeclareMathOperator{\map}{map}
\DeclareMathOperator{\diag}{diag}

\DeclareMathOperator{\tor}{tor}
\DeclareMathOperator{\im}{im}

\DeclareMathOperator{\Hom}{Hom}

\DeclareMathOperator{\ind}{ind}

\DeclareMathOperator{\hocolim}{hocolim}

\DeclareMathOperator{\res}{res}

\DeclareMathOperator{\Vect}{Vect}
\DeclareMathOperator{\id}{id}
\DeclareMathOperator{\pr}{pr}
\DeclareMathOperator{\toral}{toral}
\DeclareMathOperator{\reg}{reg}
\newcommand{\V}{\Vect_{\mathbb{F}_2}}

\pagestyle{headings}
\numberwithin{equation}{section}

\title{Global group laws and equivariant bordism rings}
\author{Markus Hausmann}
\address{Matematiska institutionen, Stockholms Universitet, 106 91 Stockholm, Sweden}
\email{markus.hausmann@math.su.se}

\begin{document}
\begin{abstract}
For every abelian compact Lie group $A$, we prove that the homotopical $A$-equivariant complex bordism ring, introduced by tom Dieck (1970), is isomorphic to the $A$-equivariant Lazard ring, introduced by Cole--Greenlees--Kriz (2000). This settles a conjecture of Greenlees. We also show an analog for homotopical real bordism rings over elementary abelian $2$-groups. Our results generalize classical theorems of Quillen (1969) on the connection between non-equivariant bordism rings and formal group laws, and extend the case $A=C_2$ due to Hanke--Wiemeler (2018).

We work in the framework of global homotopy theory, which is essential for our proof. In addition to the statements for a fixed group $A$, we also prove a global algebraic universal property that characterizes the collection of all equivariant complex bordism rings simultaneously. We show that they form the universal contravariant functor from abelian compact Lie groups to commutative rings that is equipped with a coordinate; the coordinate is given by the universal Euler class at the circle group. More generally, the ring of $n$-fold cooperations of equivariant complex bordism is shown to be universal among functors equipped with a strict $n$-tuple of coordinates.
\\
\\
MSC: 57R85, 55N22, 55P91, 14L05
\end{abstract}	
	
\maketitle

\vspace{-0.5cm}

\section{Introduction}
In \cite{Qui69}, Quillen proved that the complex bordism ring $MU_*$ is isomorphic to the Lazard ring, and that the formal group law associated to the complex orientation of $MU$ is the universal one. This relationship between the algebra of formal group laws and the structure of the stable homotopy category has proved to be one of the main organizational principles for understanding the latter.
 For example, Quillen's theorem has been central for the construction of many new cohomology theories, the classification of thick subcategories of finite spectra \cite{HS98}, computations of homotopy groups of spheres via the Adams--Novikov spectral sequence \cite{Nov67} and the solution of the Kervaire invariant 1 problem \cite{HHR16}.

In view of the importance of Quillen's theorem, much work has been put into obtaining a similar understanding of equivariant bordism rings and their characterization in terms of formal group law data (see, e.g., \cite{tD70,BH72,Lof73,Com96,Kri99,CGK00, Gre01, Str01, Sin01, Han05,Str11, AK15,HW18,Uri18}); motivated for example by geometric questions about almost complex $G$-manifolds, the construction of equivariant higher chromatic $K$- and $E$-theory spectra or equivariant elliptic cohomology \cite{Lur18}, computations of $MU_*BG$ via the localization theorems of Greenlees--May \cite{GM97}, or for studying the global structure of the category of finite $G$-spectra \cite{BS17b,BHN^+17,BGH20}.

In the case where $G=A$ is an abelian compact Lie group, the notion of an $A$-equivariant formal group law was introduced in \cite{CGK00} and further studied in \cite{Gre01,Str11}. As in the non-equivariant case there exists a universal $A$-equivariant formal group law defined over an $A$-equivariant Lazard ring $L_A$; and a map
\[ L_A\to  \pi^A_*(MU_A)\]
classifying the Euler class of the tensor product of $A$-equivariant complex line bundles. Here, $MU_A$ is tom Dieck's homotopical $A$-equivariant complex bordism spectrum \cite{tD70}. In \cite{Gre,Gre01}, Greenlees showed that when $A$ is finite abelian, the map $L_A\to  \pi^A_*(MU_A)$ is surjective with Euler-torsion and infinitely Euler-divisible kernel, and conjectured that it is an isomorphism for all abelian compact Lie groups $A$. Despite this strong evidence, the only non-trivial group for which the conjecture has been proved to date is the cyclic group of order $2$, by work of Hanke--Wiemeler \cite{HW18} who made use of an explicit presentation of $\pi^{C_2}_*(MU_{C_2})$ due to Strickland \cite{Str01}. The main result of this paper is a proof of the conjecture in full~generality:
\begin{thmx} \label{thm:A} The map $L_A\to \pi^A_*(MU_A)$ is an isomorphism for every abelian compact Lie group $A$.
\end{thmx}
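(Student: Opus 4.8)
The plan is to exploit the framework of global homotopy theory, which—as the abstract stresses—is essential. The key observation is that $MU_A$ for varying $A$ assemble into a global spectrum $\mathbf{MU}$, and the collection of rings $A \mapsto \pi_*^A(MU_A)$ forms a contravariant functor on abelian compact Lie groups (with restrictions along injective homomorphisms). The target is to identify this functor with the one assembled from the equivariant Lazard rings $L_A$, and the strategy is to prove that \emph{both} sides are universal for the same algebraic structure. Thus the first step is to set up the relevant category: contravariant functors $F$ from abelian compact Lie groups to commutative rings, and the notion of a \emph{coordinate} on such a functor (a compatible system of Euler-type classes, one for each character, satisfying the natural relations coming from multiplicativity of characters). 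By the global-homotopy-theoretic input, $A \mapsto \pi_*^A(MU_A)$ carries a canonical such coordinate coming from the complex orientation, and one shows it is \emph{universal}: any functor-with-coordinate receives a unique map from it. Separately, one shows the Lazard-ring functor $A \mapsto L_A$ is also universal for functors-with-coordinate, essentially by unwinding the definition of $L_A$ from \cite{CGK00}. The map $L_A \to \pi_*^A(MU_A)$ is then the comparison of two universal objects, hence an isomorphism.

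The heart of the matter is the universality of $A \mapsto \pi_*^A(MU_A)$ among functors-with-coordinate, and this is where the real work lies. The proof should proceed by a gluing/induction argument over the lattice of subgroups, reducing to tori and to elementary pieces. The crucial geometric input is a concrete understanding of $\pi_*^A(MU_A)$ via tom Dieck's construction: one uses the family of representation spheres and the resulting filtration of $MU_A$, together with the splitting (or cofiber sequence) relating $\pi_*^A(MU_A)$ to lower-dimensional bordism via the Euler classes of characters. For a torus $A = \mathbb{T}^n$, the key is that the characters generate enough, and $MU_{\mathbb{T}^n}$ is built from $MU$-homology of classifying spaces in a way that matches the free construction; for finite abelian $A$ one combines the torus case with the localization/geometric-fixed-point decompositions. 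Greenlees's theorem (surjectivity with Euler-torsion, infinitely Euler-divisible kernel) reduces the isomorphism claim to injectivity after a completion or localization argument; but to get the clean universality statement without invoking that as a black box, one instead directly exhibits generators and relations for $\pi_*^A(MU_A)$ matching those of the free functor-with-coordinate. The technical engine for this is likely a careful analysis of the $\mathbf{MU}$-based global cooperations and the structure of $\mathbf{MU}_*\mathbf{MU}$ globally, which is where the ``$n$-fold cooperations'' / ``strict $n$-tuple of coordinates'' mentioned in the abstract enter.

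The main obstacle I anticipate is precisely proving that no further relations hold in $\pi_*^A(MU_A)$ beyond those forced by the coordinate structure—i.e., the injectivity/freeness half. Surjectivity of $L_A \to \pi_*^A(MU_A)$ is already known by Greenlees, so the entire content is injectivity, equivalently that $\pi_*^A(MU_A)$ is \emph{not too small}. The global framework helps here because restriction maps to tori and to products of cyclic groups are jointly faithful enough to detect elements, and over tori one has a clean computation (à la tom Dieck, relating $MU_{\mathbb{T}}$ to $MU^*(\mathbb{CP}^\infty)$ and thus to power series in the Euler class), so one bootstraps: verify the universal property on tori by hand, then propagate along the global functoriality to all abelian compact Lie groups using that every such $A$ embeds in a torus and that the relevant classes are compatible under such embeddings. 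The delicate point is controlling the interaction of the Euler classes at the ``boundary'' between the identity-component torus and the component group—this is exactly where the $C_2$ case of Hanke–Wiemeler was hard and required Strickland's explicit presentation, and the payoff of the global viewpoint is that it replaces that ad hoc presentation by a structural universal property that can be checked uniformly.
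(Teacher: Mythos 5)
Your general framework is correct: the paper does set up the notion of a coordinate on a contravariant functor from abelian compact Lie groups to rings (a ``global group law''), does show $\upi_*(\MU)$ carries such a coordinate, and does reduce \Cref{thm:A} to a universal-property statement. But the key idea of the paper's proof is essentially the \emph{opposite} of what you propose, and the step you identify as the ``heart of the matter'' is not what is done.

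You propose to show directly that $\upi_*(\MU)$ is the universal functor-with-coordinate, by ``exhibiting generators and relations for $\pi_*^A(MU_A)$,'' and you treat the universality of the Lazard side as formal (``by unwinding the definition of $L_A$''). The paper does the reverse, and for a good reason. There is no known presentation of $\pi_*^A(MU_A)$ for general $A$ (even $\pi_*^{\T}(MU_\T)$ is not a power series ring; Sinha's description is considerably more involved than ``$MU^*(\C P^\infty)$''), so exhibiting generators and relations is out of reach and not how the argument goes. Conversely, the Lazard side is not ``unwinding a definition'': showing that $\{L_A\}$ equipped with its Euler class is the universal global group law is itself the key technical insight, and it is nontrivial because the required regularity of Euler classes in $L_A$ (equivalently, absence of Euler-torsion) is exactly the obstruction that blocked Greenlees. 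The paper resolves this by constructing an adjunction $(-)^{\wedge}_A\colon GL_{gl}\rightleftarrows FGL_A\colon (-)_{gl}$ between global group laws and $A$-equivariant formal group laws (\Cref{prop:complexadjunction}); since a left adjoint sends the initial object to the initial object, the universal global group law $\wL$ must satisfy $\wL(A)\cong L_A$. This \emph{derives} the regularity of split Euler classes in $L_A$ formally, rather than proving it by hand — it is the central trick and is absent from your outline. A further step, also absent from your outline, is needed to get from split characters to all surjective characters at tori: this uses the flatness of $p$-localized pushforward along the $p$-th power map (\Cref{sec:lazardregular}), giving $2$-regularity of $\bL$.

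Once this is in place, the comparison $\bL\to\upi_*(\MU)$ is \emph{not} concluded by matching two universal properties. Instead the paper invokes Greenlees's geometric-fixed-point computation (\Cref{prop:mulazardgeom}) as the only topological input, and then deduces the isomorphism from the regularity statements on both sides together with an analysis of the classes $\psi_n\in \bL(\T)$ (analogues of cyclotomic polynomials, \Cref{prop:psi}) and the integral domain property of $L_{\T^r}$. Your outline does not mention geometric fixed points, the $\psi_n$, or the domain/regularity bookkeeping that makes the devissage from $\Phi^A$-isomorphisms to an honest isomorphism work. In short: you have the right ambient structure but are missing both of the proof's load-bearing ideas — the adjunction that identifies $\bL$ with the universal global group law (hence forces regularity), and the reduction to geometric fixed points via $\psi_n$-divisibility.
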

We obtain a similar result for homotopical real bordism $MO_A$ introduced in \cite{BH72}. Here the analog of the family of abelian compact Lie groups is that of elementary abelian $2$-groups, since those are the groups whose irreducible real representations are all $1$-dimensional (cf. \Cref{rem:whyabelian} below). Let $L^{2-\tor}_A$ be the ring carrying the universal $A$-equivariant $2$-torsion formal group law, i.e., the universal $A$-equivariant formal group law with trivial $2$-series. The universal real orientation of $MO_A$ induces an $A$-equivariant $2$-torsion formal group law over $\pi_*^A(MO_A)$, giving rise to a map $L^{2-\tor}_A\to \pi_*^A(MO_A)$.
\begin{thmx} \label{thm:B}
The map $L^{2-\tor}_A\to \pi_*^A(MO_A)$ is an isomorphism for every elementary abelian $2$-group $A$.	
\end{thmx}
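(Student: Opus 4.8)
The plan is to transport the proof of \Cref{thm:A} to the real setting, with Quillen's identification of $MO_*$ playing the role of his identification of $MU_*$. Write $\mathcal{E}$ for the family of elementary abelian $2$-groups. As in the complex case, $A\mapsto\pi^A_*(MO_A)$ is a contravariant functor from $\mathcal{E}$ to commutative $\F$-algebras via restriction along monomorphisms, and the universal real orientation of the global spectrum $\MO$ equips it with a real coordinate: each real character $\alpha\colon A\to C_2$ has an Euler class $e_\alpha\in\pi^A_{-1}(MO_A)$, compatible under restriction; the tensor product of real line bundles organizes the system $(e_\alpha)_\alpha$ into the divisor data of a formal group law $F$ over $\pi^e_*(MO_e)=MO_*$; and $F$ already has trivial $2$-series, since the Euler class of $\gamma^{\otimes 2}$ on $\mathbb{R}P^\infty$ vanishes. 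Unwinding the definition of \cite{CGK00}, such a compatible real coordinate on an $\F$-algebra-valued functor is exactly a natural family of $2$-torsion $A$-equivariant formal group laws, so $A\mapsto L^{2-\tor}_A$ is, essentially by construction, initial among such functors. The comparison map $L^{2-\tor}_A\to\pi^A_*(MO_A)$ is the induced morphism, and it suffices to prove that $A\mapsto\pi^A_*(MO_A)$ is \emph{also} initial.

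I would prove this by induction on the rank of $A$ — all groups in $\mathcal{E}$ being finite, this is a finite induction — with base case (rank $0$) Quillen's theorem that $MO_*$ is the Lazard ring of formal group laws with vanishing $2$-series. For the inductive step, fix a surjective character $\lambda\colon A\to C_2$ and use the global structure of $\MO$ to obtain a pullback (chromatic-fracture / isotropy-separation) square expressing $\pi^A_*(MO_A)$ in terms of the Euler-class localization $\pi^A_*(MO_A)[e_\lambda^{-1}]$, a geometric piece assembled from $\Phi^{C_2}\MO$, and their common further localization; both of the latter are, by the inductive hypothesis applied to groups of rank $<n$, controlled by the real bordism of such groups together with explicit polynomial-type extensions. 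Two inputs are needed: (a) the identification of $\Phi^{C_2}\MO$ as a (split) real-oriented global spectrum with computable equivariant homotopy — the real counterpart of the computation of $\Phi^{C_p}\MU$ used for \Cref{thm:A}; and (b) enough control of the restriction and localization maps out of $\pi^A_*(MO_A)$ for the square to be a genuine pullback, the cleanest form being freeness of $\pi^A_*(MO_A)$ as an $MO_*$-module on an explicit basis built from Euler classes and the coefficients of $F$. On the algebraic side, $L^{2-\tor}_A$ admits a matching pullback description, reflecting that a $2$-torsion $A$-equivariant formal group law, after inverting $e_\lambda$, is controlled by the same proper subquotients; comparing the two squares corner by corner, using the inductive hypothesis, yields the isomorphism for $A$.

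The step I expect to be the main obstacle is input (b) — establishing the $MO_*$-module freeness of $\pi^A_*(MO_A)$ (the real analog of the structural input behind \Cref{thm:A}, which is not formal) — together with the flatness needed for the comparison of the two pullback squares to collapse to the stated isomorphism, and the geometric-fixed-point computation (a), which must be carried out from scratch since it does not follow mechanically from the complex case. This is precisely where the global homotopy framework is indispensable: the restriction, inflation and Euler-localization maps that organize the induction belong to the structure of the global functor $\upi_*(\MO)$ and are meaningless for a single fixed group. A secondary point is surjectivity of $L^{2-\tor}_A\to\pi^A_*(MO_A)$ — the real analog of Greenlees's theorem, not recorded in the literature — but this should be obtainable either as a by-product of the universality argument or by adapting Greenlees's method. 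One could alternatively try to deduce the theorem from \Cref{thm:A} by realizing $\MO$ as the geometric fixed points of a global ``Real'' refinement of $\MU$ and tracking the effect on equivariant Lazard rings, but that would require developing Real global homotopy theory and is likely no shorter.
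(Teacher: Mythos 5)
Your proposal is, in essence, Greenlees's inductive isotropy-separation strategy, and it founders on exactly the point where that strategy is known to founder. The topological side of what you write is fine: the real orientation of $\MO$ does produce compatible $2$-torsion $A$-equivariant formal group laws, the comparison map is the one in the statement, the geometric fixed points $\Phi^A(\MO)$ are computable (tom Dieck, Firsching), and the structural control of $\pi_*^A(MO_A)$ comes from the cofiber sequence $(C_2)_+\to S^0\to S^{\sigma}$ rather than being the main obstacle. The gap is the sentence ``On the algebraic side, $L^{2-\tor}_A$ admits a matching pullback description.'' That is precisely what is not known a priori and cannot be assumed: to recover $L^{2-\tor}_A$ from its Euler-class localizations and the data attached to proper subquotients, one must know that $L^{2-\tor}_A$ contains no nonzero Euler-torsion (and infinitely Euler-divisible) elements, i.e.\ that the Euler classes $e_V$ for $V\neq 0$ are regular elements of $L^{2-\tor}_A$. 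Euler classes of arbitrary $A$-equivariant formal group laws are typically not regular, and no presentation of $L^{2-\tor}_A$ is available from which to read this off. Without it, the algebraic square need not be a pullback, and the corner-by-corner comparison only yields the surjectivity-type statements that Greenlees's method already gives in the complex case.

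The paper supplies exactly this missing regularity, and does so formally rather than by induction on the rank. One defines a global $2$-torsion group law to be an $\el$-algebra $X$ with a class $e\in X(C_2)$ for which the sequences $0\to X(A)\xr{e_V\cdot}X(A)\to X(\ker(V))\to 0$ are exact, constructs an adjunction $(-)^{\wedge}_A\colon GL^{2-\tor}_{gl}\rightleftarrows FGL^{2-\tor}_A\colon (-)_{gl}$, and observes that the left adjoint must carry the initial global $2$-torsion group law to the initial $A$-equivariant $2$-torsion formal group law. Hence $\{L^{2-\tor}_A\}_A$ \emph{is} the universal global $2$-torsion group law, and in particular all nontrivial Euler classes are regular in $L^{2-\tor}_A$. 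Once this is in hand no fracture square is needed: a short Whitehead-type lemma shows that a map of global $2$-torsion group laws which is an isomorphism on all geometric fixed points is an isomorphism, and the geometric fixed point comparison is the computation you cite in your input (a). So your topological inputs are essentially the right ones, but the argument you propose to wrap around them is missing its key algebraic ingredient, which is the actual content of the paper's method.
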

Again, the underlying non-equivariant theorem that $MO_*$ carries the universal $2$-torsion formal group law is due to Quillen \cite{Qui69}.

\subsection{Global formulation} Our main tool in proving Theorems \ref{thm:A} and \ref{thm:B} is the global structure of equivariant bordism, i.e.,  the fact that the collections of the various $MU_A$ and $MO_A$ assemble to global equivariant spectra $\MU$ and $\MO$ in the sense of Schwede \cite{Sch18}.  In addition to its essential role in the proofs of Theorems \ref{thm:A} and \ref{thm:B}, the global framework also gives rise to a universal property of the collection of equivariant bordism rings that is simpler to state than the one for a fixed group $A$.

To describe this universal property, we recall from \cite[Section 4.2]{Sch18} that the existence of a global complex bordism theory $\MU$ implies that the collection of homotopy groups $\{\pi_*^A(MU_A)\}_{A \text{ abelian}}$ has restriction maps along all group homomorphisms; it assembles to a contravariant functor from the category of abelian compact Lie groups to commutative rings. We call this structure an $Ab$-algebra, and write $\upi_*(\MU)$ for the graded $Ab$-algebra given by the coefficients of the various $MU_A$. Moreover, given an $Ab$-algebra $X$, we call an element $e\in X(\T)$ at the circle group $\T$ a \emph{coordinate} if for every torus $A$ and split surjective character $V\colon A\to \T$ the sequence
\[ 0\to X(A)\xr{V^*(e)\cdot} X(A) \xr{\res_{\ker(V)}^A} X(\ker(V)) \to 0 \]
is exact. Here, the first map denotes multiplication with the pullback of $e$ along $V$, and the second map is the restriction map to the kernel of $V$.

A complex orientation of a global ring spectrum $E$ gives rise to an Euler class $e_{\tau}\in \pi_{-2}^{\T}(E)$ for the tautological $1$-dimensional complex $\T$-representation $\tau$. The cofiber sequence of based $\T$-spaces
\[ \T_+\to S^0 \to S^{\tau} \]
(together with its pullbacks along split surjections $V\colon A\to \T$) implies that $e_{\tau}$ defines a coordinate on $\upi_*(E)$, see \Cref{sec:complextopological} for details. In particular, this applies to the universal global complex oriented theory $\MU$, and hence $\upi_*(\MU)$ carries a canonical coordinate $e_{\tau}$.

\begin{thmx} \label{thm:C} The pair $(\upi_*(\MU),e_{\tau})$ is the universal $Ab$-algebra with a coordinate, i.e., for any other $Ab$-algebra $X$ with coordinate $e\in X(\T)$ there exists a unique map of $Ab$-algebras $\upi_*(\MU)\to X$ sending $e_{\tau}$ to~$e$.
\end{thmx}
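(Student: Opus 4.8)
The plan is to reduce the universal property to Quillen's non-equivariant theorem together with the presentation of $\upi_*(\MU)$ that the global structure provides. First I would recall (or establish, via the global Thom space / transfer argument sketched in \Cref{sec:complextopological}) that $\upi_*(\MU)$ admits a completely algebraic description: it is built from the $\pi_*^A(MU_A)$ via the restriction maps, and the coordinate $e_\tau$ together with its pullbacks along characters generates everything in the following precise sense. For a torus $A$ of rank $n$ with coordinates $y_1,\dots,y_n$ coming from a basis of characters, the ring $\pi_*^A(MU_A)$ should be identified with the $n$-fold "cooperation-type" object over $MU_*$ — concretely, an iterated construction that at each stage adjoins a new coordinate to the previous formal group law, exactly the kind of thing governed by the universal property of a coordinate. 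This is the content one needs: that $\upi_*(\MU)$ is the value at $(\upi_*(\MU),e_\tau)$ of a functor on $Ab$-algebras-with-coordinate that is visibly initial.

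The key steps, in order, would be: (1) Construct the comparison map $\upi_*(\MU)\to X$. Given $(X,e)$, Quillen's theorem supplies a unique ring map $MU_*\cong L\to X(e)$ (the trivial group), classifying the formal group law on $X(e)$ determined by $e$; the coordinate hypothesis at split surjections $A\to\T$ then forces, inductively on the rank of a torus and then by restriction along $A\to T$ for $A\hookrightarrow T$ a closed subgroup of a torus, a unique extension to all of $\upi_*(\MU)$ — here one uses that every abelian compact Lie group embeds in a torus and that $\MU$'s functoriality is compatible with such embeddings. (2) Check this collection of maps is actually a map of $Ab$-algebras, i.e., commutes with all restrictions; this is where the coordinate axiom's exactness (not just surjectivity) does the work, pinning down the values uniquely so that compatibility is automatic. (3) Uniqueness: any map of $Ab$-algebras sending $e_\tau\mapsto e$ must agree with Quillen's classifying map at the trivial group, hence (by the generation statement) everywhere.

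The main obstacle I expect is step (1)'s inductive identification of $\pi_*^A(MU_A)$ for $A$ a torus as precisely the free-in-the-appropriate-sense object: one must show that the short exact sequences defining "coordinate" are not merely satisfied by $e_\tau$ but that they, together with Quillen's theorem for $MU_*$, \emph{force} the whole ring — i.e., that $\upi_*(\MU)$ is generated under restrictions and the $Ab$-algebra structure by $MU_*$ and $e_\tau$, with no further relations beyond those implied by being an $Ab$-algebra with a coordinate. This is exactly where the deep input — Theorem \ref{thm:A}, identifying $\pi_*^A(MU_A)$ with the equivariant Lazard ring $L_A$, which by its very construction \emph{is} built by iteratively adjoining coordinates — enters. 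So the honest summary is: Theorem \ref{thm:C} is the "global" repackaging of Theorem \ref{thm:A} plus Quillen, and the work is in matching the algebraic definition of a coordinate against the defining generators-and-relations of $L_A$ for tori and then descending to arbitrary closed subgroups; the torus case, and the passage from tori to general $A$ via embeddings, is the technical heart.
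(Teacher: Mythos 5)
Your proposal has two related gaps that track the actual difficulty of the theorem. First, the logical structure is inverted: you propose to feed Theorem \ref{thm:A} in as the ``deep input,'' but in the paper Theorems \ref{thm:A} and \ref{thm:C} are proved together, and the global-group-law formalism underlying Theorem \ref{thm:C} is precisely what makes Theorem \ref{thm:A} accessible. Even granting Theorem \ref{thm:A}, you would still need to know that $(\bL,e_\tau)$, i.e.\ the collection of equivariant Lazard rings with its universal Euler class, is the universal $Ab$-algebra with a coordinate (\Cref{cor:lazardisuniversal}); your plan to establish this by ``matching the algebraic definition of a coordinate against the defining generators-and-relations of $L_A$'' cannot be carried out, because no usable presentation of $L_A$ is known --- the paper emphasizes in \Cref{sec:outline} that controlling Euler-torsion in $L_A$ is exactly the obstruction that blocked earlier attempts. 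The paper's route around this is the adjunction $(-)^{\wedge}_A\colon GL_{gl}\rightleftarrows FGL_A\colon (-)_{gl}$ of \Cref{prop:complexadjunction}: the universal global group law $\widetilde{\bL}$ exists formally by a localization argument (\Cref{prop:reflexiveglobal}), and since left adjoints preserve initial objects, $\widetilde{\bL}^{\wedge}_A$ is the universal $A$-equivariant formal group law, whence $\widetilde{\bL}(A)\cong L_A$. That single categorical observation is the idea missing from your sketch.

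Second, your inductive construction of the comparison map $\upi_*(\MU)\to X$ does not go through. The coordinate axiom gives exactness of $0\to X(A)\xr{e_V\cdot}X(A)\to X(\ker V)\to 0$, which says $e_V$ is regular with prescribed quotient; it does \emph{not} determine $X(A)$ from $X(\ker V)$, because global group laws are deliberately not complete at Euler classes --- only the completion $X^{\wedge}_A$ is pinned down by such data, and that completion is the associated equivariant formal group law, a strictly weaker invariant than $X(A)$ itself. So a rank induction cannot ``force a unique extension'' of the map to all of $\upi_*(\MU)$, nor is compatibility with restrictions automatic. In the paper the map $\bL\to\upi_*(\MU)$ exists for free (both are global group laws and $\bL$ is initial), and the substance is showing it is an isomorphism: this needs the $2$-regularity of $\bL$ at non-split characters via the $p$-th power argument of \Cref{sec:lazardregular}, Greenlees' geometric fixed point comparison (\Cref{prop:mulazardgeom}), and the divisibility argument with the cyclotomic-type elements $\psi_n$ (\Cref{prop:psi}, \Cref{lem:psi}). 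None of these steps appears in your outline; moreover the passage from tori to general abelian $A$ is the formal part (left Kan extension, \Cref{sec:leftkan}), not the technical heart.
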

We propose a pair consisting of an $Ab$-algebra $X$ and a coordinate $e\in X(\T)$ as a definition for a \emph{global group law}, where we leave out the word `formal' since the rings involved are not necessarily complete. Hence with this definition, $(\upi_*(\MU),e_\tau)$ is the universal global group law. To be precise, $(\upi_*(\MU),e_\tau)$ is more naturally the universal \emph{graded} global group law, while the $0$-th homotopy ring $\upi_0(\MUP)$ of periodic global complex bordism with $e_{\tau}$ shifted to degree $0$ is the universal ungraded global group law, see Sections \ref{sec:aeqgrading} and \ref{sec:gradings} for a discussion of gradings. Every global group law $(X,e)$ has an associated $A$-equivariant formal group law $X^{\wedge}_A$ over $X(A)$ for all abelian compact Lie groups $A$, and these are suitably compatible under group homomorphisms (\Cref{sec:complexadjunction}). Global group laws can be interpreted as an uncompleted version of ordinary formal group laws and are related to $1$-dimensional linear algebraic groups and (not necessarily connected) $p$-divisible groups. We refer to \Cref{sec:examples} for a list of examples.

In fact, the coordinate of $\upi_*(\MU)$ has even stronger regularity properties than those required of a global group law: The sequence
\[ 0\to \pi_*^A(MU_A)\xr{V^*(e)\cdot} \pi_*^A(MU_A) \xr{\res_{\ker(V)}^A} \pi_*^{\ker(V)}(MU_{\ker(V)}) \to 0 \]
is exact for every abelian compact Lie group $A$ and surjective character $V\colon A\to \T$, not necessarily split. In terms of global group laws, we say that $\upi_*(\MU)$ is \emph{regular}, see Sections \ref{sec:regularfgl} and \ref{sec:complextopological}. These exact sequences played a central role in Sinha's work \cite{Sin01}, where he gave a description of $\pi_*^{\T}(MU_\T)$ in terms of chosen sections of the restriction maps $\pi_*^{\T}(MU_\T)\to \pi_*^{C_n}(MU_{C_n})$. \Cref{thm:C} shows that when interpreted in the global equivariant setting, these exact sequences characterize the homotopy groups of~$\MU$.

We also obtain a similar statement for $\upi_*(\MO)$, replacing the family of abelian compact Lie groups by the family of elementary abelian $2$-groups $\el$, and the circle $\T$ by the cyclic group $C_2$: Let $X$ be an $\el$-algebra, i.e., a contravariant functor from $\el$ to commutative rings. A \emph{coordinate} on $X$ is an element $e\in X(C_2)$ such that for every surjective character $V:A\to C_2$ the sequence
\[ 0\to X(A)\xr{V^*(e)\cdot} X(A) \xr{\res_{\ker(\alpha)}^A} X(\ker(\alpha)) \to 0 \]
	is exact. (Note that for elementary abelian $2$-groups there is no difference between surjective and split surjective.) Again, the real orientation of a real oriented global ring spectrum $E$ (such as $\MO$) gives rise to a universal Euler class $e_{\sigma}\in \pi_{-1}^{C_2}(E)$ for the real sign-representation $\sigma$ of $C_2$. The $C_2$-cofiber sequence
	\[ (C_2)_+\to S^0 \to S^{\sigma}\]
implies that it defines a coordinate on $\upi_*(E)$. We then have:
\begin{thmx} \label{thm:D} The pair $(\upi_*(\MO),e_{\sigma})$ is universal among $\el$-algebras equipped with a coordinate.
\end{thmx}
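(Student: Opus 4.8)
The plan is to run the proof of \Cref{thm:C} under the dictionary that replaces tori by elementary abelian $2$-groups, the circle $\T$ by $C_2$, the complex orientation and its Euler class $e_\tau$ by the real orientation and $e_\sigma$, and \Cref{thm:A} by \Cref{thm:B}. Two points make the real case behave at least as well as the complex one. First, over an elementary abelian $2$-group every irreducible real representation is $1$-dimensional (\Cref{rem:whyabelian}), so the single Euler class $e_\sigma$ at $C_2$ already sees all the representation theory the argument needs. Second, although the definition of a coordinate on an $\el$-algebra carries no explicit torsion clause, applying the exactness axiom to the identity character $C_2\to C_2$ forces the restriction of the coordinate $e$ to the trivial group to vanish, and this is exactly the input that makes the associated formal group law $2$-torsion (and $X$ of characteristic $2$); so the hypothesis is no weaker than the corresponding one in \Cref{thm:C}.

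To construct the classifying map, let $(X,e)$ be an $\el$-algebra with coordinate. As in the complex case one first builds, naturally in the elementary abelian $2$-group $A$, an $A$-equivariant $2$-torsion formal group law over $X(A)$ --- the real analogue of the associated formal group law of a global group law treated in \Cref{sec:complexadjunction}. The Euler classes $V^*(e)$ of the nontrivial characters $V\colon A\to C_2$ furnish the coordinates of the nontrivial lines, the relations $e_{V\cdot W}=F(e_V,e_W)$ among them encode the group law, the exactness axiom provides the regularity the equivariant structure demands, and the vanishing of the restriction of $e$ to the trivial group provides the $2$-torsion. Feeding this into the universal property of the $A$-equivariant $2$-torsion Lazard ring $L^{2-\tor}_A$ produces a ring map $L^{2-\tor}_A\to X(A)$ compatible with restriction along group homomorphisms, hence by \Cref{thm:B} an $\el$-algebra map $\upi_*(\MO)\to X$, which by construction sends $e_\sigma$ to $e$.

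For uniqueness I would proceed in three steps. (i) By induction on rank, using the regularity short exact sequences $0\to\pi_*^A(MO_A)\xrightarrow{V^*(e_\sigma)}\pi_*^A(MO_A)\xrightarrow{\res}\pi_*^{\ker V}(MO_{\ker V})\to 0$ together with the fact that every character of a subgroup of $A$ extends to $A$, one shows that $\pi_*^A(MO_A)$ is generated as a ring by the image of $MO_*$ under inflation together with the Euler classes $V^*(e_\sigma)$. (ii) Any $\el$-algebra map $\phi\colon\upi_*(\MO)\to X$ with $\phi(e_\sigma)=e$ is determined on all these Euler classes, since functoriality gives $\phi(V^*(e_\sigma))=V^*(e)$. (iii) It is also determined on $MO_*$: by \Cref{thm:B} at the trivial group ($=$ Quillen's theorem) $MO_*$ is generated by the coefficients $a_{ij}$ of the universal $2$-torsion formal group law, and inside $\pi_*^{(\mathbb{F}_2)^2}(MO_{(\mathbb{F}_2)^2})$ one recovers each $a_{ij}$ from the single identity $e_{V_1\cdot V_2}=F(e_{V_1},e_{V_2})$ (for two distinct nontrivial characters $V_1,V_2$) by successively dividing by the non-zero-divisor $e_{V_1}e_{V_2}$ and restricting modulo $e_{V_1}$ or $e_{V_2}$; these operations all commute with $\phi$, whose values on $e_{V_1}$, $e_{V_2}$, $e_{V_1\cdot V_2}$ are already fixed by (ii). Hence $\phi$ is unique.

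The main obstacle is the construction in the second paragraph: producing the associated $A$-equivariant $2$-torsion formal group law for all $A$ simultaneously and compatibly out of nothing but the exactness axiom. This is the point at which that axiom is used in full force, mirroring the analysis of \Cref{sec:complexadjunction} in the complex case, and it is where the real-specific facts --- one-dimensionality of the irreducibles and the automatic vanishing of the $2$-series --- enter. Everything downstream (the identification of $\upi_*(\MO)$ with $\{L^{2-\tor}_A\}_A$ via \Cref{thm:B}, and the uniqueness bookkeeping above) is then formal, given that the regular-sequence structure used in steps (i) and (iii) is supplied directly by the coordinate axiom.
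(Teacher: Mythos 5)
Your existence argument follows the paper's approach in spirit: the paper constructs the $A$-equivariant $2$-torsion formal group law $X^{\wedge}_A$ from a global $2$-torsion group law $X$ (Sections \ref{sec:realadjunction1}--\ref{sec:realadjunction3}), organizes this into an adjunction $(-)^{\wedge}_A \dashv (-)_{gl}$, and deduces that the universal global $2$-torsion group law $\wLt$ (which exists by \Cref{lem:categorical} applied to $\Fe$-algebras) satisfies $\wLt(A)\cong L^{2-\tor}_A$. Combined with Theorem B this gives Theorem D. You have correctly identified that building $X^{\wedge}_A$ compatibly in $A$ is the crux, and you correctly observe that the exactness axiom at the identity character forces $\res^{C_2}_1(e)=0$, which is how the $2$-torsion and characteristic-$2$ conditions enter without being assumed. (Do note that you should also verify the compatibility of the classifying maps $L^{2-\tor}_A\to X(A)$ with restriction along group homomorphisms, which is exactly what the adjunction formalism in \Cref{prop:realadjunction} makes automatic.)

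However, your uniqueness argument breaks in step (i). It is \emph{not} true that $\pi_*^A(MO_A)$ is generated as a ring by inflations of $MO_*$ and the Euler classes $V^*(e_\sigma)$. Already for $A=C_2$, inverting $e_\sigma$ gives the geometric fixed points
\[
\Phi^{C_2}_*(MO_{C_2}) \cong MO_*[b_i^\sigma, e_\sigma^{\pm 1}\ |\ i\in\N_{>0}],
\]
which is a polynomial ring over $MO_*[e_\sigma^{\pm 1}]$ on infinitely many further generators $b_i^\sigma$. Since $e_\sigma$ is a regular element, $\pi_*^{C_2}(MO_{C_2})$ injects into $\Phi^{C_2}_*(MO_{C_2})$, so if $\pi_*^{C_2}(MO_{C_2})$ were generated by $MO_*$ and $e_\sigma$ its localization would lie in $MO_*[e_\sigma^{\pm 1}]$, which it does not. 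The splitting of the regularity short exact sequences gives module decompositions $\pi_*^A\cong \pi_*^{\ker V}\oplus e_V\pi_*^A$, but this does not bound the ring generators, and the inductive step you sketch does not close up. As a result, steps (ii) and (iii) only pin down $\phi$ on a proper subring, and uniqueness is not established.

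The paper gets uniqueness for free from the categorical framework: once $\upi_*(\MO)$ is identified with the initial object $\wLt$ of the reflective subcategory of global $2$-torsion group laws, the uniqueness of the classifying map is automatic. If you want to stay closer to your setup, a working substitute is: use \Cref{prop:realcompletion} to identify $(\pi_*^{A\times C_2}(MO))^{\wedge}_A$ with the Hopf algebra of the universal $A$-equivariant $2$-torsion formal group law, then observe that any $\el$-algebra map $\phi$ with $\phi(e_\sigma)=e$ induces, on the degree-$(A\times C_2)$ component and after completion, a map of $A$-equivariant formal group laws showing $\phi(A)$ classifies $X^{\wedge}_A$; the universal property of $L^{2-\tor}_A\cong\pi_*^A(MO_A)$ then forces $\phi(A)$ to be the canonical map. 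That replaces the failed generation argument by the universal property, which is ultimately the same mechanism as the paper's adjunction.
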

In light of this, we view a pair of an $\el$-algebra and a coordinate as a global version of a $2$-torsion formal group law, which is again not necessarily complete. Every global $2$-torsion group law $(X,e)$ has an associated family of compatible $A$-equivariant $2$-torsion formal group laws $X^{\wedge}_A$, see \Cref{sec:realadjunction1}.

Finally, we also give a description of the $n$-fold cooperations $\upi_*(\MU^{\wedge n})$, which are relevant for setting up an Adams-Novikov spectral sequence in this framework. We say that an $n$-tuple of coordinates $(e^{(1)},\hdots,e^{(n)})$ of an $Ab$-algebra $X$ is \emph{strict} if each $e^{(i)}$ is of the form $\lambda_i e^{(1)}$ for a unit $\lambda_i\in X(\T)$ which restricts to $1$ at the trivial group. The inclusion maps $i_1,\hdots,i_n\colon \MU\to \MU^{\wedge n}$ give rise to a strict $n$-tuple $(e_{\tau}^{(1)},\hdots,e_{\tau}^{(n)})$ of coordinates for $\upi_*(\MU)$. We show the following:
\begin{thmx} \label{thm:E}
The tuple $(\upi_*(\MU^{\wedge n});e_{\tau}^{(1)},\hdots,e_{\tau}^{(n)})$ is universal among $Ab$-algebras equipped with a strict $n$-tuple of coordinates.
\end{thmx}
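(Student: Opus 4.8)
The plan is to prove \Cref{thm:E} by induction on $n$, the base case $n=1$ being exactly \Cref{thm:C}. Before starting I would check that the statement is well posed, i.e.\ that $(e_\tau^{(1)},\dots,e_\tau^{(n)})$ really is a strict $n$-tuple of coordinates on $\upi_*(\MU^{\wedge n})$. Each $e_\tau^{(i)}$ is the Euler class of $\tau$ for the complex orientation of $\MU^{\wedge n}$ pushed forward along $i_i\colon\MU\to\MU^{\wedge n}$, hence a coordinate by the cofiber-sequence argument preceding \Cref{thm:C}. For strictness one first notes that any coordinate $e$ on an $Ab$-algebra restricts to $0$ at the trivial group: taking $A=\T$ and $V=\id$ in the defining exact sequence shows $e=e\cdot 1$ lies in $\ker(\res_1^\T)$. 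Hence each $e_\tau^{(i)}$ lies in the ideal $(e_\tau^{(1)})$, and since $e_\tau^{(1)}$ is a non-zero-divisor we may write $e_\tau^{(i)}=\lambda_i\,e_\tau^{(1)}$ with $\lambda_i$ uniquely determined; the symmetric argument makes $\lambda_i$ a unit, and the fact that the Euler classes of $\tau$ for any two complex orientations have the same linear term, hence differ by a multiple of $(e_\tau^{(1)})^2$, forces $\lambda_i\equiv 1$ modulo $(e_\tau^{(1)})$, i.e.\ $\res_1^\T(\lambda_i)=1$.

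For the inductive step I would write $\MU^{\wedge(n+1)}=\MU^{\wedge n}\wedge\MU$ and let $j\colon\upi_*(\MU^{\wedge n})\to\upi_*(\MU^{\wedge(n+1)})$ be induced by smashing with the unit of the last factor, so that the first $n$ tautological coordinates downstairs are the $j$-images of those upstairs and $e_\tau^{(n+1)}=\lambda_{n+1}\cdot j(e_\tau^{(1)})$ for a unit $\lambda_{n+1}$ restricting to $1$. Given an $Ab$-algebra $X$ with a strict $(n+1)$-tuple $(e^{(1)},\dots,e^{(n+1)})$, the first $n$ entries form a strict $n$-tuple, so induction supplies a unique $\phi\colon\upi_*(\MU^{\wedge n})\to X$ realizing them; what remains is to extend $\phi$ uniquely over $e_\tau^{(n+1)}$, where it must hit $e^{(n+1)}=\lambda\cdot\phi(e_\tau^{(1)})$ for the prescribed unit $\lambda$ with $\res_1^\T(\lambda)=1$. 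This is precisely a \emph{relative} form of \Cref{thm:C}: for a complex-oriented global ring spectrum $E$ with coordinate $e$, the $Ab$-algebra $\upi_*(E\wedge\MU)$ under $\upi_*(E)$, together with its second coordinate, should be universal among $Ab$-algebras under $\upi_*(E)$ (via a map carrying $e$ to a coordinate) that are equipped with a second coordinate strictly equivalent to the image of $e$; equivalently $\upi_*(E\wedge\MU)$ corepresents the functor $Y\mapsto\{\lambda\in Y(\T)^{\times}\mid\res_1^\T(\lambda)=1\}$ on such $Y$.

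I would obtain the relative statement in two steps. First, a Künneth-type base change $\upi_*(E\wedge\MU)\cong\upi_*(E)\otimes_{\upi_*(\MU)}\upi_*(\MU\wedge\MU)$, which holds because $\MU\wedge\MU$ is a free global $\MU$-module on generators in even degrees — the global upgrade of the classical fact that $MU_*MU=MU_*[b_1,b_2,\dots]$ is polynomial over $MU_*$ — so that smashing with it over $\MU$ and applying $\upi_*$ just extends scalars along a polynomial-type map; this reduces everything to the case $E=\MU$. Second, the identification of $\upi_*(\MU\wedge\MU)$ over $\upi_*(\MU)$ with that corepresenting object, which I would prove by the method of \Cref{thm:C} itself: feed Quillen's description of $MU_*MU$ as the ring of cooperations (the corepresenting ring for a formal group law together with a strict isomorphism) into the Euler-class exact sequences for the complex-oriented spectrum $\MU\wedge\MU$, building up $\pi_*^A(MU_A\wedge MU_A)$ from $\pi_*^A(MU_A)$ one abelian compact Lie group at a time and matching it at each stage against the target.

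The step I expect to be the real obstacle is this base-change/relative analysis of $\upi_*(\MU^{\wedge n})$ — controlling how the extra copies of $\MU$ interact with the equivariant Euler-class filtration and ensuring that adjoining further complex orientations produces no equivariant relations beyond those forced by the strict-tuple structure. Non-equivariantly this is transparent, since $MU_*MU^{\wedge(n-1)}$ is a visibly polynomial extension of $MU_*$; it is the equivariant refinement, and the freeness of $\MU\wedge\MU$ over $\MU$ that underlies it, where the global framework is essential. By comparison, the bookkeeping of the $n$ orientations and the normalization packaged into the word \emph{strict} is routine once the case $n=2$ is in hand.
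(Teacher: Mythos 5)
Your proposal takes a genuinely different route from the paper, and it has a real gap where you expect one. Let me compare.

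The paper does not argue by induction on $n$ and never establishes a global K\"unneth isomorphism. Instead it proves all $n$ simultaneously by the same geometric-fixed-points strategy used for \Cref{thm:C}: it constructs the universal object $\bL^{(n)}$ by applying the reflection of \Cref{prop:reflexiveglobal} to $(\Ze)[\lambda_2^{\pm 1},\dots,\lambda_n^{\pm 1}]/(\res_1^{\T}(\lambda_i)-1)$, identifies $\bL^{(n)}(A)$ via \Cref{prop:complexadjunction} as the representing ring for an $A$-equivariant formal group law with a strict $n$-tuple of coordinates, and observes that $\bL^{(n)}(A)$ is flat over $L_A$ (adjoin the coefficients of the $\lambda_i$ with respect to a flag and invert the relevant Euler classes). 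It then shows $\Phi^A(\alpha^{(n)})$ is an isomorphism using \Cref{lem:strictiso} together with the fact that $\Phi^A(\MU)$ is a wedge of shifts of non-equivariant $MU$ (tom Dieck, \cite{Sin01}) and hence Landweber exact; evenness of $\upi_*(\MU^{\wedge n})$ is imported from \cite{CGK02}, and then the abstracted Whitehead argument of \Cref{rem:generalize} finishes.

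The step in your plan that does not go through as stated is the K\"unneth base change. You assert that $\MU\wedge\MU$ is a free global $\MU$-module on even-degree generators as ``the global upgrade of the classical fact that $MU_*MU$ is polynomial,'' but no such statement is available off the shelf, and it is not proved anywhere in this paper. It is in fact essentially equivalent to the $n=2$ case of \Cref{thm:E}: once you know $\upi_*(\MU^{\wedge 2})$ has the claimed universal property, you can show it is (locally) polynomial over $\upi_*(\MU)$ and manufacture the splitting; conversely, without that identification you have no handle on the equivariant module structure of $\MU\wedge\MU$. So the K\"unneth input to your inductive step must be \emph{derived from} your base case, not assumed as known, and your ``step 2'' (the $n=2$ case via the method of \Cref{thm:C}) already requires the full geometric-fixed-points machinery. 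At that point the induction buys you nothing: the paper's argument runs for all $n$ at once with no extra cost. Where the paper does use a splitting, it is only the classical non-equivariant splitting of $\Phi^A(\MU)$ into copies of $MU$ — a much weaker and independently known fact — precisely to avoid needing the global $\MU$-module splitting you invoke. Your reduction would become correct if reorganized as: prove $n=2$ directly, extract the flatness/splitting of $\MU\wedge\MU$ over $\MU$ as a corollary, then K\"unneth and induct; but presented as written, with the global freeness taken as known, there is a gap.
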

There is also a similar interpretation of $\pi_*^A(MU_A^{\wedge n})$ at every fixed group $A$ in terms of strict isomorphisms of $A$-equivariant formal group laws, see \Cref{sec:cooperations}.

\begin{Remark} Due to the failure of generic equivariant transversality, the homotopical bordism ring $\pi_*^A(MU_A)$ does not agree with the geometrically defined complex bordism ring $\Omega^A_*$ for any non-trivial $A$. However, there is a Thom-Pontrjagin map $\Omega^A_*\to \pi_*^A(MU_A)$ which was shown to be injective (in the abelian case considered here) by tom Dieck \cite{tD70}, L\"{o}ffler \cite{Lof73} and Comeza\~{n}a \cite{Com96}. The ring $\pi_*^A(MU_A)$ can also be described as a stabilization of geometric $A$-equivariant bordism, see \cite{BH72}.
	
When $A$ is a torus, Hanke \cite{Han05} proved a more precise relationship between $\Omega^A_*$ and $\pi_*^A(MU_A)$ in terms of a pullback square involving certain classes $Y_{V,d}$. It is straightforward to express these classes in terms of formal group law data, see \Cref{rem:hankeclasses} for the real analog. Hence, combined with Hanke's result, the results of this paper also give an algebraic description of the geometric bordism rings $\Omega^A_*$ for tori, though this is not entirely satisfactory since $\pi_*^A(MU_A)$ has a universal property for maps out of rather than into it.

The same discussion applies to the geometric real bordism ring for elementary abelian $2$-groups, for which the analog of Hanke's result has been obtained by Firsching \cite{Fir13}.
\end{Remark}

\subsection{Outline of the proof} \label{sec:outline} We first quickly recall Greenlees' method in his proof of the surjectivity of $L_A\to \pi_*^A(MU_A)$ for finite $A$: He proceeds by induction on the order of $A$ via isotropy separation, using that both $L_A$ and $\pi_*^A(MU_A)$ simplify after inverting or completing at certain collections of Euler classes; the precise description involves local homology. The reason his method does not lead to a full proof of the isomorphism is that it is a priori unclear that $L_A$ can be recovered from the various localizations and completions considered. In particular, elements that are infinitely Euler-divisible and Euler-torsion are taken to $0$ under all these constructions. For topological reasons, one knows that such elements do not exist in $\pi_*^A(MU_A)$. This non-existence is difficult to see directly for $L_A$, for which no simple presentation is known. Note that this stands in contrast to the non-equivariant Lazard ring $L$, which is a polynomial algebra over~$\Z$ by Lazard's theorem \cite{Laz55}.

Similar to Greenlees' work, our strategy is to reduce Theorems \ref{thm:A} and \ref{thm:B} to the already known statements that the maps $L_A\to \pi_*^A(MU_A)$ and $L_A^{2-\tor}\to \pi_*^A(MO_A)$ are isomorphisms after inverting all non-trivial Euler classes. In order to be able to make this reduction, we derive the regularity of Euler classes in Lazard rings (at tori in the complex case, and elementary abelian $2$-groups in the real case) essentially formally via global homotopy theory.
The key definition is that of a global group law given above, which is inspired by the cofiber sequence $\T_+\to S^0\to S^{\tau}$ from topology. It is based on the observation that all the $A$-equivariant formal group laws associated to a complex oriented global spectrum $E$ are encoded in the single Euler class $e\in \pi_{-2}^{\T}(E)$ and the global functoriality of $\upi_*E$, using only that the sequences
\[ 0\to \pi_*^A(E)\xr{V^*(e)\cdot } \pi_*^A(E) \xr{\res^A_{\ker(V)}} \pi_*^{\ker(V)}(E)\to 0 \]
are exact, for all split characters $V\in A^*$. This construction carries over to algebra, showing that every global group law $X$ has an associated $A$-equivariant formal group law $X^{\wedge}_A$ over $X(A)$, defined as a certain completion of the ring $X(A\times \T)$. The $A$-equivariant formal group laws that arise this way are special, since they inherit some of the structure of~$X$. In particular, all Euler classes $e_V$ for split characters $V$ of $A$ are regular elements. Since controlling the torsion of Euler classes in Lazard rings is our main concern, it is hence desirable to show that the collection $\{L_A\}_{A\text{ abelian}}$ assembles to such a global group law. Instead of trying to prove this directly, which would merely be a reformulation of the question we started with, we turn the problem around: We consider the universal global group law $\widetilde{\bL}$ and show that its evaluation on $A$ is isomorphic to $L_A$, or rather that the $A$-equivariant formal group law $\widetilde{\bL}^{\wedge}_A$ is the universal one. This is done by showing that the functor $(-)^{\wedge}_A$ has a right adjoint, which implies that it must send universal objects to universal objects, as desired. Hence, the collection of the Lazard rings $L_A$ even describes the universal global group law. In particular all split Euler classes are regular elements.

In the real bordism case, all non-trivial characters are split, so this argument allows us to prove regularity for all non-trivial Euler classes in $L_A^{2-\tor}$, and \Cref{thm:B} follows in a straightforward manner. In the complex case we need additional arguments to extend the regularity of split Euler classes to the regularity of all non-trivial Euler classes at tori. This is achieved by studying the effect of the $p$-th power map of $\T^r$ on a suitable localization of the Lazard ring  $L_{\T^r}$, see \Cref{sec:lazardregular}.

\begin{Remark} \label{rem:whyabelian} We say a few words on why we restrict to abelian rather than all compact Lie groups. The main reason is the splitting principle. It allows to reduce the theory of complex orientations to the behaviour of line bundles and their tensor product, giving the link to $1$-dimensional formal group laws. For non-abelian~$G$ there are irreducible $G$-representations of dimension larger than $1$, so that this reduction is no longer possible. We are unaware even of a definition of a $G$-equivariant formal group law for non-abelian~$G$, though some steps are taken in \cite[Part 3]{Gre01}. It seems likely that the global point of view can also shed light on this.
	
Another reason to restrict to abelian groups is that we rely on the theorems of \cite{Lof73,Com96} that $\pi_*^A(MU_A)$ is concentrated in even degrees. Whether this is also the case for non-abelian $G$ is an open problem. See \cite{Uri18} for a survey on the analog of this question for the geometric bordism rings $\Omega_*^ G$.
\end{Remark}

\textbf{Organization}: We first discuss the simpler case of real bordism and the proof of Theorems \ref{thm:B} and \ref{thm:D} in \Cref{sec:real}, after we have recalled the necessary background on equivariant formal group laws and global homotopy theory in \Cref{sec:equivariant} and \Cref{sec:globalhomotopy}. Finally, \Cref{sec:complex} discusses complex bordism and the proof of Theorems \ref{thm:A}, \ref{thm:C} and \ref{thm:E}.

\subsection{Acknowledgements} I thank Bernhard Hanke, Lennart Meier and Stefan Schwede for helpful comments on a preliminary version of this paper, and two anonymous referees for their careful reading and the many useful suggestions. This research was supported by the Hausdorff Center for Mathematics at the University of Bonn (DFG GZ 2047/1, project ID 390685813) and by the Danish National Research Foundation through the Centre for Symmetry and Deformation at the University of Copenhagen (DNRF92).


\section{Equivariant formal group laws} \label{sec:equivariant}

In this first section we recall the notion of an $A$-equivariant formal group law as defined in \cite{CGK00} and list some of its properties.

\subsection{Definition and basic properties}  \label{sec:definitions}
Let $A$ be an abelian compact Lie group. We denote by $A^*=\Hom(A,\T)$ its group of characters, with trivial character $\epsilon$. We often identify a character with its associated $1$-dimensional complex representation. Given a commutative ring $k$, we view its ring of functions $k^{A^*}$ as a topological ring via the product topology. In addition to its ring structure, $k^{A^*}$ carries the `dual group ring' coproduct
\[ \Delta\colon k^{A^*}\to k^{A^*\times A^*}\cong k^{A^*}\hotimes k^{A^*} \]
which sends a function $f\colon A^*\to k$ to the function $\Delta(f)\colon A^*\times A^*\to k$ mapping $(V,W)$ to $f(VW)\in k$. Here and throughout the paper, we write $VW$ for the product of $V$ and $W$ in the group $A^*$ (i.e., the tensor product of $V$ and $W$ when thought of as complex representations). Cole--Greenlees--Kriz \cite[Definition 11.1]{CGK00} define the following:

\begin{Def} An $A$-equivariant formal group law $F$ is a quintuple $(k,R, \Delta, \theta, y(\epsilon))$ of a commutative ring~$k$,  a commutative complete topological $k$-algebra $R$, a continuous comultiplication $\Delta\colon R\to R\hotimes R$, a map of topological $k$-algebras $\theta\colon R\to k^{A^*}$ and an orientation $y(\epsilon)\in R$, such that the following hold:
\begin{enumerate}
	\item The comultiplication is a map of $k$-algebras, cocommutative, coassociative and counital for the augmentation $\theta(\epsilon)\colon R\to k$.
	\item The map $\theta$ is compatible with the coproducts, and the topology on $R$ is generated by finite products of the kernels of the component functions $\theta(V)\colon R\to k$ for $V\in A^*$.
	\item The element $y(\epsilon)$ is regular and generates the kernel of $\theta(\epsilon)$.
\end{enumerate}
\end{Def}
\begin{DefNot}
Given an $A$-equivariant formal group law, we obtain an $A^*$-action $l$ on $R$ via the formula
\[ l_V\colon R\xr{\Delta} R\hotimes R \xr{\theta(V)\hotimes \id_R}R, \]
for $V\in A^*$.
\end{DefNot}
By the compatibility of $\Delta$ and $\theta$, we have $\theta(V)=\theta(\epsilon)\circ l_V$, hence the $A^*$-action together with the augmentation $\theta(\epsilon)$ determine the map $\theta$. The condition that $\theta$ is compatible with the comultiplication is then equivalent to demanding that $\Delta$ is $(A^*\times A^*)$-equivariant, where $A^*\times A^*$ acts through the tensor factors on $R\hotimes R$ and via the multiplication map $A^*\times A^*\to A^*$ on $R$.

\begin{DefNot} For $V\in A^*$, we write $y(V)$ for the element $l_V(y(\epsilon))$, which by the above discussion generates the ideal $I_{V^{-1}}=\ker(\theta(V^{-1}))$. Finally we define \emph{Euler classes} $e_V\in k$ via
\[ e_V=\theta(\epsilon)(y(V))=\theta(V)(y(\epsilon)). \]
\end{DefNot}
Note that $e_\epsilon=0$, since $y(\epsilon)$ lies in the kernel of $\theta(\epsilon)$ by definition. Furthermore, we have $\theta(V)(y(W))=e_{VW}$ for a pair $V,W\in A^*$.

\begin{Remark} We use a slighty different convention than \cite{CGK00} for the definition of $l_V$, $y(V)$ and $e_V$. What we call $l_V$ would be $l_{V^{-1}}$ in \cite{CGK00}, and consequently our $y(V)$ and $e_V$ correspond to their $y(V^{-1})$ and $e_{V^{-1}}$. We make this choice to simplify notation later on in the comparison with global group laws. 
\end{Remark}

\begin{Remark} \label{rem:antipode} As is shown in \cite[Appendix B]{CGK00}, every $A$-equivariant formal group law allows a coinverse, i.e., a continuous self-map $\chi\colon R\to R$ of $k$-algebras such that the composite
\[ R\xrightarrow{\Delta} R\hotimes R \xrightarrow{\id_R\hotimes \chi} R\hotimes R \to R \]
equals the composite $R\xrightarrow{\theta(\epsilon)} k\to R$. The coinverse turns $R$ into a complete topological Hopf algebra over $k$.
\end{Remark}

\subsection{Flags and topological bases} \label{sec:flags}
Given $V\in A^*$, every element $x\in R$ can be written as
\[ x=\theta(V^{-1})(x)+\widetilde{x}\cdot y(V) \]
for a unique $\widetilde{x}\in R$, since $y(V)$ is regular and generates the kernel of $\theta(V^{-1})$. It follows that for any sequence $V_1,\hdots,V_n \in A^*$, the quotient $R/(I_{V^{-1}_1}\cdot \hdots \cdot I_{V^{-1}_n})$ has a $k$-basis given by 
\[ 1,y(V_1),y(V_2)y(V_1),\hdots,y(V_{n-1})\cdots y(V_1). \]
Given a \emph{complete flag} $F$, i.e., a sequence $V_1,V_2,\hdots,V_n,\hdots$ of characters such that every character occurs infinitely often, an element $x\in R$ can be written uniquely as
\[ x=\sum_{i=0}^{\infty} a^ F_i \cdot y(V_i)\cdots y(V_1)\]
with $a^F_i\in k$. Unless $A$ is trivial, the choice of flag is not unique. The translation between the coefficients $(a^F_i)$ and $(a^{F'}_i)$ for different flags $F$ and $F'$ depends on the multiplicative structure of~$R$, which is in general not simply given by a power series ring, see \cite[Section 3]{Gre01} for a discussion. Writing $x$ in the above form, we see that
\[ \theta(V)(x)=\sum_{i=0}^{\infty} a^F_i\cdot e_{VV_i}\cdots e_{VV_1} \in k,\]
where the sum is finite since the product $e_{VV_i}\cdots e_{VV_1}$ becomes trivial once there is a $j\leq i$ with $V_j=V^{-1}$. Hence, for every flag $F$, the map $\theta(V)$ is a polynomial in the coefficients $a^F_i$ and the Euler classes.

For later use we note the following, where $F=(k,R,\Delta,\theta,y(\epsilon))$ is an $A$-equivariant formal group law. 
\begin{Lemma}\label{lem:unit} An element $x\in R$ is invertible if and only if all augmentations $\theta(V)(x)\in k$ are invertible.
\end{Lemma}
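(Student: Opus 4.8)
The forward direction is immediate: each $\theta(V)\colon R\to k$ is a ring homomorphism, so it sends units to units. The substance is the converse, so assume every $\theta(V)(x)$ is a unit in $k$; I want to produce an inverse for $x$ in $R$. The plan is to exploit the flag decomposition from Section~\ref{sec:flags} together with completeness of $R$. Fix a complete flag $F = (V_1, V_2, \dots)$. The key observation is that $\theta(\epsilon)(x) = e$ is a unit, so after multiplying $x$ by $e^{-1}\in k\subseteq R$ we may assume $\theta(\epsilon)(x) = 1$; it suffices to invert this normalized element. Write $x = 1 + n$ where $n = x - \theta(\epsilon)(x)$ lies in $\ker(\theta(\epsilon)) = (y(\epsilon))$, so $n$ is topologically nilpotent in the sense that its powers lie in deeper and deeper stages of the filtration — but one must be careful, since the topology on $R$ is generated by \emph{finite products of kernels of the various} $\theta(V)$, not just powers of $\ker(\theta(\epsilon))$.

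So the honest approach is the following successive-approximation argument. I claim one can build a sequence $u_0, u_1, u_2, \dots \in R$ with $u_0 = 1$ and $x\cdot u_n \equiv 1 \pmod{I_{V_1^{-1}}\cdots I_{V_n^{-1}}}$, with $u_{n+1} \equiv u_n \pmod{I_{V_1^{-1}}\cdots I_{V_n^{-1}}}$, and then set $u = \lim u_n$, which exists by completeness of $R$; continuity of multiplication gives $xu = 1$. To construct $u_{n+1}$ from $u_n$: using the basis $1, y(V_1), \dots, y(V_n)\cdots y(V_1)$ of $R/(I_{V_1^{-1}}\cdots I_{V_n^{-1}})$, write $x u_n = 1 + \sum_{i=1}^{n} c_i\, y(V_i)\cdots y(V_1)$ modulo the $(n{+}1)$-st stage. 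One then corrects by a term $u_n \cdot (1 + \delta)$ where $\delta$ is a suitable $k$-linear combination of the $y(V_i)\cdots y(V_1)$; solving for the coefficients of $\delta$ recursively in $i$ requires at each stage dividing by $\theta(V_i^{\text{(something)}})(x)$-type quantities — here is exactly where the hypothesis that \emph{all} augmentations $\theta(V)(x)$ are units gets used, via the formula $\theta(V)(x) = \sum a_j^F e_{VV_j}\cdots e_{VV_1}$ relating $\theta(V)(x)$ to the flag coefficients of $x$.

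The main obstacle, and the step deserving the most care, is the bookkeeping in this inductive correction: one must check that passing from the $n$-th approximate inverse to the $(n{+}1)$-st only modifies $u_n$ by something in $I_{V_1^{-1}}\cdots I_{V_n^{-1}}$ (so that the limit exists and is Cauchy for the given topology, not just for the $\ker(\theta(\epsilon))$-adic one), and that the coefficients needed in the correction term $\delta$ are indeed expressible using only inverses of the $\theta(V)(x)$. A cleaner alternative — which I would try first — is to reduce to the $\ker(\theta(\epsilon))$-adic case by the following trick: for each $V$, the element $\theta(V)(x)^{-1}$ exists in $k$; consider $x' = x\cdot\big(\text{element of }k^{A^*}\text{ lifted suitably}\big)$ — but lifting from $k^{A^*}$ through $\theta$ is not possible in general, so this shortcut likely fails and one is thrown back on the approximation argument. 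A second, slicker option: note that invertibility of $x$ is equivalent to invertibility of its image in $R/\mathfrak{m}R$ for the "maximal ideals" cut out by the $\theta(V)$, i.e. to $x$ not lying in any of them, together with $x$ being a unit modulo the Jacobson-type radical given by $\bigcap_V \ker\theta(V)$; since the topology is Hausdorff this intersection behaves well and completeness upgrades "unit mod radical" to "unit." I would present whichever of these the author's own later applications make most natural, but in all cases the engine is: completeness of $R$ plus the flag decomposition, with the unit hypothesis on every $\theta(V)(x)$ feeding the recursion.
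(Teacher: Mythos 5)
Your primary plan (the successive-approximation argument via the flag filtration) is essentially the paper's proof, and your instinct about where the hypothesis enters is right. The paper packages it more cleanly and thereby dissolves the ``bookkeeping'' you were worried about: instead of constructing explicit approximate inverses $u_n$ and checking Cauchyness, it shows by induction on $n$ that $x$ is a unit in each finite quotient $R/(I_{V_1^{-1}}\cdots I_{V_n^{-1}})$. The inductive step is one line: the kernel of the projection $R/(I_{V_1^{-1}}\cdots I_{V_n^{-1}})\to R/(I_{V_1^{-1}}\cdots I_{V_{n-1}^{-1}})$ is free of rank $1$ over $k$, generated by $y(V_{n-1})\cdots y(V_1)$, and $x$ acts on it by multiplication by $\theta(V_n^{-1})(x)$, which is a unit by hypothesis; since $x$ is a unit on the sub and the quotient of the short exact sequence, it is a unit in the middle. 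This pins down exactly which augmentation is needed at each stage (answering your ``$\theta(V_i^{(\text{something})})$'' uncertainty) and the passage to $R$ itself is automatic, since the inverses in the finite quotients are compatible by uniqueness and hence define an element of the limit. The two alternatives you float (lifting through $\theta$, and the Jacobson-radical heuristic) are detours you correctly suspect of not working as stated; you should commit to the filtration argument and simply organize it as the paper does.
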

\begin{proof}
Since the augmentations are ring maps, it is clear that if $x$ is invertible, then so is $\theta(V)(x)$ for any $V\in A^*$. Conversely, assume all the $\theta(V)(x)$ are invertible. We need to see that $x$ is a unit in $R/(I_{V_n}\cdots I_{V_1})$ for all $n\in \N$ and characters $V_1,\hdots,V_n\in A^*$. To show this, we consider the kernel $K$ of the projection $R/(I_{V_{n}}\cdots I_{V_1})\to R/(I_{V_{n-1}}\cdots I_{V_1})$. By the preceding discussion, $K$ is a free module of rank $1$ over $k$, generated by the equivalence class of the element $y(V_{n-1}^{-1})\cdots y(V_1^{-1})$. By induction, we need to show that~$x$ acts isomorphically on $K$. As we saw above, we can write $x=\theta(V_n)(x)+x'\cdot y(V_n^{-1})$ for some $x'\in R$. Computing in $R/(I_{V_{n}}\cdots I_{V_1})$, we obtain
\begin{eqnarray*} x\cdot [y(V_{n-1}^{-1})\cdots y(V_1^{-1})]& = & (\theta(V_n)(x)+x'\cdot y(V_n^{-1}))\cdot [y(V_{n-1}^{-1})\cdots y(V_1^{-1})] \\
&  =  &\theta(V_n)(x)\cdot [y(V_{n-1}^{-1})\cdots y(V_1^{-1})] + [y(V_n^{-1})y(V_{n-1}^{-1})\cdots y(V_1^{-1})] \\
& = & \theta(V_n)(x)\cdot [y(V_{n-1}^{-1})\cdots y(V_1^{-1})],\end{eqnarray*}
since $[y(V_{n}^{-1})y(V_{n-1}^{-1})\cdots y(V_1^{-1})]=0$ in $R/(I_{V_{n}}\cdots I_{V_1})$. Hence, $x$ acts on $K$ via $\theta(V_n)(x)$, which is a unit in~$k$ by assumption. This finishes the proof.
\end{proof}

\subsection{The category of $A$-equivariant formal group laws} \label{sec:categoryoffgla}
We write $FGL_A$ for the category of $A$-equivariant formal group laws, where a morphism from $F=(k,R, \Delta, \theta, y(\epsilon))$ to $F'=(k',R', \Delta', \theta', y(\epsilon)')$ is a pair of maps $f_1\colon k\to k'$ and $f_2\colon R\to R'$ preserving all the structure at hand. In fact, such a morphism is already determined by $f_1$, since $f_2$ needs to send $y(V)$ to $y(V)'$ and the $y(V)$ generate $R$ as a $k$-algebra topologically.

Said differently, given a map $f\colon k\to k'$ of commutative rings and an $A$-equivariant formal group law $F$ over $k$, there is a push-forward $A$-equivariant formal group law $f_*F$ over $k'$ whose complete $k'$-algebra $f_*R$ is given by $\lim_{V_1,\hdots,V_n}\left( R/(I_{V_n}\cdots I_{V_1})\otimes_k k'\right)$. The induced map $R\to f_*R$ then extends $f$ to a morphism of $A$-equivariant formal group laws. Up to canonical isomorphism fixing the coefficient ring, every morphism of $A$-equivariant formal group laws arises in this way.

In \cite[Sections 13 and 14]{CGK00} it is shown that the category $FGL_A$ has an initial object (which will also follow from the adjunction in \Cref{sec:complexadjunction}), whose ground ring the authors call the \emph{$A$-equivariant Lazard ring} and denote by $L_A$. By the discussion above, this means that every $A$-equivariant formal group law over a commutative ring $k$ arises as the push-forward of a unique map $L_A\to k$. Therefore, the category of $A$-equivariant formal group laws is in fact equivalent to the category of commutative rings under $L_A$.

\subsection{The global Lazard ring} \label{sec:globallazard} It will be crucial for us to consider the functoriality of $A$-equivariant formal group laws as $A$ varies. For this let $\alpha\colon B\to A$ be a homomorphism of abelian compact Lie groups, and $F=(k,R, \Delta, \theta, y(\epsilon))$ a $B$-equivariant formal group law. Then we obtain an $A$-equivariant formal group law $\alpha_*F=(k,R_{\alpha},\Delta_{\alpha},\theta_{\alpha},y_{\alpha}(\epsilon))$ over the same ground ring $k$, with
\begin{itemize}
	\item $R_{\alpha}$ the completion of $R$ at all finite products of elements $y(V)$ with $V$ in the image of $\alpha^*\colon A^*\to B^*$,
	\item $\Delta_{\alpha}\colon R_{\alpha}\to R_{\alpha}\hotimes R_{\alpha}$ the completion of the comultiplication $\Delta \colon R\to R\hotimes R$,
	\item $\theta_{\alpha}$ the map induced by the composite $R\xr{\theta} k^{B^*}\xr{(\alpha^*)^*} k^{A^*}$ on completion, and
	\item $y_{\alpha}(\epsilon)\in R_{\alpha}$ the image of $y(\epsilon)$ under the completion map $R\to R_{\alpha}$.
\end{itemize}

This defines a functor $\alpha_*\colon FGL_B \to FGL_A$, which induces a map $\alpha^*\colon L_A\to L_B$ between the representing rings. It is a straightforward check that we obtain a functor
\[ \bL\colon \text{(abelian compact Lie groups)}^{op}\to \text{commutative rings}, \]
which we call the \emph{global Lazard ring}.

If $i\colon B\hookrightarrow A$ is the inclusion of a subgroup, then the induced map $i^*\colon A^*\to B^*$ is surjective. This implies that the pushforward $i_*F=(k,R_{i},\Delta_{i},\theta_{i},y_{i}(\epsilon))$  of a $B$-equivariant formal group law $F=(k,R,\Delta, \theta, y(\epsilon))$ has $R_i=R$, $\Delta_i=\Delta$ and $y_i(\epsilon)=y(\epsilon)$. The only difference is that $\theta$ is replaced by the composite $R\to k^{B^*}\to k^{A^*}$, or equivalently, that the $B^*$-action on $R$ is replaced by the $A^*$-action obtained via $i^*\colon A^*\to B^*$. This implies that $FGL_B$ is equivalent to the full subcategory of $FGL_A$ on those $A$-equivariant formal group laws where the $A^*$-action factors through $i^*$. This property can be characterized in terms of Euler classes as follows:

\begin{Lemma} \label{lem:fullsubcat} Let $(k,R, \Delta, \theta, y(\epsilon))$ be an $A$-equivariant formal group law, and $i\colon B\hookrightarrow A$ the inclusion of a subgroup. Then the action of $A^*$ on $R$ factors through $i^*$ if and only if $e_V=0$ for every $V\in \ker(i^*)\subseteq A^*$.
\end{Lemma}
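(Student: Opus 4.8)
The plan is to translate the condition "$e_V = 0$ for all $V \in \ker(i^*)$" into the statement that the $A^*$-action on $R$ is trivial on $\ker(i^*)$, i.e., that $l_V = \id_R$ for every $V \in \ker(i^*)$, which is exactly the assertion that the action factors through $i^*$. The forward direction is immediate: if the $A^*$-action factors through $i^*$, then $l_V = \id_R$ for $V \in \ker(i^*)$, so $e_V = \theta(\epsilon)(l_V(y(\epsilon))) = \theta(\epsilon)(y(\epsilon)) = 0$. The substance is the converse.

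For the converse, fix $V \in \ker(i^*)$ and aim to show $l_V = \id_R$. First I would record that $\ker(i^*)$ is a subgroup of $A^*$, so it is closed under products; hence if $e_V = 0$ for all $V \in \ker(i^*)$ then in fact $e_{VW} = 0$ for all $V, W \in \ker(i^*)$. Next, use the description from \Cref{sec:flags}: pick a complete flag $F = (V_1, V_2, \dots)$ and write an arbitrary $x \in R$ as $x = \sum_{i=0}^\infty a_i^F \cdot y(V_i) \cdots y(V_1)$ with $a_i^F \in k$. Since $l_V$ is a continuous $k$-algebra map, $l_V(x) = \sum_i a_i^F \cdot l_V(y(V_i)) \cdots l_V(y(V_1))$, and $l_V(y(W)) = l_V(l_W(y(\epsilon))) = l_{VW}(y(\epsilon)) = y(VW)$. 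So it suffices to show $y(VW) = y(W)$ in $R$ for every $W \in A^*$ whenever $V \in \ker(i^*)$ — equivalently $l_V(y(W)) = y(W)$, i.e., $l_V$ fixes every topological algebra generator.

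To prove $y(VW) = y(W)$, I would use the flag decomposition with respect to the single character $W^{-1}$ (more precisely, using that $y(W)$ generates $I_{W^{-1}} = \ker(\theta(W^{-1}))$ and is regular): write $y(VW) = \theta(W^{-1})(y(VW)) + \widetilde{x}\cdot y(W)$ for a unique $\widetilde{x} \in R$. Now $\theta(W^{-1})(y(VW)) = e_{W^{-1}VW} = e_{V} = 0$ since $V \in \ker(i^*)$, so $y(VW) = \widetilde{x}\cdot y(W)$; by symmetry (replacing $V$ by $V^{-1}$, which also lies in $\ker(i^*)$) also $y(W) = y(V^{-1}(VW)) = \widetilde{y}\cdot y(VW)$ for some $\widetilde{y}$, hence $y(W) = \widetilde{y}\widetilde{x}\, y(W)$, and regularity of $y(W)$ gives $\widetilde{y}\widetilde{x} = 1$, so $\widetilde{x}$ is a unit. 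To pin down $\widetilde{x} = 1$ I would apply augmentations: for any $U \in A^*$, apply $\theta(U)$ to $y(VW) = \widetilde{x}\cdot y(W)$ to get $e_{UVW} = \theta(U)(\widetilde{x})\cdot e_{UW}$, and compare with the analogous identity $e_{UW} = \theta(U)(\widetilde{y})\cdot e_{UVW}$; whenever $e_{UW}$ is not a zero divisor this forces $\theta(U)(\widetilde{x}) = 1$, and in general one argues modulo each $I_{V_n}\cdots I_{V_1}$ using that on the rank-one graded pieces $y(W)$ acts by the relevant Euler class — more cleanly, since $\widetilde{x}$ is already known to be a unit, \Cref{lem:unit} reduces checking $\widetilde{x} = 1$ to checking $\theta(U)(\widetilde{x}) = 1$ for all $U$, and $\theta(U)(y(VW)) = \theta(U)(\widetilde x)\theta(U)(y(W))$ together with the "shift" $y(VW) = l_V(y(W))$ and $e_{UVW} = e_{(UV)W} = \theta(UV)(y(W))$ lets one identify things character by character.

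\medskip

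The main obstacle I expect is the last step, establishing $\widetilde{x} = 1$ rather than merely "$\widetilde{x}$ is a unit": the Euler classes $e_{UW}$ need not be regular individually, so one cannot simply cancel them in $e_{UVW} = \theta(U)(\widetilde{x})\, e_{UW}$. The fix is to work level by level in the tower defining the topology on $R$ — on each successive quotient $R/(I_{V_n}\cdots I_{V_1}) \to R/(I_{V_{n-1}}\cdots I_{V_1})$ the kernel is free of rank one over $k$ with $y(W)$ acting by $\theta(W^{-1})(-)$ of the relevant generator, which is where regularity is genuinely available — and then pass to the limit. Alternatively, and more slickly, once $l_V$ is shown to be a continuous $k$-algebra endomorphism of $R$ fixing each generator $y(W)$ up to the unit multiple $\widetilde x$ and preserving all augmentations $\theta(U)$, one invokes the uniqueness clause in \Cref{sec:flags} (an element is determined by its coefficients along any fixed flag, and $l_V$ visibly permutes the flag data back to itself) to conclude $l_V = \id_R$ directly.
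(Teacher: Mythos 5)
Your forward direction is fine, and the reduction of the converse to showing $l_V(y(W))=y(W)$ for all $W$ is legitimate (the $y(W)$ do generate $R$ topologically). But the converse then stalls exactly where you say it does: you get $y(VW)=\widetilde{x}\cdot y(W)$ with $\widetilde{x}$ a unit, and neither of your proposed repairs closes the gap. Cancelling in $e_{UVW}=\theta(U)(\widetilde{x})\,e_{UW}$ fails because the Euler classes need not be regular (and taking $U=W^{-1}$ just gives $0=0$); \Cref{lem:unit} detects units, not the equality $\widetilde{x}=1$; and the ``uniqueness of flag coefficients'' argument does not apply, because $l_V$ carries the decomposition of $x$ along a flag $F=(V_1,V_2,\dots)$ to a decomposition of $l_V(x)$ along the \emph{shifted} flag $(VV_1,VV_2,\dots)$ --- comparing coefficients across different flags is precisely the change-of-flag problem the paper warns depends on the multiplicative structure of $R$, so you cannot ``visibly'' read off $l_V=\id$ this way.

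The missing idea is to work with $\theta(V)$ rather than with the elements $y(VW)$. Since $e_V=\theta(V)(y(\epsilon))=0$, the $k$-algebra map $\theta(V)\colon R\to k$ kills $y(\epsilon)$, hence kills the ideal it generates, which is exactly $\ker(\theta(\epsilon))$; as $R/(y(\epsilon))\cong k$ via $\theta(\epsilon)$ and the only $k$-algebra endomorphism of $k$ is the identity, this forces $\theta(V)=\theta(\epsilon)$. Then $l_V=(\theta(V)\hotimes\id_R)\circ\Delta=(\theta(\epsilon)\hotimes\id_R)\circ\Delta=\id_R$ by counitality. This is the paper's two-line argument; it uses the Hopf structure (counitality) where you tried to use the flag/regularity machinery, and it avoids ever having to pin down your unit $\widetilde{x}$.
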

\begin{proof} If the action $l$ factors through $i^*$, then any $V\in \ker(i^*)$ acts trivially. Hence $y(V)=y(\epsilon)$ and $e_V=e_{\epsilon}=0$. If for the other direction we assume that $e_V=0$ for all $V\in \ker(i^*)$, then for any such $V$ the map $\theta(V)\colon R\to k$ has $y(\epsilon)$ in the kernel. Since $\theta(V)$ is a $k$-algebra map and $R/y(\epsilon)\cong k$, this implies that $\theta(V)=\theta(\epsilon)$, and hence by definition $l_V=l_{\epsilon}=\id_R$. This shows that the action factors through $i^*$, as desired.
\end{proof}

\begin{Cor} \label{cor:lazardeulerquotient} Let $i\colon B\hookrightarrow A$ be a subgroup inclusion. Then the map $i^*\colon L_A\to L_B$ is surjective with kernel generated by the Euler classes $e_V$ for $V\in \ker(i^*)\subseteq A^*$.
\end{Cor}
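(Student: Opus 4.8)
The goal is to identify $L_B$ with the quotient of $L_A$ by the ideal generated by $\{e_V : V \in \ker(i^*)\}$. I would deduce this directly from \Cref{lem:fullsubcat} together with the universal property of the Lazard rings, exploiting the fact established above that $FGL_B$ is equivalent to the full subcategory of $FGL_A$ consisting of those $A$-equivariant formal group laws on which the $A^*$-action factors through $i^*$.

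\emph{Step 1: reduce to a statement about morphisms out of $L_A$.} Since $L_A$ represents $FGL_A$ and $L_B$ represents $FGL_B$, the restriction map $i^*\colon L_A \to L_B$ classifies the pushforward $i_*F_B^{\mathrm{univ}}$ of the universal $B$-equivariant formal group law. A map of commutative rings $L_A \to k$ corresponds to an $A$-equivariant formal group law over $k$, and it factors through $i^*$ (i.e. through $L_B$) precisely when that formal group law is the pushforward of a $B$-equivariant one, which by the discussion preceding \Cref{lem:fullsubcat} happens exactly when its $A^*$-action factors through $i^*$. By \Cref{lem:fullsubcat}, this last condition is equivalent to the vanishing of all Euler classes $e_V$ with $V \in \ker(i^*)$.

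\emph{Step 2: translate vanishing of Euler classes into the quotient ideal.} Each Euler class $e_V \in L_A$ is the image of a well-defined element of $L_A$ under the structure map, so the formal group law over $k$ classified by $\varphi\colon L_A \to k$ has $e_V = 0$ in $k$ if and only if $\varphi(e_V) = 0$, i.e. $\varphi$ kills the ideal $I = (e_V : V \in \ker(i^*))$. Combining with Step 1, a ring map $\varphi\colon L_A \to k$ factors through $i^*\colon L_A \to L_B$ if and only if $\varphi$ factors through $L_A/I$. Hence $L_A \to L_B$ and $L_A \to L_A/I$ corepresent the same functor on commutative rings, so the induced comparison map $L_A/I \to L_B$ is an isomorphism. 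In particular $i^*$ is surjective with kernel exactly $I$.

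\emph{Main obstacle.} The only subtlety I anticipate is making the equivalence "$\varphi$ factors through $L_B$ $\iff$ the classified fgl is a $B$-pushforward" precise enough to run the corepresentability argument, i.e. checking that the bijection between maps $L_A \to k$ and $A$-equivariant formal group laws over $k$ restricts correctly to maps $L_B \to k$ and to those fgls pulled back from $B$, compatibly with $i^*$. This is essentially bookkeeping with the universal properties and the explicit description of $i_*$ given above (where $R_i = R$ and only $\theta$, equivalently the $A^*$-action, changes), but one should verify that pushing forward along $i$ and then classifying gives back the same ring map $i^*$; once that naturality is in hand, the Yoneda-style conclusion is immediate.
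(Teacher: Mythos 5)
Your proposal is correct and follows exactly the route the paper intends: the corollary is presented as an immediate consequence of the observation that $FGL_B$ embeds into $FGL_A$ as the full subcategory of formal group laws whose $A^*$-action factors through $i^*$, combined with \Cref{lem:fullsubcat} and the corepresentability of both sides. Your Steps 1 and 2 just spell out this Yoneda-style identification, which is what the paper leaves implicit.
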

In fact the proof of \Cref{lem:fullsubcat} shows that it always suffices to quotient out by finitely many Euler classes: When $V_1,\hdots,V_n$ are a set of generators of $\ker(i^*)\subseteq A^*$, then $e_{V_1},\hdots,e_{V_n}$ generate the kernel of $i^*\colon L_A\to L_B$.

As every abelian compact Lie group embeds into a torus, \Cref{cor:lazardeulerquotient} implies that the theory of equivariant formal group laws is controlled by its behaviour over tori. For example, there is a universal Euler class $e_{\tau}\in L_\T$ for the tautological $\T$-character $\tau\in \T^*$. The Euler class $e_V\in L_A$ of any other character $V\in A^*$ is obtained by pulling back $e_{\tau}$ along $V\colon A\to \T$. We will later simply write $e$ instead of $e_{\tau}$.

\subsection{The global $2$-torsion Lazard ring} \label{sec:2tor}There is also a $2$-torsion version of this theory which is relevant for the description of equivariant real bordism. For this we restrict to elementary abelian $2$-groups $A$, or in other words to finite dimensional $\F_2$-vector spaces. Note that in this case the dual group $A^*$ agrees with the dual $\F_2$-vector space.

We say that an $A$-equivariant formal group law $F=(k,R, \Delta, \theta, y(\epsilon))$ is \emph{$2$-torsion} if the composite
\[ [2]_F\colon R\xr{\Delta} R\hotimes R\xr{m} R \]
sends the orientation class $y(\varepsilon)$ (or phrased orientation-independently, the ideal $I_{\epsilon})$ to $0$. In other words, $F$ is $2$-torsion if the coinverse $\chi\colon R\to R$ (see \Cref{rem:antipode}) equals the identity map of $R$. Since $[2]_F(y(\varepsilon))$ can be written as $2\cdot y(\varepsilon)+x\cdot y(\varepsilon)^2$, the ground ring $k$ of every $2$-torsion $A$-equivariant formal group law is an $\F_2$-algebra. 

The subcategory of $2$-torsion $A$-equivariant formal group laws, which we denote by $FGL^{2-\tor}_A$, also has an initial object defined over a $2$-torsion Lazard ring $L^{2-\tor}_A$. For example, $L^{2-\tor}_A$ can be obtained from $L_A$ by quotienting out the coefficients of $[2]_F(y(\varepsilon))$ with respect to some choice of complete flag. Moreover, given a homomorphism of elementary abelian $2$-groups $\alpha\colon B\to A$, the functor $\alpha_*\colon FGL_B\to FGL_A$ restricts to a functor $\alpha_* \colon FGL^{2-\tor}_B\to FGL^{2-\tor}_A$. We obtain a global $2$-torsion Lazard ring
\[ \bL^{2-\tor}\colon (\el)^ {op}=\left(\text{elementary abelian $2$-groups}\right)^{op}\to \text{commutative $\F_2$-algebras}. \]
Again, there is a universal $2$-torsion Euler class, given by $e_{\sigma}\in L^{2-\tor}_{C_2}$ associated to the sign character $\sigma$ of $C_2$, and the analog of \Cref{cor:lazardeulerquotient} holds.

\subsection{Gradings} \label{sec:aeqgrading} It is sometimes convenient to view $\bL$ and $\bL^{2-\tor}$ as functors to $\Z$-graded rings. The relevant gradings are uniquely determined by the requirement that the Euler class $e_{\tau}\in L_\T$ is of degree $-2$, and the Euler class $e_{\sigma}\in L^{2-\tor}_{C_2}$ is of degree $-1$. The existence and uniqueness of these gradings follows either by inspection of the construction in \cite[Section 14]{CGK00}, or from the arguments in Sections \ref{sec:realuniversal} and \ref{sec:globalfgl}. The grading on $L_{C_2}$ is also discussed in \cite{HW18}, where it plays a central role in their proof of the isomorphism $L_{C_2}\xr{\cong}\pi_*^{C_2}(MU_{C_2})$.

Topologically, the graded version corresponds to considering $\upi_* (\MU)$, while the ungraded version corresponds to $\upi_0(\MUP)$, the $0$-th homotopy ring of periodic complex bordism, see \Cref{sec:complextopological}, and similarly for $\MO$ and $\MOP$.

\subsection{Inverting Euler classes} \label{sec:eulerinvert}
The goal of this subsection is to understand the structure of equivariant Lazard rings after inverting certain collections of Euler classes.
\begin{Prop} \label{prop:eulerinvert}
	Let
	\[ 1\to B\xr{i} A\xr{p} A/B\to 1\]
be a short exact sequence of abelian compact Lie groups. Let $V(B)\subset A^*$ be the subset of those characters whose restriction to $B^*$ is non-trivial, or in other words the complement of the image of the inclusion $p^*\colon (A/B)^*\hookrightarrow A^*$. Then the composite
\[ L_{A/B}\xr{p^*} L_A \to L_A[e_{V}^{-1}\ |\ V\in V(B)]\]
is flat.
\end{Prop}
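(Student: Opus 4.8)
The plan is to reduce, by an induction on the structure of $B$, to the case where $B$ is topologically simple---isomorphic to $C_p$ for a prime $p$, or to $\T$---and then to analyse the localised Lazard ring $L'_A:=L_A[e_V^{-1}\mid V\in V(B)]$ directly, showing that via the displayed composite it is a polynomial extension of $L_{A/B}$ with one generator inverted; flatness is then immediate.

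For the induction, suppose $1\subsetneq B'\subsetneq B$ is an intermediate closed subgroup (so $B'$ is also closed in $A$), and assume the proposition for the pairs $(A,B')$ and $(A/B',B/B')$. One has $V(B')\subseteq V(B)$, and the complement $V(B)\setminus V(B')$ is precisely the set of characters of $A$ that are trivial on $B'$ but nontrivial on $B$, i.e.\ the pullbacks along $A\to A/B'$ of the characters in $V(B/B')$. Since the Euler class of a pulled-back character is the pullback of the Euler class (cf.\ the discussion after \Cref{cor:lazardeulerquotient}) and since localisation commutes with base change, this gives
\[ L_A[e_V^{-1}\mid V\in V(B)]\ \cong\ L_A[e_V^{-1}\mid V\in V(B')]\ \otimes_{L_{A/B'}}\ L_{A/B'}[e_{\bar V}^{-1}\mid \bar V\in V(B/B')]. \]
The first tensor factor is flat over $L_{A/B'}$ by the case $(A,B')$, hence the tensor product is flat over $L_{A/B'}[e_{\bar V}^{-1}\mid\bar V\in V(B/B')]$, which in turn is flat over $L_{(A/B')/(B/B')}=L_{A/B}$ by the case $(A/B',B/B')$; composing these flat maps, and noting that the resulting composite $L_{A/B}\to L_A[e_V^{-1}\mid V\in V(B)]$ agrees with the one in the statement by functoriality, yields the proposition for $(A,B)$. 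Since a compact abelian Lie group $B\cong\T^r\times F$ (with $F$ finite abelian) admits a filtration by closed subgroups with successive quotients each isomorphic to some $C_p$ or to $\T$ (refine a composition series of $F$, then adjoin circle factors one at a time), we are reduced to the base cases $B\cong C_p$ and $B\cong\T$.

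In a base case set $C=A/B$ and choose $\chi\in A^*$ restricting to a generator of $B^*$, which is possible because $A^*\to B^*$ is surjective. I would then use that $L'_A$ represents the functor of $A$-equivariant formal group laws in which all $e_V$, $V\in V(B)$, are invertible, that pushing the universal such law forward along $p\colon A\to C$ produces the map $L_C\to L'_A$ of the statement, and that all of $L'_A$ is generated over the image of $L_C$ by the Euler class $e_\chi$ together with the coefficients, in a complete flag of $A$ adapted to $1\subset B\subset A$, of the orientation class in the directions transverse to $C$. The core structural claim is that these generators are free: an $A$-equivariant formal group law over a ring $k$ with $e_V\in k^\times$ for all $V\in V(B)$ is the same datum as a $C$-equivariant formal group law over $k$ together with an arbitrary choice of this coefficient data, subject only to $e_\chi\in k^\times$. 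Granting this, $L'_A\cong L_C[t_0^{\pm1},t_1,t_2,\dots]$ is free, in particular flat, over $L_C$.

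The hard part is this structural claim, and it is where I expect the real work to lie. The mechanism should be that inverting every $e_V$ with $V\in V(B)$ makes the ideals $I_{V^{-1}}$ for such $V$ as large as possible in the localisation, so that the Cole-Greenlees-Kriz relations cutting out $L_A$ degenerate, in the directions transverse to $C$, to no relations beyond the invertibility of $e_\chi$---here \Cref{lem:unit} should be used to check that the relevant coordinate-change maps become units after localising. Pinning this down means identifying exactly which flag coefficients lie in the image of $L_C$, which are the free polynomial generators, and verifying that the remaining Cole-Greenlees-Kriz identities hold automatically. The case $B\cong\T$ should be somewhat easier, since then $A\cong C\times\T$ and $p$ admits a section $s$, so that $s^*\colon L_A\to L_C$ exhibits $p^*\colon L_C\to L_A$ as a split injection with complement the principal ideal $(e_\chi)$ (by \Cref{cor:lazardeulerquotient}), giving a concrete foothold for the subsequent localisation; the case $B\cong C_p$ admits no such section and requires the flag computation in earnest.
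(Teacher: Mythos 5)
Your induction step is sound (flatness composes and base-changes, and the localization formula relating the three Lazard rings is correct), but it reduces the proposition to the two base cases $B\cong C_p$ and $B\cong\T$, and for those you stop at exactly the point where the real work starts. You explicitly flag this: the "core structural claim" that an $A$-equivariant formal group law with all $e_V$, $V\in V(B)$, invertible is equivalent to an $(A/B)$-equivariant one together with a family of free unit coefficients, is stated but not proved. That claim \emph{is} the proposition; everything before it is bookkeeping. Moreover, the reduction to $B\cong C_p$ or $\T$ does not actually make the claim easier: even for $B=C_p$ one must still show that the completed coordinate ring $R$ splits after localization into a product indexed by the cosets $A^*/(A/B)^*$, and that is no simpler for $B=C_p$ than for arbitrary $B$. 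So the filtration of $B$ does not buy you anything.

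The paper proves the structural claim once, for arbitrary $B$, by an explicit two-sided construction: given an $(A/B)$-equivariant law $\widetilde F=(k,\widetilde R,\dots)$ and an $(A/B)^*$-equivariant function $z\colon V(B)\to\widetilde R^\times$, it builds the $A$-equivariant law on the coinduced ring $R=\map_{(A/B)^*}(A^*,\widetilde R)$ with the coordinate $y(\epsilon)$ having component $\widetilde y(\epsilon)$ at $\epsilon$ and $z$ on $V(B)$; for the inverse, the $(A/B)^*$-equivariant completion map $R\to R_p$ induces an $A^*$-equivariant map $R\to\map_{(A/B)^*}(A^*,R_p)$, which is shown to be an isomorphism of $A$-equivariant formal group laws. (The paper also notes the splitting can be seen via the Chinese Remainder theorem.) Your \Cref{lem:unit} step does appear—the paper uses it exactly where you anticipate, to check that $z(V)$ is a unit—but what is missing from your write-up is the coinduction description of $R$ and the verification that this sets up a bijection of data. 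Once you have that, the flag-coefficient parametrization you describe, and hence the conclusion that $L_A[e_V^{-1}\mid V\in V(B)]$ is obtained from $L_{A/B}$ by adjoining polynomial generators and inverting some of them, does follow as you say.
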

\begin{proof}
The arguments are due to Greenlees (\cite[Section 11]{Gre}, \cite{Gre01}), but not explicitly stated in this form and generality, so we recall the proof. The ring $L_A[e_{V}^{-1}\ |\ V\in V(B)]$ classifies $A$-equivariant formal group laws for which all Euler classes $e_V$ with $V\in V(B)$ are invertible.	We claim that the datum of such an $A$-equivariant formal group law is equivalent to the datum of an $(A/B)^*$-equivariant formal group law $\widetilde{F}$ and an $(A/B)^*$-equivariant function
	\[	z\colon V(B)\to \widetilde{R}^{\times}\subset \widetilde{R}. \]
Here, $(A/B)^*$ acts on $V(B)\subset A^*$ via left translation along $p^*$. The identification sends $F$ to its pushforward $p_*F$ together with the function $z$ which takes $V\in V(B)$ to the image of $y(V^{-1})$ under the completion map $R\to R_p$ (note that $R_p$ is given by the completion of $R$ at finite products of the ideals $I_V$ with $V\in \im(p^*)$). The image of $z(V)$ under $\theta(V')$, for $V'\in (A/B)^*$, equals the Euler class $e_{V^{-1}V'}\in k$. Hence, as $V$ does not lie in $(A/B)^*$, neither does $V^{-1}V'$ and the Euler class $e_{V^{-1}V'}$ is a unit by assumption. Therefore. $z(V)$ is a unit in $R$ by \Cref{lem:unit}, as required.

Assuming the claim, the proposition follows: To give an $(A/B)^*$-equivariant function $z\colon V(B)\to \widetilde{R}$ is equivalent to giving one element $z(W)\in \widetilde{R}$ for a chosen representative $W$ of every non-trivial $(A/B)^*$-coset in $V(B)$. The elements $z(W)$ in turn are uniquely determined by their coefficients $a_i^F(z(W))\in k$ for a chosen flag $F$. Finally, the requirement that these are units is, again by \Cref{lem:unit}, equivalent to the requirement that the images under $\theta(V')\colon \widetilde{R}\to k$ are units for all $V'\in (A/B)^*$. As we saw in \Cref{sec:flags}, the functions $\theta(V')$ are polynomials in the coefficients $a_i^F$ and the Euler classes. In summary, the representing ring of pairs $(\widetilde{F}\in FGL_{A/B},z\colon V(B)\to \widetilde{R}^\times)$ is obtained from a polynomial ring over $L_{A/B}$ by inverting a collection of elements, and hence flat over~$L_{A/B}$.

To prove the claim, we explain how to construct an $A$-equivariant formal group law $(k,R,\Delta,\theta,y(\epsilon))$ out of data $\widetilde{F}$ and $z$ as above. We set
\[ R=\map_{(A/B)^*} (A^*,\widetilde{R}), \]
the coinduction of $\widetilde{R}$ to a ring with $A^*$-action, with the product topology. The map $\theta$ is given by the composite
\[ R=\map_{(A/B)^*} (A^*,\widetilde{R})\xr{\map_{(A/B)^*} (A^*,\widetilde{\theta})} \map_{(A/B)^*} (A^*,k^{(A/B)^*})\cong k^{A^*}. \]
The comultiplication
\[ \Delta\colon R=\map_{(A/B)^*} (A^*,\widetilde{R})\to \map_{(A/B)^*\times (A/B)^*} (A^*\times A^*,\widetilde{R}\hotimes \widetilde{R})\cong R\hotimes R \]
is the unique $(A^*\times A^*)$-equivariant map extending the $(A/B)^*\times (A/B)^*$-equivariant map
\[ R=\map_{(A/B)^*} (A^*,\widetilde{R})\xr{ev(\epsilon)} \widetilde{R} \xr{\widetilde{\Delta}} \widetilde{R}\hotimes \widetilde{R}. \]
Finally, to define the coordinate $y(\epsilon)$ we note that $R$ splits up as
\[ R\cong \widetilde{R}\times \map_{(A/B)^*} (V(B),\widetilde{R}),\]
where the first component of the isomorphism evaluates a function at $\epsilon$, and the second restricts to $V(B)$. We then define the first component of $y(\epsilon)$ to be $\widetilde{y}(\epsilon)$, and the second component to be the given map $z$. By assumption, $z$ is a unit in $\map_{(A/B)^*} (V(B),\widetilde{R})$ and hence $\theta(\epsilon)$ defines an isomorphism $R/y(\epsilon)\cong \widetilde{R}/\widetilde{y}(\epsilon)\cong k$, as desired. The Euler classes $e_V$ for $V\in V(B)$ are given by $\widetilde{\theta}(\epsilon)(z(V^{-1}))$ and hence units, since $z(V^{-1})$ is a unit.

It remains to show that the two assignments are inverse to one another. It is immediate that the pushforward to an $(A/B)$-equivariant formal group law of the construction just described gives back $\widetilde{F}$, since the completion amounts to projecting to the factor $\widetilde{R}$ of $R$. To see that it also gives back the same function~$z$, note that by equivariance the $V$-th component of $y(\epsilon)\in \map_{(A/B)^*} (A^*,\widetilde{R})$ equals the $\epsilon$-component of $y(V^{-1})$, or in other words the image of $y(V^{-1})$ in the completion, as desired. For the other direction, let $F=(k,R,\Delta,\theta,y(\epsilon))$ be an $A$-equivariant formal group law for which all Euler classes $e_V$ with $V\in V(B)$ are units, and let $p_*F$ be the pushforward to an $(A/B)$-equivariant formal group law. The completion map $R\to R_p$ is $(A/B)^*$-equivariant (if we let $(A/B)^*$ act on $R$ via $p^*\colon (A/B)^*\to A^*$), hence we obtain an induced $A^*$-equivariant map
	\[f\colon R\to \map_{(A/B)^*} (A^*,R_p).\]
It is straightforward to check that $f$ is compatible with the comultiplication and augmentation, and by the same argument as above it preserves the coordinates.
Since both sides are $A$-equivariant formal group laws and $f$ is a map of $k$-algebras, it follows that $f$ is an isomorphism (see the discussion in \Cref{sec:categoryoffgla}). This finishes the proof.
\end{proof}
\begin{Remark} An alternative proof that $R$ splits up into a product when the Euler classes $e_V$ for $V(B)$ are invertible is given by the Chinese Remainder theorem, see \cite[Theorem 11.2]{Gre}.
\end{Remark}

In the special case where $A=B$ and we are hence inverting all non-trivial Euler classes, both the choice of coset representatives and the choice of flag in the proof of \Cref{prop:eulerinvert} go away. This leads to the following description in terms of the ordinary Lazard ring $L$, again originally due to Greenlees (\cite[Theorem 6.3]{Gre01}). Given $V\in A^*-\{\epsilon\}$, we define elements $\gamma_i^V\in L_A$ via the property that
\[ y(\epsilon)=e_{V}+\sum^{n}_{i=1}\gamma_i^V\cdot y(V^{-1})^{i} \in R/y(V^{-1})^{n+1}\cong L_A\{1,y(V^{-1}),y(V^{-1})^2,\hdots,y(V^{-1})^n\} \]
for all $n\in \N$. In other words, $\gamma_i^V$ is the coefficient of $y(V^{-1})^{i}$ in the power series expansion of $y(\epsilon)$ with respect to $y(V^{-1})$. Equivalently, $\gamma_i^V$ can be defined as the coefficient of $y(\epsilon)^{i}$ in the power series expansion of $y(V)$ with respect to $y(\epsilon)$, which, in the notation of the proof of \Cref{prop:eulerinvert}, equals the $i$th coefficient of the power series $z(V^{-1})\in R_p=k[|y_p(\epsilon)|]$. Then we have:

\begin{Prop} \label{prop:geomlazard} The canonical map
 \[ L[e_V^{\pm 1},\gamma^V_i\ |\ V\in A^*-\{\epsilon\}, i\in \mathbb{N}_{>0}]\to L_A[e_V^{-1}\ |\ V\in A^*-\{\epsilon\}]\]
is an isomorphism for every abelian compact Lie group $A$.
\end{Prop} 
There is also a version for the $2$-torsion Lazard ring. Again, given an elementary abelian $2$-group $A$ and $V\in A^*$ we write $\gamma^V_i\in L_A^{2-\tor}$ for the coefficient of $y(V)^{i}$ in the power series expansion of $y(\epsilon)$ with respect to $y(V)$. The proof of the following is then entirely analogous to the one above:
\begin{Prop} \label{prop:realgeomlazard} The canonical map
	\[	L^{2-\tor}[e_V^{\pm 1},\gamma^V_i\ |\ V\in A^*-\{\epsilon\}, i\in \mathbb{N}_{>0}]\to L^{2-\tor}_A[e_V^{-1}\ |\ V\in A^*-\{\epsilon\}]\]
	is an isomorphism for every elementary abelian $2$-group $A$.
\end{Prop}

\section{Recollections on global spectra} \label{sec:globalhomotopy}

In this section we recall various features of global spectra. We work in the framework of \cite{Sch18}, though our arguments rely only on formal properties of the global stable homotopy category. For our purposes it will be enough to know that (1) every global spectrum gives rise to a global cohomology theory in the sense below, and (2) that there exist global spectra $\MU,\MUP,\MO$ and $\MOP$ whose associated $A$-equivariant spectra are equivalent to the ones introduced by tom Dieck \cite{tD70} and Br\"{o}cker-Hook \cite{BH72}.

\subsection{Global cohomology theories} Throughout this paper we take \emph{global spectrum} to mean an object of the global stable homotopy category with respect to the global family of abelian compact Lie groups, in the sense of \cite[Section 4.3]{Sch18}. Every global spectrum $E$ has an underlying genuine $A$-spectrum $E_A$ for every abelian compact Lie group $A$; hence it defines a cohomology theory $E_A^*(-)$ on based $A$-spaces. Since the spectra $E_A$ are genuine, these cohomology theories are graded over the real representation ring $RO(A)$, though there are some subleties in making this precise. For us it will be enough to consider gradings of the form $E_A^{*+V}(-)$ for $*$ an integer and $V$ an $A$-representation. Then there are natural isomorphisms
\[ E_A^{*+V}(X\wedge S^V)\cong E_A^*(X) \]
for any based $A$-space $X$.

The fact that the $E_A$ come from one global spectrum $E$ induces extra functoriality between these cohomology theories (see \cite[Construction 3.3.6]{DHLPS} for a construction in the more general setting of proper actions of arbitrary Lie groups): Any group homomorphism $\alpha\colon B\to A$ induces a natural restriction map
\[ \alpha^*\colon E^{*+V}_A(X)\to E^{*+\alpha^*(V)}_B(\alpha^*(X)),\]
where $\alpha^*(X)$ denotes $X$ with $B$-action pulled back along $\alpha$. The restriction maps are contravariantly functorial, i.e., we have $(\alpha \circ \beta)^*=\beta^*\circ \alpha^*$ if $\beta\colon C\to B$ is another group homomorphism.

\begin{Example} In particular, setting $X=S^0$ and $V=0$, the assignment
\[ \upi_*E\colon A\mapsto \pi^A_*(E_A)=E_A^{-*}(S^0) \]
defines a contravariant functor from abelian compact Lie groups to graded abelian groups, see \cite[Section 4.2]{Sch18} for a discussion. In addition to the restriction maps there are also transfer maps for subgroup inclusions, but these do not play a role in this paper.
\end{Example}

\subsection{Inductions} \label{sec:induction}
The restriction maps satisfy an additional property, called the \emph{induction isomorphism}: Given a group homomorphism $\alpha\colon B\to A$ and a based $B$-space $X$, we let $A_+\wedge_{\alpha}X$ denote the induction of $X$ to a based $A$-space along $\alpha$. The resulting functor $A_+\wedge_{\alpha}(-)$ is left adjoint to the restriction functor $\alpha^*$, with unit
\[ \eta_X\colon X\to \alpha^*(A_+\wedge_\alpha X) \]
sending $x$ to the class $[1,x]$. For a global spectrum $E$, we obtain an induction map
\[ \ind_{\alpha}\colon E_A^*(A_+\wedge_\alpha X) \xr{\alpha^*} E_B^*(\alpha^*(A_+\wedge_\alpha X)) \xr{(\eta_X)^*}E_B^*(X). \]
These satisfy the following (see \cite[Proposition 3.3.8]{DHLPS} for a proof):
\\
\\
\textbf{Induction isomorphism}: If $X$ is a based $B$-CW complex on which the kernel of $\alpha$ acts freely (in the based sense), then the induction map $\ind_\alpha$ is an isomorphism $E_A^*(A_+\wedge_\alpha X) \cong E_B^ *(X)$.
\\
\newline
The induction isomorphism can be viewed as a combination of the following two special cases:
\begin{enumerate}
	\item If $i\colon B\hookrightarrow A$ is the inclusion of a subgroup, there is an induction isomorphism
	\[ E_A^*(A_+\wedge_B X) \cong E_B^ *(X)\]
	for any based $B$-CW complex $X$. This is a formal consequence of the induction/restriction adjunction relating $A$-spectra and $B$-spectra, together with the fact that given a global spectrum $E$, its underlying $B$-spectrum $E_B$ is equivalent to the restriction of its underlying $A$-spectrum $E_A$ to $B$.
	\item If $1\to B\to A\to A/B \to 1$ is a short exact sequence of abelian compact Lie groups, and $X$ is a based $A$-CW complex on which $B$ acts freely, then there is an induction isomorphism $E_{A/B}^*(X/B)\cong E_A^*(X)$. This special case is closely related to the Adams isomorphism.
\end{enumerate}

We further record the following:
\begin{Lemma} \label{lem:restriction}
Let $i\colon B\hookrightarrow A$ be a subgroup inclusion, $X$ a based $A$-CW complex and consider the counit $\epsilon_X\colon A_+\wedge_B \res^A_BX\to X$. Then the composite
	\[  E_A^*(X)\xr{\epsilon_X^*} E_A^*(A_+\wedge_B \res^A_BX)\mathop{\xr{\ind_i}}_{\cong} E_B^*(\res^A_BX) \]
equals the restriction map.
\end{Lemma}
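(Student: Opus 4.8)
The plan is to unwind the definition of $\ind_i$ from \Cref{sec:induction} and reduce the assertion to one of the triangle identities for the adjunction $A_+\wedge_B(-)\dashv \res^A_B$ on based $A$-spaces. By definition, $\ind_i$ applied with the based $B$-space $\res^A_B X$ is the composite
\[ E_A^*(A_+\wedge_B\res^A_B X)\xr{i^*} E_B^*\big(\res^A_B(A_+\wedge_B\res^A_B X)\big)\xr{(\eta_{\res^A_B X})^*} E_B^*(\res^A_B X), \]
so the composite in the statement is $(\eta_{\res^A_B X})^*\circ i^*\circ \epsilon_X^*$.

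First I would invoke the naturality of the restriction maps: they form a natural transformation of cohomology theories, so for the map of based $A$-spaces $\epsilon_X\colon A_+\wedge_B\res^A_B X\to X$ one has $i^*\circ \epsilon_X^*=(\res^A_B\epsilon_X)^*\circ i^*$ as maps $E_A^*(X)\to E_B^*\big(\res^A_B(A_+\wedge_B\res^A_B X)\big)$. Substituting this, the composite in the statement becomes
\[ (\eta_{\res^A_B X})^*\circ(\res^A_B\epsilon_X)^*\circ i^*=\big(\res^A_B(\epsilon_X)\circ \eta_{\res^A_B X}\big)^*\circ i^*. \]

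The final step is to recognize $\res^A_B(\epsilon_X)\circ \eta_{\res^A_B X}\colon \res^A_B X\to \res^A_B(A_+\wedge_B\res^A_B X)\to \res^A_B X$ as the identity of $\res^A_B X$: this is precisely the composite appearing in the triangle identity $\res^A_B(\epsilon_Y)\circ\eta_{\res^A_B Y}=\id_{\res^A_B Y}$ for the adjunction $A_+\wedge_B(-)\dashv\res^A_B$, applied to $Y=X$. Hence the composite in the statement equals $\id^*\circ i^*=i^*=\res^A_B$. I do not anticipate a genuine obstacle here; the only points that need a line of care are the naturality of $\alpha^*$ in the based $A$-space (part of the structure of a global cohomology theory) and the fact that $A_+\wedge_B(-)$ is genuinely left adjoint to $\res^A_B$ on based $A$-spaces, which is standard and is already used in the very definition of $\ind_\alpha$. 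The decoration $\cong$ over $\ind_i$ in the statement merely records the induction isomorphism for the trivial group $\ker(i)$, and is not needed for the identity of maps.
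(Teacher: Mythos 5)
Your proposal is correct and follows essentially the same route as the paper's proof: a naturality square for the restriction maps applied to $\epsilon_X$, followed by the triangle identity for the adjunction $A_+\wedge_B(-)\dashv\res^A_B$. No issues.
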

\begin{proof} We consider the diagram
\[   \xymatrix{ E_A^*(X) \ar[rr]^-{\epsilon_X^*}  \ar[d]_{\res^A_B} & & E_A^*(A_+\wedge_B \res^A_BX) \ar[d]^{\res^A_B} \ar[drr]_{\cong}^{\ind_{i}} \\
		E_B^*(\res^A_BX) \ar[rr]_-{(\res^A_B \epsilon_X)^*}	& &	E_B^*(\res^A_B(A_+\wedge_B \res^A_BX)) \ar[rr]_-{\eta_{\res_B^A(X)}^*} && E_B^*(\res^A_BX) }\]
where the square commutes by naturality and the triangle commutes by definition. By the triangle equality, the composite of the two lower horizontal map equals the identity of $E_B^*(\res^A_BX)$, which shows the claim.
\end{proof}
\begin{Remark} \label{rem:restriction}
Let $A,C$ be two abelian compact Lie groups, $X$ a based $A$-CW complex and $p\colon A\times C\to A$ the projection. Then $C=\ker(p)$ acts freely on $(A\times C)_+\wedge_A X$ and hence there is an induction isomorphism
\[ E_{A\times C}^*((A\times C)_+\wedge_A X) \mathop{\xleftarrow{\ind_p}}_{\cong} E_A^*(((A\times C)_+\wedge_A X)/C)\cong E_A^*(X). \]
We omit the straightforward proof that this ismorphism agrees with the induction isomorphism associated to the inclusion $A\hookrightarrow A\times C$.
\end{Remark}

\subsection{Homotopy orbits} \label{sec:homorbits}
The induction isomorphisms give rise to the following construction regarding homotopy orbits: Given two abelian compact Lie groups $A$ and $C$, let $E_AC$ be a universal $C$-space in the category of $A$-spaces. In other words, $E_AC$ is an $(A\times C)$-CW complex such that given a subgroup $H\subseteq A\times C$, the $H$-fixed points $(E_AC)^H$ are contractible if $H$ is the graph subgroup $\Gamma(\alpha)$ associated to a homomorphism $\alpha\colon B\to C$ for a subgroup $B$ of $A$, and empty otherwise. Given a based $(A\times C)$-space $X$, the quotient $A$-space $E_AC_+\wedge_C X$ is called the \emph{$(C,A)$-homotopy orbit space of $X$} and denoted $X_{h_AC}$. Now given a global spectrum $E$, we obtain a natural map 
\[ h_{A,C}(X)\colon E^{*}_{A\times C}(X)\to E^{*}_{A}(X_{h_AC}) \]
as the composite
\[ E^{*}_{A\times C}(X) \xr{(E_AC\to *)^*} E^{*}_{A\times C}((E_A C)_+\wedge X)\xleftarrow{\cong} E^{*}_{A}((E_A C)_+\wedge_C X), \]
where the second map is the inverse of the induction isomorphism. These maps have the following property: Let $\alpha \colon A\to C$ be a homomorphism. Choosing a point $*_{\alpha}$ in the contractible space $(E_AC)^{\Gamma(\alpha)}$ yields an $A$-equivariant map $(\id_A,\alpha)^*(X)\to X_{h_AC}$ sending $x$ to the class $[*_{\alpha},x]$.

\begin{Lemma} \label{lem:orbitrestriction} The composite
	\[ E^*_{A\times C}(X)\xr{h_{A,C}(X)} E^{*}_{A}(X_{h_AC})\to E^*_A((\id_A,\alpha)^*(X)) \]
equals the restriction map $(\id_A,\alpha)^*$.
\end{Lemma}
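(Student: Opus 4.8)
The plan is to unwind both maps in the composite to a diagram of counits and units of the various adjunctions involved, and then conclude by two triangle identities, exactly in the spirit of the proof of \Cref{lem:restriction}. Write $B = \Gamma(\alpha) \subseteq A\times C$ for the graph subgroup of $\alpha\colon A\to C$; the projection $A\times C\to A$ restricts to an isomorphism $B\xrightarrow{\cong} A$, and composing the inclusion $(\id_A,\alpha)\colon A\to A\times C$ with this identification is the identity. So the map $E^*_A((\id_A,\alpha)^*(X))\to \cdots$ in the statement is, after this identification, the restriction $E^*_{A\times C}(X)\to E^*_B(\res^{A\times C}_B X)$ composed with a point inclusion.

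First I would make the second map of the composite precise: choosing $*_\alpha\in (E_AC)^{\Gamma(\alpha)}$ gives an $A$-equivariant map $j_\alpha\colon (\id_A,\alpha)^*(X)\to (E_AC)_+\wedge_C X = X_{h_AC}$, $x\mapsto [*_\alpha, x]$, and the claimed map is $(j_\alpha)^*\colon E^*_A(X_{h_AC})\to E^*_A((\id_A,\alpha)^*(X))$. Next I would observe that $j_\alpha$ factors as the composite of point inclusions and the counit: the $A$-map $(\id_A,\alpha)^*(X)\to \res^{A\times C}_{A\times C}\big((E_AC)_+\wedge X\big)$ given by $x\mapsto (*_\alpha, x)$ (using that $(E_AC)^{\Gamma(\alpha)}\simeq *$ is nonempty and $A$-fixed under the graph action) composed with the inverse of the induction isomorphism $E^*_C((E_AC)_+\wedge_A X)\xleftarrow{\cong} E^*_{A\times C}((E_AC)_+\wedge X)$. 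By the definition of $h_{A,C}(X)$, the composite in the statement therefore becomes $E^*_{A\times C}(X)\to E^*_{A\times C}((E_AC)_+\wedge X)$ (restriction along $E_AC\to *$) followed by pullback along $x\mapsto (*_\alpha,x)$. The latter two maps compose to the map induced by the $A\times C$-equivariant map $(\id_A,\alpha)^*(X)\to X$ that is the counit $\epsilon_X$ of the adjunction $(A\times C)_+\wedge_B(-)\dashv \res^{A\times C}_B$ followed by the identification $B\cong A$ — here I use that $(E_AC)^{\Gamma(\alpha)}$ being $A$-fixed and contractible lets one replace the chosen point by the ``universal'' one, i.e.\ this step is precisely an instance of \Cref{lem:restriction} applied to the subgroup $B = \Gamma(\alpha)\hookrightarrow A\times C$.

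Assembling these reductions, the composite equals $\res^{A\times C}_{B}$ followed by the isomorphism $E^*_B(\res^{A\times C}_B X)\cong E^*_A((\id_A,\alpha)^*(X))$ induced by $B\xrightarrow{\cong} A$, which is by definition the restriction map $(\id_A,\alpha)^*$. I would organize the bookkeeping into a single commutative diagram: one square commuting by naturality of restriction, one triangle commuting by definition of the induction map, and one triangle being the triangle identity for the $\big((A\times C)_+\wedge_B(-)\big)\dashv \res^{A\times C}_B$ adjunction — the same three-cell structure as in \Cref{lem:restriction}, with the extra input that contractibility of $(E_AC)^{\Gamma(\alpha)}$ makes the choice of basepoint $*_\alpha$ immaterial up to $A$-homotopy.

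The main obstacle is purely organizational rather than conceptual: one must carefully match the induction isomorphism built into the definition of $h_{A,C}(X)$ (which is the one for the inclusion $C\hookrightarrow A\times C$ after passing to homotopy orbits) with the induction isomorphism for the graph subgroup $\Gamma(\alpha)\hookrightarrow A\times C$, and check that the point-inclusion map $x\mapsto [*_\alpha,x]$ is compatible with both. This is exactly the kind of diagram chase carried out in \Cref{lem:restriction} and in \Cref{rem:restriction}, so I would lean on those and on the functoriality of the induction isomorphism in the subgroup, rather than re-deriving anything from the point-set model.
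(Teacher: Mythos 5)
Your argument follows the paper's proof essentially verbatim: introduce the graph subgroup $\Gamma(\alpha)\subseteq A\times C$, factor the point-inclusion pullback through the induction isomorphism for $\Gamma(\alpha)\hookrightarrow A\times C$, and conclude by \Cref{lem:restriction} together with \Cref{rem:restriction}. The one spot to tidy is the phrase describing the counit as an ``$A\times C$-equivariant map $(\id_A,\alpha)^*(X)\to X$''; the counit is defined on $(A\times C)_+\wedge_{\Gamma(\alpha)}\res^{A\times C}_{\Gamma(\alpha)}X$, and $(\id_A,\alpha)^*(X)$ is its $C$-quotient, which is exactly where \Cref{rem:restriction} enters.
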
 
\begin{proof} We consider the graph subgroup $\Gamma(\alpha)\subseteq A\times C$. Our chosen point $*_{\alpha}$ induces a $\Gamma(\alpha)$-equivariant map $[*_{\alpha},-]\colon X\to E_AC_+\wedge X$, which we can induce up to an $(A\times C)$-equivariant map 
\[ (A\times C)_+\wedge_{\Gamma(\alpha)} X\to E_AC_+\wedge X. \]
Now, since the intersection of $C$ with any conjugate of $\Gamma(\alpha)$ is empty, the $C$-action on $(A\times C)_+\wedge_{\Gamma(\alpha)} X$ is free. The quotient $A$-space $((A\times C)_+\wedge_{\Gamma(\alpha)} X)/C$ identifies with $(\id_A,\alpha)^*(X)$ via the homeomorphism sending $x$ to the class $[1,x]$. Hence, by naturality of the induction isomorphism we obtain a commutative diagram
	\[ \xymatrix{ E^*_{A\times C}(X) \ar[r]^-{(E_AC\to *)^*} \ar[dr]_-{(\epsilon_X)^*} & E_{A\times C}^*(E_AC_+\wedge X) \ar[d]^{[*_{\alpha},-]^*} &  E_{A}^*(E_AC_+\wedge_C X) \ar[d]^{[*_{\alpha},-]^*} \ar[l]_-{\ind }^-{\cong}\\
	& E_{A\times C}^ *((A\times C)_+\wedge_{\Gamma(\alpha)} X)  & E^*_{A}((\id_A,\alpha)^* X). \ar[l]_-{\ind }^-{\cong}
}\]
The composite of the upper two horizontal maps equals $h_{A,C}(X)$ by definition. Hence it remains to see that the lower composite equals the restriction map $(\id_A,\alpha)^*$. But identifying $A$ with $\Gamma(\alpha)$ by sending $a$ to $(a,\alpha(a))$, this follows from \Cref{lem:restriction} and \Cref{rem:restriction}.
\end{proof}

\subsection{Products} \label{sec:products} The functor that assigns to a global spectrum $E$ the genuine $A$-spectrum $E_A$ is strong symmetric monoidal. Hence, if $E$ is a commutative global homotopy ring spectrum (i.e., a commutative monoid in the homotopy category of global spectra with the smash product \cite[Corollary 4.3.26]{Sch18}), then each $E_A$ is a commutative homotopy $A$-ring spectrum. Hence there are product maps of the form
\[ E_A^{n+V}(X)\otimes E_A^{m+W}(Y)\to E_A^{n+m+V+W}(X\wedge Y)\]
and these are compatible with the restriction maps $\alpha^*$. In particular, if $E$ is a commutative global homotopy ring spectrum, then $\upi_*E$ is a contravariant functor from abelian compact Lie groups to graded commutative rings.

In the case $X=Y$, pulling back along the diagonal $X\to X\wedge X$ induces an internal multiplication on $E_A^{*+*}(X)$. Given $X$, we say that an element $t\in E_A^{n+V}(X\wedge S^W)$ is a unit if there exists an element $s\in E_A^{-n+W}(X\wedge S^V)$ such that $s\cdot t\in E_A^{W+V}(X\wedge S^{V+W})$ corresponds to $1$ under the isomorphism 
\[E_A^{W+V}(X\wedge S^{V+W})\cong E_A^{W+V}(X\wedge S^{W+V})\cong E_A^0(S^0)=\pi_0^A(E).\]
In terms of the $RO(A)$-grading, the class $t$ defines an element in $E_A^{n+V-W}(X)$, and being a unit with respect to the $RO(A)$-graded multiplication amounts to the condition above.

\section{Real bordism} \label{sec:real}

We now begin our study of equivariant bordism spectra and global group laws. We start with real bordism to demonstrate our general method while avoiding some of the complications that arise in the complex case.
\subsection{Global $2$-torsion group laws} We work over the family of elementary abelian $2$-groups and write `$\el$-algebra' to mean a functor
\[ \text{(elementary abelian $2$-groups)}^{op}\to \text{commutative rings}.\]
In this setup, we make the following definition:
\begin{Def}
	A \emph{global $2$-torsion group law} is an $\el$-algebra $X$ together with a class $e\in X(C_2)$, called the \emph{coordinate}, such that for every elementary abelian $2$-group $A$ and non-trivial $V\in A^*$ the sequence
	\begin{equation} \label{eq:sequence} 0\to X(A)\xr{e_V\cdot } X(A)\xr{\res_{\ker(V)}^A} X(\ker(V))\to 0 \end{equation}
	is exact. Here, $e_V$ denotes the pullback $V^*e\in X(A)$.
	
	A morphism of $2$-torsion global group laws is a map of $\el$-algebras $X\to Y$ sending the coordinate of $X$ to that of $Y$. We write $GL_{gl}^{2-\tor}$ for the category of $2$-torsion global group laws.
\end{Def}
We will usually leave the choice of coordinate implicit and denote the pair $(X,e)$ simply by $X$. We call the element $e_V$ the \emph{Euler class} of $V$. Note that the surjectivity in \eqref{eq:sequence} holds for any $\el$-algebra, since every inclusion of elementary abelian $2$-groups has a section. So the condition on $e$ is that any $e_V$ with $V\neq \epsilon$ is a regular element, and that divisibility by $e_V$ is detected by restricting to $\ker(V)$ (in particular that $\res^A_{\ker(V)}(e_V)=0$, which follows from the universal case $\res^{C_2}_1(e)=0$).
\begin{Remark} At this point the notion of a $2$-torsion global group law might seem a little mysterious. The definition is inspired by equivariant stable homotopy theory, where long exact sequences relating Euler classes and restrictions to subgroups are a standard tool. When combined with global structures and real orientations, these long exact sequences split up into short exact sequences of the form \eqref{eq:sequence}, as we will see shortly in \Cref{sec:globalrealoriented}.
\end{Remark}
In \Cref{sec:realadjunction1} we will show that a global $2$-torsion group law $X$ naturally encodes a $2$-torsion $A$-equivariant formal group over $X(A)$, for every elementary abelian $2$-group $A$. For now we list some elementary properties:

\begin{Lemma} \label{lem:elementary}
	\begin{enumerate}
		\item Let $X$ be a global $2$-torsion group law and $A$ an elementary abelian $2$-group. If $V_1,\hdots,V_n\in A^*$ are linearly independent, then the sequence $(e_{V_1},\hdots,e_{V_n})\in X(A)$ is regular and generates the kernel of the restriction map
		\[ \res^A_{\cap \ker(V_i)}\colon X(A)\to X(\bigcap_{i=1}^n(\ker(V_i))).\]
		In particular, given a basis $V_1,\hdots,V_r$ of $A^*$, the kernel of the restriction map to the trivial group is generated by the regular sequence $(e_{V_1},\hdots,e_{V_r})$.
		\item If $X$ is a global $2$-torsion group law, then $2=0$ in $X(1)$ and therefore $2=0$ in $X(A)$ for any $A\in \el$. Hence, $X$ takes image in $\F_2$-algebras.
	\end{enumerate} 
\end{Lemma}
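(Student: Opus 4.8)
The plan is to establish (1) by induction on $n$, and then to derive (2) by applying (1) to $A=C_2\times C_2$.

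For (1), the base case $n=1$ is exactly the exactness of \eqref{eq:sequence}. For the inductive step I set $B=\bigcap_{i=1}^{n-1}\ker(V_i)$. Since every subgroup inclusion of elementary abelian $2$-groups splits, all restriction maps of $X$ are split surjective, so the inductive hypothesis says precisely that $\res^A_B$ descends to an isomorphism $X(A)/(e_{V_1},\dots,e_{V_{n-1}})\xrightarrow{\ \cong\ }X(B)$. A dimension count identifies the annihilator of $B$ in $A^*$ with $\langle V_1,\dots,V_{n-1}\rangle$, so $V_n$ restricts to a \emph{non-trivial} character $V_n|_B$ of $B$ with $\ker(V_n|_B)=\bigcap_{i=1}^n\ker(V_i)$, and naturality of Euler classes gives $\res^A_B(e_{V_n})=e_{V_n|_B}$. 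Applying \eqref{eq:sequence} over $B$ to the character $V_n|_B$ and transporting the result through the isomorphism above then shows that $e_{V_n}$ is a non-zero-divisor modulo $(e_{V_1},\dots,e_{V_{n-1}})$ --- hence $(e_{V_1},\dots,e_{V_n})$ is regular --- and, via the standard one-step chase (lift a preimage of the image in $X(B)$, subtract a multiple of $e_{V_n}$, land in $\ker(\res^A_B)$), that $(e_{V_1},\dots,e_{V_n})$ generates $\ker\bigl(\res^A_{\cap\ker(V_i)}\bigr)$. The displayed special case is $n=r$ with $V_1,\dots,V_r$ a basis, in which case $\bigcap\ker(V_i)$ is the trivial group.

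For (2), I apply (1) to $A=C_2\times C_2$ with its three non-trivial characters $V_1,V_2,V_3=V_1V_2$. Since $(e_{V_1},e_{V_2})$ and $(e_{V_1},e_{V_3})$ both generate $\ker(\res^A_1)$, we may write $e_{V_3}=\alpha e_{V_1}+\beta e_{V_2}$ for some $\alpha,\beta\in X(A)$. Restricting this to $\ker(V_2)\cong C_2$ --- where $e_{V_2}$ goes to $0$ and $e_{V_1},e_{V_3}$ both go to the same regular element (the pullback of $e$ along $V_1|_{\ker(V_2)}$, which is an isomorphism onto $C_2$) --- forces $\res^A_{\ker(V_2)}(\alpha)=1$, hence $\alpha\equiv 1\pmod{(e_{V_2})}$ because $\ker(\res^A_{\ker(V_2)})=(e_{V_2})$ by \eqref{eq:sequence}; symmetrically $\beta\equiv 1\pmod{(e_{V_1})}$. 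Thus $e_{V_3}=e_{V_1}+e_{V_2}+\gamma\,e_{V_1}e_{V_2}$ for some $\gamma\in X(A)$. Restricting once more to the diagonal $\Delta\cong C_2$ --- where $e_{V_3}\mapsto 0$ and $e_{V_1},e_{V_2}$ both map to the same regular element $\bar e$ --- yields $0=2\bar e+\bar\gamma\,\bar e^{\,2}=\bar e\,(2+\bar\gamma\bar e)$, so by regularity of $\bar e$ we get $2=-\bar\gamma\bar e$ in $X(\Delta)$; restricting this identity to the trivial group, where $\bar e$ maps to $\res^{C_2}_1(e)=0$, gives $2=0$ in $X(1)$. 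Finally $2=0$ in every $X(A)$ by pulling back along $A\to 1$, so $X$ is valued in $\F_2$-algebras; this is precisely the global incarnation of the argument in \Cref{sec:2tor} that a $2$-torsion formal group law is defined over an $\F_2$-algebra.

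The argument has no genuine conceptual obstacle once (1) is available; the delicate part is the bookkeeping in (2) --- correctly recording which of $V_1,V_2,V_3$ restricts non-trivially to each of the three $C_2$-subgroups of $C_2\times C_2$, matching the kernels of the ensuing restriction maps with the Euler-class ideals produced by (1), and keeping the a priori unknown multipliers $\alpha,\beta,\gamma$ under just enough control to isolate the coefficient $2$ at the end.
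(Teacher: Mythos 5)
Your proof is correct and follows essentially the same strategy as the paper: part (1) by induction on $n$ using the defining exact sequence applied over the intermediate subgroup $B=\bigcap_{i<n}\ker(V_i)$, and part (2) by working inside $X(C_2\times C_2)$, expanding $e_{1,1}$ in terms of $e_{1,0}$ and $e_{0,1}$, and restricting along the diagonal. In (2) your bookkeeping is slightly different (and arguably cleaner): you invoke part (1) to write $e_{V_3}=\alpha e_{V_1}+\beta e_{V_2}$ at once and then pin down $\alpha\equiv 1\pmod{e_{V_2}}$ and $\beta\equiv 1\pmod{e_{V_1}}$ symmetrically by restricting to the two $C_2$-factors, whereas the paper peels off the expansion one step at a time using only the defining exact sequence. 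Both routes land on the identity $e_{1,1}=e_{1,0}+e_{0,1}+\gamma\, e_{1,0}e_{0,1}$ and conclude identically from the diagonal restriction. The only structural difference worth noting is that your proof of (2) depends on (1), while the paper's proof of (2) uses only the axiom \eqref{eq:sequence} directly; this is harmless since (1) is proved first.
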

\begin{proof} (1): The proof is via induction on $n$. We assume the statement for $n-1$ and write $K_{n-1}$ for the intersection $\cap_{i=1}^{n-1}(\ker(V_i))$. Then the quotient $X(A)/(e_{V_1},\hdots,e_{V_{n-1}})$ identifies with $X(K_{n-1})$ via the restriction map. Since the $V_i$ are linearly independent, the restriction of $V_n$ to a character of $K_{n-1}$ is non-trivial. Hence, by the axioms of a global $2$-torsion group law, the element $\res^A_{K_{n-1}} e_{V_n}$ is a regular element in $X(K_{n-1})$ generating the kernel of the restriction map to $X(K)$, where $K$ is the kernel of the restriction of $V_n$ to $K_{n-1}$. Since $K=K_{n-1}\cap \ker(V_n)$, the claim follows.
	
	(2): We let $e_{1,0},e_{0,1}, e_{1,1}\in X(C_2\times C_2)$ denote the Euler classes associated to the projection to the first component, projection to the second component, and the multiplication map, respectively. Then $e_{1,1}-e_{1,0}$ lies in the kernel of the restriction to the first component which is generated by $e_{0,1}$. Hence there exists a unique $x\in X(C_2\times C_2)$ such that $e_{1,1}=e_{1,0}+x\cdot e_{0,1}$. Similarly, let $x_{0,1}\in X(C_2)$ be the restriction of $x$ along the inclusion of $C_2$ in the second component. Then $x-\pr_2^*(x_{0,1})$ lies in the kernel of the restriction to the second component and there exists an $x'\in X(C_2\times C_2)$ such that $x=\pr_2^*(x_{0,1})+x'\cdot e_{1,0}$. Put together we have $e_{1,1}=e_{1,0}+\pr_2^*(x_{0,1})\cdot e_{0,1}+x'\cdot e_{1,0}\cdot e_{0,1}$. Now restricting to the second component, we find that $e=x_{0,1}\cdot e$, and hence by regularity $x_{0,1}$ must be $1$. So in summary we see that
	\[ e_{1,1}=e_{1,0}+e_{0,1}+ x'\cdot e_{1,0}\cdot e_{0,1}. \]
	Restricting along the diagonal $\diag \colon C_2\to C_2\times C_2$ this yields
	\[ 0= 2e + \diag^*(x')\cdot e^2. \]
Regularity of $e$ then implies that $0=2 + \diag^*(x')\cdot e$, which restricts to $0=2$ at the trivial group, as claimed.
\end{proof}

\begin{Remark} \label{rem:decomposition} Let $X$ be a global $2$-torsion group law. Then for every element $x\in X(C_2)$ there exist unique elements $x_0\in X(1)$ and $x'\in X(C_2)$ such that $x=p^ *(x_0)+x'\cdot e$, where $p\colon C_2\to 1$ is the projection. Indeed, the uniqueness follows from the fact that restricting to the trivial group shows that $x_0$ must be $\res^{C_2}_1(x)$ and so regularity implies that $x'$ is also unique. Conversely, the element $x-p^*(\res^{C_2}_1(x))$ lies in the kernel of $\res^{C_2}_1$, hence it is uniquely divisible by $e$, proving existence. Iterating this construction shows that for given $n\in \N$, every $x\in X(C_2)$ can be uniquely written as $x=\sum_{i=0}^{n-1} p^*(x_i)\cdot e^i + \widetilde{x}\cdot e^n$ for elements $x_i\in X(1)$ and $\widetilde{x}\in X(C_2)$. In particular, the completion of $X(C_2)$ at the augmentation ideal is isomorphic to the power series ring $X(1)[|e|]$.
	
Similarly, every $x\in X(C_2\times C_2)$ can be written uniquely as 
\[x=p^*(x_0)+\pr_1^*(x_{1,0})\cdot e_{1,0} + \pr_2^*(x_{0,1})\cdot e_{0,1} + x'\cdot e_{1,0} \cdot e_{0,1} \]
with $x_0\in X(1)$, $x_{1,0},x_{0,1}\in X(C_2)$ and $x'\in X(C_2\times C_2)$. The special case $x=e_{1,1}$ was shown in the proof of Part (2) of \Cref{lem:elementary}. Applying the polynomial expansions from the first paragraph of the remark to the elements $x_{1,0},x_{0,1}\in X(C_2)$, this implies that modulo the ideal $(e_{1,0}^m,e_{0,1}^n)$, every element $x\in X(C_2\times C_2)$ can be written uniquely as \[x=\sum_{0\leq i<m,0\leq j<n}p^*(x_{i,j})\cdot e_{1,0}^i\cdot e_{0,1}^j \]
with coefficients $x_{i,j}\in X(1)$. There are analogous decompositions for $x\in X(B)$ for all $B\in \el$. More generally, elements $x\in X(A\times B)$ can be expressed as polynomials with coefficients in $X(A)$ modulo certain products of Euler classes, for all pairs $A,B\in \el$. These constructions are studied systematically in \Cref{sec:realadjunction1} and give the link from global group laws to $A$-equivariant formal group laws.
\end{Remark}

The following are examples of global $2$-torsion group laws:

\begin{Example}[Additive] \label{ex:torsionadd} Let  $\Ga$ be the $\el$-algebra defined via
	\[ \Ga(A)=\F_2[e_V, V\in A^*]/(e_V+e_W+e_{VW}),\]
with restriction maps $\alpha^*\colon \Ga(A)\to \Ga(B)$ sending $e_V$ to $e_{\alpha^*(V)}$. Then the non-trivial element $\sigma \in (C_2)^*$ defines a coordinate $e=e_{\sigma}\in \Ga(C_2)$. This turns $\Ga$ into a global $2$-torsion group law, which we call the \emph{additive $2$-torsion formal group law}. Indeed, given a non-trivial character $V\in A^*$, we can extend it to a basis $V,V_2,\hdots,V_n$ of $A^*$. Then the ring $\Ga(A)$ identifies with the polynomial ring $\F_2[e_{V},e_{V_2}\hdots,e_{V_n}]$ and restriction to $\ker(V)$ amounts to the map
\[ \F_2[e_{V},e_{V_2}\hdots,e_{V_n}]\to \F_2[e_{V_2}\hdots,e_{V_n}]\]
sending $e_V$ to $0$. So the sequence $0\to \Ga(A)\xr{e_V\cdot} \Ga(A)\xr{\res^A_{\ker(V)}} \Ga(\ker(V))\to 0$ is short exact as required.

A more compact way to write down $\Ga$ is via the global presentation
\[ \Ga=\F_2[e]/(e_{1,0}+e_{0,1}+e_{1,1}), \]
where $e$ is a free variable in $\Ga(C_2)$ and $e_{1,0},e_{0,1}, e_{1,1}\in \Ga(C_2\times C_2)$ are as in the proof of \Cref{lem:elementary}.
\end{Example}

\begin{Example}[Ordinary $2$-torsion formal group laws] \label{ex:realcomplete} A global $2$-torsion group law is said to be \emph{complete} if each $X(A)$ is complete with respect to the kernel of the restriction map $X(A)\to X(1)$. When $X$ is complete, $X(C_2)$ is given by the power series ring $X(1)[|e|]$, see \Cref{rem:decomposition}. Moreover, given $A,B\in \el$, 	the map \[ X(A)\hotimes_{X(1)} X(B) \to X(A\times B)\]
is an isomorphism, where the tensor product is completed with respect to the induced topologies on $X(A)$ and $X(B)$ (see \Cref{lem:realadjunction} for a proof). Choosing a basis $V_1,\hdots,V_n$ of $A^*$, this shows by induction that $X(A)$ is a power series ring over $X(1)$ on the variables $e_{V_1},\hdots,e_{V_n}$.

The category of complete global $2$-torsion group laws is equivalent to the category of non-equivariant $2$-torsion formal group laws. The equivalence sends a complete global $2$-torsion group law $X$ to the tuple $(X(1),X(C_2),e)$ with comultiplication
\[ X(C_2)\xr{m^*} X(C_2\times C_2)\xleftarrow{\cong } X(C_2)\hotimes_{X(1)} X(C_2).\]
In other words, the non-equivariant $2$-torsion formal group law associated to $X$ is given by the Euler class $e_{1,1}\in X(C_2\times C_2)\cong X(1)[|e_{1,0},e_{0,1}|]$.
\end{Example}
\begin{Example}[More general complete Hopf algebras] \label{ex:realgencomplete} More examples of the previous sort can be constructed by starting with a triple $(k,R,y)$ where $R$ is a complete topological Hopf algebra over $k$ (with the topology not necessarily induced by the augmentation $R\to k$) and $y\in R$ a regular generator of the augmentation ideal. In particular, every $A$-equivariant $2$-torsion formal group law for some $A\in \el$ gives rise to a global $2$-torsion group law. More details are given in \Cref{sec:realadjunction2}.
\end{Example}

Many more examples follow from the adjunction defined in the following section, which shows that any $\el$-algebra $X$ with a chosen element $e\in X(C_2)$ can be turned into a global $2$-torsion group law in a universal way. For example, there is an initial global $2$-torsion group law (which we will later see is isomorphic to $\bL^{2-\tor}$, of which we don't yet know that it satisfies the regularity properties of a global $2$-torsion group law), a free global $2$-torsion group law on a class $x\in X(A)$, or a global $2$-torsion group law classifying the choice of two coordinates (see \Cref{sec:cooperations}).

\subsection{The universal global $2$-torsion group law} \label{sec:realuniversal}

In this section we study some formal properties of global $2$-torsion group laws. Since we make use of similar constructions throughout the paper, we recall the following standard categorical lemma, see for example \cite[Theorem 1.39]{AR94}:
\begin{Lemma} \label{lem:categorical}
	Let $\cC$ be a bicomplete category and $M=(f_i\colon c_i\to d_i)_{i\in I}$ a set of morphisms in $\cC$. Let $\cD\subseteq \cC$ be the subcategory of $M$-local objects, i.e., those objects $d$ for which $\Hom_{\cC}(f_i,d)$ is a bijection for all $i\in I$. Assume further that the sources $c_i$ are small in the sense that $\Hom_{\cC}(c_i,-)$ commutes with filtered colimits. Then the inclusion $\cD\to \cC$ has a left adjoint $l$ and $\cD$ has all colimits and limits.	
\end{Lemma}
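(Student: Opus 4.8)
The plan is to recognise this as the standard construction of a reflective localisation at a set of maps, built from (the dual of) the small object argument.

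First I would record that $\cD$ is closed under limits in $\cC$: for a diagram $d_\bullet$ in $\cD$ with limit $L=\lim^{\cC}d_\bullet$ and each $i\in I$, the map $\Hom_{\cC}(f_i,L)$ is the limit of the bijections $\Hom_{\cC}(f_i,d_\bullet)$ (since $\Hom_{\cC}(c_i,-)$ and $\Hom_{\cC}(d_i,-)$ preserve limits), hence itself a bijection; so $\cD$ has all limits, created by the inclusion.

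The heart of the argument is the construction of the left adjoint. I would enlarge $M$ to the set $M'=\{f_i\colon c_i\to d_i\}_{i\in I}\cup\{\nabla_i\colon d_i\sqcup_{c_i}d_i\to d_i\}_{i\in I}$, with $\nabla_i$ the fold map, observing that an object $d$ is $M$-local exactly when every lifting problem against a morphism of $M'$ has a solution (a lift against $f_i$ exists iff $\Hom_{\cC}(f_i,d)$ is surjective, and one against $\nabla_i$ iff it is injective). Writing $g\colon a_g\to b_g$ for a generic element of $M'$, define an endofunctor $F$ of $\cC$ with a natural transformation $\iota\colon\id\Rightarrow F$ by letting $F(Y)$ be the pushout of
\[ Y\longleftarrow\coprod_{g\in M'}\ \coprod_{\phi\colon a_g\to Y}a_g\longrightarrow\coprod_{g\in M'}\ \coprod_{\phi\colon a_g\to Y}b_g, \]
with the left map given by $\phi$ and the right map by $g$ on the $(g,\phi)$-summand. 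After choosing a regular cardinal $\lambda$ such that $\Hom_{\cC}(a_g,-)$ preserves $\lambda$-filtered colimits for every $g\in M'$, form the transfinite sequence $X=X_0\xrightarrow{\iota}X_1\to\cdots\to X_\alpha\xrightarrow{\iota}X_{\alpha+1}\to\cdots$, with $X_\beta=\colim_{\alpha<\beta}X_\alpha$ at limit ordinals, and set $l(X)=X_\lambda$ with $\eta_X\colon X\to l(X)$ the canonical map. Then $l(X)$ is $M$-local: any map $a_g\to X_\lambda$ factors through some $X_\alpha$ with $\alpha<\lambda$ (the colimit is $\lambda$-filtered and $a_g$ is $\lambda$-small), and the passage to $X_{\alpha+1}=F(X_\alpha)$ already solves the corresponding lifting problem. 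For the universal property, given $d\in\cD$ and a map $X\to d$, one extends it up the tower by transfinite induction: the pushout defining $X_{\alpha+1}$ identifies extensions of $X_\alpha\to d$ with tuples of lifts against $d$ of the assembled lifting problems, and such lifts exist and are \emph{unique} because $d$ is $M$-local, while limit stages are forced by the colimit; hence $\Hom_{\cC}(l(X),d)\to\Hom_{\cC}(X,d)$ is a bijection and $l$ is left adjoint to the inclusion $\cD\hookrightarrow\cC$. Colimits in $\cD$ then come for free: $\colim^{\cD}d_\bullet=l(\colim^{\cC}d_\bullet)$, since $\Hom_{\cD}(l(\colim^{\cC}d_\bullet),e)\cong\Hom_{\cC}(\colim^{\cC}d_\bullet,e)\cong\lim\Hom_{\cC}(d_\bullet,e)$ for $e\in\cD$.

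The one point that is not purely formal, and which I expect to be the main obstacle, is the set-theoretic bookkeeping guaranteeing that the transfinite construction terminates: one must know that a suitable regular cardinal $\lambda$ exists, i.e., that all the domains $a_g$ are small — the $c_i$ by hypothesis, but also the codiagonal domains $d_i\sqcup_{c_i}d_i$, which is where one uses that in all the applications in this paper $\cC$ is a category of presheaves valued in commutative rings and hence locally presentable (so that every object is small). Granting this, the verification that $l(X)$ is local and that $\eta_X$ is universal is a routine transfinite induction, and everything else is a formal consequence of having the adjunction; this is the content of \cite[Theorem 1.39]{AR94}.
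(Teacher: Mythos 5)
Your proof is correct and is precisely the standard orthogonal-reflection/small-object argument that the paper invokes by citing \cite[Theorem 1.39]{AR94} rather than proving the lemma itself; the concrete two-step iteration \eqref{eq:leftadjoint} used later is exactly one pass of your functor $F$, with $\omega$ stages sufficing there because all sources and targets are finitely presentable. Your caveat is also well placed: the hypotheses as literally stated only make the sources $c_i$ small, whereas the transfinite construction additionally needs the codomains (and hence the codiagonal domains $d_i\sqcup_{c_i}d_i$) to be $\lambda$-presentable for some regular $\lambda$, which is guaranteed in every application in the paper because the relevant categories of $\Ze$- and $\Fe$-algebras are locally presentable.
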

In our situation, we take $\cC$ to be the following category of $\Fe$-algebras:
\begin{Def} An $\Fe$-algebra is an $\el$-algebra
\[ X\colon \text{(elementary abelian $2$-groups)}^{op}\to \text{commutative rings},\]
equipped with an element $e\in X(C_2)$ which restricts to $0$ at the trivial group.
\end{Def}
In other words, an $\Fe$-algebra is an $\el$-algebra $X$ together with a map 
\[ \Fe = \F_2[e]/(\res_1^{C_2}(e))\to X,\]
where $\F_2[e]$ is the free $\el$-algebra over $\F_2$ on a class $e$ in level $C_2$. In particular, the category of $\Fe$-algebras is bicomplete, with colimits and limits computed levelwise. For the set of morphisms $M$ in the lemma we take the collection
\[ f_{A,V}\colon \Fe[x]/(\res_{\ker(V)}^A(x)) \to \Fe[y], \]
indexed over all pairs of an elementary abelian $2$-group $A$ (or rather a set of representatives of isomorphism classes of such) and a non-trivial element $V\in A^*$. Here, $x$ and $y$ are free variables in level $A$, and $f_{A,V}$ sends $x$ to $e_V\cdot y$. Note that every global $2$-torsion group law is in particular an $\Fe$-algebra, since the short exact sequence
\[ 0\to X(C_2)\xrightarrow{e\cdot} X(C_2)\xrightarrow{\res^{C_2}_1} X(1) \to 0 \]
(associated to the tautological $C_2$-character $\sigma=\id_{C_2}\colon C_2\to C_2$) implies that $e$ restricts to $0$ at the trivial group. Moreover, the following holds:
\begin{Lemma}
	An $\Fe$-algebra $X$ is a global $2$-torsion group law if and only if it is local with respect to $M$.
\end{Lemma}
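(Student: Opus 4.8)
The plan is to make the map $\Hom_{\cC}(f_{A,V},X)$ completely explicit and to observe that its being bijective for all pairs $(A,V)$ is literally the exactness condition defining a global $2$-torsion group law.

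First I would identify the two Hom-sets in question. Since $\Fe[y]$ is the free $\Fe$-algebra on one generator $y$ in degree $A$, evaluation at $y$ gives a natural bijection $\Hom_{\cC}(\Fe[y],X)\cong X(A)$, using that $X$ already carries its $\Fe$-algebra structure so that a morphism out of $\Fe[y]$ amounts precisely to a choice of element of $X(A)$. Similarly a morphism $\Fe[x]\to X$ is an element $a\in X(A)$, and it factors through $\Fe[x]/(\res^A_{\ker(V)}(x))$ exactly when $\res^A_{\ker(V)}(a)=0$; hence $\Hom_{\cC}(\Fe[x]/(\res^A_{\ker(V)}(x)),X)$ is naturally identified with $\ker\bigl(\res^A_{\ker(V)}\colon X(A)\to X(\ker(V))\bigr)$.

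Next I would trace the induced map through $f_{A,V}$. If a morphism $\Fe[y]\to X$ is classified by $b\in X(A)$, then precomposing with $f_{A,V}$ yields the morphism classified by $e_V\cdot b$; so under the above identifications $\Hom_{\cC}(f_{A,V},X)$ becomes the multiplication-by-$e_V$ map from $X(A)$ to $\ker(\res^A_{\ker(V)})$. The one point to check is that $e_V\cdot b$ really lies in that kernel, i.e.\ that $\res^A_{\ker(V)}(e_V)=0$: this holds because $e_V=V^*(e)$ and the composite $\ker(V)\hookrightarrow A\xr{V}C_2$ is the trivial homomorphism, which factors through the trivial group, on which $e$ restricts to $0$ by the definition of an $\Fe$-algebra. (The same computation shows that each $f_{A,V}$ is indeed a well-defined morphism of $\Fe$-algebras.)

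Finally I would read off the equivalence. The multiplication map $b\mapsto e_V\cdot b$ is injective exactly when $e_V$ is a non-zero-divisor on $X(A)$, and surjects onto $\ker(\res^A_{\ker(V)})$ exactly when $e_V\cdot X(A)=\ker(\res^A_{\ker(V)})$. Running over all $(A,V)$, these two conditions together say precisely that each sequence
\[ 0\to X(A)\xr{e_V} X(A)\xr{\res^A_{\ker(V)}} X(\ker(V)) \]
is exact; since the last map of \eqref{eq:sequence} is automatically surjective ($\ker(V)$ being a direct summand of $A$, so that $\res^A_{\ker(V)}$ is split), this is exactly the condition in the definition. Conversely every global $2$-torsion group law is an $\Fe$-algebra, as it takes values in $\F_2$-algebras by \Cref{lem:elementary}(2) and its coordinate restricts to $0$ at the trivial group, so the statement is well posed. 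I do not expect a genuine obstacle here: the argument is an unwinding of definitions, and the only step carrying any content is the vanishing $\res^A_{\ker(V)}(e_V)=0$, which is exactly what pins down the target of $\Hom_{\cC}(f_{A,V},X)$ as the kernel occurring in the exact sequence.
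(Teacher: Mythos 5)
Your proposal is correct and follows the same route as the paper: the paper's proof is the one-line observation that locality with respect to $f_{A,V}$ says exactly that every element of $X(A)$ restricting to $0$ on $\ker(V)$ is uniquely divisible by $e_V$, which (given $\res^A_{\ker(V)}(e_V)=0$ and the automatic surjectivity of the split restriction) is the defining exactness condition. You have simply unwound the two Hom-sets and the induced map more explicitly, with no difference in substance.
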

\begin{proof} An $\Fe$-algebra $X$ is local with respect to a fixed $f_{A,V}$ if and only if every element $x\in X(A)$ which restricts to $0$ at $X(\ker(V))$ is uniquely divisible by $e_V$. Since any $\Fe$-algebra satisfies $\res^A_{\ker(V)}(e_V)=0$, this is precisely the definition of a global $2$-torsion group law.
\end{proof}

\begin{Cor} The category of global $2$-torsion group laws is a reflective subcategory of the category of $\Fe$-algebras and closed under limits. In particular, it has all limits and colimits. We write $\wLt$ for the initial object.
\end{Cor}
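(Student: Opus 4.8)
The plan is to deduce this as a formal consequence of \Cref{lem:categorical}, using the identification (from the lemma immediately above) of global $2$-torsion group laws with the $M$-local objects in the category of $\Fe$-algebras. So the only real work is checking the two hypotheses of \Cref{lem:categorical}: bicompleteness of the ambient category, and smallness of the sources of the maps in $M$.

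First I would verify bicompleteness of $\cC$, the category of $\Fe$-algebras. The category of $\el$-algebras is bicomplete, since limits and colimits are computed objectwise in commutative rings, which is bicomplete; and $\Fe$-algebras form the coslice category under the $\el$-algebra $\Fe$, hence are again bicomplete. Next, $M$ is the set of maps $f_{A,V}\colon \Fe[x]/(\res^A_{\ker(V)}(x)) \to \Fe[y]$, indexed over (a skeleton of) the pairs consisting of an elementary abelian $2$-group $A$ and a non-trivial character $V\in A^*$; this is a genuine set. Each source $\Fe[x]/(\res^A_{\ker(V)}(x))$ is finitely presented as an $\Fe$-algebra — a map out of it into $X$ is precisely the choice of an element of $X(A)$ restricting to $0$ in $X(\ker(V))$, a finite-type condition — and since filtered colimits of $\Fe$-algebras are computed objectwise in commutative rings, $\Hom_\cC(\Fe[x]/(\res^A_{\ker(V)}(x)),-)$ commutes with filtered colimits. (One could instead invoke that $\Fe$-algebras form a locally finitely presentable category and finitely presented objects there are small.) Thus the hypotheses of \Cref{lem:categorical} hold.

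Applying \Cref{lem:categorical} with $\cD$ the full subcategory of $M$-local objects, which by the preceding lemma is exactly the category of global $2$-torsion group laws, we obtain: the inclusion $\cD\hookrightarrow\cC$ has a left adjoint $l$, so $\cD$ is reflective; $\cD$ is closed under limits in $\cC$; and $\cD$ has all limits and colimits (limits computed as in $\cC$, colimits obtained by applying $l$ to colimits in $\cC$). In particular, the empty colimit exists, giving an initial object, which is concretely $l$ applied to the initial $\Fe$-algebra $\Fe$ itself; we denote this initial global $2$-torsion group law by $\wLt$. There is essentially no obstacle in this argument — it is a packaging of \Cref{lem:categorical} — with the only points requiring (routine) care being the smallness of the sources of the $f_{A,V}$ and the bicompleteness of the coslice category of $\Fe$-algebras.
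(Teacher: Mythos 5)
Your proof is correct and takes essentially the same route as the paper: both deduce the corollary by applying \Cref{lem:categorical} to the set $M$ of maps $f_{A,V}$, using the preceding lemma's identification of global $2$-torsion group laws with the $M$-local $\Fe$-algebras. The paper leaves the verification of bicompleteness and smallness of the sources implicit; you have simply spelled out those (routine) checks.
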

One can make the construction of the left adjoint $l\colon (\Fe-\text{algebras})\to GL_{gl}^{2-\tor}$ slightly more explicit than just an existence statement: Given an $\Fe$-algebra $X$, we form a new $\Fe$-algebra $X_1$ by
\begin{enumerate}
	\item quotienting out by all $e_V$-torsion in $X(A)$, and
	\item adjoining an element $x'\in X(A)$ and the relation $x'\cdot e_V=x$ for every element $x$ in the kernel of the restriction $\res^A_{\ker(V)}$,
\end{enumerate}
both for all pairs $(A\in \el,V\in A^*)$ with $V$ non-trivial. Iterating this process leads to a sequence
\begin{equation} \label{eq:leftadjoint} X\to X_1\to X_2\to \hdots \to X_n\to \hdots. \end{equation}
The colimit of this sequence is a model for $l(X)$. Note that this shows that if each $X(A)$ is a countable ring, then so is each $l(X)(A)$. In particular the universal $A$-torsion formal group law $\wLt$ is countable in each degree, since this is the case for $\Fe$.

\subsubsection{Gradings} \label{sec:gradings}

We say that an $\Fe$-algebra (or global $2$-torsion group law) is graded if it is equipped with a grading where $e$ is of degree $-1$.

\begin{Lemma} If an $\Fe$-algebra $X$ is graded, there is a unique grading on $l(X)$ such that the unit $X\to l(X)$ respects the gradings. Via these gradings, the adjunction lifts to an adjunction between graded $\Fe$-algebras and graded global $2$-torsion group laws.
\end{Lemma}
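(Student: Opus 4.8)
The plan is to use the explicit model for the left adjoint $l$ given by the filtered colimit in \eqref{eq:leftadjoint}, and show that each step of the construction carries a canonical grading compatible with the unit map. First I would observe that a grading on an $\Fe$-algebra $X$ is the same as an action of the group $\Gm$ (equivalently, a $\Z$-grading functorial in the group variable), and that the two constructions (1) quotienting out $e_V$-torsion and (2) adjoining divided elements $x'$ with $x'\cdot e_V = x$ are visibly defined by universal properties, so it suffices to check that these universal properties interact well with gradings. For step (1), quotienting out $e_V$-torsion: if $x \in X(A)$ is homogeneous of degree $d$ and $x \cdot e_V = 0$, then $x$ is a homogeneous element of a graded ideal, so $X_1$ inherits a grading and the quotient map is graded; here one uses that $e_V = V^*(e)$ is homogeneous of degree $-1$ (since $e$ is, and restriction maps are graded). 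For step (2), for each homogeneous $x \in \ker(\res^A_{\ker(V)})$ of degree $d$ we adjoin a free variable $x'$ and declare it to have degree $d+1$, so that the relation $x'\cdot e_V = x$ is homogeneous; a general element of the kernel is a sum of homogeneous ones, so this determines the grading on all adjoined variables. The colimit of a sequence of graded rings along graded maps is graded, giving the grading on $l(X)$.

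The uniqueness claim is the easy part: the unit $X \to l(X)$ is, by construction, surjective after the colimit is taken on generators — more precisely, $l(X)$ is generated as an $\Fe$-algebra by the images of the iterated variables $x'$, and the degree of each $x'$ is forced by the homogeneous relation $x' \cdot e_V = x$ together with regularity of $e_V$ in $l(X)$ (which holds since $l(X)$ is a global $2$-torsion group law). So any grading on $l(X)$ making the unit graded must assign $x'$ the degree of $x$ minus the degree of $e_V$, hence is determined; and one checks this assignment is consistent, i.e. well-defined, again using that $e_V$ is a nonzerodivisor so the expression of a homogeneous element as $x' \cdot e_V$ is unique.

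Finally, to promote this to an adjunction between graded $\Fe$-algebras and graded global $2$-torsion group laws, I would verify the triangle identities, or more simply invoke the universal property directly: given a graded global $2$-torsion group law $Y$ and a graded map $f\colon X \to Y$ of $\Fe$-algebras, the unique factorization $l(X) \to Y$ provided by the ungraded adjunction is automatically graded, because it is determined on the generators $x'$ of $l(X)$ by $x' \mapsto$ (the unique element $z \in Y(A)$ with $z \cdot e_V = f(x)$), and since $f(x)$ and $e_V$ are homogeneous and $e_V$ is regular in $Y$, this $z$ is homogeneous of the expected degree. Thus $\Hom_{\mathrm{graded}}(l(X), Y) \cong \Hom_{\mathrm{graded}}(X, Y)$ naturally, which is the desired adjunction. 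The main obstacle — though it is genuinely routine here rather than deep — is bookkeeping the well-definedness of the grading through step (2): one must make sure that when $x$ is expressed as a sum $\sum x_i$ of homogeneous pieces of different degrees, the single adjoined variable $x'$ cannot be consistently graded, and so one should instead adjoin one divided variable per \emph{homogeneous} kernel element (equivalently, work with the graded kernel), and check this still computes the same left adjoint; this is where care is needed, but it follows from the fact that in the graded category the local objects are detected by the homogeneous instances of the maps $f_{A,V}$.
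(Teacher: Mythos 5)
Your proof is correct and takes essentially the same approach as the paper: use the iterated construction of $l(X)$, observe that it suffices to work with homogeneous elements (adjoining one divided variable per homogeneous kernel element rather than per arbitrary kernel element), assign each new $x'$ the degree forced by $x'\cdot e_V = x$, and pass the grading to the colimit, with the graded adjunction then following formally. The well-definedness concern you raise at the end is precisely the point the paper addresses by noting that the conditions defining a global $2$-torsion group law, and hence the localization, can be checked on homogeneous elements.
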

\begin{proof} This follows from the iterated construction above. If an $\Fe$-algebra $X$ is graded, it suffices to check the conditions of a global $2$-torsion group law on the homogeneous elements. This means that we can alternatively form the sequence \eqref{eq:leftadjoint} above by performing both operations only to the homogeneous elements of $X(A)$. It then follows that each $X_i$ has a unique grading extending the one of $X_{i-1}$, giving the newly adjoined elements $x'$ satisfying $x'\cdot e_V=x$ the degree one larger than that of $x$. Consequently, the colimit inherits a unique compatible grading. The adjunction on graded objects is then a formal consequence.
\end{proof}

\begin{Cor}
	There exists a grading on $\wLt$, uniquely determined by the property that $e$ is homogeneous of degree $-1$. With this grading, $\wLt$ is initial among all graded global $2$-torsion group laws. 
\end{Cor}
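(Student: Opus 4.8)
The plan is to deduce the corollary formally from the preceding lemma on gradings together with the identification $\wLt = l(\Fe)$, which is valid because $\Fe = \F_2[e]/(\res_1^{C_2}(e))$ is the initial $\Fe$-algebra and $l$, being a left adjoint, preserves initial objects.

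First I would put a grading on $\Fe$ itself. Writing $\F_2[e]$ for the free $\el$-algebra over $\F_2$ on a generator in degree $C_2$, one has $\F_2[e](A) = \F_2[e_V\ |\ V\in A^*]$ and hence $\Fe(A) = \F_2[e_V\ |\ 0\neq V\in A^*]$. Declaring each $e_V$ homogeneous of degree $-1$ turns $\Fe$ into a graded $\Fe$-algebra, and this is visibly the only grading with $e$ in degree $-1$, since the elements $e_V = V^*(e)$ generate $\Fe(A)$ as an $\F_2$-algebra and the restriction maps are then forced to preserve degrees. Moreover $\Fe$ is initial among \emph{graded} $\Fe$-algebras: for any graded $\Fe$-algebra $Y$ the unique $\Fe$-algebra map $\Fe\to Y$ carries each generator $e_V$ to the element $e_V\in Y$, which lies in degree $-1$ by definition of a graded $\Fe$-algebra, so this map is automatically graded.

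Next I would apply the previous lemma with $X = \Fe$: it provides a unique grading on $l(\Fe) = \wLt$ making the unit $\Fe\to\wLt$ graded, and it tells us that $l$ restricts to a left adjoint of the inclusion of graded global $2$-torsion group laws into graded $\Fe$-algebras. Since $\Fe$ is initial among graded $\Fe$-algebras, its reflection $\wLt = l(\Fe)$ is therefore initial among graded global $2$-torsion group laws; concretely, for any such $Y$ the graded maps $\wLt\to Y$ correspond to the graded $\Fe$-algebra maps $\Fe\to Y$, of which there is exactly one. As the unit $\Fe\to\wLt$ is a map of $\Fe$-algebras it sends $e$ to $e$, so $e\in\wLt(C_2)$ is homogeneous of degree $-1$.

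It remains to establish uniqueness of the grading on $\wLt$. Any grading making $\wLt$ into a graded global $2$-torsion group law with $e$ in degree $-1$ forces every Euler class $e_V$ into degree $-1$ and every restriction map to be graded, hence makes the unit $\Fe\to\wLt$ graded relative to the unique grading on $\Fe$; the uniqueness clause of the previous lemma then identifies it with the grading constructed above. I do not expect any genuine obstacle here: the statement is a formal consequence of the lifted adjunction, and the only point requiring a word of justification is that $\Fe$ is initial among graded $\Fe$-algebras, which comes down to the observation that $\Fe(A)$ is generated in degree $-1$ by the Euler classes $e_V$.
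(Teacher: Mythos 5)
Your argument is correct and is exactly the deduction the paper intends: the corollary is stated as an immediate consequence of the preceding lemma, obtained by applying it to the initial graded $\Fe$-algebra $\Fe$ itself and noting that the lifted left adjoint preserves initial objects. Your additional care in checking that $\Fe(A)=\F_2[e_V\ |\ 0\neq V\in A^*]$ admits a unique grading with $e$ in degree $-1$, and that any qualifying grading on $\wLt$ makes the unit graded, fills in precisely the details the paper leaves implicit.
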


\subsection{Global real oriented ring spectra} \label{sec:globalrealoriented}
In this section we turn to topology and show that the homotopy groups of a global real oriented ring spectrum equipped with the universal Euler class form a graded global $2$-torsion group law. For the study of real bordism we restrict the global category to the global family of elementary abelian $2$-groups, in the sense of \cite[Section 4.3]{Sch18}. We call the resulting spectra \emph{$\el$-global spectra}.

Let $E$ be a commutative $\el$-global homotopy ring spectrum, i.e., a commutative monoid in the $\el$-global homotopy category with respect to the smash product.

\begin{Def} A \emph{real orientation} of $E$ is an element $t\in E_{C_2}^1(S^{\sigma})=\pi_{\sigma-1}^{C_2}(E)$ which is a unit with respect to the $RO(C_2)$-graded multiplication discussed in \Cref{sec:products} and restricts to $1\in E^1(S^1)$ at the trivial group.
\end{Def}	
 Every real orientation $t$ gives rise to a unit Thom class $t_V=V^*(t)\in \pi_{V-1}^A(E)$ for any character $V\colon A\to C_2$ of an elementary abelian $2$-group $A$. Furthermore, pulling back $t$ along the inclusion of fixed points $S^0\to S^{\sigma}$ yields the universal Euler class $e\in E_{C_2}^1(S^0)= \pi_{-1}^{C_2}(E)$.
\begin{Prop} \label{prop:realgivesfgl}
	If $E$ is a real oriented $\el$-global ring spectrum, then the pair $(\upi_*E,e)$ is a graded $2$-torsion global group law.
\end{Prop}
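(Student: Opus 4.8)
The plan is to extract the required short exact sequences from the long exact sequence in $A$-equivariant $E$-cohomology obtained by applying $E_A^*(-)$ to the pullback along $V$ of the standard $C_2$-cofiber sequence $(C_2)_+=S(\sigma)_+\to S^0\xr{q} S^{\sigma}$, where $q$ is the inclusion of fixed points. As preliminary remarks: just as in \Cref{sec:products}, $\upi_*E$ is a contravariant functor from $\el$ to graded commutative rings, i.e.\ an $\el$-algebra; and $e=q^*(t)$ lies in $E_{C_2}^1(S^0)=\pi^{C_2}_{-1}(E)$, a class in degree $-1$, which takes care of the grading requirement. (Restricting along $1\hookrightarrow C_2$ turns $q$ into a based map $S^0\to S^1$, hence a nullhomotopic one, so $\res^{C_2}_1(e)=0$, consistent with the axioms.) What remains is to show that for every $A\in\el$ and every $0\neq V\in A^*$, writing $B=\ker V$, the sequence $0\to\pi^A_*(E)\xr{e_V}\pi^A_*(E)\xr{\res^A_B}\pi^B_*(E)\to 0$ is exact.

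Since $V\colon A\to C_2$ is surjective, pulling the cofiber sequence back along $V$ gives an $A$-cofiber sequence $(A/B)_+\xr{c} S^0\xr{q_V} S^V$: indeed $V^*((C_2)_+)=(A/B)_+$ because $A$ acts on $C_2$ through $V$ with stabilizer $B$, and $V^*(S^{\sigma})=S^V$ for the $1$-dimensional real $A$-representation $V=V^*\sigma$; moreover $c$ is the counit $A_+\wedge_B S^0\to S^0$, collapsing $A/B$ onto the non-basepoint. Applying $E_A^*(-)$ and identifying terms: by the induction isomorphism (see \Cref{sec:induction}) we have $E_A^*((A/B)_+)=E_A^*(A_+\wedge_B S^0)\cong E_B^*(S^0)$, and under this identification $c^*$ becomes $\res^A_B$ by \Cref{lem:restriction}; on the other end, since $t_V=V^*(t)\in E_A^1(S^V)$ is a unit Thom class, multiplication by $t_V$ is an isomorphism $\Phi_V\colon E_A^{*}(S^0)\xr{\cong} E_A^{*+1}(S^V)$, and $q_V^*\circ\Phi_V$ is multiplication by $q_V^*(t_V)=V^*(q^*t)=V^*(e)=e_V$. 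Rewriting via $E_A^n(S^0)=\pi^A_{-n}(E)$, the long exact sequence of the cofiber sequence takes the form
\[ \cdots\to\pi^B_*(E)\xr{\delta}\pi^A_*(E)\xr{e_V}\pi^A_*(E)\xr{\res^A_B}\pi^B_*(E)\xr{\delta}\pi^A_*(E)\to\cdots, \]
where multiplication by $e_V$ lowers internal degree by one.

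It then suffices to show the connecting map $\delta$ vanishes, and this follows once we know that $\res^A_B$ is surjective. But $\res^A_B$ is even split surjective: $B$ is a codimension-one subspace of the $\F_2$-vector space $A$, so the inclusion $i\colon B\hookrightarrow A$ admits a retraction $r\colon A\to B$, and then $r^*$ is a section of $\res^A_B=i^*$. Hence the long exact sequence breaks into the short exact sequences $0\to\pi^A_*(E)\xr{e_V}\pi^A_*(E)\xr{\res^A_B}\pi^B_*(E)\to 0$, which, together with $e$ lying in degree $-1$, says precisely that $(\upi_*E,e)$ is a graded global $2$-torsion group law (that $2=0$ in all $\upi_*(E)(A)$ is then automatic, cf.\ \Cref{lem:elementary}(2)). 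I expect the only real subtleties to be the two identifications in the middle step — that $\Phi_V$ is an honest isomorphism, which is exactly where the unit hypothesis on the real orientation is used, and that $c$ coincides with the counit so that \Cref{lem:restriction} applies — together with the attendant degree bookkeeping; everything else is formal.
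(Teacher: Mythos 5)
Your proof is correct and follows essentially the same route as the paper's: pull back the cofiber sequence $(C_2)_+\to S^0\to S^\sigma$ along $V$, identify the outer terms via the induction isomorphism together with \Cref{lem:restriction} and the unit Thom class, and split the long exact sequence using the section of $\ker(V)\hookrightarrow A$. The extra bookkeeping you supply (the degree of $e$, the vanishing of $\res^{C_2}_1(e)$, and that $2=0$ follows from \Cref{lem:elementary}) is consistent with the paper, which leaves these points implicit.
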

\begin{proof} The sphere for the sign-representation $\sigma$ sits inside a $C_2$--cofiber sequence
	\[ (C_2)_+\to S^0 \to S^{\sigma}. \]
More generally, given a non-trivial character $V\colon A\to C_2$, we can pull back the $C_2$-actions via $V$, yielding a cofiber sequence of based $A$-spaces
\[ (A/\ker(V))_+\to S^0 \to S^{V}.\]
Applying the $A$-equivariant cohomology theory $E_A$ to this cofiber sequence gives a long exact sequence
\[ \hdots \to E_A^*(S^V)\to E_A^*(S^0)\to E_A^*((A/\ker(V))_+) \to \hdots .\]
The induction isomorphism from \Cref{sec:induction} implies that $E_A^*((A/\ker(V))_+)$ identifies with $E_{\ker(V)}^*(S^0)$, and the resulting map $E_A^*(S^0)\to E_{\ker(V)}^*(S^0)$ equals the restriction $\res^A_{\ker(V)}$ (\Cref{lem:restriction}). Since $E$ is a global theory and $\ker(V)\hookrightarrow A$ has a section, this restriction map also has a section and it follows that the long exact sequence collapses into a collection of (split) short exact sequences. Moreover, the real orientation of $E$ yields an identification $E_A^*(S^V)\cong E_A^{*-1}(S^0)$, and the map $E_A^{*-1}(S^0)\to E_A^*(S^0)$ becomes multiplication with the Euler class (since it is by definition the composition of the inclusion $S^0\hookrightarrow S^V$ and the Thom class). Putting everything together and replacing the cohomological notation $E_A^*(S^0)$ with the homological $\pi_*^A(E)$, we see that the sequence
\[ 0\to \pi_{*+1}^A(E)\xr{e_V\cdot} \pi_*^A(E)\xr{\res_{\ker(V)}^A} \pi_*^{\ker(V)}(E)\to 0\]
is exact, as desired.
\end{proof}
\begin{Remark} For $A=C_2$, $V=\sigma$ the sign-representation and $E=\MO$ the real equivariant bordism spectrum (see below), the above argument first appeared in \cite[Theorem 2.2]{Sin02}. Generally, the Gysin sequence relating the restriction map $\res^{C_2}_1$ to the Euler class $S^0\to S^{\sigma}$ is a standard tool in equivariant stable homotopy theory, going back at least to \cite{Bre67}.
\end{Remark}
We are particularly interested in the case where $E$ is the universal global real oriented theory $\MO$, or its periodic version $\MOP=\sum_{i\in \Z} (\MO\wedge S^i)$. These are discussed in detail in \cite[Section 6.1]{Sch18}. In particular, the real orientation of $\MOP$ is introduced in \cite[Construction 6.1.15]{Sch18}. Note that there is an isomorphism $\upi_0(\MOP)\cong \bigoplus_{n\in \Z} \upi_n(\MO)$. Hence, $\upi_0(\MOP)$ carries an ungraded global $2$-torsion group law, which we will show to be the universal one. In this sense, the difference between $\MOP$ and $\MO$ corresponds roughly to the difference between ungraded and graded group laws on the algebraic side.

\subsubsection{Relation to real oriented $A$-ring spectra} \label{sec:realAorientation}
Some words are in order on how this story relates to real orientations at a fixed group $A$. For this we recall the notion of a real orientation of an $A$-ring spectrum. Let $\U^{\R}_A$ denote a complete real $A$-universe, i.e., a sum of countable infinitely many copies of each character $V\in A^*$. Let $\R P(\U^{\R}_A)$ be its real projective space. The tautological line  bundle $\gamma$ over $\R P(\U^{\R}_A)$ is universal among all line bundles in $A$-spaces. The Thom space of $\gamma$ is again $A$-homeomorphic to $\R P(\U^{\R}_A)$. Now recall (see \cite[Definition 4.1]{CGK00} or \cite[Definition 9.5]{Gre01} for the complex case) that a real orientation of a commutative $A$-homotopy-ring spectrum $Y$ is a class $t^{(A)}\in Y^1(\R P(\U^{\R}_A))$ whose restriction to $Y^1(\R P(\epsilon \oplus V))$ for a character $V\in A^*$ is given by
\[ \begin{cases} 1 & \text{if } \ V=\epsilon \\
\text{unit} & \text{otherwise}. \end{cases} \]
Here, the condition that the restriction of $t^{(A)}$ to $Y^1(\R P(\epsilon \oplus V))$ is a unit is again meant in the $RO(A)$-graded sense, identifying $\R P(\epsilon \oplus V)$ with $S^V$ by sending $v\in V$ to $[1: v]$ and $\infty$ to $[0: 1]$. Our definition differs slightly from the one given in \cite{CGK00} in that they allow the restriction to the trivial sphere $S^1$ to be an arbitrary unit, not necessarily the element $1$.

The restriction $y(\epsilon)$ of $t^{(A)}$ to $Y^1(\R P(\U^{\R}_A)_+)$ via the $0$-section is called the \emph{universal Euler class} or \emph{coordinate} for $Y$. Given such a coordinate, the quintuple
\[ F(Y)=(Y^*(S^0),Y^*(\R P(\U^{\R}_A)_+),\Delta,\theta,y(\epsilon)) \]
is a graded $A$-equivariant formal group law, where
\[ \Delta\colon Y^*(\R P(\U^{\R}_A)_+) \to Y^*((\R P(\U^{\R}_A)\times \R P(\U^{\R}_A))_+)\cong Y^*(\R P(\U^{\R}_A)_+)\hotimes_{Y^*}Y^*(\R P(\U^{\R}_A)_+)\]
is induced from the multiplication map $\R P(\U^{\R}_A)\times \R P(\U^{\R}_A)\to \R P(\U^{\R}_A)$ classifying the tensor product of $A$-equivariant real line bundles, and
\[ \theta(V)\colon Y^*(\R P(\U^{\R}_A)_+)\to Y^*(S^0)\]
is induced from the map $S^0\to \R P(\U^{\R}_A)_+$ classifying $V$ thought of as an $A$-equivariant line bundle over a point. The proof that $F(Y)$ is indeed an $A$-equivariant formal group law is analogous to the complex version proved in \cite{CGK00}. Since the tensor product of any $A$-equivariant real line bundle with itself is trivial, it follows that $F(Y)$ is $2$-torsion.

The link from the global definition to this one is via the following observation: The unit sphere $S(\U^{\R}_A)$ is a model for the universal $C_2$-space $E_AC_2$ in $A$-spaces, where $C_2$ acts by multiplication with $-1$. Moreover, applying the unreduced $(C_2,A)$-homotopy orbit construction (see \Cref{sec:homorbits}) to the $C_2$--line bundle $\sigma \to *$ (with $A\times C_2$ acting through the projection to $C_2$) via this model yields the tautological $A$-line bundle $\gamma_A$. It follows that $(S^{\sigma})_{h_AC_2}$ is equivalent to the Thom space $\R P(\U^{\R}_A)$ of $\gamma_A$. Hence, given a real orientation $t$ of an $\el$-global ring spectrum $E$, we can apply the homotopy orbit construction $h_{A,C_2}(S^{\sigma})$ of \Cref{sec:homorbits} to obtain an element $t^{(A)}=h_{A,C_2}(S^{\sigma})(t)\in E_A^1(\R P(\U^{\R}_A))$.
\begin{Lemma} \label{lem:A-orientation}
	The element $t^{(A)}$ is a real orientation of $E_A$.
\end{Lemma}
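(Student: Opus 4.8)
The plan is to verify the two conditions in the definition of a real orientation of $E_A$ directly, reducing each restriction map that appears there to a restriction along a group homomorphism by means of \Cref{lem:orbitrestriction}. Unwinding the definition, I must show that for every character $V\in A^*$ the restriction of $t^{(A)}$ along the inclusion $\R P(\epsilon\oplus V)\hookrightarrow\R P(\U^{\R}_A)$ — under the identification $\R P(\epsilon\oplus V)\cong S^V$ fixed above — equals the unit $1$ when $V=\epsilon$, and is a unit in the $RO(A)$-graded sense otherwise. The first step is to re-interpret this inclusion: since $\U^{\R}_A$ contains infinitely many copies of every character, any two embeddings $\epsilon\oplus V\hookrightarrow\U^{\R}_A$ are $A$-homotopic, so the restriction map is well defined, and it is induced by the one-point compactification of the inclusion of the fibre of $\gamma_A$ over the point $[V]\in\R P(\U^{\R}_A)$ into the Thom space of $\gamma_A$, which was identified above with $(S^{\sigma})_{h_AC_2}\simeq\R P(\U^{\R}_A)$.

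Next I would match this fibre inclusion with the map coming from the homotopy-orbit formalism. Working in the model $E_AC_2=S(\U^{\R}_A)$ with $C_2$ acting by $-1$, a point $*_V\in(E_AC_2)^{\Gamma(V)}$ is precisely a unit vector of $\U^{\R}_A$ spanning a subrepresentation isomorphic to $V$, and the fibre of $\gamma_A$ over the corresponding point $[*_V]$ is then itself isomorphic to $V$ as an $A$-representation. Hence the $A$-map
\[ j_V\colon (\id_A,V)^*(S^{\sigma})=S^V\longrightarrow (S^{\sigma})_{h_AC_2} \]
of \Cref{sec:homorbits}, which sends $x$ to the class $[*_V,x]$, is exactly the Thomified inclusion of the fibre over $[*_V]$; since $(E_AC_2)^{\Gamma(V)}$ is contractible, $j_V$ is independent of the choice of $*_V$ up to $A$-homotopy and agrees with the fibre inclusion of the previous step. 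I expect this identification of the two maps — threading together the model $E_AC_2=S(\U^{\R}_A)$, the homeomorphism $(S^{\sigma})_{h_AC_2}\cong\R P(\U^{\R}_A)$, and the construction of $t^{(A)}$ — to be the only genuine work here; it is of the same routine flavour as the compatibility checks the paper elsewhere leaves to the reader.

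Granting this, the conclusion is formal. Recall $t^{(A)}=h_{A,C_2}(S^{\sigma})(t)$, where $t\in E_{C_2}^1(S^{\sigma})$ is first inflated along the projection $\pr\colon A\times C_2\to C_2$. Applying \Cref{lem:orbitrestriction} with $X=S^{\sigma}$ and $\alpha=V$, the composite $E_{A\times C_2}^1(S^{\sigma})\xr{h_{A,C_2}(S^{\sigma})}E_A^1((S^{\sigma})_{h_AC_2})\xr{j_V^*}E_A^1(S^V)$ equals the restriction map $(\id_A,V)^*$, so
\[ j_V^*(t^{(A)})=(\id_A,V)^*\bigl(\pr^*(t)\bigr)=\bigl(\pr\circ(\id_A,V)\bigr)^*(t)=V^*(t)=t_V, \]
using functoriality of restriction and $\pr\circ(\id_A,V)=V$. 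For $V\neq\epsilon$ the Thom class $t_V$ is a unit, as already recorded when Thom classes were introduced. For $V=\epsilon$ the character factors as $A\to 1\hookrightarrow C_2$, so $t_\epsilon$ is the pullback to $A$ of the restriction of $t$ to the trivial group, which equals $1$ by the definition of a real orientation. This verifies both conditions, so $t^{(A)}$ is a real orientation of $E_A$.
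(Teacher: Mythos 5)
Your argument follows the paper's proof closely: both identify the restriction of $t^{(A)}$ to $E_A^1(\R P(\epsilon\oplus V))$ with the composite of inflation along $\pr_{C_2}$, the homotopy-orbit map, and pullback along $[*_V,-]$ for a $\Gamma(V)$-fixed point $*_V\in S(\U^{\R}_A)$, and then invoke \Cref{lem:orbitrestriction} to reduce to restriction along $V\colon A\to C_2$. You simply spell out a bit more of the Thom-space geometry behind the identification of $j_V$ with the fibre inclusion; otherwise the two proofs coincide.
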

\begin{proof} We have to show that the restriction of $t^{(A)}$ to $E_A^1(\R P(\epsilon \oplus V))$ equals $1$ when $V=\epsilon$ and an $RO(A)$-graded unit otherwise. Under the $A$-homeomorphisms $\R P(\epsilon \oplus V)\cong S^V$ and $\R P(\U^{\R}_A)\cong S(\U^{\R}_A)_+\wedge_{C_2}(S^{\sigma})$, the inclusion $\R P(\epsilon \oplus V))\to \R P(\U^{\R}_A)$ corresponds to the map $S^V=V^*(S^\sigma)\to S(\U^{\R}_A)_+\wedge_{C_2}(S^{\sigma})$ associated to a $\Gamma(V)$-fixed point $*_V=v\in S(\U^{\R}_A)$ spanning a character isomorphic to $V$. Hence, using the notation of \Cref{sec:homorbits}, the restriction of $t^{(A)}$ is given by the image of $t$ under the composition
	\[ E_{C_2}^1(S^{\sigma})\xr{\pr_{C_2}^*}E_{A\times C_2}^1(\pr_{C_2}^*S^{\sigma})\xr{h_{A,C_2}(\pr_{C_2}^*S^{\sigma})} E_A^1((S^{\sigma})_{h_AC_2})\xr{[*_V,-]^*} E_A^1(S^V). \]
By \Cref{lem:orbitrestriction}, this composition equals restriction along $V\colon A\to C_2$. The restriction map $V^*$ is compatible with the $RO(A)$-graded multiplications, hence it takes $t$ to a unit in general, and to $1$ if $V=\epsilon$ since that is required of a global orientation. This finishes the proof.
\end{proof}

Hence, for every real oriented $\el$-global ring spectrum $E$ we have a global $2$-torsion group law $(\upi_*(E),e)$, as well as an $A$-equivariant $2$-torsion formal group law $F(E_A)$ for every elementary abelian $2$-group $A$. For the rest of this subsection we explain that each $F(E_A)$ can be constructed out of $(\upi_*(E),e)$ purely algebraically.

We start by applying the argument from the proof of \Cref{lem:A-orientation} above to the base space instead of the total space of the universal line bundle. This shows that there is a commutative diagram
\begin{equation} \label{eq:comp1} \xymatrix{ \pi_*^{C_2}(E) \ar[r]^-{\pr_{C_2}^*} \ar[rd]_{V^*} & \pi_*^{A\times C_2}(E) \ar[rr]^-{h_{A,C_2}} \ar[d]^{(\id_A,V)^*} && E_A^{-*}(\R P(\U^{\R}_A)_+) \ar[d]^{\theta(V)} \\
 & \pi_*^A(E) \ar[rr]^{\cong} && E_A^{-*}      } \end{equation}
Furthermore, naturality implies that the comultiplication map on $E_A^{-*}(\R P(\U^{\R}_A)_+)$ fits into the diagram
\begin{equation} \label{eq:comp2}\xymatrix{\pi_*^{A\times C_2\times C_2}(E) \ar[rr]^-{h_{A,C_2\times C_2}} && E_A^{-*}(\R P(\U^{\R}_A)\times \R P(\U^{\R}_A)_+)\ar[r]^-{\cong} &  E_A^{-*}(\R P(\U^{\R}_A)_+)\hotimes E_A^{-*}(\R P(\U^{\R}_A)_+) \\
	\pi_*^{A\times C_2}(E) \ar[rr]^-{h_{A,C_2}} \ar[u]^{(\id_A\times m)^*} && E_A^{-*}(\R P(\U^{\R}_A)) \ar[ur]_-{\Delta} \ar[u]_{m^*} \\
	\pi_*^A(E) \ar[u]^{\pr_A^*} \ar[rr]^-{\cong} && E_A^{-*}.\ar[u] 
}\end{equation}
Now let $I_V\subseteq \pi_*^{A\times C_2}(E)$ denote the kernel of the augmentation $(\id_A,V)^*\colon \pi_*^{A\times C_2}(E)\to \pi_*^A(E)$, and denote by $(\pi_*^{A\times C_2}(E))^{\wedge}_A$ the completion at finite products of the $I_V$ with $V\in A^*$. More generally, we write $(\pi_*^{A\times B}(E))^{\wedge}_A$ for the completion at finite products of the kernels of the maps $(\id_A,\alpha)^*\colon \pi_*^{A\times B}(E)\to \pi_*^{A}(E)$ with $\alpha\colon A\to B$. Then we have the following:

\begin{Prop} \label{prop:realcompletion} Let $E$ be a real oriented $\el$-global  ring spectrum. Then the map 
	\[ h_{A,C_2}\colon \pi_*^{A\times C_2}(E)\to  E_A^{-*}(\R P(\U^{\R}_A)_+) \]
	sends an Euler class of the form $e_{(V,\id)}$ to $y(V)$ and induces an isomorphism
	\[ (\pi_*^{A\times C_2}E)_A^{\wedge}\xr{\cong} E_A^{-*}(\R P(\U^{\R}_A)_+). \]
	More generally, the homotopy orbit maps induce isomorphisms
	\[ (\pi_*^{A\times (C_2)^{\times n}}E)^{\wedge}_A \xr{\cong} E_A^{-*}(\R P(\U^{\R}_A)^{\times n}_+)\xr{\cong} E_A^{-*}(\R P(\U^{\R}_A)_+)^{\hotimes n}. \]
\end{Prop}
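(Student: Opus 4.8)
The plan is to identify $h_{A,C_2}$, after completion, with the structure map of the $A$-equivariant $2$-torsion formal group law $F(E_A)$ attached to $E_A$ in \Cref{sec:realAorientation}, and then to read off the isomorphism from the flag bases on either side. To unwind the map, recall from \Cref{sec:realAorientation} that $S(\U^{\R}_A)$ is a model for $E_AC_2$; since $C_2$ acts freely on it, \Cref{sec:induction} identifies $(S^0)_{h_AC_2}$ with $\RPU_+$ and exhibits $h_{A,C_2}(S^0)$ as the composite
\[ \pi_*^{A\times C_2}(E)\to E^{-*}_{A\times C_2}(S(\U^{\R}_A)_+)\xleftarrow[\ \cong\ ]{\ \ind\ } E_A^{-*}(\RPU_+),\]
the first map being restriction along $S(\U^{\R}_A)\to *$. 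By \Cref{sec:realAorientation} and \Cref{sec:flags}, the target $R:=E_A^{-*}(\RPU_+)$ is the ground ring of $F(E_A)$, so it is complete for the topology generated by finite products of the ideals $\ker\theta(V)$, each $y(V)$ generates $\ker\theta(V)$ and is a non-zero-divisor, and for every sequence $V_1,\dots,V_n\in A^*$ the quotient $R/(\ker\theta(V_1)\cdots\ker\theta(V_n))$ is free over $k:=\pi_*^A(E)$ on $1,y(V_1),\dots,y(V_{n-1})\cdots y(V_1)$.

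Next I would prove the first assertion, that $h_{A,C_2}(e_{(V,\id)})=y(V)$. The description of $\gamma_A$ in \Cref{sec:realAorientation} shows that the twisted tautological line bundle $\gamma_A\otimes V$ is the $(C_2,A)$-homotopy orbit bundle of the $(A\times C_2)$-line bundle determined by the character $(V,\id)\colon A\times C_2\to C_2$; since $h_{A,C_2}$ is a multiplicative natural transformation compatible with Thom classes (which is exactly how $t^{(A)}$ was produced in \Cref{sec:realAorientation}), hence — restricting along zero-sections — with Euler classes, it carries $e_{(V,\id)}$ to the Euler class of $\gamma_A\otimes V$, which is $l_V(y(\epsilon))=y(V)$. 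From \eqref{eq:comp1} one has $\theta(V)\circ h_{A,C_2}=(\id_A,V)^*$ under the canonical identification $E_A^{-*}\cong k$, so $h_{A,C_2}$ maps $I_V=\ker((\id_A,V)^*)$ into $\ker\theta(V)$, is therefore continuous, and — since $R$ is complete — factors through a $k$-algebra map $\widehat h\colon(\pi_*^{A\times C_2}E)^{\wedge}_A\to R$.

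To show $\widehat h$ is an isomorphism I would compare towers of quotients. By \Cref{prop:realgivesfgl}, $\upi_*E$ is a global $2$-torsion group law, so its defining exact sequence at the group $A\times C_2$ and character $(V,\id)$ identifies $I_V$ with the principal ideal $(e_{(V,\id)})$ generated by the non-zero-divisor $e_{(V,\id)}$ and gives $\pi_*^{A\times C_2}(E)/(e_{(V,\id)})\cong\pi_*^{\ker(V,\id)}(E)\cong k$, the isomorphism being $k$-linear because $\ker(V,\id)$ is a graph subgroup isomorphic to $A$ and the restriction splits. Splicing the short exact sequences $0\to(e_{(V_1,\id)})/(e_{(V_1,\id)}\cdots e_{(V_n,\id)})\to \pi_*^{A\times C_2}(E)/(e_{(V_1,\id)}\cdots e_{(V_n,\id)})\to \pi_*^{A\times C_2}(E)/(e_{(V_1,\id)})\to 0$, in which the left term is free over $k$ by induction (multiplication by the non-zero-divisor $e_{(V_1,\id)}$ being injective) and the sequence splits through $\pr_A^*(1)=1$, shows that $\pi_*^{A\times C_2}(E)/(e_{(V_1,\id)}\cdots e_{(V_n,\id)})$ is free over $k$ on $1,e_{(V_1,\id)},\dots,e_{(V_{n-1},\id)}\cdots e_{(V_1,\id)}$. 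Since the flags are cofinal among finite sequences on both sides, $\widehat h$ is a $k$-algebra map carrying this tower onto the tower $R/(\ker\theta(V_1)\cdots\ker\theta(V_n))$ and, by the first assertion, the source basis to the target basis; hence it is an isomorphism on each quotient, and so $\widehat h$ is an isomorphism. (Surjectivity on quotients also uses \Cref{lem:unit} to see that $y(V_i)$ and $h_{A,C_2}(e_{(V_i,\id)})$ generate the same ideal.)

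Finally, the second isomorphism $E_A^{-*}(\RPU^{\times n}_+)\cong E_A^{-*}(\RPU_+)^{\hotimes n}$ is a Künneth statement: at each finite stage $\RPU$ is replaced by a finite-dimensional real projective space whose $E_A$-cohomology is a finite free $k$-module, so the Künneth map is an isomorphism there, and one passes to the limit — this is the iterate of the comultiplication isomorphism used to define $\Delta$ in \Cref{sec:realAorientation}. For the first isomorphism one reruns the whole argument with $S(\U^{\R}_A)^{\times n}$ as a model for $E_A(C_2^{\times n})$: by \Cref{prop:realgivesfgl} each defining ideal $\ker((\id_A,\alpha)^*)$ of $(\pi_*^{A\times C_2^{\times n}}E)^{\wedge}_A$ is generated by the regular sequence of Euler classes $e_{(\alpha_i,\pr_i)}$ cutting out the graph $\ker(\id_A,\alpha)\cong A$, and the same flag induction, now carried out in the $n$ independent $C_2$-directions, identifies the finite quotients with the corresponding quotients of $R^{\hotimes n}$, matched with the target bases via the first assertion. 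The step I expect to be the main obstacle is that first assertion — making precise both the identification of $\gamma_A\otimes V$ with the homotopy orbit bundle of the $(A\times C_2)$-representation with character $(V,\id)$, and the compatibility of $h_{A,C_2}$ with Euler classes — together with the elementary but error-prone bookkeeping of the graph subgroups $\ker(\id_A,\alpha)\cong A$ and the splittings of the relevant restriction maps; the rest is routine diagram-chasing and filtration arguments.
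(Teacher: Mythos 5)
Your proposal is correct and its overall architecture coincides with the paper's: establish $h_{A,C_2}(e_{(V,\id)})=y(V)$, observe via \eqref{eq:comp1} that $h_{A,C_2}$ carries $I_V$ into $\ker\theta(V)$, match the flag basis $1,e_{(V_1,\id)},\dots,e_{(V_{n-1},\id)}\cdots e_{(V_1,\id)}$ of $\pi_*^{A\times C_2}(E)/(I_{V_1}\cdots I_{V_n})$ with the basis $1,y(V_1),\dots,y(V_{n-1})\cdots y(V_1)$ of the corresponding quotient of $E_A^{-*}(\RPU_+)$, and conclude from completeness of the target that the map is a completion. Your inline derivation of the freeness of the finite quotients (splicing the split short exact sequences coming from the regular Euler classes $e_{(V_i,\id)}$) is exactly the content of \Cref{lem:realadjunction}(1), to which the paper simply defers, and your treatment of the $n$-fold case is an unwinding of \Cref{lem:realadjunction}(2).

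The one place where you take a genuinely different route is the first assertion. The paper handles only $V=\epsilon$ geometrically (the $0$-section $\RPU_+\to\RPU$ is the homotopy orbit of $S^0\to S^\sigma$, so $e_{(0,\id)}\mapsto y(\epsilon)$) and then deduces the general case formally: diagrams \eqref{eq:comp1} and \eqref{eq:comp2} show $h_{A,C_2}$ is equivariant for the $A^*$-action $l_V=\left(\begin{smallmatrix}\id_A & V\\ 0 & \id\end{smallmatrix}\right)^*$ on the source and the translation action on the target, whence $e_{(V,\id)}=l_V(e_{(0,\id)})\mapsto l_V(y(\epsilon))=y(V)$. You instead identify $\gamma_A\otimes V$ with the homotopy orbit of the $(A\times C_2)$-line bundle of character $(V,\id)$ and invoke compatibility of $h_{A,C_2}$ with Euler classes. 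Both are valid; your bundle identification is true and can be checked with the graph-fixed-point $*_V$ argument of \Cref{lem:orbitrestriction}, but the paper's equivariance argument buys the same conclusion purely from naturality and diagrams already established, avoiding the extra geometric verification you correctly flag as the delicate step.
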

\begin{proof} The $0$-section $\R P(\U^{\R}_A)_+\to \R P(\U^{\R}_A)$ is obtained by applying the $(C_2,A)$-homotopy orbit construction to the $C_2$-map $S^0\to S^{\sigma}$. Hence, the universal Euler class $y(\epsilon)$ is the image of $\pr_{C_2}^*(e)=e_{(\epsilon,\id)}$. For the other characters $V$ recall that the $A^*$-action $l_V$ on $E_A^{-*}(\R P(\U^{\R}_A)_+)$ is given by the composite
	\[ E_A^{-*}(\R P(\U^{\R}_A)_+) \xr{\Delta} E^{-*}(\R P(\U^{\R}_A)_+)\hotimes E^{-*}(\R P(\U^{\R}_A)_+) \xr{\theta(V)\hotimes \id} E_A^{-*}(\R P(\U^{\R}_A)_+). \]
The commutative diagrams \eqref{eq:comp1} and \eqref{eq:comp2} hence imply that the homotopy orbit map $h_{A,C_2}$ is $A^*$-equivariant for the $A^*$-action on $\pi_*^{A\times C_2}(E)$ defined via the composite
\[ \pi_*^{A\times C_2}(E) \xr{(\id_A\times m)^*} \pi_*^{A\times C_2\times C_2}(E) \xr{((\id_A,V)\times \id_{C_2})^*} \pi_*^{A\times C_2}(E), \]
or in other words by $l_V=\begin{pmatrix} \id_A & 0\\ V & \id_{C_2}\end{pmatrix}^*\colon \pi_*^{A\times C_2}(E)\to \pi_*^{A\times C_2}(E)$. It follows that $e_{(V,\id)}=l_V(e_{(\epsilon,\id)})$ is sent to $y(V)=l_V(y(\epsilon))$.

Knowing this, the two statements about completion follow from a more general statement (\Cref{lem:realadjunction}) about completions of the form $X(A\times B)^{\wedge}_A$ for arbitrary global $2$-torsion group laws $X$, which we will prove in the next section. Indeed, \Cref{lem:realadjunction} together with the first part of the proposition guarantees that the map $\pi_*^{A\times C_2}(E)\to E_A^{-*}(\R P(\U^{\R}_A)_+)$ is an isomorphism modulo $I_{V_1}\cdots I_{V_n}$ (it sends the basis  $1,e_{(V_1,\id)},e_{(V_2,\id)}e_{(V_1,\id)},\hdots,e_{(V_{n-1},\id)}\cdots e_{(V_1,\id)}$ to the basis $1,y(V_1),y(V_2)y(V_1),\hdots, y(V_{n-1})\cdots y(V_1)$). Since we know that the target $E_A^{-*}(\R P(\U^{\R}_A)^{\times n}_+)$ is complete, the map must be a completion. Part (2) of \Cref{lem:realadjunction} then gives the statement about higher $A\times C_2^{\times n}$.
\end{proof}
\begin{Remark} Alternatively, the proposition has the following more topological proof: We can write
\[ E_A^{-*}(\R P(\U^{\R}_A)_+)\cong \lim_{\text{fin.dim.} W\subset \U^{\R}_A} E_A^{-*}(\R P(W)_+)\cong \lim_{V_1,\hdots,V_i\in A^*} E_A^{-*}(\R P(V_1\oplus\hdots \oplus V_i)_+). \]
Let $W=V_1\oplus\hdots \oplus V_i$. The projective $A$-space $\R P(W)$ is the $C_2$-orbit space of the unit sphere $S(W\otimes \sigma)$ of the $(A\times C_2)$-representation~$W\otimes \sigma$. Since the $C_2$-action on the unit sphere is free, the induction isomorphism (\Cref{sec:induction}) yields an identification
\[ E_A^{-*}(\R P(W)_+)\cong E_{A\times C_2}^{-*}(S(W\otimes \sigma)_+). \]
There is an $(A\times C_2)$-cofiber sequence
\[ S(W\otimes \sigma)_+\to S^0\to S^{W\otimes \sigma} \]
which induces a long exact sequence
\[ \hdots \to E_{A\times C_2}^{-*}(S^{W\otimes \sigma}) \to E_{A\times C_2}^{-*} \to E_{A\times C_2}^{-*}(S(W\otimes \sigma)_+)\to E_{A\times C_2}^{-*+1}(S^{W\otimes \sigma}) \to \hdots . \]
The orientation gives an isomorphism $E_{A\times C_2}^{-*}(S^{W\otimes \sigma})\cong E_{A\times C_2}^{-*-\dim W}$, under which the map $E_{A\times C_2}^{-*}(S^{W\otimes \sigma}) \to E_{A\times C_2}^{-*}$ becomes multiplication with the Euler class $e_{W\otimes \sigma}$. By the decomposition into characters $W\otimes \sigma\cong (V_1\otimes \sigma) \oplus \hdots \oplus (V_i\otimes \sigma)$, the Euler class $e_{W\otimes \sigma}$ factors as the product $e_{W\otimes \sigma}=e_{V_1\otimes \sigma}\cdots e_{V_i\otimes \sigma}$, or in our previous notation $e_{W\otimes \sigma}=e_{(V_1,\id)}\cdots e_{(V_i,\id)}$. Since all these Euler classes are regular elements, the long exact sequence above decomposes into short exact sequences, yielding an isomorphism
\[ E_A^{-*}(\R P(W)_+)\cong E_{A\times C_2}^{-*}(S(W\otimes \sigma)_+) \cong E_{A\times C_2}^{-*}/e_{W\otimes \sigma}\cong \pi^{A\times C_2}_*(E)/e_{(V_1,\id)}\cdots e_{(V_i,\id)}, \]
which upon passing to the limit over all tuples $(V_1,\hdots,V_i)$ yields the desired statement. An iteration of this argument can be used for $A\times C_2^{\times n}$.
\end{Remark}
Together with the diagrams \eqref{eq:comp1} and \eqref{eq:comp2}, \Cref{prop:realcompletion} shows that the full structure of the graded $A$-equivariant formal group $F(E_A)$ can be obtained from the global homotopy groups $\upi_*(E)$ by passing to a suitable completion. The observation that $F(Y)$ can often be described as the completion of a simpler structure if $Y$ is part of a family of equivariant spectra first appeared in \cite[Section 7]{CGK00}, predating a rigorous definition of a global spectrum.

Note moreover that given $V\in A^*$, the Euler class $e_V\in \pi_{*}^A(E)$ defined by pulling back the coordinate $e\in \pi_{*}^{C_2}(E)$ along $V$ agrees with the Euler class $e_V\in E_A^*(S^0)=\pi_*^A(E)$ associated to $F(E_A)$. Hence, the Euler classes in $\upi_*(E)$ play two different roles: Those in $\pi_*^A(E)$ correspond to the Euler classes of $F(E_A)$, while the ones of the form $e_{(V,\id)}\in \pi_*^{A\times C_2}(E)$ are taken to the coordinates $y(V)$ of $F(E_A)$ under the completion map. As we will see in the next section, this interplay of Euler classes and coordinates exists more generally for global $2$-torsion group laws and their associated $A$-equivariant formal group laws. It ultimately allows us to derive the regularity of universal Euler classes $e_V$ from the regularity of coordinates $y(V)$, which is required by the axioms of an $A$-equivariant formal group law.

\subsection{From global group laws to $A$-equivariant formal group laws} \label{sec:realadjunction1}
We now mimic these topological constructions algebraically to associate an $A$-equivariant $2$-torsion formal group law $X^{\wedge}_A$ to an arbitrary global $2$-torsion group law $X$.

Let $X$ be a global $2$-torsion group law, and $A$ an elementary abelian $2$-group. Given another $B\in \el$ and homomorphism $\alpha\colon A\to B$, we write $I_{\alpha}$ for the kernel of $(\id,\alpha)^*\colon X(A\times B)\to X(A)$. We then set $X(A\times B)^{\wedge}_A$ to be the completion of $X(A\times B)$ at finite products of the ideals~$I_{\alpha}$.

\begin{Lemma} \label{lem:realadjunction} Let $X$ be a global $2$-torsion group law and $A\in \el$.
\begin{enumerate}
\item Given $n$ characters $V_1,\hdots,V_n\in A^*$, the quotient $X(A\times C_2)/(I_{V_1}\cdot \hdots \cdot I_{V_n})$ is free of rank $n$ over $X(A)$, and a basis is given by \[ 1,e_{(V_1,\id)},e_{(V_2,\id)}e_{(V_1,\id)},\hdots,e_{(V_{n-1},\id)}\cdots e_{(V_1,\id)}. \]
\item Given $B,B'\in \el$, the map
	\[ X(A\times B)\otimes_{X(A)} X(A\times B')\xr{(\pr_{A\times B}^*,\pr_{A\times B'}^*)} X(A\times B\times B')\]
induces an isomorphism
\[ X(A\times B)^{\wedge}_A\hotimes_{X(A)}X(A\times B')^{\wedge}_A\xr{\cong} X(A\times B\times B')^{\wedge}_A.\]
\end{enumerate}
\end{Lemma}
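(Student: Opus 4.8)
The plan is to handle (1) directly and then bootstrap to (2). For (1) I would run, over the base ring $X(A)$, the argument that establishes the analogous decomposition over $X(1)$ in \Cref{rem:decomposition}. The key point is that for $V_i\in A^*$ the character $(V_i,\id_{C_2})\in(A\times C_2)^*$ is surjective onto $C_2$, hence non-trivial, and its kernel is the graph $\{(a,V_i(a)):a\in A\}$, which $\pr_A$ maps isomorphically onto $A$. Feeding this character and the group $A\times C_2$ into the defining exact sequence of a global $2$-torsion group law yields
\[ 0\to X(A\times C_2)\xr{e_{(V_i,\id)}\cdot}X(A\times C_2)\xr{(\id_A,V_i)^*}X(A)\to 0, \]
so $e_{(V_i,\id)}$ is a regular element generating $I_{V_i}$, and $\pr_A^*$ is a section of $X(A\times C_2)/I_{V_i}\xrightarrow{\,\cong\,}X(A)$. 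Since $(a)(b)=(ab)$ for principal ideals, $I_{V_1}\cdots I_{V_n}=(e_{(V_1,\id)}\cdots e_{(V_n,\id)})$ is principal and generated by a regular element. Filtering $X(A\times C_2)/(I_{V_1}\cdots I_{V_n})$ by the submodules generated by $e_{(V_1,\id)}\cdots e_{(V_k,\id)}$ for $0\le k\le n$, multiplication by this regular product followed by $(\id_A,V_{k+1})^*$ identifies the $k$-th subquotient with $X(A)$; the resulting length-$n$ filtration has free subquotients, hence splits, and a standard induction (apply $(\id_A,V_1)^*$, divide by the regular element $e_{(V_1,\id)}$, iterate) shows that $1$ together with the $e_{(V_1,\id)}\cdots e_{(V_k,\id)}$, $1\le k\le n-1$, is the claimed $X(A)$-basis.

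For (2) I would first reduce to $B=C_2^{\times s}$, $B'=C_2^{\times t}$ by choosing bases of $B^*$ and $B'^*$; the cases $s=0$ or $t=0$ are trivial, since then one completed tensor factor is $X(A\times 1)^{\wedge}_A=X(A)$ and the map is the identity. The strategy is to iterate (1): for $B=C_2^{\times n}$ and $\alpha=(V^{(1)},\dots,V^{(n)})\colon A\to C_2^{\times n}$, the map $(\id_A,\alpha)\colon A\to A\times C_2^{\times n}$ factors through $(\id,V^{(n)}\circ\pr_A)\colon A\times C_2^{\times(n-1)}\to A\times C_2^{\times n}$, whose effect on $X$ has kernel the principal regular ideal $(e_{(V^{(n)}\circ\pr_A,\,\id_{C_2})})$ by (1) at the base group $A\times C_2^{\times(n-1)}$. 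Granting a transitivity statement --- that the $(-)^{\wedge}_A$-completion of $X(A\times C_2^{\times n})$ may be computed by completing one $C_2$-direction at a time, first in the last coordinate relative to $X(A\times C_2^{\times(n-1)})$ (along the characters pulled back from $A$) and then in the coefficient ring relative to $X(A)$ --- an induction on $n$ presents $X(A\times C_2^{\times n})^{\wedge}_A$ as the profinite completion of a free $X(A)$-module with an explicit monomial basis in the relevant Euler classes, and more generally presents each $X(A\times C_2^{\times n})/\!\prod_\ell I_{\alpha_\ell}$ as a finite free $X(A)$-module. Applying this to $X(A\times B\times B')^{\wedge}_A$ and comparing with the left-hand side $X(A\times B)^{\wedge}_A\hotimes_{X(A)}X(A\times B')^{\wedge}_A$ --- a limit of tensor products over $X(A)$ of finite free modules, hence without flatness or completion subtleties --- one sees that both carry the same monomial basis (products of a $B$-monomial with a $B'$-monomial) and that the map of (2) is basis-preserving; passing to the limit over the cofinal systems of defining ideals gives the isomorphism. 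In particular this recovers the isomorphism $X(A)\hotimes_{X(1)}X(B)\xrightarrow{\cong}X(A\times B)$ used in \Cref{ex:realcomplete} as the case $A=1$.

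The step I expect to be the main obstacle is precisely the transitivity statement invoked above: verifying that on $X(A\times C_2^{\times n})$ the cofinal system of finite products of the defining ideals $I_\alpha$ ($\alpha\colon A\to C_2^{\times n}$) generates the same topology as the system obtained by iterating the one-coordinate completions. This is a bookkeeping exercise with two interacting families of Euler classes: the $e_{(V\circ\pr_A,\,\id_{C_2})}$, which generate the ideals of the finer last-$C_2$-direction completion, and the Euler classes pulled back from $X(A)$, which govern the coarser topology; one has to check that forming finite products of the former and of (lifts of) the latter yields a cofinal refinement of the products of the $I_\alpha$, and conversely. Once this comparison of topologies is in hand, everything else reduces to manipulating split short exact sequences of finite free modules, exactly as in the proof of (1).
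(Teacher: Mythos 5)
Your Part (1) is essentially the paper's proof: the split short exact sequence of groups $1\to A\xr{(\id,V)} A\times C_2\xr{(V,\id)} C_2\to 1$ gives the split short exact sequence
\[ 0\to X(A\times C_2)\xr{e_{(V,\id)}\cdot} X(A\times C_2)\xr{(\id,V)^*} X(A)\to 0, \]
so $e_{(V,\id)}$ is a regular element generating $I_V$, and the basis claim follows by the obvious induction. The filtration packaging you use is a cosmetic variant of the same argument.

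For Part (2) you take the right strategy — reduce to $B'=C_2^{\times n}$, then to $n=1$, then build the completion one $C_2$-factor at a time — but the step you flag as ``the main obstacle'' is exactly the content the argument needs, and you leave it open. What makes the two-stage completion work is the ideal decomposition: for $(\alpha,V)\colon A\to B\times C_2$,
\[ I_{(\alpha,V)}=\pr_{A\times B}^*(I_\alpha)\cdot X(A\times B\times C_2) + \bigl(e_{(V,0,\id)}\bigr). \]
The paper proves this in a few lines using the map $i_V\colon A\times B\to A\times B\times C_2$, $(a,b)\mapsto (a,b,V(a))$: given $x\in I_{(\alpha,V)}$, set $x_0=i_V^*(x)$, note $x_0\in I_\alpha$ because $(\id_A,\alpha)^*\circ i_V^*=(\id_A,\alpha,V)^*$, and observe that $x-\pr_{A\times B}^*(x_0)$ lies in $\ker(i_V^*)$, which by Part (1) applied over the base $A\times B$ is generated by the regular element $e_{(V,0,\id)}$. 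With this in hand, completing $X(A\times B\times C_2)$ at finite products of the $I_{(\alpha,V)}$ is literally the same as completing at the $e_{(V,0,\id)}$ and then at the pulled-back $I_\alpha$, which gives the completed tensor decomposition. So your approach is correct, but the lemma you defer as a ``bookkeeping exercise'' is the real content of the proof, and omitting it is a genuine gap.
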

\begin{proof} 
For Part (1), note that given $V\in A^*$ there is a short exact sequence of groups	
\[ 1\to A\xr{(\id,V)} A\times C_2\xr{(V,\id)} C_2 \to 1\]
that is split by the projection $\pr_A\colon A\times C_2\to A$. Therefore, 
\[ 0\to X(A\times C_2)\xr{e_{(V,\id)}\cdot} X(A\times C_2) \xr{(\id,V)^*}X(A) \to 0\]
is also short exact by the axioms of a global $2$-torsion group law, and split by $\pr_A^*$. It follows that every element $x\in X(A\times C_2)$ can be written uniquely as $x=\pr_A^*(x_0)+x_1\cdot e_{(V,\id)}$ with $x_0\in X(A)$. Given that $I_V$ is generated by $e_{(V,\id)}$, Part (1) follows by an easy inductive argument.

Part (2): By choosing a basis we can assume that $B'=(C_2)^{\times n}$. Via induction on $n$ we can further reduce to the case $n=1$, hence we are considering the map 
\[ X(A\times B)\otimes_{X(A)} X(A\times C_2)\xr{(\pr_{A\times B}^*,\pr_{A\times C_2}^*)} X(A\times B\times C_2).\]
Now take $(\alpha,V)\colon A\to B\times C_2$ and let $x\in I_{(\alpha,V)}\subseteq X(A\times B\times C_2)$. We consider the map $i_V\colon A\times B\to A\times B\times C_2$ whose first two components are the projections to $A$ and $B$, and whose last component equals $A\times B\to A\xr{V} C_2$. Then the kernel of $i_V^*\colon X(A\times B\times C_2)\to X(A\times B)$ is generated by the Euler class $e_{(V,\epsilon,\id)}$ associated to the map $(V,\epsilon,\id)\colon A\times B\times C_2\to C_2$. Now the element $x_0=(i_V)^*(x)\in X(A\times B)$ lies in $I_{\alpha}\subseteq X(A\times B)$, and $x-\pr_{A\times B}^*(x_0)$ is an element of the kernel of $(i_V)^*$, hence it is divisible by $e_{(V,\epsilon,\id)}$. So we see that $I_{(\alpha,V)}$ is generated by $\pr_{A\times B}^*(I_\alpha)$ and $e_{(V,\epsilon,\id)}$. Therefore we can form the completion in two steps: Completing $X(A\times B\times C_2)$ at the Euler classes $e_{(V,\epsilon,\id)}$ for $V\colon A\to C_2$ yields $X(A\times B)\hotimes_{X(A)} X(A\times C_2)^{\wedge}_{A}$, and further completing at the ideal generated by $\pr_{A\times B}^*(I_\alpha)$ gives $X(A\times B)^{\wedge}_A\hotimes_{X(A)}X(A\times C_2)^\wedge_A$, as desired.
\end{proof}
We obtain an $A$-equivariant $2$-torsion formal group law $X^\wedge_A$ as the tuple
\[ (X(A),X(A\times C_2)_A^{\wedge},\Delta,\theta,y(\epsilon)),\]
where
\begin{itemize}
	\item $\Delta$ is the induced map on completion by the composite
	\[ X(A\times C_2)\xr{(\id,m)^*} X(A\times C_2 \times C_2)\to X(A\times C_2\times C_2)_A^{\wedge}\cong X(A\times C_2)_A^{\wedge}\hotimes_{X(A)} X(A\times C_2)_A^{\wedge}.\]
	\item $\theta$ is the induced map on completion by the map $X(A\times C_2)\to X(A)^{A^*}$ whose $V$-th component is the map $(\id,V)^*\colon X(A\times C_2)\to X(A)$.
	\item $y(\epsilon)$ is the image of $e_{(\epsilon,\id)}$ under the completion map $X(A\times C_2)\to X(A\times C_2)_A^{\wedge}$.
\end{itemize}
Checking that this is indeed an $A$-equivariant $2$-torsion formal group law is straightforward: Coassociativity, cocommutativity and counitality of $\Delta$ follow directly from the corresponding properties of $m\colon C_2\times C_2\to C_2$, and compatibility of $\Delta$ and $\theta$ follows from the commutative diagram
\[ \xymatrix{ X(A\times C_2) \ar[rr]^{(\id,VW)^*} \ar[d]_-{(\id_A,m)^*}&& X(A) \\
			X(A\times C_2\times C_2). \ar[urr]_-{(\id_A,V,W)^*}
}\]
The topology on $X(A\times C_2)_A^{\wedge}$ is by definition described by the kernels of $\theta$, and regularity of $y(\epsilon)$ is an immediate consequence of Part (1) of \Cref{lem:realadjunction}, since the coefficients of $y(\epsilon)\cdot x$ with respect to a flag $F$ starting with $\epsilon$ are given by the shift of the coefficients of $x$ with respect to the flag $F'$ obtained by removing $\epsilon$ at the beginning of $F$.

Finally, the discussion in \Cref{sec:realAorientation} shows that for an $\el$-global real oriented ring spectrum $E$ with associated global $2$-torsion group law $\upi_*E$, the $A$-equivariant formal group law $(\upi_*E)^{\wedge}_A$ is isomorphic to the one associated to the induced real orientation of $E_A$.

\subsection{From $A$-equivariant $2$-torsion formal group laws to global $2$-torsion group laws} \label{sec:realadjunction2}
Having seen how to assign an $A$-equivariant formal group law to a global group law, we now study how to go the other way, as indicated in \Cref{ex:realgencomplete}. Throughout this section we fix an $A$-equivariant $2$-torsion formal group law $F=(k,R,\Delta,\theta,y(\epsilon))$, to which we want to associate a global $2$-torsion group law $F_{gl}$. The idea is simple: We want to set $F_{gl}(1)=k$, $F_{gl}(C_2)=R$, $F_{gl}(C_2\times C_2)=R\hotimes R$ etc., and use the comultiplication $\Delta$ to define the functoriality in the groups.

To make this precise, we use the following coordinate-free description. Let $\cC_{k}$ denote the category of complete commutative linear topological $k$-algebras, i.e., commutative topological $k$-algebras $S$ which have a basis $\{U_i\}_{i\in I}$ of open neighborhoods of $0$ consisting of ideals $U_i$ such that the induced map
\[ S\to \lim_{i\in I} S/U_i \]
is an isomorphism (it can be shown that this condition is independent of the chosen basis of ideals). Given two objects $S,S'\in \cC_k$, we denote by $S\hotimes S'$ their completed tensor product over $k$. Choosing ideal neighborhood bases $\{U_i\}_{i\in I}$ and $\{V_j\}_{j\in J}$ for $S$ and $S'$ respectively, $S\hotimes S'$ can be computed as
\[ S\hotimes S'= \lim_{i\in I,j\in J}(S/U_i\otimes S'/V_j)\]
equipped with the limit topology (with each $S/U_i\otimes S'/V_j$ discrete). The completed tensor product $-\hotimes -$ is the coproduct in~$\cC_k$, making the complete Hopf-algebra $R$ a commutative cogroup object. Hence, given an object $S\in \cC_{k}$, we obtain a natural abelian group structure on the set $R(S)$ of morphisms $R\to S$ via the formula
\[ f+g=(R\xrightarrow{\Delta} R\hat{\otimes}R \xr{f \hat{\otimes} g} S\hat{\otimes} S\xrightarrow{m} S),\]
with neutral element
\[ R\xr{\theta(\epsilon)} k\to S.\]
Since comultiplication by $2$ is trivial on $R$, the abelian groups $R(S)$ are in fact $\F_2$-vector spaces.

Now let $B$ be an elementary abelian $2$-group (or in other words a finite dimensional $\F_2$-vector space). Then we have the following:
\begin{Lemma}
The functor 
	\[ \Hom(B^*,R(-))\colon \cC_k\to \V \]
	is representable.
\end{Lemma}
\begin{proof} Choosing a basis of $B$, we find that there is a natural isomorphism 
	\[ \Hom(B^*,R(S))\cong R(S)^{\times n} \]
	for some $n$, which implies that $R^{\hat{\otimes}n}$ is a representing object.
\end{proof}

We let $F_{gl}(B)$ denote a choice of representing object. Since the assignment $\Hom(B^*,R(S))$ is also covariantly functorial in $B$, it follows that $F_{gl}(-)$ extends canonically to a contravariant functor from elementary abelian $2$-groups to commutative $k$-algebras, in particular to an $\el$-algebra. Note that for any $B,B'\in \el$ the canonical map 
\[ F_{gl}(B)\hotimes F_{gl}(B')\to F_{gl}(B\times B') \]
is an isomorphism, since $\Hom((B\times B')^*,R(-))$ is naturally isomorphic to $\Hom(B^*,R(-))\times \Hom((B')^*,R(-))$.

\begin{Lemma} \label{lem:realorientationregular} The element 
$y(\epsilon)\in R\cong F_{gl}(C_2)$ is a coordinate for $F_{gl}$.
\end{Lemma}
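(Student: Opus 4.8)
The goal is to verify the three axioms of a global $2$-torsion group law for $F_{gl}$ with the proposed coordinate $y(\epsilon) \in R \cong F_{gl}(C_2)$. First I would pin down the representing objects more concretely: for any $B \in \el$, choosing a basis identifies $F_{gl}(B)$ with $R^{\hotimes n}$ where $n = \dim_{\F_2} B$, and under this identification the structure maps (product in $B$, restrictions along group homomorphisms, the $\F_2$-vector space structure on $R(-)$) are all governed by the Hopf-algebra structure of $R$ via the Yoneda lemma. The key point is to describe, for a character $V \in B^*$, the Euler class $e_V = V^*(y(\epsilon)) \in F_{gl}(B)$ explicitly. Since $V \colon B \to C_2$ induces $V^* \colon F_{gl}(C_2) = R \to F_{gl}(B)$, which by construction is the map classifying the element $V \in B^*$ sitting inside $\Hom(B^*, R(-))$, I expect $e_V$ to be precisely the image of $y(\epsilon)$ under the "evaluate at $V$" coalgebra map, i.e. a twisted copy of the orientation class sitting in one tensor factor when $V$ is a coordinate vector.

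The heart of the argument is the exactness of
\[ 0 \to F_{gl}(B) \xr{e_V \cdot} F_{gl}(B) \xr{\res^B_{\ker V}} F_{gl}(\ker V) \to 0 \]
for $0 \neq V \in B^*$. By choosing a basis $V = V_1, V_2, \dots, V_n$ of $B^*$ adapted to $V$, we get $F_{gl}(B) \cong R \hotimes F_{gl}(\ker V)$, with $R$ the first tensor factor corresponding to the $V$-direction; the restriction $\res^B_{\ker V}$ becomes $\theta(\epsilon) \hotimes \id$, i.e. killing the orientation ideal in the first factor, and $e_V$ becomes $y(\epsilon) \hotimes 1$. So the sequence reduces, after completing appropriately, to the statement that
\[ 0 \to R \hotimes F_{gl}(\ker V) \xr{y(\epsilon) \cdot} R \hotimes F_{gl}(\ker V) \xr{\theta(\epsilon) \hotimes \id} F_{gl}(\ker V) \to 0 \]
is exact, which follows from axiom (3) in the definition of an $A$-equivariant formal group law: $y(\epsilon)$ is regular and generates $\ker(\theta(\epsilon))$. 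Regularity of $y(\epsilon)$ in $R$ gives injectivity of multiplication (and this survives the completed tensor product since $R/y(\epsilon) \cong k$ is flat, or by the explicit basis argument as in \Cref{sec:flags}), and the fact that $y(\epsilon)$ generates the augmentation ideal gives exactness in the middle and surjectivity on the right. Finally, $F_{gl}$ takes values in $\F_2$-algebras because $R(S)$ is an $\F_2$-vector space for every $S$, so $2 = 0$ already in $k = F_{gl}(1)$.

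The main obstacle I anticipate is not any single hard step but rather the bookkeeping of identifying restriction maps with the Hopf-algebraic operations correctly — in particular checking that $\res^B_{\ker V}$ really corresponds to $\theta(\epsilon) \hotimes \id$ under the chosen basis, and that the completions match up (one must either work with the completed tensor products throughout, invoking \Cref{lem:realadjunction}-style arguments, or verify the exact sequence at each finite stage $R/(I_{V_1}\cdots I_{V_m})$ and pass to the limit). There is also a mild subtlety that \Cref{lem:realadjunction} was stated for global $2$-torsion group laws, which is what we are trying to prove $F_{gl}$ is, so one should instead use the direct basis description $F_{gl}(B) \cong R^{\hotimes n}$ and the flag-basis decomposition of $R$ from \Cref{sec:flags} to establish the freeness statements by hand. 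Once the identifications are in place, regularity of $y(\epsilon)$ does all the real work.
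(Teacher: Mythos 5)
Your plan is correct and matches the paper's proof: both extend $V$ to a basis of $B^*$, identify $F_{gl}(B)\cong R^{\hotimes n}$ so that $e_V$ becomes $y(\epsilon)\hotimes 1$ and $\res^B_{\ker V}$ becomes $\theta(\epsilon)\hotimes\id$, and then invoke the split exactness of $0\to R\xr{y(\epsilon)\cdot}R\to k\to 0$ (which is preserved by $\hotimes R^{\hotimes(n-1)}$). The paper phrases the tensor-preservation step purely in terms of splitness rather than flatness or flag bases, but that is a cosmetic difference.
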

\begin{proof} Let $V\in B^*$ be a non-trivial character. Extending $V$ to a basis $V_2,\hdots,V_n$ of $B^*$, we see that $V^*(y(\epsilon))$ identifies with $y(\epsilon)\otimes 1_{R^{\hotimes (n-1)}}$ under the induced isomorphism $F_{gl}(B)\cong R^{\hotimes n}$, and restriction to $\ker(V)$ equals the map
	\[ \theta(\epsilon) \hotimes \id_{R^{\hotimes (n-1)}}\colon R^{\hotimes n}\to R^{\hotimes (n-1)}. \]
	By assumption, the sequence $0\to R\xr{y(\epsilon)\cdot} R\xr{\theta(\epsilon)} k\to 0$ is split exact, hence it remains so after tensoring with~$R^{\hotimes (n-1)}$, which finishes the proof.
\end{proof}
\begin{Remark} Note that this construction works more generally for any $2$-torsion commutative cocommutative complete linear topological Hopf-algebra $R$ over $k$ together with a regular element generating the augmentation ideal.
\end{Remark}

\begin{Remark} Like $(-)^{\wedge}_A$, the functor $(-)_{gl}$ also has a topological analog: The functor that assigns to a global spectrum an $A$-equivariant spectrum has a lax symmetric monoidal right adjoint $r_A$ (see \cite[Theorem 4.5.24]{Sch18}). A real orientation of an $A$-ring spectrum $Y$ induces a global real orientation on $r_AY$, and one can show that there is an isomorphism 
\[ \upi_*(r_AY)\cong F(Y)_{gl},\]
where $F(Y)$ is the $A$-equivariant formal group law associated to the real orientation of $Y$.	
\end{Remark}

\subsubsection{Extended functoriality} \label{sec:extendedreal}
There is one piece of structure of $F$ that we have not made use of yet: The augmentation map $\theta$ (or rather the components $\theta(V)$ with $V\neq \epsilon$). This additional structure can be interpreted as additional functoriality of $F_{gl}$, in the following way: Sending $V\in A^*$ to the map $R\xr{\theta(V)}k$ defines a group homomorphism $A^*\to R(k)$. Since $k$ is initial, this means that $R(S)$ is in fact naturally an $\F_2$-vector space under $A^*$, and we can rewrite the defining property of $F_{gl}(B)$ as
\[ \map(F_{gl}(B),S)\cong \Hom(B^*,R(S))\cong \Hom_{A^*/}(A^*\times B^*,R(S)),\]
the set of group homomorphisms from $A^*\times B^*$ to $R(S)$ under $A^*$. The upshot is that there is an induced map $F_{gl}(B)\to F_{gl}(C)$ for any map $A\times C\to A\times B$ that lies over the projections to $A$, or in other words for any map $C\times A\to B$, not only for maps $C\to B$. Using this extended functoriality, the full structure of $F$ can be reconstructed from $F_{gl}$: $k$ is given by $F_{gl}(1)$, $R$ by $F_{gl}(C_2)$, the comultiplication identifies with $(\id_A\times m)^*\colon F_{gl}(C_2)\to F_{gl}(C_2\times C_2)$, the maps $\theta(V)$ correspond to $(\id_A,V)^*\colon F_{gl}(C_2)\to F_{gl}(1)$ and the class $y(\epsilon)$ is just the coordinate of $F_{gl}$.

Note that for a global $2$-torsion group law $X$ with associated $A$-equivariant formal group law $X^{\wedge}_A$, \Cref{lem:realadjunction} shows that there is a natural isomorphism
\[ (X^{\wedge}_A)_{gl}(B)\cong X(A\times B)^{\wedge}_A. \]
Moreover, the additional functoriality of $(X^{\wedge}_A)_{gl}$ in maps over $A$ agrees with the apparent one on $X(A\times -)$ induced on completion.

\subsection{The adjunction} \label{sec:realadjunction3}

We now put the two constructions of the previous sections together and show:
\begin{Prop} \label{prop:realadjunction} The functors $(-)^{\wedge}_A$ and $(-)_{gl}$ form an adjunction 
\[(-)^{\wedge}_{A}\colon GL^{2-\tor}_{gl}\rightleftarrows FGL_{A}^{2-\tor}\colon (-)_{gl}. \]
The unit $X\to (X^{\wedge}_A)_{gl}$ is given in level $B$ by the composite
\[ X(B)\xr{\pr_B^*}X(A\times B) \to X(A\times B)^{\wedge}_A\cong (X^{\wedge}_A)_{gl}(B), \]
and the counit $(F_{gl})^{\wedge}_A\to F$ is the pair of maps $F_{gl}(A)\to F_{gl}(1)\cong k$ and $F_{gl}(A\times C_2)^{\wedge}_A\to F_{gl}(C_2)\cong R$ induced by the diagonals $\diag\colon A\to A\times A$ and $\diag\times \id_{C_2}\colon A\times C_2\to A\times A\times C_2$ respectively, using the extended functoriality of $F_{gl}$.
\end{Prop}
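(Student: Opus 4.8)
The plan is to check that the morphisms written down in the statement organise into a unit $\eta\colon \id_{GL^{2-\tor}_{gl}}\Rightarrow (-)_{gl}\circ(-)^{\wedge}_A$ and a counit $\epsilon\colon (-)^{\wedge}_A\circ(-)_{gl}\Rightarrow \id_{FGL^{2-\tor}_A}$ satisfying the triangle identities; by the usual characterisation of adjunctions this proves the proposition. For the unit, I use the identification $(X^{\wedge}_A)_{gl}(B)\cong X(A\times B)^{\wedge}_A$ from the end of \Cref{sec:extendedreal}, under which the proposed $\eta_X$ in degree $B$ is $X(B)\xr{\pr_B^*}X(A\times B)\to X(A\times B)^{\wedge}_A$. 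Naturality of $\pr_B^*$ in $B$ and functoriality of the completion make this a morphism of $\el$-algebras, it is visibly natural in $X$, and it carries the coordinate $e\in X(C_2)$ to the image of $\pr_{C_2}^*(e)=e_{(0,\id)}$ in $X(A\times C_2)^{\wedge}_A$, which is by construction the orientation class of $X^{\wedge}_A$ and hence, by \Cref{lem:realorientationregular}, the coordinate of $(X^{\wedge}_A)_{gl}$. So $\eta$ is a natural transformation landing in $GL^{2-\tor}_{gl}$.

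For the counit I would use the reconstruction of $F=(k,R,\Delta,\theta,y(\epsilon))$ from $F_{gl}$ recorded at the end of \Cref{sec:extendedreal}, namely $k=F_{gl}(1)$, $R=F_{gl}(C_2)$, $\Delta=(\id_A\times m)^*$, $\theta(V)=(\id_A,V)^*$ and $y(\epsilon)$ the coordinate, all via the extended functoriality of $F_{gl}$ over $A$; dually, $(F_{gl})^{\wedge}_A$ has ground ring $F_{gl}(A)$, ring $F_{gl}(A\times C_2)^{\wedge}_A$, and — directly from the construction of $(-)^{\wedge}_A$ in \Cref{sec:realadjunction1} — comultiplication and augmentations obtained by completing the $(\id_{A\times C_2}\times m)^*$- and $(\id,V)^*$-type maps of the functor $B\mapsto F_{gl}(A\times B)$. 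One then checks that the maps $F_{gl}(A)\to F_{gl}(1)$ and $F_{gl}(A\times C_2)^{\wedge}_A\to F_{gl}(C_2)$ induced by the diagonals $\diag\colon A\to A\times A$ and $\diag\times\id_{C_2}\colon A\times C_2\to A\times A\times C_2$ intertwine these structure maps and send the orientation of $(F_{gl})^{\wedge}_A$ to $y(\epsilon)$ — a diagram chase whose only inputs are functoriality of extended restriction and the coherences among $m$, $\diag$ and the projections. Since a morphism of $A$-equivariant formal group laws is determined by its ground-ring component (\Cref{sec:categoryoffgla}), naturality of $\epsilon$ in $F$ is then automatic.

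It remains to verify the triangle identities. The first, $\epsilon_{X^{\wedge}_A}\circ(\eta_X)^{\wedge}_A=\id_{X^{\wedge}_A}$, may by the same rigidity of morphisms of $A$-equivariant formal group laws be tested on ground rings, where it becomes $X(A)\xr{\pr_A^*}X(A\times A)^{\wedge}_A\xr{\diag^*}X(A)$; this is $(\pr_A\circ\diag)^*=\id_{X(A)}$. The second, $(\epsilon_F)_{gl}\circ\eta_{F_{gl}}=\id_{F_{gl}}$, is checked degreewise: in degree $B$ it is the composite $F_{gl}(B)\xr{\pr_B^*}F_{gl}(A\times B)^{\wedge}_A\xr{((\epsilon_F)_{gl})_B}F_{gl}(B)$, and unwinding $(\epsilon_F)_{gl}$ through the isomorphism $((F_{gl})^{\wedge}_A)_{gl}(B)\cong F_{gl}(A\times B)^{\wedge}_A$ — together with the fact, recorded in \Cref{sec:extendedreal}, that its extended functoriality matches the apparent functoriality of $F_{gl}(A\times-)$ — reduces this to a computation with group homomorphisms that evaluates to the identity, the role of \Cref{lem:realadjunction} being to guarantee the relevant maps descend to the completions.

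The main obstacle is the counit step and, with it, the second triangle identity: neither is conceptually deep, but both demand careful matching of the extended functoriality of $F_{gl}$ with the structure maps of $(F_{gl})^{\wedge}_A$ and precise tracking of which diagonals and projections induce which maps. By contrast the completions are only bookkeeping, handled uniformly by \Cref{lem:realadjunction}.
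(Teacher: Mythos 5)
Your proposal is correct but takes a genuinely different route from the paper's. You verify the triangle identities directly for the composite unit and counit stated in the proposition, reducing the first to a ground-ring calculation (justified by the rigidity of morphisms of $A$-equivariant formal group laws, \Cref{sec:categoryoffgla}) and the second to a degreewise check. The paper instead factors the adjunction through an intermediate category $GL^{2-\tor}_{gl/A}$ of global group laws with extended functoriality over $A$: the first factor $(-)_A\dashv u$ is a base-change/forgetful adjunction whose unit and counit are precisely the projection and diagonal restrictions, and the second factor $(-)^\wedge\dashv(-)_{gl/A}$ is a completion/inclusion adjunction whose unit is the completion map and whose counit is an isomorphism (since $(-)_{gl/A}$ lands in complete objects). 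Composing these two very simple adjunctions produces the stated unit and counit with no further computation. The factored approach is cleaner because it cleanly separates the two phenomena that your direct check has to track simultaneously — the reason your second triangle identity feels like a "careful matching" of diagonals, projections and completions is exactly that you are collapsing two independent cancellations into one. Your direct proof is nonetheless valid; the only mild inaccuracy is in the final sentence, where you attribute the descent-to-completions to \Cref{lem:realadjunction} — that lemma's role is to establish the identification $(X^\wedge_A)_{gl}(B)\cong X(A\times B)^\wedge_A$ itself, while the compatibility of the extended functoriality with the apparent one on $X(A\times-)$ after completion is the observation recorded at the end of \Cref{sec:extendedreal}. It would also be worth making explicit, as part of the counit step, that the maps you write down preserve not just $\Delta$ and $\theta$ but also the topology (i.e., are continuous), which again falls out immediately of the extended functoriality.
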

\begin{proof}
The adjunction is most easily described as a composite of two adjunctions of the following form:
 \[ \xymatrix{ GL^{2-\tor}_{gl}\ar@/^/[rr]^{(-)_A} \ar@/^2.2pc/[rrrr]^{(-)_A^{\wedge}} && GL^{2-\tor}_{gl/A} \ar@/^/[rr]^{(-)^{\wedge}} \ar@/^/[ll]^{u} && FGL^{2-\tor}_A \ar@/^/[ll]^{(-)_{gl/A}} \ar@/^2.2pc/[llll]_{(-)_{gl}}} \]
 
Here, the intermediate category $GL^{2-\tor}_{gl/A}$, whose objects one might call `$A$-global $2$-torsion group laws', is the category of global $2$-torsion group laws $Y$ with extended functoriality as described at the end of the previous section. This means that there are restriction maps $Y(B)\to Y(C)$ for any map $A\times C\to A\times B$ over $A$. The categories $GL^{2-\tor}_{gl/A}$ and $GL^{2-\tor}_{gl}$ are related by an adjunction, where the right adjoint $u\colon GL^{2-\tor}_{gl/A}\to GL^{2-\tor}_{gl}$ forgets the extra functoriality, and the left adjoint $(-)_A\colon GL^{2-\tor}_{gl}\to GL^{2-\tor}_{gl/A}$ is defined via $X_A(B)=X(A\times B)$. The unit $X\to u(X_A)$ is then induced by the projections $A\times B\to A$, while the counit $u(Y)_A\to Y$ is restriction along the maps $\diag\times B\colon A\times B\to A\times A\times B$ via the extended functoriality of $Y$.

As we just saw, the functor $(-)_{gl}$ naturally factors through $u$, yielding $(-)_{gl/A}\colon FGL^{2-\tor}_A\to GL^{2-\tor}_{gl/A}$. The remaining functor $(-)^{\wedge}\colon GL^{2-\tor}_{gl/A}\to FGL^{2-\tor}_A$ is the evident generalization of our definition of $(-)^{\wedge}_A$ to an arbitrary object of $GL^{2-\tor}_{gl/A}$. It sends $Y\in GL^{2-\tor}_{gl/A}$ to the quintuple
\[ (Y(1),Y(C_2)^{\wedge},\Delta,\theta,y(\epsilon)),\]
where the completion on $Y(C_2)$ is formed with respect to all kernels of augmentation ideals $(\id_A,V)^*\colon Y(C_2)\to Y(1)$ (with $(\id_A,V)\colon A\to A\times C_2$, using the extended functoriality of $Y$), and $\Delta$ is induced by $(\id_A\times m)^*$ on completions. To see that this is indeed left adjoint to $(-)_{gl/A}$, we note that \Cref{lem:realadjunction} also applies in this more general setting, showing that there is a natural isomorphism
\[ (Y^{\wedge})_{gl/A}(B)\cong Y(B)^{\wedge}_A \]
for any $B\in \el$. We can then define the unit of the adjunction $Y\to (Y^{\wedge})_{gl/A}$ to be the completion map. For the other direction, note that $F_{gl/A}(B)$ is already complete for any $B$. In particular there are natural isomorphisms $F_{gl/A}(1)\cong k$ and $F_{gl/A}(C_2)^{\wedge} \cong R$ and we find that $F^{\wedge}_{gl/A}$ is naturally isomorphic to $F$ itself. We let the counit be this natural isomorphism. The composite unit and counit then work out as stated, finishing the proof of  \Cref{prop:realadjunction}.
\end{proof}

\begin{Remark} Note that this shows that $(-)_{gl/A}$ identifies the category of $A$-equivariant $2$-torsion formal group laws with the full subcategory of $A$-global $2$-torsion group laws consisting of the complete objects. When $A$ is the trivial group, this embeds ordinary $2$-torsion formal group laws into global $2$-torsion group laws, as described in \Cref{ex:realcomplete}.
\end{Remark}

Now consider the universal global $2$-torsion group law $\wLt$, the initial object of $GL_{gl}^{2-\tor}$. Since any left adjoint preserves initial objects, it follows that the associated $A$-equivariant $2$-torsion formal group law $(\wLt)^{\wedge}_A$ is also universal. In particular, $\wLt(A)$ is canonically isomorphic to the equivariant Lazard ring $L^{2-\tor}_A$, and for any global $2$-torsion group law $X$ the map
\[ \wLt(A)\to X(A),\]
given by evaluating the unique map $\wLt\to X$ on $A$, classifies the $A$-equivariant formal group law $X^{\wedge}_A$. In fact, we have:
\begin{Prop} The adjunctions of \Cref{prop:realadjunction} induce an isomorphism $\bL^{2-\tor}\cong \wLt$ of $\el$-algebras with coordinate $e\in \wLt$ corresponding to the universal Euler class $e_{\sigma}\in \bL^{2-\tor}$. In particular,~$\bL^{2-\tor}$ equipped with the universal Euler class $e_{\sigma}$ is a global $2$-torsion group law and hence for every elementary abelian $2$-group $A$ all Euler classes of non-trivial characters are regular elements in $L^{2-\tor}_A$.
\end{Prop}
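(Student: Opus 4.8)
The strategy is to promote the objectwise isomorphisms $\wLt(A)\cong L^{2-\tor}_A$ produced in the discussion above to a natural isomorphism of $\el$-algebras, and then to read off the statements about the coordinate and about regularity. Recall the key input: since $(-)^{\wedge}_A\colon GL^{2-\tor}_{gl}\to FGL^{2-\tor}_A$ is a left adjoint by \Cref{prop:realadjunction}, it carries the initial object $\wLt$ to the initial object of $FGL^{2-\tor}_A$, i.e., to the universal $A$-equivariant $2$-torsion formal group law. Hence the ground ring $\wLt(A)$ of $(\wLt)^{\wedge}_A$ comes with a canonical ring isomorphism $\phi_A\colon L^{2-\tor}_A\xr{\cong}\wLt(A)$, the classifying map of $(\wLt)^{\wedge}_A$.

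To see that the $\phi_A$ assemble to an isomorphism $\bL^{2-\tor}\cong\wLt$, I would first establish a change-of-group compatibility of the completion functors: for every global $2$-torsion group law $X$ and every homomorphism $\alpha\colon B\to A$ of elementary abelian $2$-groups there is a canonical isomorphism
\[ \alpha_*(X^{\wedge}_B)\ \cong\ X(\alpha)_*(X^{\wedge}_A) \]
of $A$-equivariant $2$-torsion formal group laws over $X(B)$, where $X(\alpha)\colon X(A)\to X(B)$ is the restriction map of the $\el$-algebra $X$ and $\alpha_*$ is the change-of-group pushforward $FGL^{2-\tor}_B\to FGL^{2-\tor}_A$. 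The proof of this is a direct comparison of the two filtrations defining the completions: by part~(1) of \Cref{lem:realadjunction} the underlying ring $X(A\times C_2)^{\wedge}_A$ is the inverse limit, over finite sequences of characters of $A$, of the quotients $X(A\times C_2)/(I_{V_1}\cdots I_{V_n})$, each free over $X(A)$ on an explicit monomial basis of Euler classes; pushing forward along $X(\alpha)$ replaces the coefficients $X(A)$ by $X(B)$ and the Euler classes of characters of $A$ by their restrictions along $\alpha\times\id_{C_2}$, while $\alpha_*(X^{\wedge}_B)$ is obtained from $X(B\times C_2)^{\wedge}_B$ by recompleting at the sub-family of ideals indexed by $\alpha^*(A^*)\subseteq B^*$, which by part~(1) of \Cref{lem:realadjunction} again has exactly the same description. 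One then checks that the comultiplications, the augmentations $\theta(V)$ and the orientations agree, since on both sides they are induced by the pullbacks $(\id\times m)^*$, $(\id,V)^*$ and by the orientation class, all of which are natural in the group. Granting this, for $\alpha\colon B\to A$ the ring maps $\wLt(\alpha)\circ\phi_A$ and $\phi_B\circ\alpha^*$ from $L^{2-\tor}_A$ to $\wLt(B)$ both classify the $A$-equivariant formal group law $\alpha_*((\wLt)^{\wedge}_B)$ — the first by the displayed isomorphism with $X=\wLt$, the second by the definition of $\alpha^*\colon L^{2-\tor}_A\to L^{2-\tor}_B$ as the map on representing rings induced by $\alpha_*\colon FGL^{2-\tor}_B\to FGL^{2-\tor}_A$ — and so they coincide by uniqueness of classifying maps. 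Thus $\phi=(\phi_A)_A$ is an isomorphism of $\el$-algebras.

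It remains to match up the coordinates and to deduce the two stated consequences. Unwinding the construction of $X^{\wedge}_A$ shows that the Euler class $e_V$ of a character $V\in A^*$ in the $A$-equivariant formal group law $X^{\wedge}_A$, regarded as an element of its ground ring $X(A)$, equals the pullback $V^*(e)$ of the coordinate of $X$; this is the algebraic analog of the remark at the end of \Cref{sec:realAorientation}. Since $\phi_{C_2}$ carries the universal Euler class $e_\sigma\in L^{2-\tor}_{C_2}$ to the Euler class of $\sigma$ in $(\wLt)^{\wedge}_{C_2}$, we obtain $\phi_{C_2}(e_\sigma)=e$, as required. Finally, transporting the global $2$-torsion group law structure of $\wLt$ across $\phi$ shows that $\bL^{2-\tor}$ equipped with $e_\sigma$ is a global $2$-torsion group law, and then the defining exactness of the sequence \eqref{eq:sequence} — equivalently part~(1) of \Cref{lem:elementary} — says exactly that for every $A$ and every non-trivial $V\in A^*$ the Euler class $e_V\in L^{2-\tor}_A$ is a regular element.

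The one substantive step is the change-of-group compatibility $\alpha_*(X^{\wedge}_B)\cong X(\alpha)_*(X^{\wedge}_A)$ of the second paragraph; everything else is either formal — a consequence of the adjunction together with uniqueness of classifying maps — or a short unwinding of the construction of $X^{\wedge}_A$. That identification is not hard, but it does require a careful comparison of the two systems of ideals used to form the completions, and this is where I expect the bookkeeping to lie.
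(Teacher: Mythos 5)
Your proposal is correct and follows essentially the same route as the paper: observe that the left adjoint $(-)^{\wedge}_A$ carries the initial object $\wLt$ to the universal $A$-equivariant $2$-torsion formal group law, establish a change-of-group compatibility between the completion functor $(-)^{\wedge}_A$, the pushforward $\alpha_*\colon FGL^{2-\tor}_B\to FGL^{2-\tor}_A$, and the restriction map $X(\alpha)\colon X(A)\to X(B)$, and then deduce functoriality of the degreewise isomorphisms via uniqueness of classifying maps. The paper phrases the compatibility as an explicit morphism $X^{\wedge}_A\to\alpha_*(X^{\wedge}_B)$ of $A$-equivariant formal group laws over $\alpha^*$, whereas you phrase it as an isomorphism $\alpha_*(X^{\wedge}_B)\cong X(\alpha)_*(X^{\wedge}_A)$ over $X(B)$; by the discussion in \Cref{sec:categoryoffgla} these are the same datum, so there is no substantive difference.
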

\begin{proof}
The relation between the coordinate and the universal Euler class is clear by construction: In the $C_2$-equivariant formal group law $X^{\wedge}_{C_2}$ associated to any $X\in GL^{2-\tor}_{gl}$, the Euler class $e_{\sigma}\in X(C_2)$ is given as the pullback of $e_{(\epsilon,\id)}\in X(C_2\times C_2)$ along the diagonal $C_2\to C_2\times C_2$, which gives back the coordinate $e\in X(C_2)$. To show that the isomorphisms $(\wLt)(A)\cong L^{2-\tor}_A$ are compatible with restriction maps, let $\alpha\colon B\to A$ be a homomorphism. For every $X\in GL^{2-\tor}_{gl}$ we obtain a map of $A$-equivariant formal group laws $(X)^{\wedge}_A\to \alpha_*(X)^{\wedge}_B$ (where $\alpha_*$ denotes the push-forward along $\alpha$ as in \Cref{sec:globallazard}) via
\[ X(A)\xr{\alpha^*} X(B) \]
and
\[  X(A\times C_2)^{\wedge}_A\xr{(\alpha\times \id_{C_2})^*}  \alpha_*X(B\times C_2)^{\wedge}_{B}.  \]
For $X=\wLt$, this must give the unique map of $A$-equivariant formal group laws $(\wLt)^{\wedge}_A\to \alpha_* (\wLt)^{\wedge}_B$. In particular, the underlying map $L^{2-\tor}_A\cong \wLt(A)\to \wLt(B)\cong L^{2-\tor}_B$ classifies the push-forward along $\alpha$ of the universal $B$-equivariant $2$-torsion formal group law. This equals by definition the functoriality in $\bL^{2-\tor}$, so we find that the levelwise isomorphisms between $\bL^{2-\tor}$ and $\wLt$ are indeed compatible with the structure maps.
\end{proof}

\begin{Remark} We stress that the regularity of Euler classes in Lazard rings is a priori very unclear. The Euler classes of arbitrary $A$-equivariant formal group laws are typically not regular, so this is a special feature of the universal ones. Historically, getting control of the Euler torsion in Lazard rings has been the main obstacle to proving Theorems \ref{thm:A} and \ref{thm:B}, see \Cref{sec:outline} in the introduction. It is a major benefit of working globally that the regularity follows essentially formally.
\end{Remark}

\begin{Cor}
All $2$-torsion Lazard rings $L_A^{2-\tor}$ are integral domains.
\end{Cor}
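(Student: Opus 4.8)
The plan is to embed $L^{2-\tor}_A$ into a localisation which is visibly an integral domain. The crucial input is the regularity statement just established: by the preceding proposition, $\bL^{2-\tor}$ together with the universal Euler class is a global $2$-torsion group law, so by part (1) of \Cref{lem:elementary} every Euler class $e_V\in L^{2-\tor}_A$ with $0\neq V\in A^*$ is a non-zero-divisor. Hence the multiplicative subset $S\subseteq L^{2-\tor}_A$ generated by $\{e_V\mid 0\neq V\in A^*\}$ consists of non-zero-divisors, and so the localisation map
\[ L^{2-\tor}_A\hookrightarrow L^{2-\tor}_A[e_V^{-1}\mid 0\neq V\in A^*] \]
is injective.

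It therefore suffices to show that the target is an integral domain. By \Cref{prop:realgeomlazard} this ring is $L^{2-\tor}[e_V,e_V^{-1},\gamma_i^V\mid 0\neq V\in A^*,\ i\in\N]$, which is a localisation of the polynomial ring $L^{2-\tor}[e_V,\gamma_i^V\mid 0\neq V\in A^*,\ i\in\N]$ over the non-equivariant $2$-torsion Lazard ring $L^{2-\tor}=L^{2-\tor}_1$. By Quillen's computation \cite{Qui69} the ring $L^{2-\tor}$ is isomorphic to $MO_*$, hence is a polynomial $\F_2$-algebra and in particular an integral domain. Since a polynomial ring (on any set of variables) over an integral domain is again an integral domain, and any localisation of an integral domain is an integral domain, the ring $L^{2-\tor}_A[e_V^{-1}\mid 0\neq V\in A^*]$ is an integral domain; therefore so is its subring $L^{2-\tor}_A$.

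The only real content here is the regularity of the Euler classes, which is exactly what fails for arbitrary $A$-equivariant formal group laws and what makes the displayed injection possible; once that is available the argument is purely formal. In fact one need not invoke Quillen's full theorem: only that the non-equivariant $2$-torsion Lazard ring is an integral domain is used, and this already follows from the classical fact that it is a polynomial $\F_2$-algebra.
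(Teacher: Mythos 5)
Your proof is correct and is essentially the same as the paper's: both embed $L^{2-\tor}_A$ into the Euler-inverted localisation using the just-established regularity of Euler classes, and identify that localisation as a (localised) polynomial ring over the non-equivariant $2$-torsion Lazard ring $L^{2-\tor}$, hence a domain. Your remark that only the domain property of $L^{2-\tor}$ (not the full Quillen isomorphism) is needed also matches the paper's phrasing.
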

\begin{proof} The non-equivariant $2$-torsion Lazard ring $L^{2-\tor}$ is a polynomial algebra over $\F_2$, hence an integral domain. By \Cref{prop:geomlazard}, the ring $L_A^{2-\tor}[e_V^{-1}\ |\ V\in A^*-\{\epsilon\}]$ is obtained from $L^{2-\tor}$ by adjoining polynomial and Laurent ring generators, in particular it is again a domain. But since we just saw that the Euler classes are regular elements of $L_A^{2-\tor}$, this means that $L_A^{2-\tor}$ injects into this localization and hence must be a domain itself.
\end{proof}

\subsection{Geometric fixed points and proof of the main theorem} \label{sec:realmain}

Before we turn to the proof of Theorems \ref{thm:B} and \ref{thm:D}, we first generalize the construction of inverting Euler classes to arbitrary global $2$-torsion group laws.

\begin{Def}[Geometric fixed points] Let $X$ be a global $2$-torsion group law, and $A$ an elementary abelian $2$-group. Then we define
	\[ \Phi^A(X)=X(A)[e_V^{-1}\ |\ V\in A^*-\{\epsilon\}] \]
and call it the \emph{$A$-geometric fixed points of $X$}.
\end{Def}
Since any other coordinate only differs from $e$ by a unit in $X(C_2)$, the definition of $\Phi^A(X)$ is independent of the choice of coordinate. The collection of the $\Phi^A(X)$ for varying $A$ does not assemble to an $\el$-algebra, since the pullback along an arbitrary homomorphism $\alpha\colon B\to A$ can send a non-trivial $V\in A^*$ to the trivial class in $B^*$. This does not happen if $\alpha$ is surjective, so the $\Phi^A(X)$ still form a contravariant functor from the category of elementary abelian $2$-groups and surjective group homomorphisms to commutative rings.

The name `geometric fixed points' is motivated from topology: Given a genuine $A$-spectrum $Y$, the $A$-geometric fixed points $\Phi^A(Y)$ are the non-equivariant spectrum defined as the genuine fixed points $(Y\wedge \widetilde{E}P_A)^A$, where $\widetilde{E}P_A$ is a based $A$-CW complex uniquely characterized by the fact that its $A$-fixed points are equivalent to $S^0$ and the fixed points at all other subgroups are contractible. If $E$ is a global spectrum, $\Phi^A(E)$ is defined as $\Phi^A(E_A)$.

\begin{Lemma} For a real oriented global ring spectrum $E$ the geometric fixed point maps $\pi^A_*(E)\to \pi_*(\Phi^A(E))$ induce a natural isomorphism
\[ \Phi^A(\upi_*(E))\cong \pi_*(\Phi^A(E))=\Phi^A_*(E). \]
\end{Lemma}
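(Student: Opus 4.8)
The plan is to identify $\Phi^A(\upi_*(E))$ with $\pi_*(\Phi^A(E))$ by combining the algebraic localization formula with the topological isotropy separation sequence. First I would recall that, by definition, $\Phi^A(\upi_*(E)) = \pi_*^A(E)[e_V^{-1} \mid 0 \neq V \in A^*]$. On the topological side, the space $\widetilde{E}P_A$ can be built as a (filtered) colimit of spheres $S^{W}$ where $W$ ranges over $A$-representations with $W^A = 0$; since $A$ is elementary abelian, every such $W$ is a direct sum of sign-characters $\sigma_V = V^*(\sigma)$ for non-trivial $V \in A^*$. Smashing with $E$ and taking $A$-fixed points, the real orientation identifies $\pi_*^A(E \wedge S^{\sigma_V})$ with a shift of $\pi_*^A(E)$ in such a way that the structure map $\pi_*^A(E) \to \pi_*^A(E \wedge S^{\sigma_V})$ becomes multiplication by the Euler class $e_V$ (this is exactly the identification used in the proof of \Cref{prop:realgivesfgl}, where the cofiber sequence $(A/\ker V)_+ \to S^0 \to S^{\sigma_V}$ was analyzed). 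Hence $\pi_*(\Phi^A(E)) = \pi_*((E \wedge \widetilde{E}P_A)^A) = \colim_W \pi_*^A(E \wedge S^W)$ is precisely the localization of $\pi_*^A(E)$ at the multiplicative set generated by the Euler classes $e_V$, which is $\Phi^A(\upi_*(E))$.

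The key steps, in order, are: (1) present $\widetilde{E}P_A$ as $\colim_W S^W$ over the poset of finite-dimensional $A$-subrepresentations $W$ of a complete universe with $W^A = 0$, and observe that this colimit is filtered and that cofinally one may take $W = \bigoplus_{V} \sigma_V^{\oplus n_V}$; (2) use that $\Phi^A$ commutes with the relevant homotopy colimit and with taking homotopy groups, so $\pi_*(\Phi^A(E)) \cong \colim_W \pi_*^A(E \wedge S^W)$; (3) for each $W$, invoke the real orientation to produce a Thom isomorphism $\pi_*^A(E \wedge S^W) \cong \pi_{*-|W|}^A(E)$ under which the maps in the colimit system, induced by inclusions $W \hookrightarrow W'$ (equivalently by smashing with $S^0 \hookrightarrow S^{W'-W}$), correspond to multiplication by the product of the Euler classes $e_V$ appearing in $W' - W$; (4) conclude that the colimit is the localization $\pi_*^A(E)[e_V^{-1} \mid 0 \neq V \in A^*] = \Phi^A(\upi_*(E))$; (5) check naturality in $A$ (for surjective homomorphisms $\alpha\colon B \to A$), which follows since geometric fixed points are natural and the real orientation, Euler classes and Thom isomorphisms are all compatible with restriction along $\alpha$.

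The main obstacle I anticipate is making precise that the comparison map is the geometric fixed point map $\pi_*^A(E) \to \pi_*(\Phi^A(E))$ and that it is the \emph{canonical} localization map, rather than merely an abstract isomorphism: one must track that the unit $S^0 \to \widetilde{E}P_A$ induces exactly the localization $\pi_*^A(E) \to \pi_*^A(E)[e_V^{-1}]$, which amounts to checking that the $W$-th term of the colimit system receives the class of $1 \in \pi_0$ as the Thom class and that the transition maps act by the correct Euler classes with the correct signs/degrees in the $RO(A)$-grading. This is essentially bookkeeping with the identifications from \Cref{sec:realAorientation} and \Cref{prop:realcompletion}, together with the observation that a localization of a ring at a multiplicative subset can be computed as the filtered colimit over multiplication-by-generator maps. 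A minor additional point is to note that although each individual $e_V$ suffices cofinally (as $\sigma_V \oplus \sigma_V$ is $A$-fixed-point-free and any $W$ embeds into a sum of the $\sigma_V$), one should record that the poset of such $W$ is filtered so the colimit is computed levelwise, which makes step (2) legitimate.
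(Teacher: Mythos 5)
Your argument is essentially the same as the paper's: both present $\widetilde{E}P_A$ as the filtered homotopy colimit $\hocolim_{W\subseteq \U_A,\ W^A=0} S^W$, use the real orientation to identify $\pi_*^A(E\wedge S^W)$ with a shift of $\pi_*^A(E)$ so that transition maps become multiplication by Euler classes, and conclude that $\pi_*(\Phi^A(E))$ is the localization at all non-trivial Euler classes. The extra bookkeeping you describe about tracking the unit map $S^0\to\widetilde{E}P_A$ and the naturality in $A$ is correct and fills in details the paper leaves implicit.
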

\begin{proof} This follows from the fact that the sphere
	\[ \hocolim_{V\subseteq \U_A; V^A=0} S^V \]
is a model for $\widetilde{E}P_A$, where the colimit is taken along all finite dimensional subrepresentations $V\subseteq \U_A$ with trivial fixed points. Hence there is an equivalence
\[ (E_A\wedge \widetilde{E}P_A)^A\simeq \hocolim_{V\subseteq \U_A; V^A=0} (E_A\wedge S^V)^A\simeq (E_A\wedge S^{\dim(V)})^A,\]
where the last equivalence comes from the real orientation of $E_A$. Decomposing each $V$ into a direct sum of irreducible representations shows that $\Phi^A_*(E_A)$ is obtained from $\pi_*((E_A)^A)=\pi_*^A(E_A)$ via a filtered colimit of iterated multiplications by Euler classes, each appearing infinitely often, which proves the claim.
\end{proof}

Now we come to the proof of Theorems \ref{thm:B} and \ref{thm:D}. By \Cref{prop:realgivesfgl} applied to the real orientation of $\MO$, the coefficients $\upi_*(\MO)$ form a global $2$-torsion group law. Hence there exists a unique map
\[ \bL^{2-\tor}\to \upi_*(\MO)\]
of global $2$-torsion group laws, and we want to show that this map is an isomorphism. With the structural understanding of $\bL^{2-\tor}$ and $\upi_*(\MO)$ we gathered in the previous sections, the only input from topology we require is a comparison of their geometric fixed points.

\begin{Prop} \label{prop:realgeomiso} The map $\bL^{2-\tor}\to \upi_*(\MO)$ induces an isomorphism on geometric fixed points $\Phi^A$ for every elementary abelian $2$-group $A$.
\end{Prop}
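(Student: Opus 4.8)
The plan is to reduce the statement to an explicit comparison of two freely generated algebras over the non-equivariant coefficient ring, where the isomorphism follows from Quillen's theorem together with a direct computation of the topological side. By the lemma just above, $\Phi^A(\upi_*(\MO)) \cong \pi_*(\Phi^A(\MO))$, and by \Cref{prop:realgeomlazard},
\[
\Phi^A(\bL^{2-\tor}) = L^{2-\tor}_A[e_V^{-1}\mid 0\neq V\in A^*] \cong L^{2-\tor}[e_V, e_V^{-1}, \gamma_i^V\mid 0\neq V\in A^*,\ i\in\N],
\]
which is free over $L^{2-\tor}$ (polynomial in the $\gamma_i^V$, Laurent in the $e_V$). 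Since $\bL^{2-\tor}\to\upi_*(\MO)$ is a map of global $2$-torsion group laws, it is compatible with the associated $A$-equivariant formal group laws and their completions; hence the induced map on $\Phi^A$ carries the algebraic Euler classes $e_V$ and the algebraic expansion coefficients $\gamma_i^V$ to the classes in $\pi_*(\Phi^A(\MO))$ defined by the same formulas applied to the real orientation of $\MO$, and on the subring $L^{2-\tor}$ it restricts to the comparison map $L^{2-\tor}\to\pi_*(\MO)=MO_*$, which is an isomorphism by Quillen \cite{Qui69}. So it is enough to prove that $\pi_*(\Phi^A(\MO))$ is free over $MO_*$ on the corresponding topological classes $e_V$ and $\gamma_i^V$.

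For this I would compute $\pi_*(\Phi^A(\MO))$ directly from the Thom-spectrum description of $\MO_A$ (cf. \cite[Section 6.1]{Sch18}). The geometric fixed points of an equivariant Thom spectrum are again a Thom spectrum, over the $A$-fixed points of the base, and smashing with $\widetilde{E}P_A$ amounts to inverting the Euler classes of all nonzero characters. Since $A$ is elementary abelian, its irreducible real representations are $1$-dimensional, so every real $A$-bundle splits into character-isotypic summands and the fixed-point Grassmannians decompose into products of ordinary Grassmannians indexed by $A^*$; the effect of $\widetilde{E}P_A$ is to send the isotypic directions for the nonzero characters to infinity while leaving the trivial-isotypic direction as the universal bundle over $BO$. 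Carrying this out, $\Phi^A(\MO)$ becomes $\MO$ smashed with a product, over the nonzero $W\in A^*$, of Thom spaces over $BO$ with the Euler class of the tautological line inverted; since $\MO$ is a wedge of suspensions of $H\F_2$ this yields $\pi_*(\Phi^A(\MO))\cong MO_*[e_W, e_W^{-1}, b_i^W\mid 0\neq W\in A^*,\ i\in\N]$, with $e_W$ the topological Euler class of $W$ by construction.

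It then remains to match the polynomial generators: one must check that the coefficients $\gamma_i^W$ of the expansion $y(\epsilon)=e_W+\sum_{i\geq 0}\gamma_i^W\,y(W)^{i+1}$ of the coordinate of the $A$-equivariant formal group law of $\MO$, read off in $\pi_*(\Phi^A(\MO))$, differ from the $b_i^W$ only by a triangular (hence invertible) change of generators. This is the real counterpart of the classical identification of the coefficients of the universal complex orientation with polynomial generators of $MU_*(\C P^\infty)$, and is where the precise structure of $\MO$ — and ultimately Quillen's computation again — enters. Granting this, $\Phi^A(\bL^{2-\tor})\to\Phi^A(\upi_*(\MO))$ is a map of free algebras over the isomorphic rings $L^{2-\tor}\cong MO_*$ sending one set of free generators bijectively onto the other, hence an isomorphism. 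I expect this last step, matching the algebraically defined expansion coefficients with a free generating set on the topological side, to be the main obstacle; the Euler-class generators and the non-equivariant input are comparatively routine.
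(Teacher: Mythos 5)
Your approach is essentially the same as the paper's: both reduce to the presentation of $\Phi^A(\bL^{2-\tor})$ from \Cref{prop:realgeomlazard}, both compute $\Phi^A(\MO)$ via a tom Dieck--style splitting (paper cites \cite[Proposition 3.2]{Fir13} and \cite[Theorem 4.9]{Sin01} rather than re-deriving the Thom-spectrum decomposition, but the output $\Phi^A_*(\MO)\cong MO_*[b_i^V,e_V^{\pm 1}]$ is identical), and both then feed in Quillen's non-equivariant theorem. However, you explicitly defer the step you flag as the ``main obstacle'' --- showing that the algebraic expansion coefficients $\gamma_i^V$ map to a free generating set --- and this is exactly the content the paper supplies. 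The paper unwinds the splitting $[\RPU_+,\MO\wedge\widetilde{E}P_A]^A_*\cong\prod_{A^*}[\RPU_+,\Phi^A(\MO)]_*$ to see that the $V$-component of the pushforward of the coordinate is the map $\lambda_V=e_V+e_Vb_1^Vx+e_Vb_2^Vx^2+\cdots$, whence $\gamma_i^V\mapsto e_Vb_{i+1}^V$. Because $e_V$ is a unit in the Euler-inverted ring, this is a \emph{diagonal} change of generators (a unit times a shift), which is even simpler than the triangular change you hypothesized; no Quillen-style argument is needed at this stage beyond what was already used for $L^{2-\tor}\cong MO_*$. So your outline is sound, but as written it has a genuine hole at the most important step, and you should actually carry out the comparison of $\gamma_i^V$ with $b_i^V$ by tracking the orientation through the geometric fixed point splitting rather than appealing to an expected triangularity.
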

\begin{proof} By \Cref{prop:realgeomlazard} we understand the geometric fixed points of the Lazard ring $L^{2-\tor}_A$. The computation of the geometric fixed points of $MO_A$ goes back to tom Dieck (see \cite{tD70} for the complex analog) and is also described in \cite[Proposition 3.2]{Fir13}. From these descriptions it follows that $\Phi^A(\bL^{2-\tor})$ and $\Phi_*^A (\MO)$ are abstractly isomorphic, assuming Quillen's non-equivariant isomorphism $L^{2-\tor}\cong MO_*$ \cite{Qui69}. The proof that the map $\Phi^A(\bL^{2-\tor})\to \Phi_*^A (\MO)$ realizes such an isomorphism can then be given analogously to the one in the complex case due to Greenlees (see \cite[Proposition 13.2]{Gre01} and \cite[Section 11]{Gre}).
	
For the convenience of the reader, we sketch the argument. Let $Y$ be a real oriented $A$-ring spectrum. The geometric fixed point functor $\Phi^A$ defines a bijection
\begin{equation}  \label{eq:split} [\RPU_+,Y\wedge \widetilde{E}P_A]_*^A\cong [\RPU^A_+, \Phi^A(Y)]_*\cong \prod_{A^*} [\R P^\infty_+,\Phi^A(Y)]_*,\end{equation}
using that $\RPU^A$ decomposes into a disjoint union of copies of $\R P^{\infty}$ indexed over all $V\in A^*$ (see \cite[Section 11.A]{Gre01}, for example). The ring $[\R P^\infty_+,\Phi^A(Y)]_*=\Phi^A(Y)^*(\R P^\infty_+)$ is the associated non-equivariant formal group law over $\Phi_*^A(Y)$, and the splitting \eqref{eq:split} corresponds to the splitting of Euler-invertible formal group laws described in the proof of \Cref{prop:eulerinvert}.

Now we consider the case $Y=MO_A$. There is an equivalence
\begin{equation} \label{eq:equiv}  \Phi^A(MO_A)\simeq S[\widetilde{A}^*]\wedge MO\wedge (\prod_{V\in \widetilde{A}^*}BO)_+,\end{equation}
(see \cite[Proposition 3.2]{Fir13}, or \cite[Theorem 4.9]{Sin01} for the complex analog) where $\widetilde{A}^*$ denotes the set of non-trivial characters and $S[\widetilde{A}^*]$ is the homotopy ring spectrum $\bigvee_{W\in \Z[V]_{V\in \widetilde{A}^*}} S^{|W|}$. In the latter, $\Z[V]_{V\in \widetilde{A}^*}$ denotes the free abelian group on $\widetilde{A}^*$, and $|W|$ denotes the sum of the coefficients of $W$. The Euler class $e_V\in \Phi^A_*(MO_A)$ corresponds to the inclusion $S^{-1}\to S[\widetilde{A}^*]$ associated to $-V$.

Now we recall from non-equivariant homotopy theory that there is an isomorphism $MO_*(BO_+)= MO_*[b_1,b_2,\hdots]$, where $b_i$ is the image of the generator in $MO_i(\R P^{\infty}_+)$ dual to the $i$-th power of the standard coordinate $x$ of $MO$. Recall moreover that in terms of the pushforward of $x\in MO^1(\R P^{\infty}_+)$ under $MO\to MO\wedge BO_+$, the map $\R P^{\infty}_+\to BO_+\to MO\wedge BO_+$ is given by the power series $1+b_1x+b_2x^2 + b_3x^3 + \hdots $. For $V\in \widetilde{A}^*$, we write $b_i^V\in $ for the image of $b_i$ under the inclusion $MO\wedge BO_+\xr{MO\wedge i_V} \Phi^A(MO_A)$ associated to $V$. Then we find that there is an isomorphism
\[ \Phi^A_*(MO_A)= MO_*[b_i^V,e_V^{\pm 1}\ |\ V\in \widetilde{A}^*,i\in \N_{>0}].\]
In \Cref{prop:eulerinvert} we saw that $\Phi^A(\bL^{2-\tor})$ is isomorphic to $L^{2-\tor}[\gamma^V_i,e_V^{\pm 1}\ |\ V\in \widetilde{A}^*, i\in \N_{>0}]$, where the $\gamma^V_i$ are determined by the fact that under the Euler-invertible splitting, the $V$th component of the coordinate $y(\epsilon)$ equals $e_V+\gamma_1^V x+\gamma_2^V  x^2 + \hdots $, where $x$ is the coordinate of the associated non-equivariant formal group law.

Hence, we need to consider the orientation class in the $A$-equivariant formal group law over $\Phi^A_*(MO_A)$. As described above, this $A$-equivariant formal group law is given by
\[ \prod_{A^*} [\R P^\infty_+,\Phi^A(MO_A)]_*\cong \prod_{A^*} [\R P^\infty_+,S[\widetilde{A}^*]\wedge MO\wedge (\prod_{V\in \widetilde{A}^*}BO)_+]_*. \]
By taking fixed points, the orientation $(\RPU_+)\wedge S^{-1}\to MO_A\wedge \widetilde{E}P_A$ then corresponds to a map
\[ (\bigsqcup_{A^*}\R P^{\infty})_+\wedge S^{-1}\to S[\widetilde{A}^*]\wedge MO\wedge (\prod_{V\in \widetilde{A}^*}BO)_+. \]
Inspection of the equivalence \eqref{eq:equiv} shows that this map is given by the wedge of the maps
\[ \lambda_V\colon \R P^{\infty}_+\wedge S^{-1}\to BO_+\wedge S^{-1} \xr{i_V\wedge e_V} \Phi^A(MO_A)\]
for non-trivial characters $V$, and the usual orientation
\[ t\colon \R P^{\infty}_+\wedge S^{-1}\to MO \to \Phi^A(MO_A) \]
for the trivial character.

By what we recalled above, the map $\lambda_V$ can be expressed in terms of the coordinate $x$ as
\[ \lambda_V = e_V(1+b_1^V x + b_2^V x^2 + \hdots)= e_V+e_Vb_1^Vx+ e_Vb^V_2 x^2+\hdots. \]
Hence, $\gamma^V_i$ is sent to the element $e_Vb^V_{i}$, a unit multiple of $b^V_{i}$. Since we know that the map $L^{2-\tor}\to MO_*$ is an isomorphism by \cite{Qui69}, it follows that so is $\Phi^A(\bL^{2-\tor})\to \Phi^A_*(MO_A)$, as desired.
\end{proof}

\begin{Remark} \label{rem:hankeclasses}
In terms of the classes described in the proof, the classes $Y_{d,V}\in \Phi^A_*(MO_A)$ that feature in the pullback square of \cite{Fir13} relating geometric and homotopical bordism are defined via 
\[ Y_{d,V}=e_V^{-1} b_{d-1}^V, \]
for $d\geq 2$ (see \cite[Definition 3.5]{Fir13}), or in other words the image of the element $e_V^{-2}\gamma^V_{d-1}\in \Phi^A(\bL^{2-\tor})$. This is the real analog of Hanke's definition in the complex version \cite{Han05}.
\end{Remark}
Theorems \ref{thm:B} and \ref{thm:D} are then a direct consequence of the following `Whitehead theorem for global $2$-torsion group laws'. 

\begin{Prop} \label{prop:whitehead} Let $f:X\to Y$ be a morphism of global $2$-torsion group laws such that $\Phi^A(f)$ is an isomorphism for every elementary abelian $2$-group $A$. Then $f$ is an isomorphism.
\end{Prop}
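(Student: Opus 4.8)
The plan is to bootstrap from the hypothesis — that $f$ becomes an isomorphism after inverting all Euler classes of nontrivial characters — using two structural features of every global $2$-torsion group law: the Euler classes $e_V$ of nontrivial $V$ are non-zero-divisors, and divisibility by such an $e_V$ in $X(A)$ is detected by restriction to $\ker(V)$ (this is precisely the defining exact sequence). No induction on the rank of $A$ will be needed. As a first step, for any $B\in\el$ the ring $\Phi^B(X)=X(B)[e_V^{-1}\mid 0\neq V\in B^*]$ is a localization of $X(B)$ at a multiplicative set generated by non-zero-divisors, so the localization map $X(B)\to\Phi^B(X)$ is injective, and likewise for $Y$. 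Since $\Phi^B(f)$ is an isomorphism and $f$ is compatible with these localization maps, the resulting commuting square with injective verticals forces $f(B)$ to be injective. Thus $f$ is injective at every object of $\el$.

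Next I would isolate the following divisibility lemma: for $A\in\el$, a nontrivial $V\in A^*$ and $x\in X(A)$, if $f(x)$ is divisible by $e_V$ in $Y(A)$ then $x$ is divisible by $e_V$ in $X(A)$, and if $x=e_V x'$ while $f(x)=e_V z$ then $f(x')=z$. Indeed, using that $f$ is a ring map preserving the coordinate (hence commutes with multiplication by $e_V$ and with restriction), divisibility of $f(x)$ by $e_V$ is equivalent to $\res^A_{\ker V}(f(x))=0$ by the exact sequence for $Y$; by naturality this equals $f\bigl(\res^A_{\ker V}(x)\bigr)$, which vanishes iff $\res^A_{\ker V}(x)=0$ because $f(\ker V)$ is injective; and by the exact sequence for $X$ this says exactly that $e_V$ divides $x$. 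The last assertion is immediate since $e_V$ is a non-zero-divisor in $Y(A)$.

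For surjectivity, fix $A\in\el$ and $y\in Y(A)$. Since $\Phi^A(f)$ is surjective there are $x_0\in X(A)$ and a product $p=e_{V_1}\cdots e_{V_k}$ of Euler classes of nontrivial characters with $f(x_0)=p\,y$ in $\Phi^A(Y)$; as both sides lie in the image of the injection $Y(A)\hookrightarrow\Phi^A(Y)$, this identity already holds in $Y(A)$. Now $f(x_0)=e_{V_1}\cdot(e_{V_2}\cdots e_{V_k}\,y)$ is divisible by $e_{V_1}$, so the divisibility lemma produces $x_1\in X(A)$ with $x_0=e_{V_1}x_1$ and $f(x_1)=e_{V_2}\cdots e_{V_k}\,y$. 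Iterating $k$ times yields $x_k\in X(A)$ with $f(x_k)=y$. Hence $f(A)$ is surjective, and combined with the injectivity above it is an isomorphism; since $A$ was arbitrary, $f$ is an isomorphism.

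The argument carries little genuine difficulty once these two structural inputs are available, but it is worth stressing where they are essential: the first step — that inverting the Euler classes loses no information — relies on their being non-zero-divisors, which, as emphasized in the text, \emph{fails} for Euler classes of arbitrary equivariant formal group laws, so this is the place where the definition of a global $2$-torsion group law genuinely enters. The only technical wrinkle, that $\Phi^A$ inverts many Euler classes at once rather than a single one, is absorbed by factoring $p$ into individual Euler classes and peeling them off one at a time in the last step.
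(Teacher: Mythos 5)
Your proof is correct and takes essentially the same route as the paper: injectivity by embedding $X(A)$ and $Y(A)$ into their geometric fixed points via regularity of Euler classes, then surjectivity by repeatedly detecting divisibility by $e_{V_i}$ through the exact sequence (restriction to $\ker V_i$) and injectivity at $\ker V_i$, and cancelling by regularity. Isolating this as a "divisibility lemma" is just a cosmetic repackaging of the same argument.
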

\begin{proof} We fix $A\in \el$ and show that the map $f(A)\colon X(A)\to Y(A)$ is an isomorphism. Since all non-trivial Euler classes are regular in $X$ and $Y$, it follows that $X(A)$ and $Y(A)$ embed into $\Phi^A(X)$ and $\Phi^A(Y)$ respectively. By assumption, $\Phi^A(f)$ is an isomorphism, thus $f(A)$ is injective. Furthermore, surjectivity of $\Phi^A(f)$ implies that for every $y\in Y(A)$ there exist non-trivial $V_1,\hdots,V_n\in A^*$ and $x\in X(A)$ such that $f(x)=e_{V_1}\cdot \hdots e_{V_n}\cdot y$. If $n=0$ and hence $f(x)=y$, we are done. If $n\geq 1$,  note that 
\[ f(\res^A_{\ker(V_n)}(x))=\res^A_{\ker(V_n)}(f(x))=0, \]
since $f(x)$ is a multiple of $e_{V_n}$. But $f(\ker(V_n))$ is also injective, thus $\res^A_{\ker(V_n)}(x)=0$. As $X$ is a global $2$-torsion group law, this implies that $x$ can be written as $x=e_{V_n}\cdot x'$. It follows that
	\[ e_{V_n}\cdot f(x')=f(e_{V_n}\cdot x')=f(x)= e_{V_1}\cdot \hdots \cdot e_{V_n}\cdot y,\]
which means that
	\[ f(x')=e_{V_1}\cdot \hdots \cdot e_{V_{n-1}}\cdot y\]
by regularity of $e_{V_n}$. Repeating this process for $V_{n-1},\hdots,V_1$ produces the desired preimage of $y$, which finishes the proof of the proposition and Theorems \ref{thm:B} and \ref{thm:D}.
\end{proof}

\begin{Remark} One can show that more generally, $\upi_*(\MO^{\wedge n})$ is universal among $\el$-algebras equipped with a so-called strict $n$-tuple of coordinates. We carry this out in the complex case in \Cref{sec:cooperations}.
\end{Remark}

\section{Complex bordism} \label{sec:complex}

We now move on to global complex bordism $\MU$ and its relation to equivariant formal group laws via Theorems \ref{thm:A}, \ref{thm:C} and \ref{thm:E}. The general strategy is similar to the one in \Cref{sec:real}, but it is complicated by the facts that (1) not every abelian compact Lie group is a torus and especially (2) not every non-trivial character $V\colon \T^r\to \T$ is split. Proceeding analogously to the $2$-torsion case in \Cref{sec:real} will show that the Euler classes $e_V\in L_{\T^r}$ for split characters $V\colon \T^r\to \T$ are regular elements. This is not sufficient to reduce to geometric fixed points since these are formed by inverting the Euler classes for all non-trivial characters. The step from split surjective characters to all non-trivial ones requires an additional argument, which is given in \Cref{sec:lazardregular} and relies on studying the effect of the $p$-th power map $\T^r\xr{(-)^p} \T^r$ on Lazard rings.

In the following we leave out details where they are analogous to those in \Cref{sec:real} and focus on the differences.

\subsection{Global group laws} \label{sec:globalfgl}
We begin with the definition of a global group law. For many of the constructions it is technically convenient to first restrict to the family of tori and later extend formally to all abelian compact Lie groups, in the way explained in \Cref{sec:fromtoritoall} below. See the discussion in \Cref{sec:complextopological} for the reasons behind this.

\begin{Def} A \emph{global group law} is a functor
	\[ X\colon \text{(tori)}^{op}\to \text{commutative rings}\]
together with a class $e\in X(\T)$, called a \emph{coordinate}, such that for every torus $A$ and every split surjective character~$V\colon A \to \T$ the sequence
\begin{equation} \label{eq:defglo} 0\to X(A)\xr{e_V\cdot} X(A)\xr{\res^{A}_{\ker(V)}} X(\ker(V))\to 0\end{equation}
is exact, where we write $e_V$ for the pullback $V^*e$. A morphism of global group laws $(X,e)\to (Y,e')$ is a natural transformation $X\to Y$ sending $e$ to~$e'$. We denote the category of global group laws by $GL_{gl}$.
\end{Def}
\begin{Remark} Since the opposite of the category of tori is equivalent to the category of finitely generated free abelian groups via Pontryagin duality, the more algebraically minded reader might prefer to phrase global group laws as functors
\[ \text{finitely generated free abelian groups} \to \text{commutative rings}.\]
This point of view suggests a relationship between global group laws and abelian cogroup objects in commutative rings, i.e., commutative cocommutative Hopf algebras. We say more about this in \Cref{ex:hopf} and \Cref{sec:complexadjunction}.
\end{Remark}

Again, we usually simply write $X$ for the tuple $(X,e)$. We  also make use of the weaker notion of a \emph{$\Ze$-algebra}, which is just a contravariant functor $X$ from tori to commutative rings equipped with an element $e\in X(\T)$ which restricts to $0$ at the trivial group. Every $\Ze$-algebra can be formally turned into a global group law:

\begin{Prop} \label{prop:reflexiveglobal}	The category of global group laws is a reflective subcategory of the category of $\Ze$-algebras which is closed under limits. In particular, it has all limits and colimits. 
\end{Prop}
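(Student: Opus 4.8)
The plan is to realise global group laws as the orthogonality class of a set of morphisms and then to invoke the categorical \Cref{lem:categorical}, exactly as in \Cref{sec:realuniversal} for global $2$-torsion group laws. First I would take $\cC$ to be the category of $\Ze$-algebras, that is, the coslice under $\Ze=\Z[e]/(\res^{\T}_1(e))$ in the category of functors $\text{(tori)}^{op}\to\text{commutative rings}$, where $\Z[e]$ is the free such functor on a class $e$ in degree $\T$. Since the category of tori is essentially small and the category of commutative rings is bicomplete, this functor category is bicomplete, and hence so is the coslice $\cC$; moreover, for each torus $A$ the free $\Ze$-algebra $\Ze[y]$ on a single generator $y$ in degree $A$ is finitely presentable, hence small.

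Next I would introduce the set $M$ of test morphisms: for each torus $A$ (drawn from a fixed small skeleton) and each split surjective character $V\colon A\to\T$, the $\Ze$-algebra map
\[ f_{A,V}\colon \Ze[x]/(\res^A_{\ker(V)}(x))\to \Ze[y],\qquad f_{A,V}(x)=e_V\cdot y, \]
where $x$ and $y$ are free variables in degree $A$. This is genuinely a set, since each torus carries only countably many split surjective characters, and the source of $f_{A,V}$ is again finitely presentable, hence small, which is the hypothesis required by \Cref{lem:categorical}. Here $f_{A,V}$ is well defined because $\res^A_{\ker(V)}(e_V)=0$ in \emph{any} $\Ze$-algebra: the restriction of $V$ to $\ker(V)$ is trivial and $\res^{\T}_1(e)=0$ by construction of $\Ze$.

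The heart of the argument is to identify the $M$-local objects with the global group laws. For a $\Ze$-algebra $X$ one has natural bijections $\Hom_{\cC}(\Ze[y],X)\cong X(A)$ and $\Hom_{\cC}(\Ze[x]/(\res^A_{\ker(V)}(x)),X)\cong \ker(\res^A_{\ker(V)}\colon X(A)\to X(\ker(V)))$, using the fact above for the second one, and under these identifications precomposition with $f_{A,V}$ becomes multiplication by $e_V$. Hence $X$ is $f_{A,V}$-local if and only if multiplication by $e_V$ defines a bijection from $X(A)$ onto $\ker(\res^A_{\ker(V)})$, i.e. if and only if the sequence $0\to X(A)\xr{e_V\cdot}X(A)\xr{\res^A_{\ker(V)}}X(\ker(V))$ is exact. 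Finally, because $V$ is split surjective the inclusion $\ker(V)\hookrightarrow A$ admits a retraction, so $\res^A_{\ker(V)}$ is automatically (split) surjective, and exactness of this three-term sequence is equivalent to exactness of the short exact sequence \eqref{eq:defglo}. Thus $X$ is $M$-local precisely when it is a global group law, and \Cref{lem:categorical} then gives that the inclusion of global group laws into $\cC$ has a left adjoint and that the subcategory has all limits and colimits; being reflective, it is in particular closed under limits.

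I do not expect a substantive obstacle: this is a routine transcription of the real-bordism argument, and the only points needing care are bookkeeping ones — that the indexing collection $M$ is actually a set, that the sources $f_{A,V}$ are small, and that the surjectivity half of \eqref{eq:defglo} comes for free from the splitness of $V$ (which is precisely why the definition of a global group law quantifies only over split surjective characters, unlike the stronger regularity enjoyed by $\upi_*(\MU)$ itself). As in the real case, one can moreover make the reflection functor explicit by an iterated construction mirroring \eqref{eq:leftadjoint}, though this is not needed for the statement.
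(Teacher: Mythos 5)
Your proof is correct and takes essentially the same route as the paper: identify global group laws as the $M$-local $\Ze$-algebras for the set of test maps $f_{A,V}$ (after noting that surjectivity in \eqref{eq:defglo} is automatic from the splitting of $V$), then invoke \Cref{lem:categorical}. The paper states this more tersely; your version merely spells out the set-theoretic and smallness bookkeeping that the paper leaves implicit.
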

\begin{proof} The surjectivity of the restriction maps in \eqref{eq:defglo} is implied by the existence of a section of $V$, and hence the global group laws are exactly the $\Ze$-algebras that are local with respect to the morphisms
\[ f_{A,V}\colon \Ze[x]/(\res_{\ker(V)}^A(x)) \to \Ze[y] \]
sending $x$ to $y\cdot e_V$. Here, $(A,V)$ ranges through all pairs of a torus $A$ and a split character $V\in A^*$, and $x$ and $y$ are formal variables at the group $A$. The proposition then follows from \Cref{lem:categorical}. 
\end{proof}

In particular there exists an initial global group law $\wL$ (which we will later prove to be isomorphic to the global Lazard ring $\bL$ defined in \Cref{sec:globallazard}).

\begin{Def} A \emph{graded} global group law is a contravariant functor from tori to graded commutative rings equipped with a coordinate $e$ of homogeneous degree $-2$ such that the sequences \eqref{eq:defglo} are exact.
\end{Def}
We obtain the following, cf. \Cref{sec:gradings}.
\begin{Cor} \label{cor:complexgrading}
	There exists a unique grading on $\wL$, which is concentrated in even degrees. With this grading, $\wL$ is initial among all graded formal group laws. 
\end{Cor}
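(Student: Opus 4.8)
The plan is to follow the graded reflection argument used in the $2$-torsion case in \Cref{sec:realuniversal}. First I would equip $\Ze$ with a grading: writing $\Ze(A)$ as the polynomial ring over $\Z$ on the Euler classes $e_V$ of the non-trivial characters $V\in A^*$, I declare each $e_V$ to be homogeneous of degree $-2$. Since $e_V=V^*(e)$ and all restriction maps are degree-preserving, this grading is forced by the requirement that $e$ have degree $-2$, so it is the unique such grading on $\Ze$; and it is visibly concentrated in even degrees.

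Next I would use the explicit model for the reflection $l$ onto global group laws as a sequence $\Ze=X_0\to X_1\to X_2\to\cdots$, constructed exactly as in \Cref{sec:realuniversal}: at each stage one kills all $e_V$-torsion and adjoins, for every $x$ in the kernel of $\res^A_{\ker(V)}$, an element $x'$ subject to $x'\cdot e_V=x$, ranging over all tori $A$ and split surjective $V\in A^*$; then $\wL=\colim_n X_n$. When the input is graded it suffices to perform these operations on homogeneous elements — killing $e_V$-torsion amounts to quotienting by a homogeneous ideal, and $x'\cdot e_V=x$ forces $\deg(x')=\deg(x)-\deg(e_V)=\deg(x)+2$ — so each $X_n$ carries a unique compatible grading, the unit $\Ze\to\wL$ is graded, and one obtains the graded analog of \Cref{prop:reflexiveglobal}: graded global group laws form a reflective subcategory of graded $\Ze$-algebras with initial object $\wL$. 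Since $\Ze$ lives in even degrees and $\deg(e_V)=-2$ is even, every newly adjoined $x'$ again has even degree, so $\wL$ is concentrated in even degrees.

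For uniqueness I would argue that any grading making $\wL$ a graded global group law is forced: $e$, hence every $e_V$, must sit in degree $-2$; and because each $e_V$ is a regular homogeneous element of degree $-2$, whenever $x'\cdot e_V=x$ with $x$ homogeneous the factor $x'$ is automatically homogeneous of degree $\deg(x)+2$. As $\wL(A)$ is generated as a commutative ring by the images of the $e_V$ together with the successively adjoined elements $x'$, an induction over the stages of the iteration pins down the degree of every generator, hence the entire grading; and the grading constructed above is one such, so it is the unique one.

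I do not expect a genuine obstacle, since the whole argument is formal and runs parallel to \Cref{sec:realuniversal}. The only point requiring some care is the comparison between the ``homogeneous'' and ``all elements'' versions of the iteration: they have the same colimit because, over the homogeneous regular element $e_V$, both $e_V$-torsion and $e_V$-divisibility decompose along the grading — the same bookkeeping as in the $2$-torsion case.
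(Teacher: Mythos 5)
Your proposal follows exactly the route the paper intends: Corollary~\ref{cor:complexgrading} is stated with a pointer ``cf.~\Cref{sec:gradings}'', and the argument there (grading the iterated small-object-style construction of the reflection $l$, with the newly adjoined element $x'$ in the relation $x'\cdot e_V=x$ forced into degree $\deg(x)-\deg(e_V)$) is precisely what you adapt, replacing $\deg(e)=-1$ by $\deg(e)=-2$. Your additional observation that $\Ze$ sits in even degrees and the degree shift $+2$ preserves evenness supplies the ``concentrated in even degrees'' clause, and your regularity argument for uniqueness of the grading given $\deg(e)=-2$ matches the uniqueness claim in the paper's Lemma; so this is the same proof, spelled out somewhat more fully.
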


\subsubsection{From tori to all abelian compact Lie groups} \label{sec:leftkan} \label{sec:fromtoritoall} It is useful to extend global group laws to functors defined on all abelian compact Lie groups (i.e., to an $Ab$-algebra) via left Kan extension. This process has a more explicit  description in terms of quotienting by Euler classes, which we now describe. We first need a preparatory lemma.

\begin{Lemma} \label{lem:euleragree}
	Let $T_1,T_2$ be tori and $f,g:T_1\to T_2$ two group homomorphisms which agree on a closed subgroup $B$ of $T_1$. Then for every global group law $X$, the maps $f^*$ and $g^*\colon X(T_2)\to X(T_1)$ agree modulo the ideal generated by all Euler classes $e_V$ for those $V\in T_1^*$ which vanish on $B$.
\end{Lemma}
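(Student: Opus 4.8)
The plan is to reduce everything to the exactness axiom \eqref{eq:defglo} built into the definition of a global group law, by means of a diagonal trick. First I would introduce the homomorphism $\phi=(f,g)\colon T_1\to T_2\times T_2$ with components $f$ and $g$, together with the two projections $\pi_1,\pi_2\colon T_2\times T_2\to T_2$ and the diagonal $\diag\colon T_2\to T_2\times T_2$. Since $f=\pi_1\circ\phi$ and $g=\pi_2\circ\phi$, for every $x\in X(T_2)$ we have $f^*(x)-g^*(x)=\phi^*\bigl(\pi_1^*(x)-\pi_2^*(x)\bigr)$, and because $\pi_1\circ\diag=\pi_2\circ\diag=\id_{T_2}$ the element $\pi_1^*(x)-\pi_2^*(x)$ lies in the kernel of $\diag^*\colon X(T_2\times T_2)\to X(T_2)$. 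So the statement comes down to understanding that kernel and then pushing it forward along $\phi^*$.

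The first (and main) step is the structural fact that $\ker(\diag^*)$ is generated by the Euler classes $e_W$ of characters $W\in(T_2\times T_2)^*$ that vanish on $\diag(T_2)$. Writing $T_2=\T^m$, the automorphism $\psi\colon(a,b)\mapsto(ab^{-1},b)$ of $T_2\times T_2$ carries $\diag(T_2)$ isomorphically onto $\{1\}\times T_2$, and since $\psi^*$ is a ring isomorphism it is enough to treat $\ker\bigl(\res^{T_2\times T_2}_{\{1\}\times T_2}\bigr)$. This I would handle exactly as in the proof of \Cref{lem:elementary}(1): peel off the $m$ coordinate circles of the first factor one at a time, applying the exact sequence \eqref{eq:defglo} to the corresponding coordinate characters $\chi_1,\dots,\chi_m$ (each of which remains a split surjective character at every stage), to identify $X(T_2\times T_2)/(e_{\chi_1},\dots,e_{\chi_m})$ with $X(\{1\}\times T_2)$ via restriction; hence this kernel equals the ideal $(e_{\chi_1},\dots,e_{\chi_m})$. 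Transporting back through $\psi$ and using naturality of Euler classes, $\ker(\diag^*)$ is generated by the $e_{\chi_i\circ\psi}$, and each $\chi_i\circ\psi\colon(a,b)\mapsto a_ib_i^{-1}$ visibly vanishes on $\diag(T_2)$.

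To conclude, I would write $\pi_1^*(x)-\pi_2^*(x)=\sum_i a_i\,e_{W_i}$ with each $W_i\in(T_2\times T_2)^*$ vanishing on $\diag(T_2)$; then naturality of Euler classes gives $\phi^*(e_{W_i})=e_{W_i\circ\phi}$ and hence $f^*(x)-g^*(x)=\sum_i\phi^*(a_i)\,e_{W_i\circ\phi}$. Finally, since $f|_B=g|_B$, the homomorphism $\phi|_B$ lands inside $\diag(T_2)$, so each character $W_i\circ\phi$ of $T_1$ vanishes on $B$, and therefore $e_{W_i\circ\phi}$ lies in the ideal generated by the Euler classes $e_V$ with $V\in T_1^*$ vanishing on $B$. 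Thus $f^*(x)-g^*(x)$ lies in that ideal, as claimed.

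I expect the only real obstacle to be the structural step identifying $\ker(\diag^*)$ — specifically, making the inductive peeling-off argument clean, since one must check that the relevant coordinate characters stay split surjective after passing to successive kernels and that the Euler-class generators are compatible under these quotients. Everything else (the diagonal trick, naturality of Euler classes, the observation $\phi(B)\subseteq\diag(T_2)$) is formal.
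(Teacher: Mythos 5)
Your proof is correct and is essentially the paper's argument up to a change of coordinates on $T_2\times T_2$: the paper sets $g'=f^{-1}g$ and factors $g=m\circ(f,g')$, placing the analysis at the factor $T_2\times\{1\}$, while you work with $\phi=(f,g)$ and the diagonal and then transport to $\{1\}\times T_2$ via the automorphism $\psi(a,b)=(ab^{-1},b)$. Both rest on exactly the same ingredients — the Euler-class description of the kernel of restriction to a subtorus obtained by iterating the exactness axiom (cf.\ \Cref{lem:elementary}), and naturality of Euler classes under pullback — so the peeling step you flag as the only potential obstacle is indeed the same step the paper relies on and goes through without difficulty.
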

\begin{proof}
Let $g'=f^{-1}\cdot g$ be the pointwise product of $f^{-1}$ and $g$. Then $g'$ vanishes on $B$, and $g$ factors as
\[ T_1\xr{(f,g')} T_2\times T_2\xr{m} T_2.\]
Given $x\in X(T_2)$, the difference $m^*(x)-\pr_1^*(x)\in X(T_2\times T_2)$ lies in the kernel of the restriction to the first $T_2$-factor. Hence we can write $m^*(x)$ as 
\[ m^*(x)=\pr_1^*(x) + x',\]
with $x'$ in the ideal generated by all $e_V$ for characters $V\in \im(\pr_2^*\colon T_2^*\to (T_2\times T_2)^*)$ (cf. \Cref{lem:elementary} for the real analog). So we have $g^*(x)=f^*(x)+(f,g')^*(x')$, and $(f,g')^*(x')$ lies in the ideal generated by Euler classes $e_V$ with $V\in \im((g')^*\colon T_2^*\to T_1^*)$. Since any $V$ in $\im((g')^*)$ becomes trivial when restricted to $B$ (because $g'$ vanishes on $B$), this yields the desired statement.
\end{proof}

We can now extend a global group law $X$ as follows: For any abelian compact Lie group $A$ we choose an embedding $i_A\colon A\hookrightarrow T_A$ into a torus. For simplicity we choose $i_A$ to be the identity if $A$ already is a torus. Then we set
\[ X(A)=X(T_A)/(e_V\ |\ V\in T_A^*, i_A^*V=\epsilon).\]
Note that it is enough to quotient out by  $e_V$ for $V$ ranging through a set of generators of $\ker(i_A^*)$. For a group homomorphism $\alpha\colon B\to A$, we choose a dotted arrow in the diagram
\[  \xymatrix{ T_B \ar@{-->}[r]^-{\widetilde{\alpha}} & T_A \\
				B \ar[u]^{i_B} \ar[r]^-{\alpha}& A \ar[u]_{i_A}},  \]
which exists since $T_A$ is an injective object in the category of abelian compact Lie groups. We then set $\alpha^*\colon X(A)\to X(B)$ to be the unique map making the diagram
\[ \xymatrix{ X(T_B) \ar[d] & X(T_A) \ar[l]_-{\widetilde{\alpha}^*}\ar[d] \\
				X(B) & X(A) \ar[l]^-{\alpha^*}} \]
commute. This is well-defined: If $V\in (T_A)^*$ restricts to the trivial element in $A^*$, then, by commutativity of the square, $\widetilde{\alpha}^*(V)$ restricts to the trivial element in $B^*$, and hence the relevant Euler classes are indeed sent to $0$ in $X(B)$. Furthermore, the definition is independent of the choice of $\widetilde{\alpha}$ by \Cref{lem:euleragree}. It then follows easily that this construction preserves identities and composition, that its restriction to the full subcategory of tori agrees with the original $X$, and that $i_A^*\colon X(T_A)\to X(A)$ is given by the projection map. Finally, this extension is a left Kan extension by the following two observations:
\begin{enumerate}
	\item By definition, every class of $X(A)$ is the image of a class in $X(T)$ for some torus $T$ and group homomorphism $A\to T$. In fact, the chosen embedding $i_A$ does the job.
	\item Let $X'$ be any extension of $X$ to a functor defined on all abelian compact Lie groups. Then, if $V\in T_A^*$ restricts to $\epsilon$ in $A^*$, its Euler class $e_V$ must restrict to $0$ in $X'(A)$ since the composite
	\[     A\to T_A\xr{V} \T\]
	factors through the trivial group, where the unique Euler class is trivial.
\end{enumerate}

Since left Kan extensions are unique up to canonical isomorphism, this also implies that the description of $X(A)$ is independent of the chosen embedding $i_A$. From now on we will not distinguish between a global group law defined only on tori and its extension to all abelian compact Lie groups obtained in this way.

\subsection{Regular global group laws}  \label{sec:regularfgl} Given a global group law, all Euler classes $e_V$ for split characters $V$ are regular elements by definition, while no condition is required for the other Euler classes.

\begin{Lemma} \label{lem:k-regular} Given a global group law $X$ and number $k\in \N$, the following two conditions are equivalent:

\begin{enumerate}
\item For every torus $T$, natural number $l\leq k$ and linearly independent $l$-tuple $(V_1,\hdots,V_l)$ of characters of $T$, the sequence
\[ (e_{V_1},\hdots,e_{V_l})\]
is regular in $X(T)$.
\item For every abelian compact Lie group $A$ with $\pi_0(A)$ a product of at most $(k-1)$ cyclic groups, and every surjective character $V\colon A\to \T$, the sequence
\begin{equation*} \label{eq:exact} 0\to X(A)\xr{e_V\cdot} X(A)\xr{\res^A_{\ker(V)}} X(\ker(V))\to 0\end{equation*}
is exact.
\end{enumerate}

\end{Lemma}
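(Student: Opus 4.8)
The plan is to prove the two implications separately. In each direction one peels off characters (equivalently, Euler classes) one at a time, keeping track of two invariants of the groups that appear: the dimension, and the minimal number of generators of the component group.

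For $(2)\Rightarrow(1)$, I would induct on $l\le k$. Given a torus $T$ and linearly independent $V_1,\dots,V_l\in T^*$, set $B_j=\bigcap_{i\le j}\ker(V_i)$, so $B_0=T$. Dualizing, $B_j$ is the kernel of $(V_1,\dots,V_j)\colon T\to\T^j$, whose image is a subtorus of dimension $\operatorname{rank}\langle V_1,\dots,V_j\rangle=j$; hence $\dim B_j=\dim T-j$, so $\dim B_j=\dim B_{j-1}-1$ and the restriction $V_j|_{B_{j-1}}\colon B_{j-1}\to\T$ is surjective. From the homotopy long exact sequence of the fiber sequence $B_j\to B_{j-1}\to\T$ one gets inductively that $\pi_0(B_{j-1})$ is a product of at most $j-1$ cyclic groups, because the connecting term $\pi_1(\T)=\Z$ can enlarge a minimal generating set by at most one. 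Thus for every $j\le l\le k$ condition (2) applies to the pair $(B_{j-1},V_j|_{B_{j-1}})$ and exhibits $X(B_j)$ as $X(B_{j-1})/(e_{V_j|_{B_{j-1}}})$ via restriction; iterating, $(e_{V_1},\dots,e_{V_l})$ is a regular sequence in $X(T)$ with quotient $X(B_l)$, which proves (1) (and also identifies the quotient).

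For $(1)\Rightarrow(2)$, let $A$ be an abelian compact Lie group with $\pi_0(A)$ a product of at most $k-1$ cyclic groups and $V\colon A\to\T$ surjective. Choose an embedding $A\hookrightarrow T$ into a torus realizing $A$ as $\bigcap_{i=1}^{k-1}\ker(W_i)$ with $W_1,\dots,W_{k-1}\in T^*$ linearly independent (possible since $\pi_0(A)$ is a product of at most $k-1$ cyclic groups); by the left Kan extension description in \Cref{sec:fromtoritoall}, $\res^T_A\colon X(T)\to X(A)$ is the quotient by $(e_{W_1},\dots,e_{W_{k-1}})$. Extend $V$ to $\widetilde V\colon T\to\T$ using injectivity of $\T$. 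Since $V$ is surjective it has infinite order, so $\widetilde V$ is not a rational combination of the $W_i$ and the length-$k$ tuple $W_1,\dots,W_{k-1},\widetilde V$ is again linearly independent. Dualizing $1\to\ker V\to A\to\T\to1$ gives that the annihilator of $\ker V$ in $A^*$ is $\Z\langle V\rangle$, and lifting through the surjection $T^*\twoheadrightarrow A^*$ (whose kernel is $\Z\langle W_1,\dots,W_{k-1}\rangle$) shows that $W_1,\dots,W_{k-1},\widetilde V$ generate the annihilator of $\ker V$ in $T^*$. Hence, again by the left Kan extension description — using that $e_{U+U'}\in(e_U,e_{U'})$ and $e_{cU}\in(e_U)$, which follow from the defining exact sequences for $\T\times\T$ and for $\T$ — the map $\res^T_{\ker V}\colon X(T)\to X(\ker V)$ is surjective with kernel $(e_{W_1},\dots,e_{W_{k-1}},e_{\widetilde V})$. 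Now condition (1) applied to this tuple says it is regular, so multiplication by $e_V$ is injective on $X(A)=X(T)/(e_{W_i})$, and reducing that identification modulo $(e_{W_i})$ exhibits $X(A)/(e_V)$ as $X(\ker V)$ via $\res^A_{\ker V}$; that is the exact sequence in (2).

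I expect the main obstacle to be synchronizing the two numerical bounds, which is also the point of the lemma: in $(2)\Rightarrow(1)$ the intermediate groups $B_j$ are genuinely disconnected, so one really must have condition (2) available for all abelian compact Lie groups rather than just tori, and the inductive estimate that $\pi_0(B_{j-1})$ is a product of at most $j-1$ cyclic groups is exactly what keeps the induction inside the allowed range $j\le k$. The reverse implication is more formal given the left Kan extension description of $X$ on non-tori, the two elementary Euler-class relations above, and the observations that $V$ has infinite order and that the $W_i$ together with $\widetilde V$ generate the annihilator of $\ker V$; the only minor care needed there is to choose the auxiliary embedding $A\hookrightarrow T$ so that $A$ is cut out by exactly $k-1$ characters of $T$.
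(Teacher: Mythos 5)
Your proposal is correct and follows the same strategy as the paper: for $(1)\Rightarrow(2)$ choose an embedding $A\hookrightarrow T$, a generating set $W_1,\dots,W_{k-1}$ of $\ker(i_A^*)$, a lift $\widetilde V$ of $V$, and use the left Kan extension description together with the linear independence of $(W_1,\dots,W_{k-1},\widetilde V)$; for $(2)\Rightarrow(1)$ peel off the $e_{V_j}$ one at a time, using that the intermediate $B_j$ have component group generated by at most $j$ elements and that the restricted character is surjective. You spell out a few points the paper leaves implicit (the bound on $\pi_0(B_j)$, and the identification of the annihilator of $\ker V$), but the argument is the same.
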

\begin{Def} If the two equivalent conditions are satisfied, we say that $X$ is a \emph{$k$-regular global group law}. If $X$ is $k$-regular for all $k$, we say that $X$ is \emph{regular}.
\end{Def}

\begin{proof}[Proof of \Cref{lem:k-regular}] (1) $\Rightarrow$ (2): Let $(A,V)$ be as in (2) above. We choose an embedding $i_A\colon A\hookrightarrow T_A$ and a basis $V_1,\hdots,V_l$ of $\ker(i_A^*)$. We can arrange that $l\leq k-1$ since $\pi_0(A)$ is a product of at most $(k-1)$ cyclic groups by assumption. Furthermore, we lift $V$ to a character $\widetilde{V}$ of $T_A$. Note that $X(A)$ identifies with $X(T_A)/(e_{V_1},\hdots,e_{V_l})$, and $X(\ker(V))$ identifies with $X(T_A)/(e_{V_1},\hdots,e_{V_l},e_{\widetilde{V}})$. It follows that the map $X(A)\to X(\ker(V))$ is surjective with kernel generated by $e_V$. Moreover, the surjectivity of $V$ implies that $(V_1,\hdots,V_k,\widetilde{V})$ is a linearly independent tuple, and hence $e_V=i_A^*(e_{\widetilde{V}})$ is a regular element of
\[X(T_A)/(e_{V_1},\hdots,e_{V_l})\cong X(A), \]
so the statement follows.

(2) $\Rightarrow$ (1): By induction on $l\leq k$ it suffices to show that $e_{V_l}$ is a regular element in 
\[ X(T)/(e_{V_1},\hdots,e_{V_{l-1}}),\]
which identifies with $X(A)$ for $A=\ker(V_1)\cap \hdots \cap \ker(V_{l-1})$. Since the sequence is regular, the restriction of $V_l$ to $A$ is a surjective character, so by assumption its Euler class must be regular, which finishes the proof.
\end{proof}

Hence, if $X$ is a regular global group law, then the sequence
\[ 0\to X(A)\xr{e_V\cdot} X(A)\xr{\res^A_{\ker(V)}} X(\ker(V))\to 0\]
is exact for every abelian compact Lie group $A$ and surjective character $V$. We will later see that the initial global group law $\wL$ is regular, but this is unclear at this point.

\subsection{Examples} \label{sec:examples}
We list some examples of global group laws.

\begin{Example}[Additive global group law] Let $k$ be a commutative ring. Then the \emph{additive global group law} $(\Ga)_k$ over $k$ is defined as
	\[ (\Ga)_k(A)=k[e_V, V\in A^*]/(e_{VW}-e_V-e_W\ |\ V,W\in A^*), \]
with coordinate $e=e_{\tau}\in (\Ga)_k(\T)$ for the tautological $\T$-character $\tau$. Choosing a basis $W_1,\hdots,W_r$ of $(\T^r)^*$ gives an isomorphism
\[ (\Ga)_k(\T^r)\cong k[e_{W_1},\hdots,e_{W_r}]. \]
It follows analogously to \Cref{ex:torsionadd} that $(\Ga)_k$ is indeed a global group law. Its regularity depends on the ground ring $k$: Given an $l$-tuple of linearly independent characters $V_1,\hdots,V_l\in (\T^r)^*$, we can choose a basis $W_1,\hdots,W_r$ of $(\T^r)^*$ such that each $V_i$ is a non-trivial multiple of $W_i$ modulo the subgroup generated by $W_1,\hdots,W_{i-1}$ (and hence $e_{V_i}$ is the same multiple of $e_{W_i}$ modulo $(e_{W_1},\hdots,e_{W_{i-1}})$). We then have an isomorphism $(\Ga)_k(\T^r)\cong k[e_{W_1},\hdots,e_{W_r}]$. It follows that if $k=\Q$, the sequence $e_{V_1},\hdots,e_{V_l}$ is regular in $(\Ga)_k(\T^r)$, since by induction each $e_{V_i}$ is a unit multiple of $e_{W_i}$ modulo $(e_{V_1},\hdots,e_{V_{i-1}})=(e_{W_1},\hdots,e_{W_{i-1}})$. So $(\Ga)_{\Q}$ is a regular global group law. If $k=\F_p$, then $e_{V_i}=0$ if $p$ divides $V_i$, so $(\Ga)_{\F_p}$ is not even $1$-regular. When $k=\Z$, then each single Euler class $e_{V}$ with $V\in (\T^r)^*-\{\epsilon\}$ is regular, but for example any sequence $(e_{V_1},e_{V_2})$ with some prime $p$ dividing both $V_1$ and $V_2$ is not regular as
\[ e_{V_2}\cdot e_{V_1/p}=p \cdot e_{V_2/p}\cdot e_{V_1/p}=e_{V_2/p}\cdot e_{V_1} \]
is trivial modulo $e_{V_1}$, while $e_{V_1/p}$ is not. Therefore, $(\Ga)_{\Z}$ is $1$-regular but not $2$-regular.
\end{Example}
\begin{Example}[Multiplicative global group law] \label{ex:mult} The \emph{multiplicative global group law} over a commutative ring $k$ is defined via
	\[ (\Gm)_k(A)=kA^*\cong RU(A)\otimes_{\Z}k, \]
the group ring of $A^*$ over $k$, or in other words the complex representation ring $RU(A)$ tensored with~$k$. The coordinate $e$ is the difference $[\tau] - 1\in RU(\T)\otimes k$. A basis $W_1,\hdots,W_r$ of $(\T^r)^*$ gives an isomorphism to the Laurent polynomial ring
\[ (\Gm)_k(\T^r)\cong k[[W_1]^{\pm 1},\hdots, [W_r]^{\pm 1}]. \]
Given a linearly independent tuple $(V_1,\hdots,V_l)$ in $(\T^r)^*$, we can again choose the basis in such a way that each $V_i$ is a non-trivial multiple $n_iW_i$ of $W_i$ modulo the subgroup generated by $W_1,\hdots,W_{i-1}$. Then the quotient $(\Gm)_k(\T^r)/(e_{V_1},\hdots,e_{V_{i-1}})$ takes the form 
\[ (k[[W_1]^{\pm 1},\hdots,[W_{i-1}]^{\pm 1}]/(e_{V_1},\hdots,e_{V_{i-1}}))[[W_i]^{\pm 1},\hdots, [W_r]^{\pm 1}], \]
with $e_{V_i}$ being of the form $x+ [W_i]^{n_i}$ for some $x\in k[[W_1]^{\pm 1},\hdots,[W_{i-1}]^{\pm 1}]/(e_{V_1},\hdots,e_{V_{i-1}})$. Since any element of this form is regular, we conclude that $(\Gm)_k$ is a regular global group law for all ground rings $k$.

The multiplicative global group law over $k=\mathbb{Z}$ is realized as $\upi_0$ of the periodic global $K$-theory spectrum~$\mathbf{KU}$, see \cite{Joa04} and \cite[Section 6.4]{Sch18}.
\end{Example}
\begin{Example}[Hopf algebras/1-dimensional linear algebraic groups] \label{ex:hopf}
	The category of global group laws for which all maps
	\begin{equation} \label{eq:tensoriso} (\pr_A^*\otimes \pr_B^*)\colon X(A)\otimes_{X(1)} X(B)\to X(A\times B) \end{equation}
	are isomorphisms is equivalent to the category of triples $(k,R,y(\epsilon))$ of a commutative ring $k$, a cocommutative commutative Hopf algebra $R$ over $k$ and a regular element $e\in R$ generating the augmentation ideal. The correspondence sends a global group law $X$ with the above property to the triple $(X(1),X(\T),e)$, where the comultiplication on $X(\T)$ is given by
	\[ X(\T)\xr{m^*} X(\T\times \T)\xleftarrow{\cong} X(\T)\otimes_{X(1)}X(\T). \]
	The previous two examples are special cases of this construction, for the Hopf algebras given by the polynomial ring $k[x]$ (the additive group) and the Laurent ring $k[x^{\pm 1}]$ (the multiplicative group). A further specific example is the circle $S^1=\R[x,y]/(x^2+y^2-1)$ over $\R$, with standard comultiplication
	\[ \Delta\colon \R[x,y]/(x^2+y^2-1) \to  \R[x_1,y_1]/(x_1^2+y_1^2-1)\otimes_\R \R[x_2,y_2]/(x_2^2+y_2^2-1) \]
	sending $[x]$ to $[x_1\otimes  y_1 - x_2 \otimes y_2]$ and $[y]$ to $[x_1\otimes y_2 + x_2\otimes y_1]$. A coordinate is given by $[x]-1$. 
\end{Example}
\begin{Example}[Complete global group laws] \label{ex:complete}
	We say that a global group law is \emph{complete} if each $X(\T^r)$ is complete with respect to the augmentation ideal $\ker(X(\T^r)\to X(1))$. Then $X(\T)$ is isomorphic to the power series ring over $X(1)$ on the coordinate $e$, and each $X(\T^r)$ becomes a power series ring on $r$ generators. The subcategory of complete global group laws is equivalent to the category of ordinary formal group laws, see \Cref{sec:complexadjunction}. Topologically, the complete global group laws correspond to the global Borel theories  of ordinary complex oriented ring spectra (see \cite[Example 4.5.19]{Sch18}).
	
	When $X$ is complete as above, then $X(C_n)$ is isomorphic to $X(1)[|e|]/([n]_F(e))$, where $[n]_F(e)$ denotes the $n$-series of the associated formal group law $F$. This ring classifies the $n$-torsion in $F$. If $F$ is a divisible formal group law over a complete local ring $X(1)$ whose residue field is of characteristic $p$ (`divisible' meaning that the multiplication-by-$p$ map $X(1)[|e|]\xrightarrow{[p]_F}X(1)[|e|]$ is an isogeny), then the sequence
	\[ X(1) \leftarrow X(C_p) \leftarrow X(C_{p^2}) \leftarrow \hdots  \leftarrow X(C_{p^n})\leftarrow \hdots\]
	is the $p$-divisible group associated to $F$, see \cite{Tat67}.
\end{Example}
\begin{Example}[Complete Hopf algebras] \label{ex:gencomplete}
	More generally, complete topological Hopf algebras with a regular generator of the augmentation ideal give rise to global group laws, see \Cref{rem:completegeneral}. In particular, every $A$-equivariant formal group law $F$ over any abelian compact Lie group $A$ defines a global group law $F_{gl}$ (\Cref{sec:complexadjunction}).
\end{Example}
\begin{Example}[Base change] \label{ex:basechange} If $X$ is a global group law, $k$ is a commutative ring and $X(1)\to k$ is a ring map, then one obtains a new global group law $X_k$ by tensoring each degree $X(A)$ with $k$ over $X(1)$. To see that this construction preserves the relevant short exact sequences, note that these are split and hence preserved under any additive functor.
	
Base change can be combined with the previous examples to obtain new global group laws. In particular, we can start with a complete global group law $F_{gl}$ associated to an ordinary formal group law $F$ and base change to another ground ring $k$. Then $(F_{gl})_k$ is in general \emph{not} complete. From the viewpoint of the divisible case described at the end of \Cref{ex:complete}, the base change on global group laws computes the base change of the associated $p$-divisible group, which in particular preserves the height. This shows that global group laws can encode $p$-divisible groups that are not connected, unlike ordinary formal group laws.
\end{Example}
\begin{Example}[Universal constructions] \label{ex:universal} We already saw that every $\Ze$-algebra can be universally turned into a global group law. For example this gives rise to the initial global group law $\widetilde{\bL}$, or a universal global group law equipped with a choice of $n$ coordinates (see \Cref{sec:cooperations}), or one which is universal with the property that $e_{V^{-1}}=-e_V$ for all $V$ (i.e., the universal `odd global group law', cf. \cite{BM14}).
\end{Example}

\subsection{Global complex oriented ring spectra} \label{sec:complextopological}
We now come to the relation between global group laws and complex orientated global ring spectra. Here, again, `global' is meant with respect to the family of abelian compact Lie groups. Let $E$ be a commutative global homotopy ring spectrum, i.e., a commutative monoid with respect to the smash product on the global homotopy category \cite[Corollary 4.3.26]{Sch18}. We from now on simply write `global ring spectrum' for this notion. Let $\tau$ denote the tautological $\T$-character.

\begin{Def} A \emph{complex orientation} of $E$ is an $RO(\T)$-graded unit $t\in \pi_{2-\tau}^{\T}E=E_{\T}^2(S^{\tau})$ which restricts to $1$ at the trivial group.
\end{Def}
Given a complex orientation $t$ of $E$, we obtain the universal Euler class $e\in E_{\T}^2(S^0)=\pi_{-2}^{\T}(E)$ by pulling back $t$ along the inclusion $S^0\hookrightarrow S^{\tau}$.

\begin{Prop} Let $E$ be a complex oriented global ring spectrum. Then the pair $(\upi_*(E),e)$ is a graded global group law. In fact, the sequence
\begin{equation} \label{eq:complexexact} 0\to \pi_*^A(E)\xr{e_V\cdot} \pi_*^A(E)\xr{\res^A_{\ker(V)}} \pi_*^{\ker(V)}(E)\to 0 \end{equation}
is exact for every abelian compact Lie group $A$ (not necessarily a torus) and split character $V\colon A\to \T$.
\end{Prop}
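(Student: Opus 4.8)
The plan is to mirror the proof of \Cref{prop:realgivesfgl} from the real case, replacing the sign representation $\sigma$ of $C_2$ by the tautological complex character $\tau$ of $\T$. Since $E$ is a commutative global ring spectrum, $\upi_*(E)$ takes values in graded commutative rings by the discussion in \Cref{sec:products}, and the universal Euler class $e$ lives in degree $-2$; so the only thing to verify is the exactness of the sequences. Let $V\colon A\to \T$ be a split character of an abelian compact Lie group $A$; a splitting $s\colon \T\to A$ of $V$ exhibits an isomorphism $A\cong \ker(V)\times \T$, so in particular $V$ is surjective and identifies $A/\ker(V)$ with $\T$. Pulling back the $\T$-equivariant cofiber sequence $\T_+\to S^0\to S^{\tau}$ along $V$ then yields a cofiber sequence of based $A$-spaces
\[ (A/\ker(V))_+\to S^0\to S^{V}, \]
where $S^V$ is the one-point compactification of the $1$-dimensional complex $A$-representation underlying $V$, and $(A/\ker(V))_+=A_+\wedge_{\ker(V)}S^0$.

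Applying the cohomology theory $E_A$ gives a long exact sequence. The induction isomorphism for the subgroup inclusion $\ker(V)\hookrightarrow A$ from \Cref{sec:induction} identifies $E_A^*((A/\ker(V))_+)=E_A^*(A_+\wedge_{\ker(V)}S^0)$ with $E_{\ker(V)}^*(S^0)$, and by \Cref{lem:restriction} the resulting composite $E_A^*(S^0)\to E_{\ker(V)}^*(S^0)$ is the restriction map $\res^A_{\ker(V)}$. Because $V$ is split, the inclusion $\ker(V)\hookrightarrow A$ admits a retraction, so this restriction map is split surjective and the long exact sequence collapses into short exact sequences
\[ 0\to E_A^*(S^V)\to E_A^*(S^0)\xr{\res^A_{\ker(V)}} E_{\ker(V)}^*(S^0)\to 0. \]

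It remains to compute the left-hand term together with the first map. The complex orientation provides a Thom class $t_V=V^*(t)\in E_A^2(S^V)$, which is an $RO(A)$-graded unit since restriction along $V$ preserves $RO$-graded units (\Cref{sec:products}); multiplication with $t_V$ is therefore a Thom isomorphism $E_A^{*-2}(S^0)\cong E_A^*(S^V)$, under which the map $E_A^*(S^V)\to E_A^*(S^0)$ induced by the inclusion $S^0\hookrightarrow S^V$ of fixed points becomes multiplication with the Euler class $e_V$ — this is exactly the definition of $e_V$ as the pullback of $e$, which in turn is the restriction of $t$ along $S^0\to S^{\tau}$. Rewriting cohomologically-indexed groups homologically, we conclude that
\[ 0\to \pi_{*+2}^A(E)\xr{e_V\cdot}\pi_*^A(E)\xr{\res^A_{\ker(V)}}\pi_*^{\ker(V)}(E)\to 0 \]
is exact, which is the assertion (the degree shift being absorbed into the convention that $e_V$ acts in degree $-2$). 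Restricting attention to tori $A$ shows in particular that $(\upi_*(E),e)$ is a graded global group law.

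As in the real case, there is no serious obstacle once the formalism of \Cref{sec:globalhomotopy} is available; the proof is essentially formal. The only points requiring care are the bookkeeping identifications: that the induction isomorphism precomposed with the map coming from the cofiber sequence recovers the restriction map (\Cref{lem:restriction}), that the remaining map becomes multiplication by $e_V$ after the Thom isomorphism, and — crucially — the observation that the hypothesis that $V$ be split is precisely what guarantees that $\res^A_{\ker(V)}$ is surjective, which is why the definition of a global group law imposes the exactness condition only for split characters.
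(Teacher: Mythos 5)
Your proof is correct and follows the paper's argument essentially verbatim: you pull back the cofiber sequence $\T_+\to S^0\to S^\tau$ along $V$, identify the resulting long exact sequence's terms and maps via the induction isomorphism and \Cref{lem:restriction}, use the splitting of $V$ to break the long exact sequence into short exact ones, and apply the Thom isomorphism coming from the complex orientation. This is precisely the paper's proof (which itself just transcribes the real-case argument of \Cref{prop:realgivesfgl}), so there is nothing further to compare.
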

\begin{proof} We consider the cofiber sequence
	\begin{equation} \label{eq:cofibertau} \T_+ \to S^0\to S^{\tau} \end{equation}
of based $\T$-spaces. Pulling back along a surjective character $V\colon A\to \T$ yields a cofiber sequence
\[ A/\ker(V)_+ \to S^0 \to S^V \]
of based $A$-spaces, which is taken to a long exact sequence
\[ \hdots \to E_A^*(S^V) \to E_A^*(S^0)\to E_A^*(A/\ker(V)_+)\to \hdots . \]
Via the isomorphism $E_A^*(A/\ker(V)_+)\cong E_{\ker(V)}^*(S^0)$, the second map agrees with restriction	to $\ker(V)$. If $V$ is split, the fact that $E$ is a global theory implies that this restriction has a section and the long exact sequence decomposes into short exact sequences. Finally, the complex orientation gives an isomorphism $E_A^*(S^V)\cong E_A^{*-2}$, under which the first map becomes multiplication with the Euler class $e_V$, so the statement follows.
\end{proof}
\begin{Remark} The line of argument in the above proof can be found in \cite[Theorem 1.2]{Sin01}, in the universal case $E=\MU$. One difference is that Sinha makes use of the fact that $\upi_*(\MU)$ is concentrated in even degrees to deduce the surjectivity of restriction maps $\pi_*^A(MU_A)\to \pi_*^B(MU_B)$ for all inclusions $B\hookrightarrow A$ of abelian groups. This produces short exact sequences of the above form also for non-split surjective characters $V\in A^*$. For split characters $V\in A^*$ the surjectivity of $\pi_*^A(MU_A)\to \pi_*^{\ker(V)}(MU_{\ker(V)})$ follows directly from the global structure and hence generalizes to all complex oriented global ring spectra $E$, while the short exact sequences for non-split characters require additional assumptions on $E$, see \Cref{lem:complexregular} below.
\end{Remark}
\textbf{Warning:} When $E$ is a global complex oriented ring spectrum, it is in general not true that $\upi_*(E)$ is left Kan extended from tori. An example is given by the Borel theory $bH\F_p$ for ordinary cohomology with $\F_p$ coefficients (cf. \cite[Example 4.5.19]{Sch18}), since the restriction map on group cohomology $H^*(B\T,\F_p)\to H^*(BC_p,\F_p)$ is not surjective.

Therefore, the proposition should be read as `the restriction of $\upi_*(E)$ to tori is a global group law', while the generalized exact sequences \eqref{eq:complexexact} refer to the actual values of $\pi_*^A(E)$ and $\pi_*^{\ker(V)}(E)$, not the left Kan extension of its restriction to tori. To make this distinction clear, we write $\upi_*^{\toral}E$ for the left Kan extension.

In light of this, one might wonder why we have not defined a global group law to be a contravariant functor from all abelian compact Lie groups to commutative rings for which the analogs of \eqref{eq:complexexact} are exact. The reason to restrict to tori instead is of technical nature. With this more general definition one runs into problems when defining the adjunctions of \Cref{sec:complexadjunction}, for example it is unclear how to compute the completion $X(C_2)^{\wedge}$ at the augmentation ideal when the restriction map $X(\T)\to X(C_2)$ is not surjective.

Given these issues, it is useful to know when $\upi_*(E)$ does agree with the left Kan extension from tori:
\begin{Lemma} \label{lem:complexregular} Let $E$ be a complex oriented global ring spectrum. \begin{enumerate}
		\item The $Ab$-algebra $\upi_*(E)$ is left induced from tori if and only if $\upi_*^{\toral}(E)$ is regular as a global group law.
		\item A sufficient condition for the equivalent conditions of (1) to hold is that $\upi_*E$ be concentrated in even degrees.
\end{enumerate}
\end{Lemma}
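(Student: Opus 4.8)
The plan is to study the canonical comparison map of $Ab$-algebras $\eta\colon \upi^{\toral}_*(E)\to\upi_*(E)$, i.e.\ the counit of the left Kan extension along the inclusion of tori (which is the identity on tori), and to prove both statements by induction on the number $s(A)$ of cyclic groups in a product decomposition of $\pi_0(A)$. The only topological input needed is the cofiber sequence $A/\ker(V)_+\to S^0\to S^V$ obtained by pulling back $\T_+\to S^0\to S^\tau$ along a surjective character $V\colon A\to\T$, together with the identification via \Cref{lem:restriction} and the induction isomorphism of the connecting map $E_A^*(S^0)\to E_A^*(A/\ker(V)_+)\cong E_{\ker(V)}^*(S^0)$ with the restriction $\res^A_{\ker(V)}$. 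Applying $\upi_*(E)$ gives a long exact sequence in which multiplication by $e_V$ from $\pi^A_{*+2}(E)$ to $\pi^A_*(E)$ and restriction to $\ker(V)$ alternate, so it collapses into short exact sequences $0\to\pi^A_*(E)\xrightarrow{e_V}\pi^A_*(E)\xrightarrow{\res}\pi^{\ker(V)}_*(E)\to0$ precisely when $e_V$ acts injectively on $\pi^A_*(E)$, equivalently when $\res^A_{\ker(V)}$ is surjective.

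For the inductive step I use that any $A$ with $s(A)=s\ge1$ can be written as $A=\ker(W)$ for a surjective character $W\colon A'\to\T$ with $s(A')=s-1$: replace the last cyclic summand $C_n$ of $A$ by a circle and let $W$ be the $n$-th power map on that factor. On the algebraic side, the left Kan description of \Cref{sec:fromtoritoall}, combined with the identities $e_{U_1+U_2}\in(e_{U_1},e_{U_2})$ and $e_{-U}\in(e_U)$ — which hold in any global group law because the coordinate projections of a torus are split — yields a natural isomorphism $\upi^{\toral}_*E(\ker W)\cong \upi^{\toral}_*E(A')/(e_W)$ under which $\res^{A'}_{\ker W}$ becomes the quotient map; in particular restriction along such an inclusion is surjective on $\upi^{\toral}_*E$.

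Proof of (1). For ``$\Leftarrow$'', assume $X:=\upi^{\toral}_*E$ is regular. Then for the pair $(A',W)$ above the sequence $0\to X(A')\xrightarrow{e_W}X(A')\xrightarrow{\res}X(\ker W)\to0$ is exact by \Cref{lem:k-regular}(2); by the inductive hypothesis $\eta(A')$ is an isomorphism, so $e_W$ acts injectively on $\pi^{A'}_*(E)$, the long exact sequence collapses, and the corresponding sequence for $\pi_*(E)$ is short exact. Naturality of $\eta$ and the five lemma then give that $\eta(\ker W)=\eta(A)$ is an isomorphism, the base case $s=0$ being the identity on tori. For ``$\Rightarrow$'', assume $\eta$ is an isomorphism; then $X=\upi_*E$, and for every $A$ and every surjective $V\colon A\to\T$ the restriction $\res^A_{\ker(V)}$ is surjective (it is so on $\upi^{\toral}_*E$ by the second paragraph), so the long exact sequence collapses into the short exact sequence required by \Cref{lem:k-regular}(2); hence $X$ is regular.

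Proof of (2). If each $\pi^A_*(E)$ is concentrated in even degrees, then for the pair $(A',W)$ of the inductive step both $\pi^{A'}_*(E)$ and $\pi^{\ker W}_*(E)$ are even, so the connecting homomorphism vanishes for parity reasons and $0\to\pi^{A'}_*(E)\xrightarrow{e_W}\pi^{A'}_*(E)\xrightarrow{\res}\pi^{\ker W}_*(E)\to0$ is short exact; since $\eta(A')$ is an isomorphism (inductive hypothesis), $e_W$ is also injective on $X(A')$, and with the algebraic identification $X(\ker W)\cong X(A')/(e_W)$ of the second paragraph the $X$-sequence is short exact as well, so the five lemma upgrades $\eta(A')$ to $\eta(A)$; thus $\eta$ is an isomorphism and (1) holds. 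The step I expect to require the most care is the algebraic bookkeeping of the second paragraph — arranging the left Kan extension to be computed by quotienting out Euler classes so that $\res^{A'}_{\ker W}$ is literally the quotient map, which forces one to establish the ideal-theoretic identities for Euler classes using only split characters of tori — together with checking that the five-lemma ladders commute, i.e.\ naturality of $\eta$ and the identification of the topological connecting map with restriction via \Cref{lem:restriction}.
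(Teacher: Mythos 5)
Your proof is correct and follows essentially the same route as the paper: the pullback of the cofiber sequence $\T_+\to S^0\to S^\tau$, the identification of the connecting map with $\res^A_{\ker(V)}$, the parity argument killing the boundary maps, and the description of the left Kan extension as an iterated quotient by Euler classes. You merely make explicit (via induction on the number of cyclic factors of $\pi_0(A)$ and the five lemma) an induction that the paper's proof leaves implicit when it passes between regularity on $\upi_*^{\toral}(E)$ and on the actual homotopy groups.
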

\begin{proof} Let $A$ be an abelian compact Lie group and $V\colon A\to \T$ a character that is surjective but not necessarily split. Again we can pullback the cofiber sequence \eqref{eq:cofibertau} and obtain a long exact sequence
	\begin{equation}  \label{eq:longexact} \hdots \to \pi_*^A(E)\xr{e_V\cdot} \pi_*^A(E)\xr{\res^A_{\ker(V)}} \pi_*^{\ker(V)}(E)\to \hdots .  \end{equation}
Now, since $V$ does not split, it is not generally true that $\res^A_{\ker(V)}$ is surjective, so the long exact sequence does not split up into short exact ones. In fact, we see that the restriction map is surjective if and only if $e_V$ is a regular element. Note that if $\upi_*(E)$ is concentrated in even degrees, then the boundary maps are necessarily trivial, which proves Part (2).

If $\upi_*(E)$ is left induced from tori (i.e., $\upi_*^{\toral}(E)=\upi_*(E)$), then all restriction maps are surjective. Hence, all Euler classes $e_V$ for surjective characters are regular and $\upi_*^{\toral}(E)$ is regular, as desired. We now conversely assume that $\upi_*^{\toral}(E)$ is a regular global group law. We show by induction on the rank of $\pi_0(A)$ that the natural map $\upi_*^{\toral}(E)(A)\to \pi_*^A(E)$ is an isomorphism for every abelian compact Lie group~$A$. For this we choose an embedding $i_A\colon A\hookrightarrow T_A$ into a torus and a basis $V_1,\hdots,V_k$ for the kernel of $i_A^*\colon T_A^*\to A^*$. We can assume that $k$ equals the rank of $\pi_0(A)$. Let $B=\ker(V_1)\cap\hdots\cap \ker(V_{k-1})$. Then by the induction hypothesis the natural map $\upi_*^{\toral}(E)(B)\to \pi_*^B(E)$ is an isomorphism. Furthermore, the restriction of $V_k$ to $B^*$ is a surjective character and hence $e_{\res_B^{T_A}(V_k)}\in \upi_*^{\toral}(E)(B)\cong \pi_*^B(E)$ is a regular element. The long exact sequence \eqref{eq:longexact} associated to $\res_B^{T_A}(V_k)$ together with the formula for the left Kan extension from \Cref{sec:leftkan} then implies that we have an isomorphism
\[ \upi_*^{\toral}(E)(A)\xrightarrow{\cong} \upi_*^{\toral}(E)(B)/(e_{\res_B^{T_A}(V_k)}) \xrightarrow{\cong} \pi_*^B(E)/(e_{\res_B^{T_A}(V_k)})\xrightarrow{\cong} \pi_*^A(E), \]
which finishes the proof.
\end{proof}
Hence, the relationship between topology and algebra is tightest if one restricts to regular global group laws. For general complex oriented theories one either needs to somehow implement the long exact sequences \eqref{eq:longexact} in the definition of a global group law, making the theory very cumbersome to work with, or to disregard the classes that are not restrictions of toral ones. Since the global Lazard ring \emph{is} left Kan extended from tori, all the equivariant formal group law data lies in the restrictions of toral classes. So the latter simplification is quite harmless.

We will particularly be interested in global complex bordism $\MU$ and its periodic version $\MUP=\bigvee_{i\in \Z} \MU\wedge S^{2i}$, both described in  \cite[Example 6.1.53]{Sch18}.

\begin{Cor} \label{cor:muregular} The global group laws of $\MU$ and $\MUP$ are regular.
\end{Cor}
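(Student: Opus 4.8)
The plan is to deduce this immediately from \Cref{lem:complexregular}, whose hypothesis is satisfied because equivariant complex bordism is concentrated in even degrees. Concretely, the first step is to invoke the theorem of L\"offler \cite{Lof73} and Comeza\~na \cite{Com96} (see also the discussion in \Cref{rem:whyabelian}) that $\pi_*^A(MU_A)$ is concentrated in even degrees for every abelian compact Lie group $A$; equivalently, $\upi_*(\MU)$ is concentrated in even degrees.

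The second step is to feed this into \Cref{lem:complexregular}: part~(2) of that lemma says precisely that even-concentration of $\upi_*(\MU)$ is a sufficient condition for $\upi_*(\MU)$ to be left induced from tori, and then part~(1) gives that the restriction $\upi_*^{\toral}(\MU)$ is regular as a global group law, which is the assertion for $\MU$.

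The third step handles the periodic spectrum. Since $\MUP=\bigvee_{i\in\Z}\MU\wedge S^{2i}$ and homotopy groups commute with wedges of spectra, one has $\pi_n^A(MUP_A)\cong\bigoplus_{i\in\Z}\pi_{n-2i}^A(MU_A)$; each summand vanishes when $n$ is odd, so $\upi_*(\MUP)$ is again concentrated in even degrees, and the same two steps apply verbatim.

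I do not expect any genuine obstacle here: this corollary is a purely formal consequence of \Cref{lem:complexregular} once the even-concentration of the $MU_A$ is granted, so the only substantive input is the cited theorem of L\"offler and Comeza\~na (which is also the source of the restriction to abelian groups mentioned in \Cref{rem:whyabelian}).
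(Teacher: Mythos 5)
Your proposal is correct and coincides with the paper's own argument: both invoke the L\"offler--Comeza\~na theorem that $\pi_*^A(MU_A)$ is concentrated in even degrees and then apply Part (2) of \Cref{lem:complexregular}, with the periodic case following from the wedge decomposition of $\MUP$. No further comment is needed.
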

\begin{proof} The coefficients of $MU_A$ are concentrated in even degrees for every abelian compact Lie group $A$, which was annonounced by L\"{o}ffler in \cite{Lof73}. A proof by Comeza{\~n}a can be found in \cite[XXVIII Theorem 5.3]{Com96}, Hence the coefficients of $MUP_A$ are also concentrated in even degree. Therefore, Part (2) of \Cref{lem:complexregular} gives the statement.
\end{proof}

\subsubsection{Relation to complex oriented $A$-ring spectra} We discuss the relation to complex orientations for a fixed abelian compact Lie group $A$, in analogy to \Cref{sec:realAorientation} in the real case. Let $Y$ be a commutative $A$-homotopy ring spectrum (short: $A$-ring spectrum). We write $\CPU$ for a complete complex $A$-universe, i.e., a countably infinite dimensional complex $A$-representation into which any finite dimensional $A$-representation embeds. Given a character $V\in A^*$, there is an $A$-homeomorphism $\C P(\epsilon\oplus V)\cong S^V$ sending $v$ to $[1:v]$.

\begin{Def} A complex orientation of $Y$ is an element $t^{(A)}\in Y^2(\CPU)$ which restricts to $1$ in $Y^*(\C P(\epsilon \oplus \epsilon))$ and to an $RO(A)$-graded unit in $Y^*(\C P(\epsilon\oplus V))$ for all other characters $V\in A^*$.
\end{Def}
Again, we differ slightly from the definition in \cite{CGK00} in that they do not require $t^{(A)}$ to restrict to $1$ at the trivial group but merely to a unit. Pulling back $t^{(A)}$ along the $0$-section of the universal $A$-equivariant line bundle gives the universal Euler class or coordinate $y(\epsilon)\in Y^2(\CPU_+)$. Cole--Greenlees--Kriz \cite{CGK00} then show that the quintuple
\[ (Y^*,Y^*(\CPU_+),\Delta,\theta,y(\epsilon)) \]
is a graded $A$-equivariant formal group law, where $\Delta$ is induced by the multiplication map $m\colon (\CPU\times \CPU)_+ \to \CPU_+$ classifying tensor products of line bundles via
\[ Y^*(\CPU_+)\xr{m^*} Y^*((\CPU\times \CPU)_+)\cong Y^*(\CPU_+)\hotimes_{Y^*} Y^*(\CPU_+). \]
The map $\theta(V)\colon Y^*(\CPU_+)\to Y^*$ is induced by the map $S^0\to \CPU_+$ classifying $V$.

When $E$ is a complex oriented global ring spectrum, we again make use of the homotopy orbit map
\[ E_{\T}^2(S^{\tau})\xr{\pr_{C_2}^*} E_{A\times \T}^2(S^{\tau})\xr{h_{A,\T}(S^{\tau})} E_A^2((S^{\tau})_{h_{\T}A})\cong E_A^2(\CPU) \]
to push the global orientation $t$ forward to an orientation $t^{(A)}$ for $E_A$. This uses that, similarly to the real case, the universal $A$-equivariant line bundle can be constructed by applying $(\T,A)$-homotopy orbits to the $\T$-bundle $\tau\to *$, and that its Thom space is again $\T$-homeomorphic to $\CPU$. That $t^{(A)}$ indeed defines an orientation for $E_A$ follows from the fact that the composite
\[ E_{\T}^2(S^{\tau})\to E_A^2((S^{\tau})_{h_{\T}A})\to E_A^2(S^V) \]
equals restriction along $V\colon A\to \T$, as a consequence of \Cref{lem:orbitrestriction}.

Furthermore, the homotopy orbit maps give rise to commutative diagrams
\[ \xymatrix{ \pi_*^{\T}(E) \ar[r]^-{\pr_{\T}^*} \ar[rd]_{V^*} & \pi_*^{A\times \T}(E) \ar[r] \ar[d]^{(\id_A,V)^*} & E_A^{-*}(\CPU_+) \ar[d]^{\theta(V)} \\
	& \pi_*^A(E) \ar[r]^{\cong} & E_A^{-*}      } \]
and
\[ \xymatrix{\pi_*^{A\times \T\times \T}(E) \ar[r] & E^{-*}(\CPU\times \CPU_+)\ar[r]^-{\cong} &  E^{-*}(\CPU_+)\hotimes E^{-*}(\CPU_+) \\
	\pi_*^{A\times \T}(E) \ar[r] \ar[u]^{(\id_A\times m)^*} & E_A^{-*}(\CPU_+) \ar[ur]_-{\Delta} \ar[u]_{m^*} \\
	\pi_*^A(E) \ar[u]^{\pr_A^*} \ar[r]_-{\cong} & E_A^{-*}.\ar[u] 
}\]
We also have:
\begin{Prop} \label{prop:complexcompletion} The homotopy orbit maps
	\[ h_{A,\T^r}(S^0)\colon \pi^{A\times \T^r}_*(E) \to E_A^{-*}(\CPU^{\times r}_+)\cong E_A^{-*}(\CPU_+)^{\hotimes r} \]
are completion with respect to finite products of the augmentation ideals \[ I_{\alpha}=\ker((\id_A,\alpha)^*\colon \pi_*^{A\times \T^r}(E)\to \pi_*^A(E)) \]
for homomorphisms $\alpha\colon A\to \T^r$.
\end{Prop}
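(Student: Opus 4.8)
The plan is to mimic the proof of \Cref{prop:realcompletion} almost verbatim, with $S^\tau$ and $\CPU$ in the roles of $S^\sigma$ and $\RPU$; only two points genuinely differ in the complex setting, namely the inverse characters forced by the conventions of \Cref{sec:definitions}, and the fact that $A\times\T^r$ is in general not a torus.

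First I would treat the case $r=1$. The $0$-section $\CPU_+\to\CPU$ is obtained by applying the $(\T,A)$-homotopy orbit construction of \Cref{sec:homorbits} to the $\T$-map $S^0\to S^\tau$ (with $A$ acting trivially), so by naturality of the homotopy orbit maps $h_{A,\T}(S^0)$ sends $\pr_\T^*(e)$ -- the restriction of $\pr_\T^*(t)$ along $S^0\hookrightarrow S^\tau$ -- to the restriction of $t^{(A)}$ along the $0$-section, which is the coordinate $y(\epsilon)$ of the $A$-equivariant formal group law $F(E_A)$. As in the real case, the two diagrams displayed just before the proposition show that $h_{A,\T}(S^0)$ is equivariant for the $A^*$-action $l_V=\begin{pmatrix}\id_A & V\\ 0 & \id_\T\end{pmatrix}^*$ on $\pi_*^{A\times\T}(E)$ and the tautological $A^*$-action on $E_A^{-*}(\CPU_+)$; hence it carries the Euler class $e_{(V,\id)}=l_V(\pr_\T^*(e))$ of the character $(a,t)\mapsto V(a)t$ to $y(V)=l_V(y(\epsilon))$ for every $V\in A^*$.

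Next I would fix characters $V_1,\dots,V_n\in A^*$ and identify the relevant ideals. The character $W_i\colon A\times\T\to\T$, $(a,t)\mapsto t\,V_i(a)^{-1}$, is split surjective with $\ker W_i=(\id_A,V_i)(A)\cong A$, and the first displayed diagram identifies $\theta(V_i)\circ h_{A,\T}(S^0)$ with $(\id_A,V_i)^*$; thus $h_{A,\T}(S^0)$ takes the ideal $I_{V_i}=\ker((\id_A,V_i)^*)\subseteq\pi_*^{A\times\T}(E)$ onto the ideal $I_{V_i}$ of $F(E_A)$. By the exact sequences \eqref{eq:complexexact} applied to the split characters $W_i$ -- here one uses that these sequences are available at the not-necessarily-toral group $A\times\T$, which is exactly what the preceding proposition guarantees -- the ideal $I_{V_i}$ on the source is generated by the regular element $e_{W_i}=e_{(V_i^{-1},\id)}$, and $h_{A,\T}(S^0)$ carries $e_{W_i}$ to the generator $y(V_i^{-1})$ of $I_{V_i}$ in $F(E_A)$. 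An easy induction on $n$, using that a product of regular elements is regular (equivalently, Part~(1) of the evident analog of \Cref{lem:realadjunction} for global group laws, to be established in \Cref{sec:complexadjunction}), then shows that $\pi_*^{A\times\T}(E)/(I_{V_1}\cdots I_{V_n})$ is free over $\pi_*^A(E)$ on $1,e_{W_1},e_{W_2}e_{W_1},\dots$, which $h_{A,\T}(S^0)$ maps isomorphically onto the flag basis $1,y(V_1^{-1}),y(V_2^{-1})y(V_1^{-1}),\dots$ of $E_A^{-*}(\CPU_+)/(I_{V_1}\cdots I_{V_n})$ coming from \Cref{sec:flags}. Hence $h_{A,\T}(S^0)$ is an isomorphism modulo every finite product of the ideals $I_\alpha$ ($\alpha\in A^*$); since $E_A^{-*}(\CPU_+)$ is complete with respect to these products by the axioms of an $A$-equivariant formal group law, $h_{A,\T}(S^0)$ is the completion map.

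For general $r$ I would follow the end of the proof of \Cref{prop:realcompletion}: Part~(2) of the analog of \Cref{lem:realadjunction} gives a natural isomorphism $\pi_*^{A\times\T^r}(E)^{\wedge}_A\cong\bigl(\pi_*^{A\times\T}(E)^{\wedge}_A\bigr)^{\hotimes r}$ induced by $h_{A,\T^r}(S^0)$, while the $r=1$ case identifies each factor with $E_A^{-*}(\CPU_+)$; since the completed tensor power $E_A^{-*}(\CPU_+)^{\hotimes r}\cong E_A^{-*}(\CPU^{\times r}_+)$ (the iterated Künneth isomorphism appearing in the second displayed diagram) is complete, $h_{A,\T^r}(S^0)$ is the completion map. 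I expect the only step requiring genuine attention -- everything else being a transcription of the real argument -- to be the bookkeeping in the third paragraph: tracking the inverse characters when matching $e_{W_i}$ with $y(V_i^{-1})$, and checking that \eqref{eq:complexexact} may legitimately be invoked at the non-toral groups $A\times\T^r$, which is precisely why the preceding proposition was stated for arbitrary abelian compact Lie groups rather than just tori.
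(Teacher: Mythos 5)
Your proposal is correct and follows exactly the route the paper intends: its own proof of this proposition is literally the one-line remark that the argument is analogous to \Cref{prop:realcompletion}, and your write-up is a faithful transcription of that real-case proof, with the two genuinely new points (the inverse-character bookkeeping forced by $y(V)$ generating $I_{V^{-1}}$, and the availability of the split exact sequences at the non-toral group $A\times\T^r$) handled correctly.
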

\begin{proof} The proof is analogous to that of \Cref{prop:realcompletion}.
\end{proof}

Moreover, $h_{A,\T}(S^0)$ sends the Euler class $e_{(\epsilon,\id)}\in \pi_{-2}^{A\times \T}(E)$ to $y(\epsilon)\in E^2_\T(\CPU_+)$. Hence, the $A$-equivariant formal group law associated to $E_A$ is encoded in the functoriality of the coefficients $\upi_*(E)$ and the universal Euler class.

\subsection{The adjunction between global group laws and $A$-equivariant formal group laws} \label{sec:complexadjunction}

We now describe the process of constructing an $A$-equivariant formal group law out of an arbitrary global group law not necessarily induced by a complex oriented global ring spectrum. For the reasons described earlier, we first restrict to the case where $A$ is a torus. The constructions are very similar to those in Sections \ref{sec:realadjunction1}, \ref{sec:realadjunction2} and \ref{sec:realadjunction3}, so we will be brief.

Let $X$ be a global group law and $A,B$ tori. We denote by $X(A\times B)^{\wedge}_A$ the completion at finite products of ideals $I_{\alpha}$ for maps $\alpha\colon A\to B$ (as in \Cref{prop:complexcompletion}). Then we have:

\begin{Lemma}
	\begin{enumerate}
		\item Given another torus $B'$, the projection maps induce an isomorphism
		\[ X(A\times B)^{\wedge}_A\hotimes_{X(A)} X(A\times B')^{\wedge}_A\xr{\cong} X(A\times B\times B')^{\wedge}_A.\]
		\item The quintuple
		\[ (X(A),X(A\times \T)^{\wedge}_A,\Delta,\theta,y(\epsilon)) \]
		is an $A$-equivariant formal group law, where $\Delta$ is given by
		\[ (\id_A\times m)^*\colon X(A\times \T)^{\wedge}_A\to X(A\times \T\times \T)^{\wedge}_A\cong X(A\times \T)^{\wedge}_A\hotimes_{X(A)}X(A\times \T)^{\wedge}_A, \]
		$\theta(V)$ is induced by $(\id_A,V)^*\colon X(A\times \T)\to X(A)$ and $y(\epsilon)$ is the image of $e_{(\epsilon,\id)}\in X(A\times \T)$ under the completion map. We denote this $A$-equivariant formal group law by $X^{\wedge}_A$.
	\end{enumerate}
\end{Lemma}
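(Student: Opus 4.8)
The plan is to directly mimic the real case treated in Sections~\ref{sec:realadjunction1} and~\ref{sec:realadjunction2}, so the main task is to identify which arguments carry over verbatim and which need the split-surjectivity hypothesis to be tracked carefully. For Part~(1), I would first reduce to the case $B' = \T$ by choosing a basis of characters of $B'$ and inducting on the rank, exactly as in the proof of \Cref{lem:realadjunction}. The key computational input is the analog of Part~(1) of \Cref{lem:realadjunction}: for a character $V\in A^*$, the short exact sequence of tori $1\to A\xr{(\id,V)} A\times\T\xr{(V,\id)}\T\to 1$ is split by $\pr_A$, so that the defining short exact sequence of a global group law gives that every $x\in X(A\times\T)$ can be written uniquely as $x = \pr_A^*(x_0) + x_1\cdot e_{(V,\id)}$ with $x_0\in X(A)$, and more generally $X(A\times\T)/(I_{V_1}\cdots I_{V_n})$ is free over $X(A)$ with basis $1, e_{(V_1,\id)}, \ldots, e_{(V_{n-1},\id)}\cdots e_{(V_1,\id)}$. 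The only subtlety is that for tori $V$ ranges over \emph{all} characters of $A$, but each nontrivial character $A\cong\T^r\to\T$ factoring as a coordinate projection composed with a power map is split, and more to the point the character $(V,\id)\colon A\times\T\to\T$ is always split (projection to the $\T$-factor is a section), so the short exact sequence axiom applies. Once this is in hand, the two-step completion argument from \Cref{lem:realadjunction}(2) --- showing $I_{(\alpha,V)}\subseteq X(A\times B\times\T)$ is generated by $\pr_{A\times B}^*(I_\alpha)$ together with the Euler class $e_{(V,0,\id)}$, hence the completion factors as completing first at the latter Euler classes and then at the former ideal --- goes through unchanged, yielding the claimed isomorphism.

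For Part~(2), with the freeness statement from Part~(1) available, I would verify the five axioms of an $A$-equivariant formal group law one at a time, just as in the paragraph following \Cref{lem:realadjunction} in the real case. Coassociativity, cocommutativity, and counitality of $\Delta$ follow from the corresponding identities for the multiplication $m\colon\T\times\T\to\T$ (associativity, commutativity, unitality); compatibility of $\Delta$ and $\theta$ follows from the commutative triangle
\[ \xymatrix{ X(A\times\T) \ar[rr]^{(\id,VW)^*} \ar[d]_{(\id_A,m)^*} && X(A) \\
  X(A\times\T\times\T) \ar[urr]_{(\id_A,V,W)^*} & & } \]
after completion; the topology on $X(A\times\T)^{\wedge}_A$ is by construction generated by finite products of the kernels of the $\theta(V)$; and regularity of $y(\epsilon) = $ image of $e_{(0,\id)}$ is immediate from Part~(1), since multiplication by $y(\epsilon)$ shifts flag coefficients, exactly as in the real case. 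The functoriality in $\alpha\colon B\to A$ and the identification of the associated $A$-equivariant formal group law with the topologically-defined one for a complex oriented global ring spectrum $E$ (via \Cref{prop:complexcompletion} and the commutative diagrams preceding it) then proceed as sketched there.

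The main obstacle, such as it is, is bookkeeping rather than conceptual: one must be careful that every short exact sequence invoked really does come from a \emph{split} surjective character, since the definition of a global group law only guarantees exactness in that case (this is precisely the technical reason the definition was made over tori first, as discussed in \Cref{sec:complextopological}). Concretely, in the inductive proof of Part~(1) and in verifying regularity of $y(\epsilon)$ one repeatedly uses sequences of the form $0\to X(A\times\T)\xr{e_{(V,\id)}} X(A\times\T)\to X(A)\to 0$; here the relevant character $(V,\id)\colon A\times\T\to\T$ is split by the inclusion of the $\T$-factor regardless of whether $V$ itself is split, so the axiom does apply --- but this point should be noted explicitly. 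Everything else is a routine transcription of the $2$-torsion arguments of Section~\ref{sec:real}, with $C_2$ replaced by $\T$, $\el$ replaced by the category of tori, and the $\F_2$-linearity statements simply dropped.
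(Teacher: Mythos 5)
Your proposal is correct and takes the same route as the paper, which simply defers the complex case to the $2$-torsion argument of \Cref{sec:realadjunction1}; the key observation that you make explicit --- that the character $(V,\id)\colon A\times\T\to\T$ is always split (section $t\mapsto(1,t)$) even when $V$ is not, so the defining exact sequence of a global group law applies --- is exactly what makes the transcription work. One small slip: a coordinate projection $\T^r\to\T$ composed with a power map $z\mapsto z^n$ is \emph{not} a split character for $n>1$, but this parenthetical plays no role since the argument rests on the splitness of $(V,\id)$, which you state correctly immediately afterward.
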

\begin{proof} See \Cref{sec:realadjunction1} for the proof in the $2$-torsion case.
\end{proof}
Here, we make use of $A$ being a torus, because otherwise the element $e_{(\epsilon,\id)}\in X(A\times \T)$ would in general not be regular. In \Cref{sec:generaladjunction} we describe a way to get around this problem and associate an $A$-equivariant formal group law $X^{\wedge}_A$ to $X$ also when $A$ is not a torus.

To define the functor $(-)_{gl}$ in the other direction we do not need to restrict to tori. Let $A$ be an arbitrary abelian compact Lie group, and let $F=(k,R,\Delta,\theta,y(\epsilon))$ be an $A$-equivariant formal group law. We recall from \Cref{sec:realadjunction2} that $\cC_k$ denotes the category of complete commutative linear topological $k$-algebras. Given $S\in \cC_k$, let $R(S)$ denote the set of morphisms $R\to S$ in $\cC_k$ equipped with the abelian group structure defined via
\[ f+g=(R\xr{\Delta}R\hotimes R\xr{f\hotimes g}S\hotimes S\xr{m} S). \]
The existence of inverses for this addition follows from $R$ having a coinverse, see \Cref{rem:antipode}. Furthermore, the augmentation $\theta$ induces a group homomorphism $A^*\to R(k)$, implying that the functor $R(-)$ takes values in abelian groups under $A^*$. We have:
\begin{Lemma} The functor
	\[ \Hom_{A^*/}(A^*\times B^*,R(-)) \]
is representable for every torus $B$.
\end{Lemma}
\begin{proof} Choosing a basis $V_1,\hdots,V_n$ for $B^*$, we find that $R^{\hotimes n}$ is a representing object.
\end{proof}
We choose one such representing object $F_{gl}(B)$ for every $B$. Then the assignment $B\mapsto F_{gl}(B)$ has a canonical extended functoriality, as in \Cref{sec:extendedreal}. This means that every map $A\times B\to A\times B'$ which lies over the projections to $A$ induces a restriction map $F_{gl}(B')\to F_{gl}(B)$. If we only remember the restriction maps along $A\times B \xr{\id_A\times f}A\times B'$ for homomorphisms $f\colon B\to B'$, we obtain a functor
\[ F_{gl}\colon (\text{tori})^{op}\to \text{commutative rings}. \]
Furthermore we have:
\begin{Lemma}
	The class $y(\epsilon)\in R\cong F_{gl}(\T)$ is a coordinate for $F_{gl}$.
\end{Lemma}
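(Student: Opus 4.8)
The plan is to mimic the proof of \Cref{lem:realorientationregular} from the $2$-torsion case; the only work is to unwind the representability construction so as to make the relevant Euler class and restriction map explicit.

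First I would fix a torus $T$ together with a split surjective character $V\colon T\to \T$, and put $T'=\ker(V)$. Since $V$ admits a section, $T'$ is again a torus, and dualizing the resulting splitting $T\cong \T\times T'$ gives an internal decomposition $T^*=\langle V\rangle\oplus (T')^*$. Choosing a basis $W_1,\hdots,W_{n-1}$ of $(T')^*$, the tuple $V,W_1,\hdots,W_{n-1}$ is then a basis of $T^*$, and unwinding the construction of $F_{gl}$ through this basis yields a natural identification $F_{gl}(T)\cong R\hotimes R^{\hotimes(n-1)}$ under which the first tensor factor corresponds to $V$ and the last $n-1$ factors to $F_{gl}(T')\cong R^{\hotimes(n-1)}$. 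Under this identification I would verify two points. (i) The Euler class $e_V=V^*(y(\epsilon))$ equals $y(\epsilon)\hotimes 1^{\hotimes(n-1)}$; this is because $V^*\colon \T^*\to T^*$ sends the tautological character to $V$, so that on representing objects $V^*\colon F_{gl}(\T)=R\to F_{gl}(T)$ is the inclusion of the tensor factor indexed by $V$. (ii) The restriction map $\res^T_{T'}\colon F_{gl}(T)\to F_{gl}(T')$ equals $\theta(\epsilon)\hotimes\id^{\hotimes(n-1)}$; this follows because the dual of $\ker(V)\hookrightarrow T$ kills $V$ and fixes the $W_j$, and because the neutral element of the group $R(S)$ is, by definition, the composite $R\xr{\theta(\epsilon)}k\to S$, so that setting the $V$-coordinate to the neutral element amounts to applying $\theta(\epsilon)$ in the first tensor slot.

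Granting (i) and (ii), the statement reduces to checking that the sequence
\[ 0\to R\hotimes R^{\hotimes(n-1)}\xr{(y(\epsilon)\,\cdot\,)\hotimes\id} R\hotimes R^{\hotimes(n-1)}\xr{\theta(\epsilon)\hotimes\id} k\hotimes R^{\hotimes(n-1)}=R^{\hotimes(n-1)}\to 0 \]
is exact. For this I would use that, by axiom~(3) in the definition of an $A$-equivariant formal group law, $y(\epsilon)$ is a regular element generating $\ker(\theta(\epsilon))$, while $\theta(\epsilon)\colon R\to k$ is a map of $k$-algebras and hence is split by the unit $k\to R$; therefore $0\to R\xr{y(\epsilon)\,\cdot\,}R\xr{\theta(\epsilon)}k\to 0$ is a split short exact sequence of $k$-modules. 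Applying the additive functor $-\hotimes_k R^{\hotimes(n-1)}$ preserves this splitting and yields exactly the sequence above (surjectivity could alternatively be deduced directly from the existence of a section of $V$).

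The only real obstacle I foresee is the bookkeeping behind points (i) and (ii): translating faithfully between the functor $\Hom_{A^*/}(A^*\times(-)^*,R(-))$ and its representing objects, checking that the $A^*$-action does not affect the underlying rings, and keeping track of the topology used to define $F_{gl}$ so that the completed tensor product $-\hotimes_k R^{\hotimes(n-1)}$ behaves additively on the split sequence. Once these identifications are in place the argument is purely formal, exactly as in \Cref{lem:realorientationregular}.
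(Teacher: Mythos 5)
Your argument is correct and is essentially the paper's own: the paper proves this lemma by referring to the proof of \Cref{lem:realorientationregular}, which is exactly the decomposition $F_{gl}(T)\cong R^{\hotimes n}$ along a basis extending $V$, the identification of $e_V$ with $y(\epsilon)\hotimes 1^{\hotimes(n-1)}$ and of $\res^T_{\ker(V)}$ with $\theta(\epsilon)\hotimes\id^{\hotimes(n-1)}$, followed by tensoring the split exact sequence $0\to R\xr{y(\epsilon)\cdot}R\to k\to 0$ with $R^{\hotimes(n-1)}$. Your extra care in restricting to split surjective characters (so that $\ker(V)$ is a torus and $T^*$ decomposes internally) is precisely the adaptation needed in the complex setting.
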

\begin{proof} See the proof of \Cref{lem:realorientationregular}.
\end{proof}

\begin{Remark} \label{rem:completegeneral} The construction $(-)_{gl}$ works more generally for any cocommutative commutative complete linear topological algebra over $k$ which is equipped with a regular generator of its augmentation ideal and which allows an antipode. In this case one needs to set $A$ to be the trivial group in the above.
\end{Remark}

\begin{Prop} \label{prop:complexadjunction} Let $A$ be a torus. Then:
\begin{enumerate} 
	\item There is an adjunction
	\[ (-)^{\wedge}_A\colon GL_{gl} \rightleftarrows FGL_A \colon (-)_{gl}. \]
	The unit $X\to (X^{\wedge}_A)_{gl}$ is given in degree $B$ by the composite
	\[ X(B)\xr{\pr_B^*}X(A\times B) \to X(A\times B)^{\wedge}_A\cong (X^{\wedge}_A)_{gl}(B). \]
	The counit $(F_{gl})^{\wedge}_A\to F$ is the pair of maps $F_{gl}(A)\to F_{gl}(1)\cong k$ and $F_{gl}(A\times \T)^{\wedge}_A\to F_{gl}(\T)\cong R$ induced by the diagonals $\diag\colon A\to A\times A$ and $\diag\times \id_{\T}\colon A\times \T\to A\times A\times \T$ via the extended functoriality of $F_{gl}$.
	\item Given a complex oriented global ring spectrum $E$, there is a natural isomorphism 
	\[ (\upi_*(E))^{\wedge}_A \cong F(E_A), \]
	where $F(E_A)$ is the $A$-equivariant formal group law associated to the induced complex orientation of~$E_A$.
\end{enumerate}	
\end{Prop}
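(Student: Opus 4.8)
The plan is to follow the strategy of \Cref{prop:realadjunction} almost verbatim, since every construction used there has an evident complex analog once we have restricted to tori. For part (1), I would first introduce an intermediate category $GL_{gl/A}$ of \emph{$A$-global group laws}: global group laws $Y$ equipped with the extended functoriality of \Cref{sec:extendedreal}, i.e.\ restriction maps $Y(B')\to Y(B)$ for every homomorphism $A\times B\to A\times B'$ lying over the projections to $A$. As in the real case, the forgetful functor $u\colon GL_{gl/A}\to GL_{gl}$ has a left adjoint $(-)_A$ with $X_A(B)=X(A\times B)$, with unit induced by the projections $A\times B\to A$ and counit by restriction along the diagonals $\diag\times\id_B$. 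On the other side, the construction $(-)_{gl}$ factors through $u$, producing $(-)_{gl/A}\colon FGL_A\to GL_{gl/A}$, and $(-)^{\wedge}_A$ extends to a functor $(-)^{\wedge}\colon GL_{gl/A}\to FGL_A$ sending $Y$ to $(Y(1),Y(\T)^{\wedge},\Delta,\theta,y(\epsilon))$, where the completion is taken at the kernels of the augmentations $(\id_A,V)^*\colon Y(\T)\to Y(1)$ coming from the extended functoriality and $\Delta$ is induced by $(\id_A\times m)^*$. The key computational input is part (1) of the Lemma preceding this proposition (the complex analog of \Cref{lem:realadjunction}), which yields a natural isomorphism $(Y^{\wedge})_{gl/A}(B)\cong Y(B)^{\wedge}_A$; this lets me take the unit of the $(-)^{\wedge}\dashv(-)_{gl/A}$ adjunction to be the completion map. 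Since $F_{gl/A}(B)$ is already complete for every $B$, one gets $F^{\wedge}_{gl/A}\cong F$ canonically, providing the counit. Composing the two adjunctions and tracing unit and counit through then reproduces the formulas in the statement.

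For part (2), I would assemble the diagrams of \Cref{sec:complextopological}. By \Cref{prop:complexcompletion} the homotopy orbit map $h_{A,\T^r}(S^0)\colon \pi^{A\times\T^r}_*(E)\to E_A^{-*}(\CPU_+)^{\hotimes r}$ is completion at finite products of the ideals $I_\alpha$, so its target is exactly $(\upi_*(E))^{\wedge}_A$ evaluated on $\T^r$ in the notation of part (1). It then remains to match the structure maps. The element $e_{(0,\id)}$ is sent to the universal Euler class $y(\epsilon)$, as recorded after \Cref{prop:complexcompletion}; the first commutative diagram of that subsection shows that the augmentations $(\id_A,V)^*$ correspond to the maps $\theta(V)$ (using \Cref{lem:orbitrestriction}); and the second diagram shows $(\id_A\times m)^*$ corresponds to the comultiplication $\Delta$. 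Since this is the full package of data of an $A$-equivariant formal group law, $(\upi_*(E))^{\wedge}_A$ is canonically isomorphic to $F(E_A)$, and naturality in $E$ is immediate from the naturality of the homotopy orbit maps.

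The main obstacle I expect is essentially bookkeeping: ensuring that each place where the real argument exploited that \emph{all} characters of an elementary abelian $2$-group split is correctly replaced by the restriction to tori and to split surjective characters. Concretely one must check (i) that $e_{(0,\id)}\in X(A\times\T)$ is regular — this uses that $A\times\T$ is a torus and that the projection to $\T$ is split surjective, so the defining exact sequence \eqref{eq:defglo} applies; (ii) that the completions $X(A\times B)^{\wedge}_A$ behave as expected, again relying on surjectivity of restriction along split characters; and (iii) that the extended-functoriality category $GL_{gl/A}$ and the completed-tensor isomorphism go through with tori in place of elementary abelian $2$-groups. None of these is a genuine difficulty, but they are precisely the points where the complex and real cases diverge, and they are the reason the whole section is organized over tori rather than over all abelian compact Lie groups (the extension to the latter being deferred to \Cref{sec:generaladjunction}).
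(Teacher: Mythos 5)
Your proposal matches the paper's proof: the paper likewise factors the adjunction through the intermediate category $GL_{gl/A}$ of global group laws with extended functoriality, identifies $FGL_A$ with the complete objects via $(-)_{gl/A}$ with the completion map as unit, and deduces part (2) from \Cref{prop:complexcompletion} together with the two commutative diagrams preceding it. Your additional bookkeeping points about regularity of $e_{(0,\id)}$ and the restriction to tori are exactly the reasons the paper organizes the section this way, so nothing is missing.
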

\begin{proof} As in the proof of \Cref{prop:realadjunction}, the adjunction factors naturally through the category $GL_{gl/A}$ of global group laws with extended functoriality. The functor from $GL_{gl/A}$ forgets the extended functoriality, with left adjoint sending $X$ to $X(A\times -)$. For the second adjunction, the functor $(-)_{gl/A}$ embeds $A$-equivariant formal group laws inside $GL_{gl/A}$ as the complete objects, and the adjunction unit is the completion map. We refer to the proof of \Cref{prop:realadjunction} for more details.
	
	Part (2) follows from \Cref{prop:complexcompletion} and the diagrams preceding it.
\end{proof}

\begin{Cor} \label{cor:lazardisuniversal} There is an isomorphism of $Ab$-algebras $\bL\cong \widetilde{\bL}$ sending the universal Euler class $e=e_{\tau}\in L_\T$ to the universal coordinate $e\in \widetilde{\bL}(\T)$.
	
Hence, the pair $(\bL,e)$ is the universal global group law.
\end{Cor}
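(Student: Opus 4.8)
The plan is to run the $2$-torsion argument (the proof that $\bL^{2-\tor}\cong\wLt$) with the adjunction of \Cref{prop:complexadjunction} replacing that of \Cref{prop:realadjunction}. The key point is that $(-)^{\wedge}_A\colon GL_{gl}\to FGL_A$ is a left adjoint for every torus $A$, hence preserves initial objects. Since $\widetilde{\bL}$ is the initial object of $GL_{gl}$, the initial object of $FGL_A$ is the universal $A$-equivariant formal group law over $L_A$ (\Cref{sec:categoryoffgla}), and the ground ring of $X^{\wedge}_A$ is $X(A)$, this immediately gives that $(\widetilde{\bL})^{\wedge}_A$ is the universal $A$-equivariant formal group law together with a canonical isomorphism $\widetilde{\bL}(A)\cong L_A$, for every torus $A$.

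Next I would check these isomorphisms are natural in $A$, just as in the real case: for a homomorphism $\alpha\colon B\to A$ of tori and any $X\in GL_{gl}$, the map $\alpha^*\colon X(A)\to X(B)$ together with the map induced on completions by $(\alpha\times\id_{\T})^*\colon X(A\times\T)\to X(B\times\T)$ form a morphism of $A$-equivariant formal group laws $X^{\wedge}_A\to\alpha_*(X^{\wedge}_B)$; for $X=\widetilde{\bL}$ this is the unique morphism out of the initial object, so on ground rings it realizes exactly the map $L_A\to L_B$ classifying the pushforward along $\alpha$, which is the structure map of $\bL$. Hence $\widetilde{\bL}\cong\bL$ as functors on tori. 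To match coordinates I would note that in $X^{\wedge}_{\T}$ the Euler class of the tautological character $\tau\in\T^*=\Hom(\T,\T)$ is $\theta(\tau)(y(\epsilon))$, and since $\theta(\tau)$ is pullback along the diagonal $\diag\colon\T\to\T\times\T$ while $y(\epsilon)$ is the image of $\pr_2^*(e)$, this equals $(\pr_2\circ\diag)^*(e)=e$; applied to $X=\widetilde{\bL}$ this shows that $\widetilde{\bL}(\T)\cong L_{\T}$ sends the coordinate $e$ to the universal Euler class $e_{\tau}$.

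Finally I would extend from tori to all abelian compact Lie groups. By \Cref{cor:lazardeulerquotient}, $\bL$ is the left Kan extension of its restriction to tori (for an embedding $i_A\colon A\hookrightarrow T_A$ one has $L_A\cong L_{T_A}/(e_V\mid V\in\ker(i_A^*))$, precisely the formula from \Cref{sec:leftkan}), and by construction so is the $Ab$-algebra $\widetilde{\bL}$. Since left Kan extensions are unique up to canonical isomorphism, the isomorphism of the previous step extends uniquely to an isomorphism $\bL\cong\widetilde{\bL}$ of $Ab$-algebras carrying $e_{\tau}$ to $e$. As $\widetilde{\bL}$ is initial, hence universal, in $GL_{gl}$, this yields that $(\bL,e_{\tau})$ is the universal global group law.

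I expect the main obstacle to be the naturality check in the second paragraph — verifying that the two maps genuinely assemble into a morphism of $A$-equivariant formal group laws (compatibility with $\Delta$, $\theta$ and the orientation, as in \Cref{sec:realadjunction3}) and that the identifications involved (the ground ring of $X^{\wedge}_A$, the pushforward $\alpha_*$, and the relevant completions) are compatible on the nose, so that the naturality square commutes. The rest is formal given the adjunction and the universal properties already established.
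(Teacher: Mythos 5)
Your proposal is correct and follows essentially the same approach as the paper: invoke that the left adjoint $(-)^{\wedge}_A$ preserves initial objects, match the coordinate $e$ to $e_{\tau}$ via pullback of $e_{(0,\id)}$ along the diagonal, verify naturality by the same argument as the $2$-torsion case, and extend from tori to all abelian compact Lie groups via the left Kan extension identification from \Cref{cor:lazardeulerquotient}. You spell out the naturality and coordinate checks a bit more explicitly than the paper (which simply refers back to the real case), but the route is identical.
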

\begin{proof} Since any left adjoint preserves initial objects, the $A$-equivariant formal group law $\widetilde{\bL}^ {\wedge}_A$ must be the universal one for any torus $A$. In particular, $\widetilde{\bL}(A)$ is canonically isomorphic to $L_A$. Moreover, given any global group law $X$, the Euler class $e_{\tau}\in X(\T)$ of the associated $\T$-equivariant formal group law is given as the pullback of $e_{(\epsilon,\id)}\in X(\T\times \T)$ along the diagonal $\T\to \T\times \T$, which gives back the orientation class $e\in X(\T)$. The same arguments as in the $2$-torsion case (see \Cref{sec:realadjunction3}) then apply to show that these degreewise isomorphisms assemble to a natural isomorphism of functors $(\text{tori})^ {op}\to (\text{commutative rings})$. Now the universal global group law $\widetilde{\bL}$ is by definition left Kan extended from tori, and \Cref{cor:lazardeulerquotient} shows that the same is true for $\bL$, hence we obtain a canonical isomorphism $\bL\cong \widetilde{\bL}$ of $Ab$-algebras, as desired.
\end{proof}

In particular we have the following regularity of Euler classes in Lazard rings, which we will later strengthen to all Euler classes for surjective characters  in \Cref{cor:lazardregular}:
\begin{Cor}
If $A$ is a torus and $V\colon A\to \T$ is split, then the Euler class $e_V\in L_A$ is regular.
\end{Cor}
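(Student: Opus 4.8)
The plan is to read this off directly from \Cref{cor:lazardisuniversal}. That corollary identifies the $Ab$-algebra $\bL$, equipped with its Euler class $e=e_{\tau}\in L_\T$, with the universal global group law $\widetilde{\bL}$; in particular it asserts that the pair $(\bL,e_{\tau})$ \emph{is} a global group law in the sense of \Cref{sec:globalfgl}. So the first thing I would do is simply invoke that structure.

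Next I would unwind the definition of a global group law for $X=\bL$: by definition, for every torus $A$ and every split surjective character $V\colon A\to\T$ the sequence
\[ 0\to L_A\xr{e_V\cdot} L_A\xr{\res^A_{\ker(V)}} L_{\ker(V)}\to 0 \]
is exact, where $e_V=V^*(e_{\tau})$. A split character is automatically surjective, since a section $s\colon\T\to A$ with $V\circ s=\id_\T$ forces $V$ to be onto, so the hypothesis of the corollary is exactly an instance of this situation. The injectivity of the left-hand map in the displayed sequence is then precisely the statement that $e_V$ is a regular element of $L_A$, which is what is to be proved.

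I do not expect any obstacle here: all the content has already been packaged into \Cref{cor:lazardisuniversal}, whose proof in turn rests on the adjunction $(-)^{\wedge}_A\dashv(-)_{gl}$ of \Cref{prop:complexadjunction}, the fact that a left adjoint sends the initial object $\widetilde{\bL}$ to the initial (i.e.\ universal) $A$-equivariant formal group law, and the observation (\Cref{cor:lazardeulerquotient}) that $\bL$ is left Kan extended from tori so that it agrees with $\widetilde{\bL}$ as an $Ab$-algebra. The only caveat worth flagging — and the reason this corollary is not yet the end of the story — is that the argument produces regularity of $e_V$ only for \emph{split} characters $V$; upgrading this to all surjective characters $V\colon\T^r\to\T$ needs the separate study of the $p$-th power map on suitably localized Lazard rings carried out in \Cref{sec:lazardregular}, which yields \Cref{cor:lazardregular}.
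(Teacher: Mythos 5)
Your proposal is correct and matches the paper's intent exactly: the corollary is stated as an immediate consequence of \Cref{cor:lazardisuniversal}, which identifies $(\bL,e_{\tau})$ with the universal global group law $\widetilde{\bL}$, and regularity of $e_V$ for split $V$ is then literally part of the definition of a global group law. Your closing caveat about split versus general surjective characters is also exactly the point the paper addresses afterwards in \Cref{sec:lazardregular}.
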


\subsubsection{Adjunction for non-tori} \label{sec:generaladjunction}
\Cref{cor:lazardisuniversal} can be used to define an associated $A$-equivariant formal group law $X^{\wedge}_A$ for general abelian compact Lie groups $A$, in the following way. Evaluating the unique map $\bL\to X$ of global group laws at $A$ yields a map $L_A\to X(A)$. We define $X^{\wedge}_A$ to be the $A$-equivariant formal group law classified by this map. Unravelling the definitions, $X^ {\wedge}_A$ can be computed as follows: Choose an embedding $i_A\colon A\hookrightarrow T_A$ into a torus, and consider the pushforward of the $T_A$-equivariant formal group law $X^{\wedge}_{T_A}$ along $(i_A)^*\colon X(T_A)\to X(A)$. This defines a $T_A$-equivariant formal group law over $X(A)$ for which an Euler class $e_V$ vanishes if $V$ restricts to $\epsilon$ in $A^*$ (simply because $i_A^*$ sends these classes to $\epsilon$, see \Cref{sec:leftkan}). By \Cref{lem:fullsubcat}, this is the same datum as an $A$-equivariant formal group law; one checks that it agrees with~$X^ {\wedge}_A$. Note that the functor $(-)_{gl}\colon FGL_A\to GL_{gl}$ factors as $FGL_A\xr{(i_A)_*} FGL_{T_A}\xr{(-)_{gl}} GL_{gl}$. Since $(i_A)_*$ is right adjoint to the functor quotienting out all Euler classes $e_V$ for $V$ in the kernel of $i_A^ *$, it follows formally that there is an adjunction
\[ (-)^{\wedge}_A\colon GL_{gl} \rightleftarrows FGL_A \colon (-)_{gl}. \]
for general $A$.

If $E$ is a complex oriented global ring spectrum, then it is in general not quite true that $\upi^{\toral}_*(E)^ {\wedge}_A$ agrees with the $A$-equivariant formal group law $F(E_A)$ associated to $E_A$, since $\pi^A_*(E_A)$ does not necessarily agree with $\upi_*^{\toral}(E)(A)$. However, there is a natural map $\upi_*^{\toral}(E)(A)\to \pi^A_*(E_A)$ and one checks that $F(E_A)$ is the pushforward of $\upi^{\toral}_*(E)^ {\wedge}_A$ along this map. If $\upi^{\toral}_*(E)^ {\wedge}_A$ is regular, these problems disappear by \Cref{lem:complexregular} and $\upi^{\toral}_*(E)^ {\wedge}_A$ is canonically isomorphic to $F(E_A)$.

\subsection{The global Lazard ring is $2$-regular} \label{sec:lazardregular}
Our strategy for proving Theorems \ref{thm:A} and \ref{thm:C} is again to reduce to geometric fixed points. For this we need to extend our understanding of the regularity of Euler classes in the global Lazard ring beyond split characters. As $\upi_*(\MU)$ is concentrated in even degrees, we know that it is a regular global group law (\Cref{cor:muregular}). Hence every $e_V\in \pi_*^A(\MU)$ with $V\in A^*$ surjective is regular, for any abelian compact Lie group $A$. We want to prove that $\bL$ is isomorphic to $\upi_*(\MU)$, so the same must be true for $\bL$.  The goal of this subsection is to first show something weaker:
\begin{Prop} \label{prop:lazard2regular} The global Lazard ring $\bL$ is $2$-regular in the sense of \Cref{sec:regularfgl}: For every torus $A$ and every pair of linearly independent characters $V_1,V_2\in A^*$ the sequence $(e_{V_1},e_{V_2})$ is regular in $L_A$, i.e., $e_{V_1}$ is a regular element of $L_A$ and $e_{V_2}$ is a regular element of $L_A/(e_{V_1})\cong L_{\ker(V_1)}$.
\end{Prop}
This turns out to be enough to prove Theorems \ref{thm:A} and \ref{thm:C}. Knowing that $\upi_*(\MU)$ is regular, we can then deduce the regularity of $\bL$ from its isomorphism to $\upi_*(\MU)$. I know of no purely algebraic proof that $\bL$ is fully regular.

We now turn to the proof of \Cref{prop:lazard2regular}. Let $A$ be a torus. By now we only know that the Euler classes for split characters are regular in $L_A$. If we want to apply our previous method for showing Euler class regularity once more, we run into the following problem: It is in general not the case that the global group law $F_{gl}$ associated to an $A$-equivariant formal group law $F$ is $2$-regular. For example,  the Euler class $e_p\in F_{gl}(\T)$ for the $p$-th power map on the circle is given by the $p$-series $[p]_F(y(\epsilon))$. Taking $A$ to be the trivial group and $F$ the additive formal group law over $\F_p$, this $p$-series is trivial and hence far from being a regular element. This means that the adjunction of \Cref{prop:complexadjunction} does not restrict to an adjunction between $2$-regular global group laws and all $A$-equivariant formal group laws. Instead one needs to restrict to a subclass of $A$-equivariant formal group laws satisfying extra conditions. This is problematic because we do not know a priori  that the universal $A$-equivariant formal group law satisfies these conditions. We overcome this problem in two steps:
\begin{enumerate}
	\item[Step 1:] Localizing at a prime $p$ gives us control over all sequences $(e_{V_1},e_{V_2})$ for linearly independent characters $V_1,V_2\in A^*$ which remain linearly independent in the quotient $A^*/pA^*$.
	\item[Step 2:] The Euler classes for the remaining linearly independent pairs of characters are then studied via the effect of the $p$-th power map on $L_A$ and $L_{\ker(V_1)}$, using \Cref{prop:eulerinvert} on the interplay of localized Lazard rings and short exact sequences of abelian compact Lie groups.
\end{enumerate}
We start with \textbf{Step 1}: Let $p$ be a prime number.
\begin{Def} We say that a global group law $X$ is \emph {$(p,2)$-regular} if
\begin{enumerate}
	\item $X(1)$ is a $\Z_{(p)}$-algebra (and hence so is $X(A)$ for any $A$), and
	\item for all tori $A$ and characters $V_1,V_2\in A^*$ which are linearly independent modulo $p$, i.e., their images in the $\F_p$-vector space $A^*/pA^*$ are linearly independent, the Euler classes $(e_{V_1}, e_{V_2})$ form a regular sequence in $X(A)$.
\end{enumerate}
\end{Def}

\begin{Lemma} \label{lem:p2regular} Let $k$ be a $\Z_{(p)}$-algebra and $B$ an abelian compact Lie group. Then for every $B$-equivariant formal group law $F$ with ground ring~$k$, the global group law $F_{gl}$ is $(p,2)$-regular.
\end{Lemma}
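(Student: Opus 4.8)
The goal is to prove \Cref{lem:p2regular}: for a $\Z_{(p)}$-algebra $k$ and an $A$-equivariant formal group law $F$ over $k$, the global group law $F_{gl}$ is $(p,2)$-regular. Condition (1) is immediate, since $F_{gl}(1) = k$ by construction. The heart of the matter is condition (3): given linearly independent characters $V_1, V_2 \in (\T^r)^*$, both \emph{not} divisible by $p$, we must show $(e_{V_1}, e_{V_2})$ is a regular sequence in $F_{gl}(\T^r)$. (Condition (2) will follow as a special case, and is also subsumed by (3).)

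First I would identify $F_{gl}(\T^r)$ explicitly. By the representability construction, after choosing a basis $W_1, \dots, W_r$ of $(\T^r)^*$ we have $F_{gl}(\T^r) \cong R^{\hotimes r}$, the completed $r$-fold tensor power over $k$ of the complete Hopf algebra $R$ underlying $F$ (with the extended $A^*$-functoriality recording $\theta$). The Euler class $e_V$ of a character $V = W_1^{n_1} \cdots W_r^{n_r}$ is then computed by the $F$-multiplication: it is the image of $y(\epsilon)$ under the map $R \to R^{\hotimes r}$ classifying the group element $\sum n_i \cdot (\text{$i$-th coordinate})$ of the functor $R(-)$. The plan is to reduce to $r = 1$, or rather to reduce to understanding a single $e_V$ with $V$ prime to $p$ on each ``axis,'' by a change of basis. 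Since $V_1$ is not divisible by $p$, its coordinates $(n_1, \dots, n_r)$ are not all divisible by $p$; one of them, say $n_j$, is a unit in $\Z_{(p)}$. After an automorphism of $\T^r$ (hence of $F_{gl}(\T^r)$) we may assume $V_1 = W_1$ up to the factor $n_1 \in \Z_{(p)}^\times$, i.e. $e_{V_1} = [n_1]_F(y(\epsilon)) \hotimes 1^{\hotimes(r-1)}$, which differs from $y(\epsilon) \hotimes 1^{\hotimes(r-1)}$ by a unit (as $[n_1]_F(y) = n_1 y + \cdots$ and $n_1$ is invertible). Thus $e_{V_1}$ is, up to unit, the regular element $y(\epsilon) \hotimes 1 \hotimes \cdots \hotimes 1$, and $F_{gl}(\T^r)/(e_{V_1}) \cong (R/y(\epsilon)) \hotimes R^{\hotimes(r-1)} \cong R^{\hotimes(r-1)} = F_{gl}(\T^{r-1})$, where the last identification uses $\theta(\epsilon)\colon R/y(\epsilon) \cong k$ together with the linear change of coordinates.

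Having killed $e_{V_1}$, I then need to see that the image of $e_{V_2}$ in $F_{gl}(\T^{r-1}) = F_{gl}(\T^r/\ker(V_1))$ is again an Euler class of the form $e_{\bar V_2}$ for the restricted character $\bar V_2$, and that $\bar V_2$ is still not divisible by $p$. The first point follows because the identification $F_{gl}(\T^r)/(e_{V_1}) \cong F_{gl}(\ker(V_1))$ is compatible with the global functoriality (it is, after extending to all abelian compact Lie groups as in \Cref{sec:leftkan}, precisely the restriction map), so Euler classes go to Euler classes. The second point is linear algebra over $\Z_{(p)}$: since $V_1$ and $V_2$ are linearly independent and $V_1$ is primitive at $p$, the image of $V_2$ in the rank-$(r-1)$ lattice $(\ker V_1)^* = (\T^r)^*/\langle V_1 \rangle$ is nonzero mod $p$ — otherwise $V_2 \equiv c V_1 \pmod{p \cdot (\T^r)^*}$ for some $c$, and then $V_2 - cV_1$ would be divisible by $p$ while still independent from $V_1$, contradicting primitivity of $V_1$ in a way that... more carefully: one checks directly that $V_2 \bmod (p, V_1)$ vanishing forces $V_1, V_2$ to span a rank-$\le 1$ subspace of $(\T^r)^* \otimes \F_p$, but both are nonzero there and independence lifts, giving a contradiction. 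Hence by induction on $r$ (base case $r=1$, where the claim is just that $e_{V_1}$ is regular in $F_{gl}(\T) = R$, which holds since it is a unit times $y(\epsilon)$), the image of $e_{V_2}$ is regular in $F_{gl}(\T^r)/(e_{V_1})$, completing the proof that $(e_{V_1}, e_{V_2})$ is regular.

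The main obstacle I anticipate is bookkeeping the interaction between the \emph{algebraic} structure of $F_{gl}(\T^r)$ as a completed tensor power and the \emph{global} functoriality (the extended $A^*$-action): I must be careful that the isomorphism $F_{gl}(\T^r)/(e_{V_1}) \cong F_{gl}(\ker V_1)$ is not merely an abstract ring isomorphism but the canonical restriction map, so that it carries $e_{V_2}$ to $e_{\bar V_2}$. This is where \Cref{cor:lazardisuniversal}, \Cref{lem:fullsubcat} and the left-Kan-extension description of \Cref{sec:leftkan} do the real work — quotienting $F_{gl}$ by the Euler classes of characters killed on a subgroup $B$ computes $\res_B$ — so the argument is ``formal'' once that dictionary is in place. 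The $p$-adic linear-algebra step (primitivity mod $p$ is inherited by the quotient lattice) is elementary but must be stated cleanly to make the induction go through.
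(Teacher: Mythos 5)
Your $p$-adic linear algebra step has a genuine gap, and the counterexample to it actually also refutes the lemma in the literal form stated. You claim that if $V_1,V_2\in(\T^r)^*$ are ($\Q$-)linearly independent and both not divisible by $p$, then the image $\bar V_2$ of $V_2$ in $(\ker V_1)^*$ is still not divisible by $p$, and you justify this with ``both are nonzero there and independence lifts.'' This is false: $\Q$-linear independence of integral characters does \emph{not} imply $\F_p$-linear independence of their reductions. Take $r=2$, $p=2$, $V_1=(1,0)$, $V_2=(1,2)$. Both are primitive (hence not divisible by $2$) and $\Q$-linearly independent (determinant $2$), yet $V_2\equiv V_1\pmod 2$; the restriction $\bar V_2$ corresponds to $2\in\Z\cong(\ker V_1)^*$ and \emph{is} divisible by $2$. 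And for $A$ trivial and $F$ the additive formal group law over $\F_2$ (so $F_{gl}(\T^2)=\F_2[|y_1,y_2|]$ and $e_{(a_1,a_2)}=a_1y_1+a_2y_2$) one gets $e_{V_1}=e_{V_2}=y_1$, which is certainly not a regular sequence. So not only your argument but the conclusion itself fails under the stated hypotheses.

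The condition that is actually used in Step 2 of \Cref{sec:lazardregular}, and that makes the lemma true, is that $V_1$ and $V_2$ have \emph{linearly independent reductions in $(\T^r)^*\otimes\F_p$} (equivalently, some $2\times 2$ minor of the matrix $(V_1\mid V_2)$ is a $p$-adic unit). With this strengthened hypothesis your inductive strategy works: after changing basis so that $V_1=n_1W_1$ with $n_1=\gcd(V_1)\in\Z_{(p)}^\times$, $\F_p$-independence of $\bar V_1,\bar V_2$ means exactly that the components of $V_2$ along $W_2,\dots,W_r$ are not all divisible by $p$; so their gcd $n_2$ is prime to $p$ and a further change of $W_2,\dots,W_r$ brings $V_2$ to $bW_1+n_2W_2$. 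Then $e_{V_1}=[n_1]_F(y_1)$ is a unit multiple of $y_1$, and modulo $e_{V_1}$ the class $e_{V_2}$ reduces to $[n_2]_F(y_2)$, regular since $n_2$ is a unit. One more small repair: your induction passes to $\ker V_1$, which is a torus only when $V_1$ is genuinely primitive; this is harmless because $e_{V_1}$ and $e_{W_1}$ generate the same ideal, so you may replace $V_1$ by $W_1$ before recursing.

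For comparison, the paper's own proof avoids the induction and does a single simultaneous coordinate change, but it also implicitly needs the ``mod $p$'' reading of linear independence, and moreover its claim that one can arrange $V_1=n_1W_1$, $V_2=n_2W_2$ on the nose is too strong (e.g.\ $V_1=(1,0)$, $V_2=(1,3)$ cannot be simultaneously diagonalized by a $\Z$-change of basis of $(\T^2)^*$); the triangular form $V_2=bW_1+n_2W_2$ is what one can and needs to achieve. The essential idea in both your approach and the paper's --- that the relevant $n$-series has leading coefficient a unit over a $\Z_{(p)}$-algebra --- is the same, so once the $\F_p$-independence hypothesis is in place your argument and the paper's are very close in spirit.
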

\begin{proof} Let $A$ be a torus and $V_1,V_2\in A^*$ be linearly independent modulo $p$. We first make a few preliminary reductions on $V_1$ and $V_2$ which apply to any global group law $X$. To start, we can choose coordinates on $A\cong \T^r$ and hence $A^*\cong \Z^r$ so that $V_1$ and $V_2$ are given by the vectors $(n_1,0,\hdots,0)$ and $(a,n_2,0,\hdots,0)$ respectively, where $n_1$ and $n_2$ are natural numbers coprime to $p$.
We now argue that for showing that $(e_{V_1},e_{V_2})$ form a regular sequence in $X(A)$ for all such pairs $V_1,V_2$, we can reduce to the case where $a=0$ and hence the coefficient vector for $V_2$ is of the form $(0,n_2,0,\hdots,0)$ with $n_2$ coprime to $p$. For this we consider the character $V_2^{n_1}$, the $n_1$-fold power of $V_2$, whose coefficient vector is $(a\cdot n_1,n_2\cdot n_1,0,\hdots,0)$. Since the Euler class $e_{V_2}$ divides the Euler class $e_{V_2^{n_1}}$ (this is a consequence of the universal Euler class $e_{\tau}=e\in X(\T)$ dividing the Euler class $e_{\tau^n}$, which follows from the fact that $e_{\tau^n}$ is sent to $0$ under restriction to the trivial group), it suffices to show that $e_{V_2^{n_1}}$ is a regular element in the quotient $X(A)/(e_{V_1})$. But modulo $e_{V_1}$, the Euler class $e_{V_2^{n_1}}$ agrees with the Euler class associated to the vector $(0,n_2\cdot n_1,0,\hdots,0)$, since the character with coefficient vector $(a\cdot n_1,0,\hdots,0)$ vanishes in the quotient $A^*/(V_1)$ (cf. \Cref{sec:leftkan}). This proves the claim.

We now specialize to the case $F_{gl}$ for a $B$-equivariant formal group law $B$. By the above, we have to show that for every pair of natural numbers $n_1,n_2$ coprime to $p$ and $r\geq 2$, the Euler classes $e_{V_1},e_{V_2}$ associated to the vectors $V_1=(n_1,0,\hdots,0)$ and $V_2=(0,n_2,0,\hdots,0)$ form a regular sequence in $F(\T^r)$. We have an isomorphism
\[  F_{gl}(A)\cong R\hotimes R\hotimes R^{\hotimes r-2}, \]
with Euler classes $e_{V_1}$ and $e_{V_2}$ given by $[n_1]_F(y(\epsilon))\otimes 1\otimes 1$ and $1\otimes [n_2]_F(y(\epsilon))\otimes 1$. Modulo $y(\epsilon)^2$, the $n_i$-series $[n_i]_F(y(\epsilon))$ equals $n_i y(\epsilon)$. By assumption, $k$ is a $\Z_{(p)}$-algebra, and hence $n_1$ and $n_2$ are units in $k$. It follows that both $[n_i]_F(y(\epsilon))$ are regular elements in $R$ (note that in order to be a regular element in $R$, it is sufficient that the leading term with respect to any choice of flag is a regular element in~$k$), which directly implies that $(e_{V_1},e_{V_2})$ is a regular sequence in $F_{gl}(A)$.
\end{proof}

\begin{Remark} The proof shows that the $p$-regularity of global group laws associated to $p$-local $B$-equivariant formal group laws is not specific to the number two. More generally, let $A$ be a torus and $B$ an abelian compact Lie group. Further, let $F$ be a $B$-equivariant formal group law defined over a $p$-local ring $k$ and $V_1,\hdots,V_n\in A^*$ an $n$-tuple of characters which is linearly independent modulo $p$. Then $(e_{V_1},\hdots,e_{V_n})$ is a regular sequence in  $F_{gl}(A)$.
\end{Remark}
The lemma implies that the adjunction between global group laws and $B$-equivariant formal group laws restricts to an adjunction
\[   (-)_B^{\wedge}\colon GL_{gl}^{(p,2)-\reg}\rightleftarrows FGL^{(p)}_B\colon (-)_{gl},\]
where $FGL^{(p)}_B\subseteq FGL_B$ denotes the category of $B$-equivariant formal group laws over $p$-local rings.

Once again we want to apply \Cref{lem:categorical} (this time to the category of $(\Ze)_{(p)}$-algebras) to see that there exists a universal $(p,2)$-regular global group law. For this we need to see that $(p,2)$-regular global group laws are the local objects for some set of morphisms. Note that we can rephrase the conditions for a $(\Ze)_{(p)}$-algebra $X$ to be a $(p,2)$-regular global group law in the following way, where $A$ ranges through all tori:
\begin{enumerate}
	\item If $V\in A^*$ is a split character, then
	\[ 0\to X(A)\xr{e_V\cdot} X(A)\xr{\res^ {A}_{\ker(V)}}X(\ker(V))\to 0 \]
	is exact.
	\item \label{eq:p2regular} Let $V_1,V_2\in A^*$ be characters which are linearly independent modulo $p$. If two elements $x,y\in X(A)$ satisfy
	\[ x\cdot e_{V_1}=y\cdot e_{V_2},\]
	then $y$ is uniquely divisible by $e_{V_1}$.
\end{enumerate}
To see the equivalence, note that in the second item, divisibility of $y$ by $e_{V_1}$ guarantees that $e_{V_2}$ is a regular element in $X(A)/(e_{V_1})$, and the uniqueness implies that $e_{V_1}$ is a regular element in $X(A)$ (set $x=y=0$). We have already seen that properties of type (1) are equivalent to being local with respect to the morphisms $(f_{A,V})_{(p)}$, see \Cref{prop:reflexiveglobal}. For the second type we can take
\[ g_{A,V_1,V_2}\colon (\Ze)_{(p)}[x,y]/(x\cdot e_{V_1}-y\cdot e_{V_2})\to (\Ze)_{(p)}[x,y,y']/(x\cdot e_{V_1}-y\cdot e_{V_2}, e_{V_1}\cdot y'-y). \]
Hence $(p,2)$-regular global group laws are the local objects with respect to this set of morphisms, and we obtain a universal $(p,2)$-regular global group law $\bL^{(p,2)}$.
\begin{Remark}
The existence of a universal object is less clear for $(p,n)$-regularity with $n>2$ (defined in the obvious way). This is the reason why in this section we have to restrict to showing $2$-regularity of $\bL$, and later use the comparison with $\upi_*(\MU)$ to deduce the full regularity.
\end{Remark}
It follows that for any torus $A$, the left adjoint functor $(-)^{\wedge}_A$ must take $\bL^{(p,2)}$ to the universal $A$-equivariant formal group law over $p$-local rings. By the same arguments as in Sections \ref{sec:realadjunction3} and \ref{sec:complexadjunction} we deduce that $\bL^{(p,2)}$ is canonically isomorphic to $\bL_{(p)}$, the $p$-localization of the global Lazard ring. In particular we have shown that $\bL_{(p)}$ is $(p,2)$-regular, finishing the proof of Step 1. We note that this in particular implies the following:
\begin{Cor} \label{cor:regcoprimep} For every torus $A$ and character $V\in A^*$ which is not divisible by $p$, the Euler class $e_V\in (L_A)_{(p)}$ is a regular element.
\end{Cor}
\begin{proof} We consider the larger torus $\T\times A$ and the pair of characters $(V_1,q^*V)$, where $V_1$ is the projection to the first component and $q\colon \T \times A\to A$ is the projection. Then $(V_1,q^*V)$ are linearly independent modulo $p$, and hence the reduction of $e_{q^*V}$ to the quotient $(L_{\T \times A})_{(p)}/(e_{V_1})$ is a regular element. Under the isomorphism $(L_{\T \times A})_{(p)}/(e_{V_1})\cong L_A$, this reduction agrees with $e_V$, proving the claim.
\end{proof}

We now come to \textbf{Step 2}: Let $A$ be a torus. Continuing to work $p$-locally, it remains to consider Euler classes of pairs of characters $V_1,V_2\in A^*$ which are linearly independent, but not necessarily linearly independent modulo $p$. We first discuss the case where $V_1$ is not divisible by $p$ but $V_2$ is arbitrary. Hence we know by \Cref{cor:regcoprimep} that $e_{V_1}\in (L_A)_{(p)}$ is regular, and our goal is to show that $\res^{A}_{\ker(V_1)} (e_{V_2})$ is regular in $(L_{\ker(V_1)})_{(p)}$. For this we choose a homomorphism $\beta_p\colon \ker(V_1)\to \ker(V_1)$ which induces the identity on $\pi_0(\ker(V_1))$ and equals the $p$-th power map on the identity component. We have the following lemmata:
\begin{Lemma} \label{lem:eWregular1} Let $V\in \ker(V_1)^*$ be a character not in the image of $\beta_p^*\colon \ker(V_1)^*\to \ker(V_1)^*$. Then the associated Euler class $e_V\in (L_{\ker(V_1)})_{(p)}$ is a regular element.
\end{Lemma}
\begin{proof} A character $V\in \ker(V_1)^*$ lies in the image of $\beta_p^*$ if and only if its restriction to the identity component $(\ker(V_1))_1^*$ is divisible by $p$. Hence, if $V$ is not in the image, it is (1) itself not divisible by $p$ and (2) a surjective character (i.e., of infinite order in $\ker(V_1)^*$), since any torsion character is trivial on the identity component and hence in the image of $\beta_p^*$. Let $W$ be a lift of $V$ to a character for $A$. Then combining (1) and (2) with the assumption that $V_1$ is not divisible by $p$ shows that the pair $(V_1,W)$ is linearly independent modulo $p$. So Step 1 implies that $e_V\in (L_{\ker(V_1)})_{(p)}$ is regular, as desired.
\end{proof}
\begin{Lemma} \label{lem:eWregular}Let $W\in \ker(V_1)^*$ be a character and assume that $e_W\in (L_{\ker(V_1)})_{(p)}$ is regular. Then $e_{\beta_p^*(W)}\in (L_{\ker(V_1)})_{(p)}$ is also regular.
\end{Lemma}
\begin{proof}
Since $\beta_p$ is surjective, we obtain a short exact sequence
	\[ 1\to B\to \ker(V_1) \xr{\beta_p} \ker(V_1) \to 1. \]
Applying \Cref{prop:eulerinvert}, we see that the composition
\[ (L_{\ker(V_1)})_{(p)}\xr{(\beta_p)^*}(L_{\ker(V_1)})_{(p)}\to (L_{\ker(V_1)})_{(p)}[e_V^{-1}\ |\ V\notin \im(\beta_p^*)]\]
is a flat map. This means that if $e_W$ is regular in $(L_{\ker(V_1)})_{(p)}$, then its image under $(\beta_p)^*$, i.e., $e_{\beta_p^*(W)}$, is a regular element in $(L_{\ker(V_1)})_{(p)}[e_V^{-1}\ |\ V\notin \im(\beta_p^*)]$. By \Cref{lem:eWregular1}, each $e_V\in (L_{\ker(V_1)})_{(p)}$ for $V\notin \im(\beta_p^*)$ is a regular element. Hence the map
\[ (L_{\ker(V_1)})_{(p)}\to (L_{\ker(V_1)})_{(p)}[e_V^{-1}\ |\ V\notin \im(\beta_p^*)] \]
is injective and therefore $e_{\beta_p^*(W)}$ must also be regular in $(L_{\ker(V_1)})_{(p)}$, proving the lemma.
\end{proof}
Now, since $\res^{A}_{\ker(V_1)} (V_2)\in \ker(V_1)^*$ is surjective, it can be written as $\res^{A}_{\ker(V_1)} (V_2)=(\beta_p^*)^m(W)$ for some $m\in \N$ and surjective character $W\in \ker(V_1)^*$ whose restriction to the component of the identity is not divisible by~$p$, i.e., which is not in the image of $\beta_p^*$. By \Cref{lem:eWregular1}, the associated Euler class $e_W$ of $W$ is then regular in $(L_{\ker(V_1)})_{(p)}$. Iterated application of \Cref{lem:eWregular} shows that $e_{\res^{A}_{\ker(V_1)}(V_2)}$ is also regular in $(L_{\ker(V_1)})_{(p)}$. This finishes the proof of the case where $V_1$ is not divisible by $p$ and $V_2$ is arbitrary. Again, this in particular implies the following:
\begin{Cor} Let $A$ be a torus and $V\in A^*$ a non-trivial character. Then $e_V\in (L_A)_{(p)}$ is regular.
\end{Cor}
\begin{proof} The proof is analogous to that of \Cref{cor:regcoprimep}: Let $V_1\in (\T \times A)^*$ be the projection to the first component and $q\colon \T \times A\to A$ the projection to the second component. Then the pair of characters $(V_1,q^*V)\in (\T\times A)^*$ is linearly independent with $V_1$ not divisible by $p$. Hence by what we just saw the Euler class $\res^{\T\times A}_A(e_{q^*V})=e_V\in (L_A)_{(p)}$ is regular.
\end{proof}
Knowing this, we see that for tori $A$ and non-trivial characters $V_1,V_2\in A^*$, the pair $(e_{V_1},e_{V_2})$ is a regular sequence if and only if the pair $(e_{V_2},e_{V_1})$ is a regular sequence. Indeed, if $x\cdot e_{V_1}=y\cdot e_{V_2}$ and we assume that $(e_{V_1},e_{V_2})$ is regular, the element $y$ must be divisible by $e_{V_1}$, say $y=y'\cdot e_{V_1}$. This yields that $x\cdot e_{V_1}=y'\cdot e_{V_2} \cdot e_{V_1}$, which by regularity of $e_{V_1}$ implies that $x$ is divisible by $e_{V_2}$. Since we already know that $e_{V_2}$ is regular, this proves that the sequence $(e_{V_2},e_{V_1})$ is regular. 

Hence, we also know that $(e_{V_1},e_{V_2})$ is a regular sequence in $(L_A)_{(p)}$ if $(V_1,V_2)$ are linearly independent and $V_2$ is not divisible by $p$, with no further condition on $V_1$. But now we can choose $\beta_p\colon \ker(V_1)\to \ker(V_1)$ as above and repeat the exact same argument to deduce the case where $V_2$ is also arbitrary. This finishes the proof that $\bL_{(p)}$ is $2$-regular for all $p$. Since regularity can be checked locally, \Cref{prop:lazard2regular} follows.

\begin{Cor} \label{cor:lazarddomain}
	If $A$ is a torus, the Lazard ring $L_A$ is an integral domain.
\end{Cor}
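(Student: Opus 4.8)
The plan is to deduce this directly from the $2$-regularity of the global Lazard ring (\Cref{prop:lazard2regular}) together with Greenlees' computation of the Euler-inverted Lazard ring, in exactly the same way as the $2$-torsion analogue was proved above. The point is that after inverting all non-trivial Euler classes the Lazard ring becomes a visibly nice ring, and that $L_A$ embeds into this localization.

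First I would localize. By \Cref{prop:geomlazard}, the ring $L_A[e_V^{-1}\ |\ 0\neq V\in A^*]$ is isomorphic to $L[e_V,e_V^{-1},\gamma_i^V\ |\ 0\neq V\in A^*,\ i\in\N]$, which is obtained from the non-equivariant Lazard ring $L$ by adjoining a collection of polynomial and Laurent generators. Since $L$ is a polynomial ring over $\Z$ by Lazard's theorem (see \cite{Qui69}), it is an integral domain, and adjoining polynomial and Laurent variables preserves this property; hence $L_A[e_V^{-1}\ |\ 0\neq V\in A^*]$ is an integral domain.

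Next I would invoke \Cref{prop:lazard2regular}: since $\bL$ is $2$-regular, in particular $1$-regular, every Euler class $e_V$ with $0\neq V\in A^*$ is a regular element of $L_A$. Consequently the multiplicative subset $S\subseteq L_A$ generated by all non-trivial Euler classes consists of non-zero-divisors (a finite product of non-zero-divisors is again a non-zero-divisor), so the localization map $L_A\to S^{-1}L_A=L_A[e_V^{-1}\ |\ 0\neq V\in A^*]$ is injective. Since a subring of an integral domain is an integral domain, it follows that $L_A$ is an integral domain.

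The only substantive input is the regularity of Euler classes in $L_A$, i.e.\ \Cref{prop:lazard2regular}; once that is available, the remaining argument is purely formal and there is no genuine obstacle. The single point worth being careful about is precisely that the product of the finitely many Euler classes occurring in any given denominator is a non-zero-divisor, which is immediate from the regularity of each individual Euler class.
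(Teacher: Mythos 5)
Your proof is correct and is essentially identical to the paper's own argument: invoke \Cref{prop:geomlazard} to see that the Euler-inverted Lazard ring is a domain (built from the polynomial ring $L$ by adjoining polynomial and Laurent generators), then use the regularity of Euler classes from \Cref{prop:lazard2regular} to conclude that $L_A$ injects into this localization. The only cosmetic difference is the citation for Lazard's theorem (the paper cites Lazard directly rather than via Quillen).
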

\begin{proof} By \Cref{prop:geomlazard} we know that for any abelian compact Lie group $A$ the Lazard ring $L_A$ becomes an integral domain after inverting all Euler classes of non-trivial characters, since the non-equivariant Lazard ring $L$ is a polynomial algebra over the integers \cite{Laz55}. If $A$ is a torus we just saw that these Euler classes are regular elements, so it follows that $L_A$ itself is a domain.
\end{proof}

\subsection{Geometric fixed points and proof of the main theorem}

Let $X$ be a global group law. As in \Cref{sec:realmain}, we define the geometric fixed points $\Phi^A(X)$ at any abelian compact Lie group $A$ by \[ \Phi^A(X)=X(A)[e_V^{-1} \ |\ V \in A^*-\{\epsilon\}].\]
If $X=\upi_*(E)$ is a regular global group law associated to a global complex oriented ring spectrum, then there is a natural isomorphism
\[ \Phi^A(\upi_*(E))\cong \pi_*(\Phi^A(E))=\Phi^A_*(E).\]
For general complex oriented $E$, such an isomorphism only holds at tori.

We now move towards the proof of Theorems \ref{thm:A} and \ref{thm:C} and consider the unique map of global group laws
\[ \alpha \colon \bL \to \upi_*(\MU).\]
Again, the only input from topology that we need is the following:
\begin{Prop}[Greenlees] \label{prop:mulazardgeom}
	The morphism $\alpha\colon \bL \to \upi_*(\MU)$ of global group laws induces an isomorphism on geometric fixed points $\Phi^A$ for every abelian compact Lie group $A$.
\end{Prop}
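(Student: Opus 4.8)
The plan is to reduce the statement to Quillen's non-equivariant theorem $L \cong MU_*$ together with Greenlees' computations of the geometric fixed points of both sides, in exactly the same spirit as the proof of \Cref{prop:realgeomiso} in the real case. First I would recall the structure of $\Phi^A(\bL) = L_A[e_V^{-1} \mid 0 \neq V \in A^*]$ from \Cref{prop:geomlazard}: it is a polynomial/Laurent algebra
\[ L[e_V^{\pm 1}, \gamma^V_i \mid 0 \neq V \in A^*, i \in \N] \]
over the non-equivariant Lazard ring $L$, where the $\gamma^V_i$ are the coefficients of the orientation $y(\epsilon)$ expressed in terms of the coordinate $x$ of the associated non-equivariant formal group law after inverting Euler classes. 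On the topological side, one has tom Dieck's computation (the complex analog of \cite[Proposition 3.2]{Fir13}, cf.\ \cite[Theorem 4.9]{Sin01}): there is an equivalence
\[ \Phi^A(MU_A) \simeq S[\widetilde{A}^*] \wedge MU \wedge \Bigl(\prod_{V \in \widetilde{A}^*} BU\Bigr)_+, \]
where $\widetilde{A}^* = A^* \setminus \{\epsilon\}$, and $S[\widetilde{A}^*] = \bigvee_{W \in \Z[V]_{V \in \widetilde{A}^*}} S^{2|W|}$, with the Euler class $e_V \in \Phi^A_*(MU_A)$ corresponding to the inclusion $S^{-2} \to S[\widetilde{A}^*]$ indexed by $-V$. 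Using the standard fact $MU_*(BU_+) = MU_*[b_1, b_2, \dots]$, with $b_i \in MU_{2i}(\C P^\infty_+)$ dual to $x^i$ and the map $\C P^\infty_+ \to MU \wedge BU_+$ given by $1 + b_1 x + b_2 x^2 + \cdots$, one obtains
\[ \Phi^A_*(MU_A) = MU_*[b_i^V, e_V^{\pm 1} \mid V \in \widetilde{A}^*, i \in \N_{>0}], \]
where $b_i^V$ is the image of $b_i$ under the inclusion indexed by $V$.

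The heart of the argument is to identify where the map $\varphi$ sends the generators $\gamma^V_i$. For this I would trace the universal complex orientation through the equivalence above exactly as in the real case: the $A$-equivariant formal group law over $\Phi^A_*(MU_A)$ is $\prod_{A^*}[\C P(\U^\C_A)_+, \Phi^A(MU_A)]_*$, and its orientation class, restricted to the $V$-th factor for $V \neq \epsilon$, is the composite
\[ \lambda_V \colon \C P^\infty_+ \wedge S^{-2} \to BU_+ \wedge S^{-2} \xrightarrow{i_V \wedge e_V} \Phi^A(MU_A), \]
while on the trivial factor it is the usual orientation $t \colon \C P^\infty_+ \wedge S^{-2} \to MU \to \Phi^A(MU_A)$. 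Expressed in terms of the coordinate $x$, the map $\lambda_V$ is $e_V + e_V b_1^V x + e_V b_2^V x^2 + \cdots$, so that $\varphi(\gamma^V_i) = e_V b^V_{i+1}$, a unit multiple of $b^V_{i+1}$, and $\varphi(e_V) = e_V$. Comparing the two presentations, $\varphi$ sends the free polynomial generators $\gamma^V_i$ of $\Phi^A(\bL)$ bijectively (up to units) to the free polynomial generators $b^V_{i+1}$ of $\Phi^A_*(MU_A)$, sends the Laurent generators $e_V$ to $e_V$, and on the coefficient ring $L$ restricts to the Quillen isomorphism $L \xrightarrow{\cong} MU_*$ of \cite{Qui69}. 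Hence $\Phi^A(\varphi)$ is an isomorphism, as claimed.

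The main obstacle I anticipate is purely bookkeeping rather than conceptual: one must carefully set up the identification of the equivariant orientation class with the wedge of the $\lambda_V$'s under tom Dieck's splitting, matching conventions between the algebraic side (the flag decomposition and the classes $\gamma^V_i$ of \Cref{prop:geomlazard}) and the topological side (the $b_i$'s and the coordinate $x$), and checking that the Euler-invertible splitting of \Cref{prop:eulerinvert} corresponds precisely to tom Dieck's geometric splitting of $\Phi^A(MU_A)$ into $\prod_{A^*} \C P^\infty$-indexed pieces. Since \cite{Gre01} and \cite{Gre} have carried out essentially this comparison (and \cite{Fir13} the real analog), I would cite those for the compatibility details and only spell out the computation of $\varphi(\gamma^V_i)$, which is the one point genuinely needed here. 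Everything else is formal once Quillen's theorem is invoked.
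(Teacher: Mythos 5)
Your proposal is correct and follows essentially the same route as the paper: the paper's proof of \Cref{prop:mulazardgeom} simply cites Greenlees' \cite[Section 11]{Gre} and defers to the proof of \Cref{prop:realgeomiso} for a sketch of the real analog, and what you have written is precisely the complex version of that sketch — tom Dieck's splitting of $\Phi^A(MU_A)$, the presentation of $\Phi^A(\bL)$ from \Cref{prop:geomlazard}, the identification $\varphi(\gamma^V_i) = e_V b^V_{i+1}$ via the orientation formula $\lambda_V = e_V + e_V b_1^V x + \cdots$, and the reduction to Quillen's theorem $L \xrightarrow{\cong} MU_*$. The bookkeeping caveats you flag (matching the flag decomposition with tom Dieck's geometric splitting) are exactly what the paper handles by citation, so nothing is missing.
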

\begin{proof} The computation of $\Phi^A(MU_A)$ is due to tom Dieck \cite{tD70} and also appears in \cite[Theorem 4.9]{Sin01}. In \Cref{prop:geomlazard} we saw a formula for $\Phi^A(\bL)$ (originally also due to Greenlees), showing that the rings are abstractly isomorphic. The proof that $\varphi_A$ is indeed an isomorphism is given in \cite[Section 11]{Gre} for finite~$A$, but applies without change to infinite $A$ (it is also stated in this generality in \cite[Section 13.B.]{Gre01}). We refer to \Cref{prop:realgeomiso} for a proof of the real analog, which straightforwardly translates to the complex case.
\end{proof}

For real bordism there was a simple Whitehead theorem (\Cref{prop:whitehead}) that allowed us to conclude directly from the geometric fixed point statement that the map $\bL^{2-\tor}\to \upi_*(\MO)$ is an isomorphism. In the complex case we have to work a little harder. We first show the following structural property of $L_{\T}$, where we denote by $e_n\in L_{\T}$ the Euler class associated to the $n$-th power map on $\T$:

\begin{Prop} \label{prop:psi} There exists a family of elements $\{\psi_n\}_{n\in \N_{>0}}$ of $\bL(\T)$ uniquely characterized by the equations
	\begin{equation} \label{eq:eulerproduct} e_n=\prod_{m|n}\psi_m \end{equation}
for all $n\in \N_{>0}$. Moreover, each $\psi_n$ is a prime element and generates the kernel of the composite
\[ \varphi^{\T}_{C_n}\colon \bL(\T)\xr{\res_{C_n}^{\T}} \bL(C_n)\to \Phi^{C_n}(\bL).\]
\end{Prop}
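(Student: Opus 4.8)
The plan is to exploit that $\bL$ is now known to be $2$-regular (and in particular $\bL(\T)$ is an integral domain by \Cref{cor:lazarddomain}) together with the explicit description of $\Phi^{C_n}(\bL)$ from \Cref{prop:geomlazard}. First I would handle the existence and uniqueness of the $\psi_n$ by induction on $n$. Set $\psi_1 = e_1 = e$. For $n > 1$, we have already shown that $e_n$ is a regular (indeed, by \Cref{cor:lazarddomain}, nonzero) element of the domain $\bL(\T)$, and the inductive hypothesis gives $\psi_m$ for all proper divisors $m \mid n$. The defining equation forces $\psi_n$ to be a quotient $e_n / \prod_{m \mid n, m < n}\psi_m$, which is unique if it exists since $\bL(\T)$ is a domain; so the content of the statement is that this quotient lies in $\bL(\T)$, i.e. that $\prod_{m\mid n, m<n}\psi_m$ divides $e_n$. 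I would prove this divisibility using the short exact sequences for split characters: for a prime $q \mid n$, the restriction $\bL(\T) \to \bL(C_{n/q})$ is surjective with kernel generated by $e_{n/q}$ (this is the $C_{n/q}$-case of \Cref{cor:lazardeulerquotient} applied to $C_{n/q} \hookrightarrow \T$), and $e_n$ lies in this kernel since the $n$-th power map kills $C_{n/q}$; more precisely one checks that the ideal $(e_n)$ is the intersection, over primes $q \mid n$, of the kernels of $\res^\T_{C_{n/q}}$, which by induction equals the ideal $\bigl(\prod_{m \mid n, m < n}\psi_m\bigr)$ — here one uses that distinct $\psi_m$ are coprime, established simultaneously in the induction.

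**Next,** for the primality of $\psi_n$ and the identification of the kernel of $\varphi^\T_{C_n}$, I would argue as follows. By \Cref{prop:geomlazard}, $\Phi^{C_n}(\bL) = \bL(C_n)[e_V^{-1} \mid 0 \neq V \in (C_n)^*]$ is obtained from the non-equivariant Lazard ring $L$ — a polynomial ring over $\Z$, hence a UFD and in particular a domain — by adjoining polynomial generators $\gamma^V_i$ and inverting the Euler classes $e_V$; thus $\Phi^{C_n}(\bL)$ is a domain, so $\ker(\varphi^\T_{C_n})$ is a prime ideal. The composite $\varphi^\T_{C_n}$ is surjective onto $\Phi^{C_n}(\bL)$ only after the localization, but it suffices to show $\ker(\varphi^\T_{C_n})$ is principal generated by $\psi_n$. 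First, $\psi_n$ lies in the kernel: restricting $e_n = \prod_{m \mid n}\psi_m$ to $C_n$, the character "$n$-th power map" restricts to the trivial character of $C_n$, so $e_n$ restricts to $0 \in \bL(C_n)$; one then checks (using the factorization and the already-known kernels of the restrictions $\res^\T_{C_{n/q}}$ at the primes $q\mid n$) that among the factors $\psi_m$, exactly $\psi_n$ restricts into the kernel of $\bL(C_n) \to \Phi^{C_n}(\bL)$ while the $\psi_m$ with $m \mid n$, $m < n$ become units there (they restrict to Euler classes of \emph{nontrivial} characters of $C_n$, which are inverted in $\Phi^{C_n}(\bL)$). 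Hence $\psi_n \in \ker(\varphi^\T_{C_n})$ and the others don't contribute.

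**For the reverse inclusion** $\ker(\varphi^\T_{C_n}) \subseteq (\psi_n)$: I would combine $2$-regularity with a dimension/codimension count, or more concretely argue that $\bL(\T)/(\psi_n)$ already injects into $\Phi^{C_n}(\bL)$. Indeed, modulo $\psi_n$ the element $e_n$ becomes $0$, so the map factors as $\bL(\T)/(\psi_n) \to \bL(C_n) \to \Phi^{C_n}(\bL)$, and since $\psi_n$ generates exactly the "new" relation that $\res^\T_{C_n}$ imposes beyond what the $\res^\T_{C_{n/q}}$ already impose, the first map realizes $\bL(\T)/(\psi_n)$ as (a localization-compatible version of) $\bL(C_n)$ localized away from the remaining Euler classes — and $\psi_n$ restricts to a regular element of $\bL(C_n)$ by $2$-regularity applied to the surjective character $\T \to C_n$ (wait: rather, applied to the presentation), so no further collapse occurs. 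Cleanly: $\bL(C_n)$ has all nontrivial Euler classes regular (this is part of the $2$-regularity, since nontrivial characters of $C_n$ lift to surjective characters of $\T$, hence restrict to regular Euler classes, and regularity passes to the quotient as in \Cref{lem:elementary}(1)'s analog), so $\bL(C_n) \hookrightarrow \Phi^{C_n}(\bL)$, giving $\ker(\varphi^\T_{C_n}) = \ker(\res^\T_{C_n}) = (\psi_n)$, the last equality being the content of the existence/uniqueness induction above.

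**The main obstacle** I expect is the divisibility step — showing $\prod_{m \mid n, m<n}\psi_m$ divides $e_n$ in $\bL(\T)$ — because it requires knowing that the various $\psi_m$ are pairwise coprime and that the ideal $(e_n)$ is precisely the intersection $\bigcap_{q \mid n \text{ prime}} \ker(\res^\T_{C_{n/q}})$. Establishing the latter cleanly needs that $\bL(\T)$ is sufficiently well-behaved (a domain in which these principal ideals behave like those in a UFD for the purposes at hand); one way to sidestep subtleties is to verify the identity after tensoring with each localization $\bL(\T)_{(p)}$ and using the $(p,2)$-regularity statements from \Cref{sec:lazardregular}, together with the explicit form of $\beta_p$ and the flatness in \Cref{prop:eulerinvert}, to reduce the coprimality to a statement about Euler classes of characters differing by $p$-divisibility. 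The rest of the argument is then bookkeeping with restriction maps and the formula of \Cref{prop:geomlazard}.
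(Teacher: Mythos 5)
Your plan for existence/uniqueness has the right skeleton (induction, $\bL(\T)$ a domain, divisibility of $e_n$ by $\prod_{m|n,\,m<n}\psi_m$), but, as you yourself flag, the crux — that the distinct $\psi_m$ are pairwise non-associate primes — is not actually established, and the $p$-local detour you sketch is vague and considerably harder than needed. The paper dispatches coprimality with a single observation you seem to have missed: the unique map of global group laws $\bL \to (\Gm)_{\Z}$ sends each $\psi_m$ (constructed by induction) to the $m$-th cyclotomic polynomial in $\Z[t^{\pm 1}]$, and no two cyclotomic polynomials are associate or divide one another; this immediately transfers back to the $\psi_m$ in $\bL(\T)$. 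A minor wording issue in the same step: you write that $(e_n)$ \emph{is} the intersection $\bigcap_q\ker(\res^{\T}_{C_{n/q}})$ — that equality is false (for $n=4$ one gets $(e_4)\subsetneq(e_2)$); what you need and what holds is only the containment $(e_n)\subseteq\bigcap_q\ker(\res^{\T}_{C_{n/q}})$.

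The argument for the reverse inclusion $\ker(\varphi^{\T}_{C_n})\subseteq(\psi_n)$ is incorrect. You assert $\bL(C_n)\hookrightarrow\Phi^{C_n}(\bL)$ and conclude $\ker(\varphi^{\T}_{C_n})=\ker(\res^{\T}_{C_n})=(\psi_n)$. But by \Cref{cor:lazardeulerquotient} the kernel of $\res^{\T}_{C_n}$ is $(e_n)=(\prod_{m|n}\psi_m)$, which for $n>1$ is strictly smaller than $(\psi_n)$ since the cofactor contains $\psi_1=e$, not a unit; so the second equality is false, and if the injection you claim held it would force $\ker(\varphi^{\T}_{C_n})=(e_n)\neq(\psi_n)$, contradicting the very statement being proved. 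Hence $\bL(C_n)\hookrightarrow\Phi^{C_n}(\bL)$ must actually fail for $n>1$. Your appeal to $2$-regularity also does not apply here: condition (2) of \Cref{lem:k-regular} concerns surjective characters $A\to\T$, and $C_n$ admits none, while condition (1) requires linearly independent characters of a torus, and any two characters of $\T$ are $\Z$-linearly dependent, so pairs like $(e_n,e_k)$ in $\bL(\T)$ are not covered. The paper instead argues directly with Euler torsion: if $\varphi^{\T}_{C_n}(x)=0$ then $x\cdot e_1^{m_1}\cdots e_{n-1}^{m_{n-1}}\in(e_n)\subseteq(\psi_n)$ for some exponents, and since each $e_i$ with $i<n$ is a product of the primes $\psi_j$, $j<n$, none of which divide the prime $\psi_n$, cancellation in the domain $\bL(\T)$ gives $\psi_n\mid x$. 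Primality of $\psi_n$ then follows exactly as you indicate, by embedding $\bL(\T)/(\psi_n)$ (note: not $\bL(C_n)$) into the domain $\Phi^{C_n}(\bL)$.
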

\begin{Example}[Cyclotomic polynomials] \label{ex:psimult} Once the proposition is proved, it defines elements $\psi_n\in X(\T)$ for any global group law $X$. To understand the nature of these elements, it is helpful to consider the example of the multiplicative global group law over $\Z$ (\Cref{ex:mult}). Its value at the circle group is the Laurent polynomial ring $\Z[t^{\pm 1}]$ with universal Euler class $t-1$. The element $\psi_n$ is then the $n$-th cyclotomic polynomial, and equation \eqref{eq:eulerproduct} becomes the standard decomposition of $t^n-1$ into a product of cyclotomic polynomials. To see why the $\psi_n$ are as claimed in this case, note that the multiplicative global group law is regular and hence the $\psi_n\in \Z[t^{\pm 1}]$ are uniquely characterized by the equations \eqref{eq:eulerproduct}. Since $\psi_1=e_1=t-1$, the claim follows.
\end{Example}

\begin{proof}[Proof of \Cref{prop:psi}] 	
We prove the proposition by induction on $n$. By definition, $\psi_1$ is given by $e_1$. We now fix an $n>1$ and assume we have proved the existence and uniqueness of $\psi_m$ for all $m$ properly dividing~$n$, and that these classes satisfy the properties stated in the proposition. Recall from \Cref{cor:lazarddomain} that $\bL(\T)$ is a domain, so there can be at most one $\psi_n$ satisfying \eqref{eq:eulerproduct}.
	
It remains to prove existence. Note that $e_n$ maps to $0$ in $\bL(C_m)$ for all $m$ dividing $n$. Hence $e_n$ is uniquely divisible by $e_m$ for all these $m$. In particular, each individual such $\psi_m$ divides $e_n$. We need to show that their product also divides $e_n$, as then we can (and have to) define $\psi_n$ as the unique element satisfying equation \eqref{eq:eulerproduct}. Since the $\psi_m$ are prime elements, this will follow once we know that no two $\psi_m$ and $\psi_{m'}$ with $m\neq m'$ agree up to a unit. To see that this is indeed not the case it suffices to find a single example of a global group law for which (the push-forwards of) $\psi_m$ and $\psi_{m'}$ do not differ by a unit. The multiplicative global group law described in \Cref{ex:psimult} does the job, since it is clear that no two cyclotomic polynomials differ by a unit in $\Z[t^{\pm 1}]$ as the roots of $\psi_m$ in $\C$ are given by the primitive $m$-th roots of unity. This proves the existence and uniqueness of $\psi_n$. The properties of the multiplicative global group law also imply that $\psi_n$ does not divide any of the $\psi_m$ with $m<n$ and vice versa, since this is not the case for cyclotomic polynomials.

It remains to see that $\psi_n$ generates the kernel of the map $\varphi^{\T}_{C_n}$. First note that the product $\psi_n\cdot e_1\cdot \hdots \cdot e_{n-1}$ is divisible by $e_n$, since each $\psi_m$ divides $e_m$. Each $e_i$ with $0<i<n$ is by definition sent to a unit under $\varphi^{\T}_{C_n}$, while $e_n$ is sent to $0$. It follows that $\psi_n$ lies in the kernel of $\varphi^{\T}_{C_n}$. For the other direction, assume that $x\in \bL(\T)$ is sent to $0$ under $\varphi^{\T}_{C_n}$. This means that its restriction to $\bL(C_n)$ is Euler class torsion. In other words there exist numbers $m_1,\hdots,m_{n-1}$ such that $x\cdot e_1^{m_1}\cdots e_{n-1}^ {m_{n-1}}$ is divisible by $e_n$. This uses that the kernel of the restriction map is generated by $e_n$ and that $e_1,\hdots,e_{n-1}$ restrict to all the non-trivial Euler classes of $C_n$. Since $\psi_n$ divides $e_n$, we can hence write
\[ x\cdot e_1^{m_1}\cdots e_{n-1}^ {m_{n-1}}=x'\cdot \psi_n \]
for some $x'\in \bL(\T)$. Now each $e_i$ is a product of the regular prime elements $\psi_j$ with $j<n$. As we argued above, none of the $\psi_j$ divide $\psi_n$, so it follows that the whole product $e_1^{m_1}\cdots e_{n-1}^ {m_{n-1}}$ must divide $x'$. We can cancel by $e_1^{m_1}\cdots e_{n-1}^ {m_{n-1}}$ on both sides, showing that $x$ is divisible by $\psi_n$, as desired. Finally, that $\psi_n$ is again prime follows from the fact that $\varphi^{\T}_{C_n}$ defines an embedding of $\bL(\T)/(\psi_n)$ into $\Phi^{C_n}(\bL)$, which is an integral domain by \Cref{prop:eulerinvert}. This finishes the proof.
\end{proof}

In fact, more is true:
\begin{Lemma} \label{lem:psi} Let $A$ be a torus, $V\colon A \to \T$ be a split character and $n\in \N_{>0}$. Then $V^*{\psi_n}\in \bL(A)$ is a prime element and generates the kernel of the composition
\[ \varphi^{A}_{\ker(V^n)}\colon \bL(A)\xr{\res^{A}_{\ker(V^n)}} \bL(\ker(V^n))\to \Phi^{\ker(V^n)}\bL.\]
\end{Lemma}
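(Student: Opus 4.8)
The plan is to run the same induction as in the proof of \Cref{prop:psi}, with the split character $V$ playing the role of the identity character of $\T$ and a torus carried passively along. First I would put $V$ in normal form: a choice of section identifies $\T^r$ with $B\times\T$ for $B=\ker(V)$ (again a torus, since the splitting exhibits $\T^r$ as $\ker(V)\times\T$) in such a way that $V$ becomes the projection $\pr_\T$. Then $\ker(V^n)=B\times C_n$, \Cref{prop:psi} gives $e_{V^n}=V^*e_n=\prod_{m\mid n}V^*\psi_m$, and \Cref{cor:lazardeulerquotient} gives $\ker\bigl(\res^{\T^r}_{B\times C_n}\bigr)=(e_{V^n})$. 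I also record that $\bL(\T^r)=L_{\T^r}$ is a domain (\Cref{cor:lazarddomain}) and that $\Phi^{B\times C_n}(\bL)=\bL(B\times C_n)[e_W^{-1}\mid 0\neq W]$ is a domain, being a localization of a polynomial ring over $\Z$ by \Cref{prop:geomlazard} and Lazard's theorem \cite{Laz55}. The induction on $n$, carried out for all tori $B$ at once, shows that $V^*\psi_n$ is a prime element generating $\ker\bigl(\varphi^{\T^r}_{\ker(V^n)}\bigr)$; the base case $n=1$ reads $V^*\psi_1=e_V$, where $\ker\bigl(\varphi^{\T^r}_{\ker(V)}\bigr)=\ker\bigl(\res^{\T^r}_{B}\bigr)=(e_V)$ (using that the torus $B$ has all nontrivial Euler classes regular, by $2$-regularity, so that $\bL(B)\hookrightarrow\Phi^B(\bL)$) and $\bL(\T^r)/(e_V)\cong\bL(B)$ is a domain.

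For the inductive step, that $V^*\psi_n$ lies in the kernel is exactly as in \Cref{prop:psi}: for $m\mid n$, $m<n$, the class $V^*\psi_m$ divides $V^*e_m$, which restricts on $B\times C_n$ to the Euler class of a nontrivial character and hence maps to a unit in the domain $\Phi^{B\times C_n}(\bL)$, while $e_{V^n}=V^*\psi_n\cdot\prod_{m\mid n,\,m<n}V^*\psi_m$ maps to $0$ there, so $V^*\psi_n$ does as well. For the reverse inclusion, if $\varphi^{\T^r}_{\ker(V^n)}(x)=0$ then $\res^{\T^r}_{B\times C_n}(x)$ is annihilated by a product of Euler classes of nontrivial characters of $B\times C_n$; lifting those characters to characters $\tilde W_j$ of $\T^r$ and using $\ker\bigl(\res^{\T^r}_{B\times C_n}\bigr)=(e_{V^n})$ yields
\[ x\cdot\prod_j e_{\tilde W_j}^{m_j}=y\cdot e_{V^n}=\Bigl(y\cdot\!\!\prod_{m\mid n,\,m<n}\!\!V^*\psi_m\Bigr)\cdot V^*\psi_n \]
in $\bL(\T^r)$. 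Taking each $\tilde W_j=d_jU_j$ with $U_j$ primitive (hence split) and $d_j\mid n$, and factoring $e_{\tilde W_j}=U_j^*e_{d_j}=\prod_{m\mid d_j}U_j^*\psi_m$ into primes via the inductive hypothesis, it then suffices — exactly as in \Cref{prop:psi} — to know that none of these prime factors divides $V^*\psi_n$, after which one divides them off the factor $y\cdot\prod_{m<n}V^*\psi_m$ one at a time and cancels to get $V^*\psi_n\mid x$. The non-divisibility $U^*\psi_m\nmid V^*\psi_n$ (for $U$ split) is read off, as in \Cref{prop:psi}, from the multiplicative global group law $(\Gm)_\Z$ of \Cref{ex:mult}: in coordinates identifying $(\Gm)_\Z(\T^r)$ with $\Z[t_1^{\pm1},\dots,t_r^{\pm1}]$ these classes become cyclotomic polynomials $\Phi_m$, $\Phi_n$ evaluated at monomials, and a short degree/specialization argument shows one cannot divide the other. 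Finally, once $\ker\bigl(\varphi^{\T^r}_{\ker(V^n)}\bigr)=(V^*\psi_n)$ is known, primeness of $V^*\psi_n$ follows from the embedding $\bL(\T^r)/(V^*\psi_n)\hookrightarrow\Phi^{B\times C_n}(\bL)$ into a domain.

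The main obstacle is the bookkeeping in the reverse-inclusion step, and specifically the borderline case $d_j=n$: this occurs exactly for those nontrivial characters $W$ of $B\times C_n$ that are trivial on $C_n$ and divisible by $n$ on $B$, which cannot be lifted to a primitive character of $\T^r$, so that the factorization of $e_{\tilde W_j}$ then involves $U_j^*\psi_n$, whose primeness is not yet part of the induction. I expect to deal with this in the spirit of Step~2 of the proof of \Cref{prop:lazard2regular}: such a $W$ is a proper multiple $W=pW'$ with $p\mid n$, so $e_W$ lies in the ideal generated by $e_{W'}$, and iterating this together with the regularity of Euler classes provided by \Cref{prop:lazard2regular} (and, where needed, the flatness statement of \Cref{prop:eulerinvert} for a $p$-th power self-map) lets one replace the offending Euler classes by ones of characters that do lift to split characters of $\T^r$; the residual non-divisibility $U^*\psi_n\nmid V^*\psi_n$ is again checked in $(\Gm)_\Z$.
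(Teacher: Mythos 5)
Your overall plan (induction on $n$ following \Cref{prop:psi}, with $\T^r\cong B\times\T$ and $\ker(V^n)=B\times C_n$) is the right one, and your base case and forward inclusion are fine. But the reverse inclusion as you set it up has a genuine gap, and the "main obstacle" you flag is a symptom of a missing reduction rather than something to be patched afterwards. You start from an annihilating product $\prod_j e_{W_j}^{m_j}\cdot\res^{\T^r}_{B\times C_n}(x)=0$ over \emph{arbitrary} non-trivial characters $W_j$ of $B\times C_n$, and are then forced to lift and factor Euler classes whose prime factorizations involve classes (your $U_j^*\psi_n$, and more generally $U^*\psi_m$ for split $U\neq V$) whose primeness and mutual non-divisibility are not covered by the induction. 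Your proposed fix via Step~2 of \Cref{prop:lazard2regular} and the flatness of \Cref{prop:eulerinvert} is not worked out, and it is not clear it closes the loop: you would still need non-divisibility statements between pullbacks of $\psi$-classes along \emph{different} split characters, which is strictly more than \Cref{prop:psi} provides.

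The step that makes all of this unnecessary is to use the $2$-regularity of $\bL$ (\Cref{prop:lazard2regular}) \emph{before} lifting anything: every Euler class $e_W$ for a \emph{surjective} character $W$ of $\ker(V^n)=B\times C_n$ (i.e., one non-trivial on the identity component $B$) is already a regular element of $\bL(\ker(V^n))$, so it can be cancelled from the annihilating product. The non-surjective non-trivial characters of $B\times C_n$ are exactly the restrictions of $V,V^2,\hdots,V^{n-1}$, so $x\in\ker\bigl(\varphi^{\T^r}_{\ker(V^n)}\bigr)$ if and only if $x\cdot e_V^{m_1}\cdots e_{V^{n-1}}^{m_{n-1}}$ is divisible by $e_{V^n}$ for some $m_i$ (using \Cref{cor:lazardeulerquotient} for the kernel of the restriction). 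After this reduction the only prime factors occurring are $V^*\psi_m$ with $m<n$, all handled by the inductive hypothesis, and the required non-divisibility $V^*\psi_m\nmid V^*\psi_n$ follows immediately by applying the ring map $\bL(\T^r)\to\bL(\T)$ induced by a section of $V$, which sends $V^*\psi_k$ to $\psi_k$ and reduces to the statement already proved in \Cref{prop:psi}; no separate computation in $(\Gm)_\Z$ with two different split characters is needed. With that reduction in place the rest of your argument (cancellation one prime at a time, and primeness of $V^*\psi_n$ from the embedding of $\bL(\T^r)/(V^*\psi_n)$ into the domain $\Phi^{\ker(V^n)}(\bL)$) goes through as written.
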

\begin{proof} Since the global Lazard ring $\bL$ is $2$-regular (\Cref{prop:lazard2regular}), all Euler classes $e_{W}$ for surjective characters $W\in (\ker(V^n))^*$ are regular elements of $\bL(\ker(V^n))$. Hence, in order to determine the kernel of $\bL(\ker(V^n))\to \Phi^{\ker(V^n)}\bL$, it suffices to consider the Euler-torsion for the non-surjective, non-trivial characters of $\ker(V^n)$. These are given precisely by the restrictions of the characters $V,V^2,\hdots,V^ {n-1}\in A^*$ to $\ker(V^n)^*$. It then follows that an element $x\in \bL(A)$ lies in the kernel of $\varphi^{A}_{\ker(V^n)}$ if and only if $x\cdot e_V^{m_1}\cdots e_{V^{n-1}}^ {m_{n-1}}$ is divisible by $e_{V^n}$ for some numbers $m_1,\hdots,m_{n-1}$. Furthermore, no $V^*\psi_m$ with $m<n$ can divide $V^*\psi_n$ as this would imply that also $\psi_m$ divides $\psi_n$ by pulling back along a section $\T\to A$ of $V$. Knowing this, the proof proceeds as the one above for \Cref{prop:psi}.
\end{proof}

\Cref{lem:psi} further shows that for any global group law $X$ and torus $A$, the geometric fixed points $\Phi^{A}X$ can also be obtained from $X(A)$ by inverting $V^*\psi_n$ for all split characters $V\colon A\to \T$ and $n\in \N_{>0}$. This is because any non-trivial character $W$ can be written as $V^n$ for some split character $V$, and $e_{V^n}$ decomposes as
\[ e_{V^n}=V^*e_n=\prod_{m|n}V^* \psi_m .\]

\begin{proof}[Proof of Theorems \ref{thm:A} and \ref{thm:C}] We wish to show that the canonical map
	\[ \alpha: \bL \to \upi_*(\MU)\]
is an isomorphism, and we have already seen in \Cref{prop:mulazardgeom} that it induces an isomorphism on geometric fixed points for all abelian compact Lie groups. Since both $\bL$ and $\upi_*(\MU)$ are left induced from tori, it is sufficient to show that $\alpha$ induces an isomorphism on tori. Let $A$ be a torus. All Euler classes $e_V\in \bL(A)$ with $V\neq \epsilon\in A^*$ are regular elements, so it follows that the maps $\bL(A)\to \Phi^{A}(\bL)$ are injective. Since $\Phi^A(\alpha)$ is an isomorphism, this implies that $\alpha(A)$ is injective. It remains to prove that $\alpha(A)$ is also surjective.	Again using that $\Phi^{A}(\alpha)$ is an isomorphism, it is sufficient to show that if $V^*\psi_n\cdot y\in \pi^{A}_*(\MU)$ is in the image for some split character $V$ and $n\in \N_{>0}$, then so is  $y$. For this, let $x\in \bL(A)$ be a preimage of $V^*\psi_n\cdot y$. Since $\varphi^{A}_{\ker(V^n)}(V^*\psi_n)=0\in \Phi^A_*(\MU)$, it follows that $\varphi^{A}_{\ker(V^n)}(x)$ lies in the kernel of $\Phi^{\ker(V^n)}(\alpha)$. But $\Phi^{\ker(V^n)}(\alpha)$ is an isomorphism, so $x$ lies in the kernel of $\varphi^{A}_{\ker(V^n)}$. Hence \Cref{lem:psi} implies that $x$ is divisible by $V^*\psi_n$, yielding an equation of the form
\[ V^*\psi_n\cdot y = \alpha(x)=\alpha(V^*\psi_n\cdot x')=V^*\psi_n \cdot \alpha(x') \in \pi_*^ {A}(\MU) \]
for some $x'\in \bL(A)$. Since $\pi_*^ {A}(\MU)$ is an integral domain and $V^*\psi_n$ is non-trivial (both statements follow from the embedding $\pi_*^ {A}(\MU) \hookrightarrow \Phi_*^ {A}(\MU)$), we can conclude that $y=\alpha(x')$. This finishes the proof of Theorems \ref{thm:A} and \ref{thm:C}.
\end{proof}

\begin{Cor} \label{cor:lazardregular} The global Lazard ring $\bL$ is a regular global group law. Moreover, for every abelian compact Lie group $A$ the ring $L_A$ is free as a module over the Lazard ring $L$.
\end{Cor}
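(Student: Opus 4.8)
The plan is to deduce both assertions from the identification $\bL\cong\upi_*(\MU)$ established in Theorems~\ref{thm:A} and~\ref{thm:C}; the substantive work lies entirely in that isomorphism, and the corollary merely transports two structural features of the homotopy of global complex bordism back to the global Lazard ring. For regularity, recall from \Cref{cor:muregular} that $\upi_*(\MU)$ is a regular global group law, which is forced by the concentration of $\pi_*^A(MU_A)$ in even degrees through \Cref{lem:complexregular}(2). Regularity is an intrinsic property of a global group law --- by \Cref{lem:k-regular} it amounts to exactness of the Euler sequences for all abelian compact Lie groups $A$ and surjective characters, equivalently to the regular-sequence condition on Euler classes over all tori --- and Theorems~\ref{thm:A} and~\ref{thm:C} identify $\bL$ with $\upi_*(\MU)$ as $Ab$-algebras matching coordinates, hence matching all Euler classes $e_V=V^*(e)$ and all restriction maps $\res^A_{\ker(V)}$. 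So this property passes to $\bL$; the one thing I would spell out is that, under the isomorphism, the exact sequences witnessing regularity of $\upi_*(\MU)$ are literally the sequences demanded of $\bL$.

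For the freeness statement, fix an abelian compact Lie group $A$. By \Cref{thm:A} the natural map $L_A\to\pi_*^A(MU_A)$ is a ring isomorphism, and by \Cref{thm:C} these assemble to an isomorphism of $Ab$-algebras, so it intertwines the structure map $L=L_1\to L_A$ (pullback along $A\to 1$) with $MU_*=\pi_*(MU)\to\pi_*^A(MU_A)$; combined with Quillen's $L\cong MU_*$ \cite{Qui69}, this identifies the $L$-module structure on $L_A$ with the $MU_*$-module structure on $\pi_*^A(MU_A)$. Hence it suffices to know that $\pi_*^A(MU_A)$ is free over $MU_*$, which is Comeza\~{n}a's theorem \cite[XXVIII Theorem~5.3]{Com96}, strengthening the even-degree concentration invoked above; pulling a basis back through \Cref{thm:A} exhibits $L_A$ as a free $L$-module.

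The only genuine obstacle in this argument is \Cref{thm:A} itself, which is already in hand. Of the two halves, I expect the freeness statement to need the most care, because it is not a formal consequence of the global-group-law axioms: a naive induction on the rank, using the exact sequences $0\to L_{\T^r}\xrightarrow{e_V}L_{\T^r}\to L_{\T^{r-1}}\to 0$ together with the $e_V$-adic filtration, only recovers the completion $L_{\T^{r-1}}[|e|]$ rather than the honest module $L_{\T^r}$ --- the $e_V$-adic filtration on $L_{\T^r}$ being non-Hausdorff --- so the actual freeness has to be imported from the topological side as above.
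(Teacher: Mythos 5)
Your argument is exactly the paper's: transport regularity (via \Cref{cor:muregular}) and Comeza\~{n}a's freeness theorem through the isomorphism $\bL\cong\upi_*(\MU)$ of Theorems~\ref{thm:A} and~\ref{thm:C}. The closing remark on why freeness cannot be extracted formally from the global-group-law axioms is a nice observation but not needed for the proof.
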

\begin{proof} We know that $\upi_*(\MU)$ is regular, since it is concentrated in even degrees. Moreover, Comeza{\~n}a \cite{Com96} showed that $\pi_*^A(\MU_A)$ is free as a module over $\pi_*(MU)$. Hence, by Theorems \ref{thm:A} and \ref{thm:C}, the analogous statements hold for $\bL$.	
\end{proof}

\begin{Remark} \label{rem:generalize} It is useful to abstract what properties of $\bL$ and $\upi_*(\MU)$ we used to conclude that the map $\bL\to \upi_*(\MU)$ is an isomorphism from only knowing that it is an isomorphism on geometric fixed points.  Let $f\colon X\to Y$ be a morphism of global group laws such that
\begin{enumerate}	
	\item $X$ is $2$-regular, all geometric fixed points $\Phi^A(X)$ are integral domains, and the classes $\psi_n,\psi_m\in X(\T)$ for $n\neq m$ are not multiples of one another. 
	\item $Y$ is $1$-regular.
\end{enumerate}
Note that the condition on the $\psi_i$ follows if $X$ maps to any global group law with this property, for example the multiplicative one. Then if $\Phi^Af$ is an isomorphism for all $A$ (in fact it suffices to consider $A$ with $\pi_0(A)$ cyclic), the map $f$ is already an isomorphism. Indeed, the conditions on $X$ guarantee that the classes $\psi_n$ satisfy the analogs of \Cref{prop:psi} and \Cref{lem:psi}, and the $1$-regularity of $Y$ implies that $Y(A)$ is a domain for all tori $A$, so the same proof applies.
\end{Remark}

\subsection{Cooperations} \label{sec:cooperations} Several results in chromatic homotopy theory rely not only on understanding the coefficients $MU_*$ but also the cooperations $MU_*MU$ and higher analogs $\pi_*(MU^{\wedge n})$. For example, these play an essential role in the Adams-Novikov spectral sequence. The goal of this final section is to give a description of the cooperations for global equivariant bordism.

Let $X$ be an $Ab$-algebra equipped with two coordinates $e^{(1)}$ and $e^{(2)}$. Then $e^{(2)}=\lambda e^{(1)}$ for a unit $\lambda\in X(\T)$. We say that the tuple $(e^ {(1)},e^{(2)})$ is \emph{strict} if the unit $\lambda$ restricts to $1$ at $X(1)$. More generally we say that an $n$-tuple $(e^{(1)},\hdots,e^{(n)})$ of coordinates is strict if every pair $(e^{(i)},e^{(j)})$ is strict.	

Now assume given $n$ complex oriented global ring spectra $E_1,\hdots,E_n$. We obtain $n$ orientations $t_1,\hdots,t_n$ in $(E_1\wedge \hdots \wedge E_n)^2_\T(S^\tau)$, differing from one another by units in $\pi_0^{\T}(E_1\wedge \hdots \wedge E_n)$. Since each of the orientation classes restrict to $1$ at the trivial group, the same must be true for the units relating them. So it follows that the $n$-tuple of coordinates $(e^{(1)},\hdots,e^{(n)}))$ associated to these orientations is strict.

Our goal is to show the following:
\begin{Theorem}[{\Cref{thm:E} in the introduction}]  \label{thm:coops} The tuple $(\upi_*(\MU^{\wedge n}),e^{(1)},\hdots,e^{(n)})$ is universal among $Ab$-algebras equipped with a strict $n$-tuple of coordinates.	
\end{Theorem}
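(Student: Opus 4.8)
The plan is to mimic the proof of \Cref{thm:C} (the case $n=1$), using the adjunction machinery of \Cref{sec:complexadjunction} together with the geometric fixed point computation, but now keeping track of the extra coordinates. Recall that $\upi_*(\MU^{\wedge n})$ is a global group law with respect to each of the coordinates $e^{(i)}_\tau$ (they all satisfy the exact sequence condition since $\MU^{\wedge n}$ is a complex oriented global ring spectrum, and in fact its homotopy is concentrated in even degrees so it is regular by \Cref{lem:complexregular}); the $e^{(i)}_\tau$ form a strict $n$-tuple by the discussion preceding the theorem. So there is a canonical comparison map from the universal $Ab$-algebra $\widetilde{\bL}^{\{n\}}$ equipped with a strict $n$-tuple of coordinates, whose existence as a reflective localization of an appropriate category of $(\Ze)^{\otimes n}$-type algebras follows from \Cref{lem:categorical} exactly as in \Cref{prop:reflexiveglobal}. (Here one must encode strictness as an extra set of relations: adjoin units $\lambda_i$ with $e^{(i)}=\lambda_i e^{(1)}$ and $\res^\T_1(\lambda_i)=1$.)

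First I would identify $\widetilde{\bL}^{\{n\}}$ algebraically. The key observation is that a global group law together with a strict second coordinate is the same datum as a global group law together with a \emph{strict isomorphism} of the associated $\T$-equivariant formal group law with a second one, i.e.\ via the adjunction $(-)^\wedge_A \dashv (-)_{gl}$ of \Cref{prop:complexadjunction} the left adjoint sends $\widetilde{\bL}^{\{n\}}$ to the universal $n$-tuple of $A$-equivariant formal group laws together with strict isomorphisms between them. On the topological side, $\pi_*^A(MU_A^{\wedge n})$ classifies precisely such data (this is the equivariant refinement of the classical statement that $MU_*MU$ carries the universal strict isomorphism of formal group laws, and it is what the final sentence of \Cref{sec:cooperations} refers to). Hence $\widetilde{\bL}^{\{n\}}(A)\cong \pi_*^A(MU_A^{\wedge n})$ abstractly; what remains is to show the canonical comparison map realizes this isomorphism, and for that I would again pass to geometric fixed points.

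So the core of the argument is: show that $\widetilde{\bL}^{\{n\}}\to\upi_*(\MU^{\wedge n})$ is an isomorphism on $\Phi^A$ for all $A$, and then promote this to an isomorphism using a Whitehead-type argument. For the geometric fixed point statement, invert all non-trivial Euler classes on both sides. The analog of \Cref{prop:geomlazard}/\Cref{prop:eulerinvert} expresses $\Phi^A(\widetilde{\bL}^{\{n\}})$ as obtained from $MU_*MU^{\wedge(n-1)}$ (equivalently, $n-1$ copies of the strict isomorphism data over the non-equivariant $MU$-theory) by adjoining polynomial-and-Laurent generators $\gamma^{V,(i)}_j, e_V^{\pm1}$ for the various coordinates, indexed over non-trivial $V\in A^*$; and $\Phi^A(MU_A^{\wedge n})\simeq S[\widetilde A^*]\wedge MU^{\wedge n}\wedge(\prod_{V\in\widetilde A^*}BU^{\times n})_+$ by the tom Dieck–Sinha computation, giving $\Phi^A_*(MU_A^{\wedge n})\cong MU_*MU^{\wedge(n-1)}[b^{V,(i)}_j,e_V^{\pm1}]$. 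Tracking the orientation classes through the equivalence, exactly as in the proof of \Cref{prop:realgeomiso}, one checks that $\gamma^{V,(i)}_j\mapsto$ (unit)$\cdot b^{V,(i)}_{j+1}$, so the comparison map is an isomorphism on $\Phi^A$ given that it is an isomorphism non-equivariantly — the latter being Quillen's theorem on $MU_*MU$.

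Finally, to pass from geometric fixed points to the full statement I would run the argument of the proof of Theorems \ref{thm:A} and \ref{thm:C} verbatim, using \Cref{rem:generalize}: it suffices that $X=\widetilde{\bL}^{\{n\}}$ is $2$-regular with integral-domain geometric fixed points and non-associate classes $\psi_m\neq\psi_k$, and that $Y=\upi_*(\MU^{\wedge n})$ is $1$-regular. The $2$-regularity of $\widetilde{\bL}^{\{n\}}$ follows from the same Step 1 / Step 2 argument as in \Cref{sec:lazardregular} — localize at $p$, use \Cref{lem:p2regular} and the adjunction to identify the universal $(p,2)$-regular object with the appropriate strict-$n$-tuple analog, then use the $p$-th power map and \Cref{prop:eulerinvert} to handle characters divisible by $p$; the domain property of $\Phi^A(\widetilde{\bL}^{\{n\}})$ follows from the explicit polynomial presentation above together with $MU_*MU$ being a domain; and the non-associate $\psi_m$ follows since $\widetilde{\bL}^{\{n\}}$ maps to a regular global group law (e.g.\ apply the multiplicative example after specializing all $n$ coordinates to the single coordinate $t-1$). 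The main obstacle I anticipate is the careful bookkeeping in the geometric fixed point computation: making the multi-coordinate generalization of the tom Dieck–Sinha splitting precise and verifying that each $\gamma^{V,(i)}_j$ maps to a unit multiple of $b^{V,(i)}_{j+1}$, since now there are $n$ interacting orientations and strictness must be used to pin down the units; once that is in hand, the rest is a mechanical upgrade of the $n=1$ proofs.
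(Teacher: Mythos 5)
Your overall architecture matches the paper's: construct the universal $Ab$-algebra $\bL^{(n)}$ with a strict $n$-tuple via \Cref{lem:categorical}, reduce the comparison map to geometric fixed points, and then promote the isomorphism using the machinery of \Cref{rem:generalize}. However, in the two technically substantive steps you propose a route that is genuinely different from, and more laborious than, the paper's.

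For the geometric fixed point isomorphism, you anticipate redoing the explicit tom Dieck--Sinha splitting and tracking generators $\gamma_j^{V,(i)}$ and $b_j^{V,(i)}$ through $n$ interacting orientations; you rightly flag this bookkeeping as the main obstacle. The paper sidesteps it entirely: it observes that $\Phi^A(\MU^{\wedge n})\simeq(\Phi^A(\MU))^{\wedge n}$, that $\Phi^A(\MU)$ is Landweber exact (being a wedge of shifts of $MU$), and that for Landweber exact theories $(E\wedge E')_*$ carries a universal property in terms of strict isomorphisms of formal group laws. Combined with \Cref{lem:strictiso} --- which identifies strict isomorphisms of Euler-invertible $A$-equivariant FGLs with strict isomorphisms of their non-equivariant completions --- this universal property matches exactly that of $\Phi^A(\bL^{(n)})$, and the isomorphism falls out without ever choosing explicit generators. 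Your route would probably work, but it reintroduces the coordinate-heavy computation that the universal-property formulation is designed to avoid.

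For the $2$-regularity of $\bL^{(n)}$, you propose to rerun the entire localize-at-$p$ plus $p$-th-power-map argument of \Cref{sec:lazardregular} for the $n$-coordinate universal object; this would in particular require an $n$-coordinate analog of \Cref{prop:eulerinvert}. The paper instead deduces $2$-regularity (and the integral-domain property of the geometric fixed points) from the single observation that the first-coordinate map $\bL(A)\to\bL^{(n)}(A)$ is flat, because specifying the $n$-tuple on an $A$-equivariant FGL amounts to adjoining polynomial-and-Laurent generators (the flag coefficients of the units $\lambda_2,\hdots,\lambda_n$) to $L_A$. Regularity then transfers formally from $\bL$ along the flat map. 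Both differences keep your proposal correct in principle, but the paper's use of Landweber exactness and flatness over $\bL$ turns what you treat as two fresh technical mountains into one-paragraph formal arguments.
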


We can obtain a universal $Ab$-algebra with $n$ coordinates $\bL^{(n)}$ by applying the left adjoint from \Cref{prop:reflexiveglobal} to the $\Ze$-algebra \[ \Ze[\lambda_2^{\pm 1},\hdots,\lambda_n^{\pm 1}]/(\res^{\T}_1(\lambda_i)-1), \]
where $\lambda_i$ are free unit variables at the circle group $\T$. The associated strict $n$-tuple is then $(e,\lambda_2e,\lambda_3e,\hdots,\lambda_ne)$.
We hence obtain a map
	\[ \alpha^{(n)}\colon \bL^{(n)} \to \upi_*(\MU^{\wedge n}). \]
To show that $\alpha^{(n)}$ is an isomorphism we again need to describe the values $\bL^{(n)}(A)$ in terms of $A$-equivariant formal group laws. By the adjunction of \Cref{prop:complexadjunction}, the ring $\bL^{(n)}(A)$ is initial among $A$-equivariant formal group laws equipped with a strict $n$-tuple of coordinates. This shows that $\bL^{(n)}(A)$ is flat over $\bL(A)=L_A$ (where we view $\bL^{(n)}$ as an algebra over $\bL$ via the first coordinate), since once the $A$-equivariant formal group law and the first coordinate are specified, it remains to give the units $\lambda_2,\hdots,\lambda_n\in R^\times$ which augment to $1$. Choosing a flag $F$ starting with the trivial character, this amounts to giving the coefficients $a_j^F(\lambda_i)$ for $j\geq 1$ (as $a_0^F(\lambda_i)$ is required to be $1$), such that the associated Euler classes, which can be expressed as polynomials in the coefficients $a_j^F(\lambda_i)$, are units (see \Cref{lem:unit} and the paragraph preceding it). Hence, the ring classifying such data is a localization of a polynomial ring over $L_A$. This also shows that $\bL^{(n)}(\T^r)$ is again an integral domain for all $r$, and that $\Phi^A(\bL^{(n)})$ is an integral domain for all abelian compact Lie groups~$A$.

We can express the universal property of $\bL^{(n)}(A)$ a little differently: It is initial among rings $k$ equipped with an $n$-tuple of $A$-equivariant formal group laws $(F_1,\hdots,F_n)$ together with strict isomorphisms $F_i\xr{\cong}F_{i+1}$ for $i=1,\hdots,n-1$. By the latter we mean isomorphisms $R_i\to R_{i+1}$ over $k$ that are compatible with the maps $\Delta$ and $\theta$ and send the coordinate $y_i(\epsilon)$ to $\lambda y_{i+1}(\epsilon)$ for a unit $\lambda\in R_{i+1}^\times$ which augments to $1\in k$.

Note that every strict isomorphism $\varphi\colon F\xr{\cong}F'$ gives rise to a strict isomorphism $\widetilde{\varphi}\colon \widetilde{F}\xr{\cong} \widetilde{F}'$ between the associated non-equivariant formal group laws by completion.
\begin{Lemma} \label{lem:strictiso} Let $A$ be an abelian compact Lie group, and $F$ and $F'$ be $A$-equivariant formal group laws over $k$, all of whose Euler classes $e_V$ with $V\neq \epsilon$ are invertible. Then the assignment $\varphi\mapsto \widetilde{\varphi}$ is a bijection from the set of strict isomorphisms $F\xr{\cong} F'$ to the set of strict isomorphisms $\widetilde{F}\xr{\cong} \widetilde{F}'$.
\end{Lemma}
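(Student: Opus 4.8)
The plan is to exploit the explicit structure of an $A$-equivariant formal group law with invertible Euler classes coming from the proof of \Cref{prop:eulerinvert} in the case $B=A$. There, $F$ over $k$ is identified with the one built from its non-equivariant completion $\widetilde{F}=(k,\widetilde{R},\widetilde{\Delta},\widetilde{\theta},\widetilde{y}(\epsilon))$ together with the function $z\colon A^*\setminus\{\epsilon\}\to \widetilde{R}^\times$, $z(V)=$ image of $y(V^{-1})$ in $\widetilde{R}$: concretely $R=\map(A^*,\widetilde{R})=\prod_{V\in A^*}\widetilde{R}$, the $A^*$-action permutes the factors by regular translation, $\theta(V)=\widetilde{\theta}(\epsilon)\circ\mathrm{ev}_V$, the comultiplication $\Delta$ is the unique $A^*\times A^*$-equivariant extension of $\widetilde{\Delta}\circ\mathrm{ev}_\epsilon$, and $y(\epsilon)$ is the tuple with $\epsilon$-component $\widetilde{y}(\epsilon)$ and $V$-component $z(V)$; the same applies to $F'$.

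First I would show that a strict isomorphism $\varphi\colon F\to F'$ is forced to respect these product decompositions. Since $\varphi$ is compatible with $\Delta$ and $\theta$ it is $A^*$-equivariant for the $l_V$-actions, and for the idempotent $e_V\in R$ supported on the $V$-th factor one has $\theta'(W)(\varphi(e_V))=\theta(W)(e_V)=\delta_{V,W}$. As $\widetilde{R}'\cong k[[\widetilde{y}'(\epsilon)]]$ and idempotents of a power series ring over $k$ are the idempotents of $k$, the element $\varphi(e_V)$ is the unique idempotent whose $W$-component augments to $\delta_{V,W}$, namely $e_V$ itself. Hence $\varphi=\prod_{V\in A^*}\varphi_V$ with each $\varphi_V\colon\widetilde{R}\to\widetilde{R}'$ a $k$-algebra map, equivariance under the regular actions forces $\varphi_V=\varphi_\epsilon$ for all $V$, and $\varphi_\epsilon$ is precisely the induced map $\widetilde{\varphi}$ on the completion at $I_\epsilon=\ker\theta(\epsilon)$ (which is projection onto the $\epsilon$-factor). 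Thus $\varphi$ is the diagonal map $\widetilde{\varphi}^{\times A^*}$ and in particular is determined by $\widetilde{\varphi}$, giving injectivity of $\varphi\mapsto\widetilde{\varphi}$.

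For surjectivity I would check that for any strict isomorphism $g\colon\widetilde{F}\to\widetilde{F}'$ the diagonal map $\varphi=g^{\times A^*}\colon R\to R'$ is a strict isomorphism of $A$-equivariant formal group laws with $\widetilde{\varphi}=g$. Compatibility with $\theta$ is immediate, and compatibility with $\Delta$ follows because $\Delta$ and $\Delta'$ are the equivariant extensions of $\widetilde{\Delta}$ resp.\ $\widetilde{\Delta}'$ from the $\epsilon$-component, which $g$ intertwines by hypothesis. It then remains to exhibit a unit $\lambda\in(R')^\times$ with $\varphi(y(\epsilon))=\lambda\,y'(\epsilon)$ and $\theta'(\epsilon)(\lambda)=1$; comparing components this means $g(\widetilde{y}(\epsilon))=\lambda_\epsilon\widetilde{y}'(\epsilon)$ and $g(z(V))=\lambda_V z'(V)$. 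The first is solvable with $\lambda_\epsilon$ a unit augmenting to $1$ exactly because $g$ is \emph{strict} (so $g(\widetilde{y}(\epsilon))=(1+\cdots)\widetilde{y}'(\epsilon)$), and the second is solved by $\lambda_V:=g(z(V))\,z'(V)^{-1}\in(\widetilde{R}')^\times$ since $z(V),z'(V)$ are units; this produces the required $\lambda$.

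I expect the main obstacle to be the first step — proving that a strict isomorphism necessarily respects the factorwise decomposition and is diagonal — since that is where one must combine $\theta$-compatibility, the idempotent structure of $\widetilde{R}'$, and the regular $A^*$-action, and where one has to be careful both with the (possibly infinite, when $A$ contains a torus) index set $A^*$ and with the conventions relating $y(V)$, $z(V)$ and the $l_V$-action. Once this rigidity is established, both directions of the bijection are formal.
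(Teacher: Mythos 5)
Your proof is correct and rests on the same decomposition $R \cong \map(A^*,\widetilde{R})$ used by the paper, but navigates the injectivity step by a more explicit route. The paper invokes the universal property of coinduction directly: since $\varphi$ is $A^*$-equivariant (as you note, a consequence of $\Delta$- and $\theta$-compatibility), it is determined by the single composite $R \to R' \xrightarrow{\pi'_\epsilon} \widetilde{R}'$, which factors as $\widetilde{\varphi}\circ\pi_\epsilon$ because $\varphi$ preserves the $I_\epsilon$-adic filtration. You instead establish the factorwise decomposition first, via $\varphi(e_V)=e_V$ and the observation that idempotents of $\widetilde{R}'\cong k[[\widetilde{y}'(\epsilon)]]$ are those of $k$ (by uniqueness of idempotent lifting in a complete separated ring); this is valid but slightly longer. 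The concern you flag about the infinite index set is not in fact an obstacle: once $\varphi(e_V)=e_V$ is known, the identity $\varphi(f)_V = (\varphi(f)\,e_V)_V = \varphi(f\,e_V)_V = \varphi_V(f_V)$ determines $\varphi$ componentwise for any cardinality of $A^*$, and $A^*$-equivariance then forces $\varphi_V=\varphi_\epsilon$ by exactly the computation you indicate. On the surjectivity side your proposal is actually more careful than the paper, which compresses the verification that $g^{\times A^*}$ is a strict isomorphism into ``the claim follows''; your explicit construction of $\lambda=(\lambda_V)_V$, with $\lambda_\epsilon$ supplied by strictness of $g$ and $\lambda_V=g(z(V))\,z'(V)^{-1}$ for $V\neq\epsilon$, fills in the content the paper leaves implicit.
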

\begin{proof} Let $R$ and $R'$ be the complete Hopf algebras of $F$ and $F'$, respectively, and $\widetilde{R}$ and $\widetilde{R}'$ their completions at the augmentation ideal. Then we saw in the proof of \Cref{prop:eulerinvert} that $R$ is canonically isomorphic to $\map(A^*,\widetilde{R})$ and likewise for $R'$, with $A^*$-action permuting the factors. Hence any isomorphism $F\cong F'$ is determined by the composite $R\to R'\to \widetilde{R}'$ under the $A^*$-action. This composite factors as $R\to \widetilde{R}$ followed by the induced isomorphism on completions $\widetilde{R}\xr{\cong} \widetilde{R}'$. Hence it is determined by the latter. Since $\varphi$ is strict if and only if $\widetilde{\varphi}$ is strict (the units $\lambda$ and $\widetilde{\lambda}$ have the same augmentation), the claim follows.
\end{proof}
\begin{Cor} The ring $\Phi^A(\bL^{(n)})$ is universal among rings equipped with an $n$-tuple of Euler-invertible $A$-equivariant formal groups $(F_1,\hdots,F_n)$ plus strict isomorphisms $\widetilde{F}_i\xr{\cong} \widetilde{F}_{i+1}$ between their associated non-equivariant formal group laws.
\end{Cor}

Using this, we show:
\begin{Prop} The map $\alpha^{(n)}\colon \bL^{(n)}\to \upi_*(\MU^{\wedge n})$ induces an isomorphism on all geometric fixed points.	
\end{Prop}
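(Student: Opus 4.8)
The plan is to reduce the statement to the non-equivariant theorem on cooperations, $\pi_*(MU^{\wedge n}) \cong L^{(n)}$ (the $n$-fold tensor product $MU_*[b_i^{(j)}]$ carrying the universal strict $n$-tuple of formal group laws over $L$), together with the already-established complex-bordism input, namely tom Dieck's computation of $\Phi^A(MU_A)$ and Greenlees' identification of it with $\Phi^A(\bL)$ (\Cref{prop:mulazardgeom}). Fix an abelian compact Lie group $A$. On the algebraic side, the discussion just before the Proposition identifies $\Phi^A(\bL^{(n)})$ as the ring universal among rings $k$ equipped with an $n$-tuple of Euler-invertible $A$-equivariant formal group laws $(F_1,\dots,F_n)$ together with strict isomorphisms $\widetilde F_i \xrightarrow{\cong} \widetilde F_{i+1}$ of the associated non-equivariant formal group laws (using \Cref{lem:strictiso} to pass between equivariant and non-equivariant strict isomorphisms). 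Combining this with the description of $\Phi^A(\bL)$ from \Cref{prop:eulerinvert}/\Cref{prop:geomlazard}, one sees that $\Phi^A(\bL^{(n)})$ is obtained from $\Phi^A(\bL)$ by adjoining, for each $1 \leq j \leq n$, a fresh copy of the polynomial variables $\gamma_i^{V,(j)}$ describing the coordinate $y_j(\epsilon)$ in the $j$-th Euler-invertible splitting, subject to the constraint that these are glued along strict isomorphisms of the underlying non-equivariant theories — equivalently, $\Phi^A(\bL^{(n)})$ is built from $\Phi^A(\bL)$ by tensoring up with the non-equivariant cooperations data $L^{(n)} \otimes_L (\text{Laurent/polynomial generators})$.

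**The topological side** runs in parallel. By the same argument as in the proof of \Cref{prop:realgeomiso} (using the real case \eqref{eq:equiv} as a template, with \cite[Theorem 4.9]{Sin01} for the complex analog of the geometric-fixed-point computation), one has an equivalence
\[ \Phi^A(MU_A) \simeq S[\widetilde A^*] \wedge MU \wedge \Bigl(\prod_{V\in \widetilde A^*} BU\Bigr)_+, \]
so that $\Phi^A_*(MU_A) = MU_*[b_i^V, e_V^{\pm 1}]$, and likewise $\Phi^A_*(MU_A^{\wedge n}) \cong \Phi^A_*(MU_A) \otimes_{MU_*} \pi_*(MU^{\wedge n})$, the latter isomorphism coming from the fact that smashing with $\widetilde E P_A$ and taking fixed points is symmetric monoidal and commutes with the relevant colimits, together with the flatness of $\pi_*(MU^{\wedge n})$ over $MU_*$. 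Under these identifications the map $\Phi^A(\alpha^{(n)})$ is, after Quillen's non-equivariant theorem $L^{(n)} \xrightarrow{\cong} \pi_*(MU^{\wedge n})$ and the base case $\Phi^A(\alpha)\colon \Phi^A(\bL)\xrightarrow{\cong}\Phi^A(MU_A)$ from \Cref{prop:mulazardgeom}, nothing but the evident comparison of these two tensor-product decompositions; in particular it sends the glued coordinate data $\gamma_i^{V,(j)}$ to unit multiples of the $b_{i+1}^{V,(j)}$, exactly as in the proof of \Cref{prop:realgeomiso}. Hence $\Phi^A(\alpha^{(n)})$ is an isomorphism.

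**The main obstacle** I anticipate is bookkeeping rather than any genuinely new idea: one must check carefully that the flat extension $\bL^{(n)}(A) \to \Phi^A(\bL^{(n)})$ — i.e., that inverting Euler classes turns "$A$-equivariant formal group law plus strict $n$-tuple of coordinates" into "Euler-invertible equivariant fgl plus strict isomorphisms of the completed non-equivariant fgls" — is compatible with the corresponding splitting $\Phi^A(MU_A^{\wedge n}) \cong \prod_{A^*}[\CPU_+, \Phi^A(MU_A^{\wedge n})]_*$ on the topological side, so that the strict isomorphisms induced by the distinct orientations $t_1,\dots,t_n$ of $MU^{\wedge n}$ match the algebraic ones. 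This amounts to tracking, through the homotopy-orbit/completion identifications of \Cref{prop:complexcompletion} and \Cref{prop:complexadjunction}(2) applied to $E = MU^{\wedge n}$, that the $j$-th orientation produces the $j$-th copy of the coordinate in $\Phi^A_*(MU_A^{\wedge n})$; once this compatibility is in place, invertibility of the relevant units (guaranteed by \Cref{lem:unit}, since each $\lambda_j$ augments to $1$) and Quillen's theorem close the argument. I would organize the write-up so that the $n=1$ case is literally \Cref{prop:mulazardgeom} and the general case is presented as "apply the $n=1$ identifications levelwise in $j$ and glue along strict isomorphisms", citing \Cref{lem:strictiso} for the gluing and the proof of \Cref{prop:realgeomiso} for the identification of $\gamma_i^{V,(j)}$ with $b_{i+1}^{V,(j)}$.
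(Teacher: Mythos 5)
Your overall strategy is the same as the paper's --- reduce to Quillen's non-equivariant theorem, use Sinha's/tom Dieck's computation of $\Phi^A(MU_A)$, strong symmetric monoidality of geometric fixed points, and Greenlees' identification $\Phi^A(\bL)\cong\Phi^A_*(\MU)$ --- but there is a concrete error in the key formula of your second paragraph, and the ``evident comparison of tensor-product decompositions'' that you build on it does not go through as stated.

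The claimed isomorphism
\[ \Phi^A_*(MU_A^{\wedge n}) \cong \Phi^A_*(MU_A)\otimes_{MU_*}\pi_*(MU^{\wedge n}) \]
is false for non-trivial $A$. Symmetric monoidality of $\Phi^A$ gives $\Phi^A(MU_A^{\wedge n})\simeq \bigl(\Phi^A(MU_A)\bigr)^{\wedge n}$, and writing $\Phi^A(MU_A)\simeq MU\wedge T$ with $T=S[\widetilde A^*]\wedge\bigl(\prod_{V\in\widetilde A^*}BU\bigr)_+$, one obtains
\[ \Phi^A_*(MU_A^{\wedge n})\;\cong\;\pi_*(MU^{\wedge n})\otimes_{MU_*}MU_*(T)^{\otimes_{MU_*}\,n}, \]
which carries $n$ independent copies of the generators $e_V^{\pm1}$ and $b_i^V$, one for each smash factor, whereas your right-hand side carries only a single copy. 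You can already see the discrepancy for $A=C_2$, $n=2$: the correct ring is $MU_*MU\otimes_{MU_*}MU_*[e_\sigma^{(1)\pm1},b_i^{\sigma,(1)}]\otimes_{MU_*}MU_*[e_\sigma^{(2)\pm1},b_i^{\sigma,(2)}]$. The same miscount silently reappears on the algebraic side in your phrase ``equivalently, $\Phi^A(\bL^{(n)})$ is built from $\Phi^A(\bL)$ by tensoring up with $L^{(n)}\otimes_L(\text{Laurent/polynomial generators})$'', which again suggests a single $\Phi^A(\bL)$-factor; the correct universal property (which you do state correctly one sentence earlier) involves $n$ independent Euler-invertible $A$-equivariant formal group laws with $n-1$ strict isomorphisms of their completions, so the ring is roughly an $(n-1)$-fold alternating tensor product of $\Phi^A(\bL)$ with $MU_*MU$ over $L$, not a single base change. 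Once the ``evident comparison of decompositions'' is gone, the proof needs the replacement the paper uses: observe that $\Phi^A(\MU)$ is a wedge of shifted $MU$'s and hence Landweber exact, invoke the universal property of $(E_1\wedge\cdots\wedge E_n)_*$ for Landweber exact $E_i$'s (maps from each $(E_i)_*$ together with strict isomorphisms of the pushed-forward non-equivariant formal group laws), and match this directly with the universal property of $\Phi^A(\bL^{(n)})$ via \Cref{lem:strictiso}. Arguing at the level of universal properties avoids having to write either side as an explicit polynomial ring, and in particular sidesteps the bookkeeping of $n$ interleaved families of $\gamma_i^{V,(j)}$'s and $b_i^{V,(j)}$'s that your approach requires.
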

\begin{proof} Geometric fixed points are strong symmetric monoidal, so the maps $i_1,\hdots,i_n\colon \MU\to \MU^{\wedge n}$ induce an equivalence
	\[ \Phi^A(\MU^{\wedge n})\cong (\Phi^A(\MU))^{\wedge n}.\]
The spectrum $\Phi^A(\MU)$ is a wedge of shifted copies of $MU$ (see \cite[Theorem 4.9]{Sin01}), hence in particular it is Landweber exact. Now recall that given two Landweber exact theories $E$ and $E'$, the ring $(E\wedge E')_*$ is universal among rings $R$ equipped with two maps $E_*\to R$ and $E'_*\to  R$ and a strict isomorphism between the pushforwards of the given formal group laws over $E$ and $E'$. This follows from the case $(MU\wedge MU)_*$ by base-changing one copy of $MU$ to $E_*$ and the other to $E'_*$. Hence, \Cref{prop:mulazardgeom} shows that the coefficients of $(\Phi^A(\MU))^{\wedge n}$ are universal among rings equipped with an $n$-tuple of Euler-invertible $A$-equivariant formal group laws, together with strict isomorphisms between their associated non-equivariant formal group laws. We just saw that this is exactly the universal property of $\Phi^A(\bL^{(n)})$, so the claim follows.
\end{proof}

We now apply the same proof as in the previous section to conclude that $\bL^{(n)}\to \upi_*(\MU^{\wedge n})$ is an isomorphism, following \Cref{rem:generalize}. We argued above that the first coordinate map $\bL(A)\to \bL^{(n)}(A)$ is flat for all $A$, which proves that $\bL^{(n)}$ is also a regular global group law. Furthermore, we saw that the geometric fixed points $\Phi^A(\bL^{(n)})$ are integral domains for all $A$. Since $\bL$ is a retract of $\bL^{(n)}$, the different $\psi_i$ cannot be multiples of one another. Finally, $\upi_*(\MU^{\wedge n})$ is again concentrated in even degrees as a consequence of the results of \cite{CGK02}, hence it is regular. So we see that all the conditions of \Cref{rem:generalize} are satisfied, which concludes the proof of \Cref{thm:coops}.

\newcommand{\etalchar}[1]{$^{#1}$}

\end{document}